\pdfoutput=1
 \PassOptionsToPackage{numbers, compress}{natbib}
 \PassOptionsToPackage{noend, vlined, ruled}{algorithm2e}
\def\arXiv{1} 
\documentclass[11pt]{article}

\newcommand{\notarxiv}[1]{foo}
\newcommand{\arxiv}[1]{ba}
\ifdefined \arXiv
\renewcommand{\arxiv}[1]{#1}%
\renewcommand{\notarxiv}[1]{\ignorespaces}%
\else%
\renewcommand{\arxiv}[1]{\ignorespaces}%
\renewcommand{\notarxiv}[1]{#1}%
\fi%

\usepackage{thmtools}
\usepackage{thm-restate}

 \notarxiv{
 	\undef\corollary
	 \undef\definition
	 \undef\assumption

	\setcitestyle{numbers,square,comma} 
	\declaretheorem[name=Lemma,sibling=theorem]{lem}
	\declaretheorem[name=Proposition,sibling=theorem]{prop}
	\declaretheorem[name=Corollary,sibling=theorem]{corollary}
	\declaretheorem[name=Assumption,sibling=theorem]{assumption}
	\declaretheorem[name=Definition,sibling=theorem]{definition}

	\undef\lemma
	\undef\proposition	

}

\arxiv{
	\usepackage{amsthm}
	\usepackage[linesnumbered,ruled,vlined,boxed,algo2e]{algorithm2e}

	\usepackage[dvipsnames]{xcolor}
	\usepackage[top=1in, right=1in, left=1in, bottom=1in]{geometry}
	\usepackage[numbers,square]{natbib}
	\usepackage{url}
	\usepackage[hidelinks]{hyperref}
	\hypersetup{
		colorlinks=true,
		linkcolor=blue!70!black,
		citecolor=blue!70!black,
		urlcolor=blue!70!black}
	
	\theoremstyle{plain}
	
	\newtheorem{theorem}{Theorem}
	\newtheorem{lem}{Lemma}
	
	\newtheorem{assumption}{Assumption}
	\newtheorem{prop}{Proposition}
	\newtheorem{corollary}{Corollary}
	\newtheorem{definition}{Definition}
	
	\theoremstyle{definition}

	\newtheorem*{example*}{Example}

}

\usepackage{enumitem}
\usepackage{amssymb}
\usepackage{amsbsy}
\usepackage{amsmath}
\usepackage{threeparttable}

\usepackage{cleveref}
\Crefname{lem}{Lemma}{Lemmas}
\Crefname{prop}{Proposition}{Propositions}
\Crefname{assumption}{Assumption}{Assumptions}

\usepackage{amsfonts}
\usepackage{latexsym}
\usepackage{graphicx}
\usepackage{color}
\usepackage{xifthen}
\usepackage{xspace}
\usepackage{mathtools}
\usepackage{bbm}
\usepackage{multirow}
\usepackage[normalem]{ulem}
\usepackage{array}
\usepackage[utf8]{inputenc}
\usepackage[T1]{fontenc}
\usepackage{etoolbox}
\newtoggle{restatements}

\newtoggle{heavyplots}

\usepackage{xfrac}
\usepackage{xargs}

\usepackage{titlesec}

\usepackage{setspace}

\usepackage{verbatim}

\usepackage{esvect}

\usepackage{nicefrac}

\usepackage{cases}
\usepackage{empheq}

 \usepackage{booktabs}
 \usepackage{multirow}
 
 \usepackage{caption}

\newcommand{\mc}[1]{\mathcal{#1}}

\newcommand{\wt}[1]{\widetilde{#1}}  %
\newcommand{\what}[1]{\widehat{#1}}  %

\DeclarePairedDelimiter{\abs}{\lvert}{\rvert} %
\DeclarePairedDelimiter{\brk}{[}{]}
\DeclarePairedDelimiter{\crl}{\{}{\}}
\DeclarePairedDelimiter{\prn}{(}{)}

\DeclarePairedDelimiter{\norm}{\|}{\|}
\DeclarePairedDelimiter{\tri}{\langle}{\rangle}

\DeclarePairedDelimiter{\ceil}{\lceil}{\rceil}
\DeclarePairedDelimiter{\floor}{\lfloor}{\rfloor}

\newcommand{\overeq}[1]{\overset{#1}{=}}
\newcommand{\overle}[1]{\overset{#1}{\le}}

\NewDocumentCommand\Ex{s O{} m }{%
	\mathbb{E}%
	\begingroup
	\IfBooleanTF{#1}
	{\ExInn*{#3}}
	{\ExInn[#2]{#3}}%
	\endgroup
}

\DeclarePairedDelimiterX\ExInn[1]{[}{]}{%
	\activatebar
	#1%
}

\RenewDocumentCommand\Pr{sO{}r()}{%
	\mathbb{P}%
	\begingroup
	\IfBooleanTF{#1}
	{\PrInn*{#3}}
	{\PrInn[#2]{#3}}%
	\endgroup
}

\DeclarePairedDelimiterX\PrInn[1](){%
	\activatebar
	#1%
}

\newcommand{\activatebar}{%
	\begingroup\lccode`~=`|
	\lowercase{\endgroup\def~}{\;\delimsize\vert\;}%
	\mathcode`|=\string"8000
}

\newcommand{\R}{\mathbb{R}} %
\newcommand{\N}{\mathbb{N}} %
\newcommand{\E}{\mathbb{E}} %
\renewcommand{\P}{\mathbb{P}}	%
\newcommand{\I}{\mathbb{I}} %

\newcommandx{\CE}[2][1="", 2=""]{\E\left[#1 \;\middle\vert\; #2\right]} %

\usepackage{xcolor}
\usepackage[algo2e]{algorithm2e}

\SetCommentSty{mycommfont}
\SetKwInput{KwInput}{Input} 
\SetKwInput{KwParameter}{Parameters}                %
\SetKwInput{KwOutput}{Output}              %
\SetKwInput{KwReturn}{Return}              %
\SetKwComment{Comment}{$\triangleright$\ }{}
\SetKwInOut{KwParameters}{Parameters}
\SetCommentSty{mycommfont}

\newcommand{\gradest}{\textsc{MorGradEst}}

\newcommand{\varianceReduction}{\textsc{VarianceReduction}}

\newcommand{\linesearch}{\lambda\textsc{-Bisection}}

\makeatletter
\let\oldnl\nl%
\newcommand{\nonl}{\renewcommand{\nl}{\let\nl\oldnl}}%
\makeatother

\DeclareMathOperator*{\argmax}{arg\,max}

\DeclareMathOperator*{\argmin}{arg\,min}

\providecommand{\abs}{\mathop{\rm abs}}

\providecommand{\minimize}{\mathop{\rm minimize}}

\newcommand{\half}{\frac{1}{2}}

\newcommand{\defeq}{\coloneqq}

\newcommand{\grad}{\nabla}

\newcommand{\xset}{\mathcal{X}}

\newcommand{\zset}{\mathcal{Z}}
\newcommand{\uset}{\mathcal{U}}

\newcommand{\eps}{\epsilon}

\newcommand{\Otil}[1]{\widetilde{O}( #1 )}

\newcommand{\inner}[2]{\left<#1,#2\right>}

\newcommand{\ball}{\mathbb{B}}

\newcommand{\indic}[1]{\I{\{#1\}}}

\newcommand{\ones}{\mathbf{1}}

\newcommand{\opt}{_\star}
\newcommand{\xopt}{x\opt}
\newcommand{\Var}{\mathrm{Var}}

\newcommand{\Lsm}[1][\epsilon]{\mc{L}_{\mathrm{smax},{#1}}}
\newcommand{\LsmReg}[1][\epsilon, \lambda]{\mc{L}_{\mathrm{smax},{#1}}}
\newcommand{\Leps}[1][\psi, \epsilon]{\mc{L}_{#1}}

\newcommandx{\upsilonpsi}[1][1= \psi ,usedefault]{\Upsilon_{#1}}

\newcommand{\upsilonepsreg}[1][\eps, \lambda]{\Upsilon_{#1}}
\newcommandx{\upsiloneps}[1][1= {\epsilon} ,usedefault]{\Upsilon_{#1}}
\newcommand{\Lpenalized}[1][\psi]{\mc{L}_{#1}}

\newcommand{\bx}{\bar{x}}  %
\newcommand{\bp}{\bar{p}}

\newcommand{\by}{\bar{y}}
\newcommand{\reps}{r_\epsilon}
\newcommand{\psieps}[1][\epsilon]{\psi_{#1}}

\newcommand{\constraintMul}[1][]{\nu}
\newcommand{\divergenceFunction}[1][]{\psi}

\newcommand{\oracle}[2][\lambda,\delta]{\mathcal{O}_{#1}(#2)}
\newcommand{\oracles}[1][\lambda,\delta]{\mathcal{O}_{#1}}

\newcommand{\xhat}{\hat{x}}
\newcommand{\hx}{\xhat}
\newcommand{\ghat}{\hat{g}}

\newcommand{\gradx}{\ghat^{\mathrm{x}}}
\newcommand{\grady}{\ghat^{\mathrm{y}}}
\newcommand{\zx}{z^{\mathrm{x}}}
\newcommand{\zq}{z^{\mathrm{q}}}
\newcommand{\deltax}{\delta^{\mathrm{x}}}
\newcommand{\deltaq}{\delta^{\mathrm{q}}}
\newcommand{\gx}{\ghat^{\mathrm{x}}}
\newcommand{\gq}{\ghat^{\mathrm{q}}}

\newcommand{\BOO}{BROO\xspace}
\newcommandx{\broocost}[2][1= \delta,2= \lambda, usedefault]{\mathcal{C}_{#2}\prn*{#1}}
\newcommandx{\hpbroocost}[2][1= \delta,2= \lambda, usedefault]{\mathcal{C}^p_{#2}\prn*{#1}}
\newcommandx{\ncost}{\mathcal{C}_F}
\newcommandx{\mlmcConstant}{c_{x',\bx}}

\newcommandx{\jmax}[1][1= \textup{max}, usedefault]{j_{#1}}

\newcommand{\groupObjective}{\mathcal{L}_\textup{g-DRO}}

\newcommand{\ngroups}{M}

\newcommand{\innerLog}{H}

\newcommand{\Tthreshold}[1][\textup{threshold}]{T_{#1}} 
\notarxiv{
\usepackage{times}
}

\usepackage[utf8]{inputenc} %

\title{Distributionally Robust Optimization via Ball Oracle Acceleration}

\notarxiv{
\coltauthor{%
 \Name{Author Name1} \Email{abc@sample.com}\\
 \addr Address 1
 \AND
 \Name{Author Name2} \Email{xyz@sample.com}\\
 \addr Address 2%
}
}

\arxiv{
	\title{Distributionally Robust Optimization via Ball Oracle Acceleration}

	\author{Yair Carmon\\\href{mailto:ycarmon@cs.tau.ac.il}{\texttt{ycarmon@tauex.tau.ac.il}}\and Danielle Hausler \\
	\href{hausler@mail.tau.ac.il}{\texttt{hausler@mail.tau.ac.il}}
    }
	\date{}
}

\begin{document}

\maketitle

\begin{abstract}%
  We develop and analyze algorithms for distributionally robust optimization (DRO) of convex losses. In particular, we consider group-structured and bounded $f$-divergence uncertainty sets. 
  Our approach relies on an accelerated method that queries a ball optimization oracle, i.e., a subroutine that minimizes the objective within a small ball around the query point. 
  Our main contribution is efficient implementations of this oracle for DRO objectives. 
  For DRO with $N$ non-smooth loss functions, the resulting algorithms find an $\epsilon$-accurate solution with  $\widetilde{O}\left(N\epsilon^{-2/3} + \epsilon^{-2}\right)$ first-order oracle queries to individual loss functions. 
  Compared to existing algorithms for this problem, we improve complexity by a factor of up to $\epsilon^{-4/3}$. 
\end{abstract}

\section{Introduction}\label{sec:intro}

The increasing use of machine learning models in high-stakes applications highlights the importance of reliable performance 
across changing domains and populations \cite{buolamwini2018gender,oakden2020hidden,koh2021wilds}.
An emerging body of research addresses this challenge by replacing Empirical Risk Minimization (ERM)
with Distributionally Robust Optimization (DRO) \cite{ben2013robust,shapiro2017distributionally,sagawa2020distributionally, duchi2021learning, lin2022distributionally}, with applications in natural language processing \cite{oren2019distributionally,zhou2021distributionally,koh2021wilds}, reinforcement learning \cite{chow2015risk, urpi2021riskaverse} and algorithmic fairness \cite{hashimoto2018fairness,wang2020robust}.
While ERM minimizes the average training loss, DRO minimizes the worst-case expected loss over all probability distributions in an \emph{uncertainty set} $\uset$, that is, it minimizes
\begin{equation}\label{eq:MainProblem}
	\mc{L}_\textup{DRO}(x) \defeq {\sup_{Q \in \uset} \E_{S \sim Q}[\ell_S(x)]},
\end{equation}
where $\ell_S(x)$ is the loss a model $x \in \xset$ incurs on a sample $S$. 
This work develops new algorithms for DRO, focusing on formulations where $\uset$ contains distributions supported on $N$ training points, where $N$ is potentially large. 
We consider two well-studied DRO variants: (1) Group DRO \cite{wen2014robust,hu2018does,sagawa2020distributionally}, and (2) $f$-divergence DRO \cite{ben2013robust,duchi2021learning}. 
\paragraph{Group DRO}
Machine learning models may rely on spurious correlations (that hold for most training examples but are wrongly linked to the target) 
and therefore suffer high loss on minority groups where these correlations do not hold \cite{hovy2015tagging,hashimoto2018fairness, buolamwini2018gender}. 
To obtain high performance across all groups, Group DRO  minimizes the worst-case loss over groups.
Given a set $\uset = \crl*{w_1, \ldots, w_{\ngroups}}$ of $\ngroups$ distributions over $[N]$,  the Group DRO objective is\footnote{
	Typically, each $w_i$ is uniform over a subset (``group'') of the $N$ training points. However, most approaches (and ours included) extend to the setting of arbitrary $w_i$'s, which was previously considered in~\cite{wen2014robust}.}
\begin{equation}\label{eq:mainProblemGroupDRO}
\groupObjective(x) \defeq  \max_{i \in [\ngroups]} 
\E_{j\sim w_i} \ell_j(x) = 
 \max_{i \in [\ngroups]}  \sum_{j=1}^N w_{ij} \ell_j(x).
\end{equation}
If we define the loss of group $i$ as  $\mc{L}_i(x) \defeq \sum_{j=1}^N w_{ij} \ell_j(x)$ then objective~\eqref{eq:mainProblemGroupDRO} is equivalent to $ \max_{q \in \Delta^\ngroups} \sum_{i \in [\ngroups]}q_i\mc{L}_i(x) $
with $\Delta^\ngroups \defeq \crl{q \in \R^\ngroups_{\ge 0} \mid \vec{1}^Tq = 1}$. Note that, unlike ERM, Group DRO requires additional supervision in the form of subgroup identities encoded by $\{w_i\}$.

\paragraph{DRO with $f$-divergence}
Another approach to DRO, which requires only as much supervision as ERM, takes $\uset$ to be an $f$-divergence ball around the empirical (training)  distribution. For every convex function $f: \R_+ \rightarrow \R\cup \crl*{+\infty} $ such that $f(1) = 0$, the $f$-divergence between distributions $q$ and $p$ over $[N]$ is $ D_f(q,p) \defeq   \sum_{i \in [N]} p_i f\prn*{q_i/p_i}$. The $f$-divergence DRO problem corresponds to the uncertainty set $\uset = \{q\in\Delta^N: D_f(q,\frac{1}{N}\mathbf{1}))\le 1\}$, i.e., 
\begin{equation}\label{eq:mainProblemfDiverDRO}
	\mathcal{L}_{f\text{-div}}(x) \defeq \max_{q\in\Delta^N: \frac{1}{N}\sum_{i\in[N]}f(Nq_i) \le 1}\sum_{i \in [N]}q_i \ell_i(x). 
\end{equation}
This formulation generalizes several well-studied instances of DRO, with the two most notable examples being conditional value at risk (CVaR) and $\chi^2$ uncertainty sets.  CVaR at level $\alpha$ corresponds to $f(x) = \indic{x < \frac{1}{\alpha}}$, and has many applications in finance such as portfolio optimization and credit risk evaluation~\cite{rockafellar2000optimization, krokhmal2002portfolio} as well as in machine learning \cite{oren2019distributionally, levy2020large, curi2020adaptive, zhai2021boosted, chow2015risk, urpi2021riskaverse}. The $\chi^2$ uncertainty set with size $\rho >0$ corresponds to $f(x) \defeq \frac{1}{2\rho}(x-1)^2$ and the resulting DRO problem is closely linked to variance regularization~\cite{duchi2019variance} and has been extensively studied in statistics and machine learning \cite{namkoong2016stochastic,hashimoto2018fairness,duchi2019variance,levy2020large,zhou2021distributionally}.

\paragraph{Complexity notion}
In this paper, we design improved-complexity methods for solving the problems \eqref{eq:mainProblemGroupDRO} and \eqref{eq:mainProblemfDiverDRO} under the assumption that the loss $\ell_i$
is convex and Lipschitz for all $i$. We measure complexity by the (expected) requiered  number of  $\ell_i(x)$ and $\nabla \ell_i(x)$ evaluations to obtain $\eps$-suboptimal solution, i.e., return $x$ such that $\mc{L}_\textup{DRO}(x)-\min_{x_\star \in \xset}\mc{L}_\textup{DRO}(x_\star)\le \eps$ with constant probability. 
Table \ref{table: complexityResults} summarizes our complexity bounds and compares them to prior art.
Throughout the introduction we assume (for simplicity) a unit domain size and that each loss is $1$-Lipschitz.

	\begin{table}[]
	\captionsetup{font=small}
	\centering
		\begin{tabular}{@{}llll@{}}
			\toprule
			Smoothness                                       & 
			Method       &
			Group DRO~\eqref{eq:mainProblemGroupDRO}        &
			$f$-divergence DRO~\eqref{eq:mainProblemfDiverDRO}    \\                                 
			\midrule                                          
			\multirow{2}{*}{None ($L=\infty$)}               & Subgradient method \cite{nesterov2018lectures}                & $N\epsilon^{-2}$                                  & $N\epsilon^{-2}$                        \\
			& Stoch. primal-dual \cite{nemirovski2009robust} $^{*}$        & $M\epsilon^{-2}$                                  & $N\epsilon^{-2}$ \\
			& MLMC stoch. gradient \cite{levy2020large}                     & -                                                 & $\rho \epsilon^{-3}$ or $\alpha^{-1}\epsilon^{-2}$ $^{\dagger}$ \\
			& Ours                                                                     & $N\epsilon^{-2/3} + \epsilon^{-2}$                & $N\epsilon^{-2/3} + \epsilon^{-2}$        \\ \midrule
			\multirow{2}{*}{Weak ($L \approx 1/\epsilon$)}  & AGD on softmax \cite{nesterov2005smooth}                          & $N\epsilon^{-1}$                                  & $N\epsilon^{-1}$                          \\ 
			& Ours                                                                     & $N\epsilon^{-2/3} + N^{3/4}\epsilon^{-1}$         &  $N\epsilon^{-2/3} + \sqrt{N}\epsilon^{-1}$                                                           \\ \midrule
		\end{tabular}
		\caption{\label{table:summary} Number of $\nabla \ell_i$ and $\ell_i$ evaluations to obtain $\E\brk*{\mc{L}_\text{DRO}(x)}-\min_{x_\star \in \xset}\mc{L}_\text{DRO}(x_\star) \le \epsilon$, where $N$ is the number of training points and (for Group DRO) 
			$M$ is the number of groups. The stated rates omit constant and polylogarithmic factors.
			$^{*}$ Requires an additional uniform bound on losses (see~\Cref{ssec:PrimalDualRegretBound}).
			$^{\dagger}$  These rates hold only for specific $f$-divergences: CVaR at level $\alpha$ or $\chi^2$-divergence with size $\rho$, respectively.
		}
	\label{table: complexityResults}
\end{table}

\paragraph{Prior art}
Let us review existsing methods that solve Group DRO and $f$-divergence DRO  problems. 
For a dataset with $N$ training points, the subgradient method \cite{nesterov2018lectures} finds an $\eps$ approximate solution in $\eps^{-2}$ iterations.
Computing a single subgradient costs $N$ functions evaluations (since we need to find the maximizing $q$). 
Therefore, the complexity of this method is $O\prn*{N\eps^{-2}}$. 

DRO can also be viewed as a game between a minimizing $x$-player and a maximizing $q$-player, which makes it amenable to primal-dual methods \cite{nemirovski2009robust, namkoong2016stochastic, sagawa2020distributionally}. 
If we further assume that the losses are bounded then, for $q \in \Delta^m$,  stochastic mirror descent with local norms obtains a regret bound of $O\prn[\big]{\sqrt{m \log (m) /T}}$ (see \Cref{ssec:PrimalDualRegretBound}). 
As a consequence, for Group DRO (where $m=M$) the complexity is $\widetilde{O}\prn*{M\eps^{-2}}$, and for $f$-divergence DRO ($m=N$) the complexity is $\widetilde{O}\prn*{N\eps^{-2}}$.
 
\citet{levy2020large} studied $\chi^2$-divergence and CVaR DRO problems, and proposed using standard gradient methods with a gradient estimator based on multilevel Monte Carlo (MLMC) \cite{blanchet2015unbiased}. 
For $\chi^2$-divergence with ball of size $\rho$ they proved a complexity bound of $\widetilde{O}\prn*{{\rho}{\eps^{-3}}}$, and for CVaR at level $\alpha$ they established complexity  $\widetilde{O}\prn*{{\alpha^{-1}\epsilon^{-2}}}$.
However, for large uncertainty sets (when $\rho$ or $\alpha^{-1}$ approach ${N}$) their method does not improve over the subgradient method.

Stronger complexity bounds are available under the weak smoothness assumption that each $\ell_i$ has $O\prn*{\eps^{-1}}$-Lipschitz gradient. In particular, 
We can apply Nesterov's accelerated gradient descent method \cite{nesterov2005smooth} on an entropy-regularized version of our objective  to solve the problem with complexity $\widetilde{O}\prn*{N\eps^{-1}}$; see \Cref{ssec:AGDonSoftmax} for more details. 

\paragraph{Our contribution}
We propose algorithms that solve the problems \eqref{eq:mainProblemGroupDRO} and~\eqref{eq:mainProblemfDiverDRO} with complexity $\widetilde{O}\prn*{N\eps^{-2/3}+\eps^{-2}}$. 
Compared to previous works, we obtain better convergence rates for DRO with general  $f$-divergence when $N \gg 1$ and for Group DRO when $\ngroups \gg N \eps^{4/3}$.
When the losses have $O\prn{\epsilon^{-1}}$-Lipschitz gradient, we solve $f$-divergence DRO  with complexity $\widetilde{O}\prn[\big]{N\eps^{-2/3}+\sqrt{N}\eps^{-1}}$, 
and, under an even weaker mean-square smoothness assumption ($\E_{j \sim w_i}\norm{\nabla \ell_j(x) - \nabla \ell_j(y)}^2 \le O(\epsilon^{-2})\norm{x-y}^2$ for all $x,y$ and $i$), we solve Group DRO with complexity $\widetilde{O}\prn*{N\eps^{-2/3}+N^{3/4}\eps^{-1}}$. 

Our complexity bounds are independent of the structure of $f$ and $\{w_i\}$, allowing us to consider arbitrarily  $f$-divergence balls and support a large number of (potentially overlapping) groups.
Our rates are optimal up to logarithmic factors for the special case of minimizing $\max_{i\in [N]}\ell_i(x)$, which corresponds to Group DRO with $N$ distinct groups and $f$-divergence DRO with $f=0$ \cite{woodworth2016tight,zhou2019lower,carmon2021thinking}. 

\paragraph{Our approach}
Our algorithms are based on a technique for accelerating optimization with a \emph{ball optimization oracle}, introduced by  \citet{carmon2020acceleration} and further developed in~\cite{carmon2021thinking,asi2021stochastic}.
Given a function $F$ and a query point $x$, the ball optimization oracle returns an approximate minimizer of $F$ inside a ball around $x$ with radius $r$; the works~\cite{carmon2020acceleration,carmon2021thinking,asi2021stochastic} show how to minimize $F$ using $\widetilde{O}(r^{-2/3})$ oracle calls. Our development consists of efficiently implementing ball oracles with radius $r=\widetilde{O}(\epsilon)$ for the DRO problems~\eqref{eq:mainProblemGroupDRO} and~\eqref{eq:mainProblemfDiverDRO}, leveraging the small ball constraint to apply stochastic gradient estimators that would have exponential variance and/or cost without it.

\citet{carmon2021thinking} previously executed this strategy for  minimizing the maximum loss, i.e., $\max_{q\in\Delta^N} \sum_i q_i \ell_i(x)$, which is a special case of both Group DRO and $f$-divergence DRO. However, the ball-oracle implementations of~\cite{carmon2021thinking} do not directly apply to the DRO problems that we consider; our oracle implementations differ significantly and intimately rely on the Group DRO and $f$-divergence problem structure. We now briefly review the main differences between our approach and~\cite{carmon2021thinking}, highlighting our key technical innovations along the way.

Since the Group DRO objective is $\max_{q\in\Delta^M} \sum_i q_i  \mc{L}_i(x)$ for $\mc{L}_i(x) = \sum_{j\in[N]}w_{ij}\ell_j(x)$, one may naively apply the technique of~\cite{carmon2021thinking} with $\mc{L}_i$ replacing $\ell_i$. However, every step of such a method would involve computing quantities of the form $e^{\mc{L}_i(x)/\epsilon'}$ (for some $\epsilon' = \widetilde{\Theta}(\epsilon)$), which can be up to $N$ times more expensive than computing $e^{\ell_{j}(x)/\epsilon'}$ for a single $j$. To avoid such expensive computation we use MLMC~\cite{blanchet2015unbiased} to obtain an unbiased estimate of $e^{\mc{L}_i(x)/\epsilon}$ with complexity $O(1)$ and appropriately bounded variance. 
In the weakly-smooth case we also adapt our estimator to facilitate variance reduction~\cite{johnson2013accelerating,allen2018katyusha}

For $f$-divergence, we mainly consider the Lagrangian form $\max_{q\in\Delta^N}\sum_{i\in[N]} \crl*{q_i \ell_i(x) - \psi(q_i)}$ for some convex $\psi$. 
\citet{carmon2021thinking} solved such a problem with $\psi(t)= \epsilon' t\log t$
in order to stabilize $q^\star(x) = \argmax_{q\in\Delta^N}\sum_{i\in[N]}\crl*{ q_i \ell_i(x) - \psi(q_i)}$. 
Their gradient estimation approach requires computing importance weighting correction terms of the form $q^\star_i(x)/q^\star_i(\bx)$ for reference point $\bx$ and a nearby point $x$.
For the special case of $\psi(t)= \epsilon' t\log t$, such correction terms can be computed with complexity $O(1)$. 
However, this is no longer true for general $\psi$: naively computing the importance weight requires computing $q^\star(x)$ and hence has complexity $N$. 

To address this challenge, we consider the well-known dual form \cite{ben2013robust,shapiro2017distributionally}
\begin{equation*}
	\max_{q\in\Delta^N}\sum_{i\in[N]} \crl*{q_i \ell_i(x) - \psi(q_i)}
	=
	\min_{y\in\R} \crl[\Bigg]{ \Upsilon(x,y)\defeq\sum_{i\in[N]} {\psi^*}(\ell_i(x)-y) + y}
\end{equation*}
where $\psi^*(v) = \max_{t\ge 0}\crl*{v t-\psi(t)}$ is the Fenchel dual of $\psi$. Several prior works consider applying stochastic gradient methods to minimize $\Upsilon$ over $x$ and $y$~\cite{namkoong2016stochastic,levy2020large,jin2021non}, but for general $\psi$ these techniques fail due to ${\psi^*}'$ being large and unstable~\cite{namkoong2016stochastic}. We tame this instability by sampling indices from $q^\star(\bar{x})$ and applying importance weighting corrections of the form ${\psi^*}'(\ell_i(x)-y)/q^\star(\bar{x})$. To show that these corrections are appropriately bounded we exploit the small domain size and also entropy-regularize $\psi$, i.e., replace it with $\psi_\epsilon(t) = \psi(t) + \epsilon' t \log t$. Underlying our result are two technical observations: (i) $\log ({\psi_\epsilon^*}'(\cdot))$ is $1/\epsilon'$-Lipschitz for all convex $\psi$, and (ii) for 1-Lipschitz losses, $y^\star(x) = \argmin_{y\in\R} \Upsilon(x,y)$ satisfies $|y^\star(x)-y^\star(x')| \le \norm{x-x'}$ for all $x,x'$. To the best of our knowledge, these observations are new and potentially of independent interest.

\subsection{Related work}
\paragraph{MLMC estimators}
The multilevel Monte Carlo (MLMC) technique was introduced by \citet{giles2008multilevel} and \citet{heinrich2001multilevel} 
in order to reduce the computational cost of Monte Carlo estimation of integrals. 
\citet{blanchet2015unbiased} extended this techniques to estimating functions of expectation and proposed several applications, including stochastic optimization~\cite{blanchet2019unbiased}.
In this work we use their estimator for two distinct purposes:  
(1) obtaining unbiased Moreau envelope gradient estimates for ball oracle acceleration as proposed by~\citet{asi2021stochastic}, and (2) estimating the 
the exponential of an expectation for Group DRO. \citet{levy2020large} also rely on MLMC for DRO, but quite differently than we do: they directly estimate the DRO objective gradient via MLMC, while we estimate different quantities.

\paragraph{Other DRO methods}
Several additional works proposed algorithms with theoretical guarantees for $f$-divergence DRO. 
\citet{jin2021non} considered non-convex and smooth losses. \citet{song2021coordinate} proposed an algorithm for linear models with complexity comparable to the ``AGD on softmax'' approach (\Cref{ssec:AGDonSoftmax}).  
 \citet{namkoong2016stochastic} proposed a primal-dual algorithm that is suitable for small uncertainty $\chi^2$ sets (with size $\rho \ll \frac{1}{N}$)
and \citet{curi2020adaptive} proposed a primal-dual algorithm specialized for CVaR.
Other works consider DRO with uncertainty sets defined using the Wasserstein distance \cite{gao2016distributionally,esfahani2018data,sinha2018certifiable, kent2021frank}. 
Another relevant line of works proposes refinements for DRO that address some of the challenges in applying it to learning problems \cite{zhai2021boosted, zhai2021doro, wang2022is}. 
\section{Preliminaries}\label{sec:prelim}
\newcommand{\kmax}{K_{\max}}

\paragraph{Notation}
We write $ \norm{\cdot}$ for the Euclidean norm. 
We denote by $ \ball_r(x_0)$ the Euclidean ball of radius $r$ around $x_0$.
We let $\Delta^n \defeq \crl{q \in \R^n_{\ge 0} \mid \ones^Tq = 1}$
denote the probability simplex in $\R^n$.
For the sequence $z_m,\ldots,z_n$ we use the shorthand $z_m^n$. Using $F$ as a generic placeholder (typically for a loss function $\ell_i$), we make frequent use of the following assumption.

\begin{assumption}\label{assumption:GlobalAssumption}
    The function $F: \xset \rightarrow \R$ is convex and and G-Lipschitz, i.e., for all $x,y\in\xset$ we have $\abs*{F(x) - F(y)} \le G\norm*{x-y}$. 
    In addition, the set $\xset$ has Euclidean diameter at most $R$.
\end{assumption}
\noindent
Throughout, $N$ denotes the number of losses and, in \Cref{sec:groupDRO}, $\ngroups$ denotes the number of groups. We use $\epsilon$ for our target accuracy and $\reps\defeq \epsilon' / G$ for the ball radius, where $\epsilon'=\epsilon / (2\log \ngroups)$ for Group-DRO (\Cref{sec:groupDRO}) and $\epsilon'=\epsilon / (2\log N)$ for $f$-divergence DRO (\Cref{sec:psiDivergence}).

\paragraph{Complexity model}
We measure an algorithm's complexity by its \emph{expected} number of $\ell_i$ and $\nabla \ell_i$ evaluations; bounds on expected evaluation number can be readily converted to more standard probability 1 bounds \cite[see][Appendix A.3]{asi2021stochastic}. Moreover if $\xset\subset \R^d$, $d=\Omega(\log N)$,\footnote{
	The assumption $d=\Omega(\log N)$ is only necessary for our results on $f$-divergence DRO (\Cref{sec:psiDivergence}), where the runtime of computing $\argmin_{y\in \R} \Upsilon(x,y)$ is $O(Nd + N\log N)$ due to the need to sort the losses.
} and the time to evaluate $\ell_i$ and $\grad \ell_i$ is $O(d)$,
the expected runtime of all the algorithms we consider is at most $d$ times the evaluation complexity.

\subsection{Ball oracle acceleration}
We now briefly summarize the complexity bounds given by the framework of~\cite{carmon2020acceleration,carmon2021thinking,asi2021stochastic} for accelerated minimization using queries to (inexact) ball optimization oracles, defined as follows.

\begin{definition}\label{def:BROO}
	An algorithm is a Ball Regularized Optimization Oracle of radius 
	$r$ ($r$-BROO) for function $F: \xset \to \R$ if for query point $\bx\in\xset$, regularization parameter $\lambda >0$ and desired accuracy $\delta>0$ it returns $\oracles(\bx)\in\xset$ satisfying
    \begin{equation}\label{eq:BROOapprox}
        \E\brk*{F(\oracles(\bx)) + \frac{\lambda}{2}\norm*{\oracles(\bx)- \bx}^2} \le \min_{x \in \ball_r(\bx)\cap \xset}\crl*{F(x)+\frac{\lambda}{2}\norm*{x-\bx}^2}+\frac{\lambda}{2}\delta^2.
    \end{equation}
\end{definition}

\newcommand{\meps}{m_\epsilon}
\newcommand{\lmin}{\lambda_{\mathrm{m}}}

\begin{restatable}{prop}{MainProp}\label{prop:MainProp}
    Let $F$ satisfy \Cref{assumption:GlobalAssumption}, let $\ncost$ be the complexity of evaluating $F$ exactly, and let $\broocost$ bound the complexity of an $r$-BROO query with $\delta, \lambda$. Assume that $\broocost$ is non-increasing in $\lambda$ and at most polynomial in $1/\delta$. For any $\epsilon>0$, \Cref{alg:acceleratedProxPoint} returns $x$ such that $F(x)-\min_{\xopt \in \xset}F(\xopt)\le \epsilon$ with probability at least $\half$. For $\meps = O\prn[\big]{\log \frac{GR^2}{\epsilon r}}$ and $\lmin = O\prn[\big]{\frac{\meps^2 \epsilon}{r^{4/3}R^{2/3}}}$, the complexity of the algorithm is
    \begin{equation}\label{eq:MLMCBROOguarantee}
    	O\prn*{
    		\prn*{\frac{R}{r}}^{2/3} \brk*{
    			 \prn*{\sum_{j=0}^{\meps}\frac{1}{2^j}\broocost[\frac{r}{ 2^{j/2}\meps^2}][\lmin]}\meps 
    			+
    			\prn*{ \broocost[r][\lmin] + \ncost }\meps^3, 
    		}
    	}.
    \end{equation}   
\end{restatable}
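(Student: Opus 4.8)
The plan is to realize \Cref{alg:acceleratedProxPoint} as an instance of the accelerated ball-oracle method of \citet{carmon2020acceleration,carmon2021thinking}, with the stochastic bias-reduced Moreau-envelope gradient estimator of \citet{asi2021stochastic} supplying the gradients, and then to book-keep the number of BROO and exact-evaluation queries. The outer loop is an accelerated proximal-point iteration maintaining the running solution $y_k$, a momentum point $z_k$, and scalars $A_k$: at step $k$ it forms a query point $\tilde x_k$ as a convex combination of $y_k$ and $z_k$ determined by $A_k$ and a regularization parameter $\lambda_{k+1}$, (approximately) computes the proximal step $y_{k+1}\approx\argmin_{x}\{F(x)+\tfrac{\lambda_{k+1}}{2}\norm{x-\tilde x_k}^2\}$, and updates $z_{k+1}$ using the implied gradient $\lambda_{k+1}(\tilde x_k-y_{k+1})=\nabla F_{\lambda_{k+1}}(\tilde x_k)$ of the Moreau envelope $F_{\lambda_{k+1}}$. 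The textbook potential argument (with potential $A_k(F(y_k)-F(\xopt))+\tfrac12\norm{z_k-\xopt}^2$) gives $F(y_K)-F(\xopt)\le R^2/(2A_K)$ up to controlled error; the crucial point of \citet{carmon2020acceleration} is that a bisection choosing $\lambda_{k+1}$ so that the proximal displacement has norm $\Theta(r)$ --- which is exactly what an $r$-radius oracle can produce --- forces $\lambda_{k+1}$ to shrink as the method progresses, and makes $A_K$ grow fast enough that $F(y_K)-F(\xopt)\le\epsilon$ after $K=O((R/r)^{2/3})$ iterations. Tracking this bisection also shows that by the time the $\epsilon$-suboptimality criterion is met, $\lambda_{k+1}$ has decreased only to $\Theta(\epsilon/(r^{4/3}R^{2/3}))$ up to the polylogarithmic factor $\meps^2$; this is the stated $\lmin$, so the method never queries the oracle with a smaller $\lambda$, and since $\broocost$ is non-increasing in $\lambda$ we may charge every query at $\lambda=\lmin$.

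Second, I would substitute the BROO for the idealized proximal step. Because $F$ is $G$-Lipschitz, the unconstrained proximal minimizer of $F(\cdot)+\tfrac{\lambda_{k+1}}{2}\norm{\cdot-\tilde x_k}^2$ lies within distance $G/\lambda_{k+1}\le r$ of $\tilde x_k$ whenever $\lambda_{k+1}\ge G/r$, so in the relevant range the ball constraint of \Cref{def:BROO} is inactive and the BROO is a genuine (inexact, randomized) proximal oracle; the bisection that selects $\lambda_{k+1}$ makes $O(\meps)$ oracle queries at accuracy $\Theta(r)$ together with $O(\meps)$ exact evaluations of $F$ to test the acceptance condition, and --- since the oracle only satisfies its guarantee in expectation --- each such query is boosted by $O(\meps^2)$ repetitions so that the bisection's comparisons are reliable; likewise, verifying outer-loop progress consumes a further $O(\meps^3)$ exact $F$-evaluations per iteration. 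The accuracy $\delta$ of a query enters the potential argument additively, which is what dictates how small $\delta$ must be taken at each stage.

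Third --- and this is the step I expect to be the main obstacle --- I would handle the fact that feeding the biased displacement $\lambda_{k+1}(\tilde x_k-\oracles(\tilde x_k))$ directly into the acceleration would corrupt its analysis, as \Cref{def:BROO} controls only the expected suboptimality of $\oracles$ and not the bias of the induced gradient. Here I would use the randomly-truncated multilevel estimator of \citet{asi2021stochastic}: call the oracle at the geometric accuracy ladder $\delta_j=r/(2^{j/2}\meps^2)$, $j=0,\dots,\meps$, with level $j$ included with probability $2^{-j}$, and combine the outputs into an estimator of $\nabla F_{\lambda_{k+1}}(\tilde x_k)$ whose bias is of order $2^{-\meps}$ (absorbed into $\epsilon$) and whose variance is small enough that averaging $O(\meps)$ independent copies suffices for the accelerated convergence to go through. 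Using that $\broocost$ is non-increasing in $\lambda$ (so we fix $\lambda=\lmin$ throughout) and at most polynomial in $1/\delta$ (so the geometric tail converges), one copy costs $O\big(\sum_{j=0}^{\meps}2^{-j}\broocost[\delta_j][\lmin]\big)$ in expectation, which --- multiplied by the $O(\meps)$ copies --- is precisely the first bracketed term of \eqref{eq:MLMCBROOguarantee}; the coarsest oracle level (accuracy $\Theta(r)$) used in the bisection and its boosting, together with the exact $F$-evaluations above, contribute the $(\broocost[r][\lmin]+\ncost)\meps^3$ term. Multiplying the per-iteration cost by the $O((R/r)^{2/3})$ outer iterations, using linearity of expectation for the random multilevel costs, and a union bound over the $O(1)$ (after boosting) failure events to get success probability at least $\tfrac12$, yields \eqref{eq:MLMCBROOguarantee}. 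The delicate parts are calibrating the three ``polylog-in-$\meps$'' exponents and the accuracy ladder so that the accumulated corruption of the potential stays below $\epsilon$ while the stated closed form is recovered, and verifying that the $\lambda$-bisection always terminates within its budget with a parameter in $[\lmin,\,O(G/r)]$.
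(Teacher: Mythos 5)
Your high-level structure is right: Monteiro–Svaiter-type acceleration driving an (approximate) proximal-point iteration, a bisection on $\lambda$ so the proximal step has displacement $\Theta(r)$ (hence implementable by an $r$-BROO), and a randomized multilevel estimator to de-bias the Moreau-envelope gradient. This is indeed the architecture of \Cref{alg:acceleratedProxPoint,alg:gradEst}. But there is a genuine accounting error in how you arrive at the exact expression~\eqref{eq:MLMCBROOguarantee}, and it hides the one modification that the paper actually makes to the scheme of \citet{asi2021stochastic}.

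You state that the outer loop runs for $K=O\!\left((R/r)^{2/3}\right)$ iterations and that each MLMC gradient estimate requires averaging $O(\meps)$ independent copies. Both of these are off by a polylogarithmic factor, and they happen to cancel. In the paper's analysis (following \cite[Proposition 2]{asi2021stochastic}, see \Cref{lem:lemma6}), the outer iteration cap is $\kmax = O\!\left((R/r)^{2/3}\meps\right)$ --- the extra $\meps$ is needed for the optional-stopping/termination argument in the supermartingale bound and cannot be dropped. On the other side, the paper's $\gradest$ (\Cref{alg:gradEst}) does \emph{not} average copies: it samples a single geometric level $J$ and issues a single pair of BROO calls at accuracy $\delta_J = \Theta\!\left(r/(2^{J/2}\meps^2)\right)$, i.e., already tightened by the $\meps^2$ factor so that \emph{one} sample achieves the required variance $\sigma_k^2 = \lambda_k^2 r^2/(900\,\meps^3)$ (see \Cref{lem:GradEstBiasVarianceBounds}). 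The paper is explicit that this replaces the original ``average $\widetilde O(G^2/\sigma^2)$ coarse de-biased estimates'' of \citet{asi2021stochastic} by ``de-bias one accurate BROO call''; this substitution is exactly what shaves the logarithmic factor and produces the clean $\meps$ (not $\meps^2$) multiplier on the first bracketed term of~\eqref{eq:MLMCBROOguarantee}. If you use the tight accuracy ladder $\delta_j = r/(2^{j/2}\meps^2)$ (as you do) \emph{and} average $O(\meps)$ copies (as you also do) \emph{and} run $\kmax = (R/r)^{2/3}\meps$ outer iterations, the first term comes out as $(R/r)^{2/3}\meps^2\sum_j 2^{-j}\broocost[r/(2^{j/2}\meps^2)][\lmin]$ --- a factor $\meps$ too large. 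Your two errors cancel and recover the right-looking formula, but neither half of the cancellation is justified.

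Two further points. First, you wave at the ``textbook potential argument'' and claim the $\lmin$ value falls out of ``tracking the bisection,'' but the specific form $\lmin = O(\meps^2\epsilon/(r^{4/3}R^{2/3}))$ is not automatic --- the paper must \emph{re-prove} the supermartingale bound of \cite[Lemma 6]{asi2021stochastic} with its own (different) error parameters $\varphi_k = \lambda_k r^2/(900\log^3(GR^2/(\epsilon r)))$ and $\sigma_k^2 = \lambda_k^2 r^2/(900\log^3(\cdot))$; that is what \Cref{lem:lemma6} does, and it is where $\lmin$ and $A_{\max}$ are pinned down. Second, you say the bisection queries are ``boosted by $O(\meps^2)$ repetitions''; in the paper's \Cref{lem:highProbBROO}, boosting a BROO to failure probability $p = 1/(6\,\kmax K_{\textup{bisect}})$ needs only $\log(1/p) = O(\meps)$ repetitions per high-probability call, and there are $O(\meps)$ bisection steps, giving $O(\meps^2)$ BROO-plus-exact-evaluation calls per outer iteration. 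Combined with $\kmax = (R/r)^{2/3}\meps$ this yields the $\meps^3(\broocost[r][\lmin]+\ncost)$ term; your $\meps^3$-per-iteration accounting, paired with $K=(R/r)^{2/3}$, again produces the same final product via a compensating over/under-count.
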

Informally, the proposition shows that $\widetilde{O}((R/r)^{2/3})$ BROO calls with $\lambda = \widetilde{\Omega}(\epsilon / (r^{3/4} R^{2/3}))$ and accuracy $\delta=\widetilde{O}(r)$ suffice to find an $\epsilon$-accurate solution. As we show in the sequel, for $\broocost = \widetilde{O}\prn[\big]{N+(\frac{G}{\lambda\delta})^2}$ the resulting complexity bound is $\widetilde{O}\prn[\big]{N (\frac{GR}{\epsilon})^{2/3} + (\frac{GR}{\epsilon})^2}$. 
The summation over $j$ in bound~\eqref{eq:MLMCBROOguarantee} stems from the use of MLMC to de-bias the BROO output (i.e., make it exact in expectation): compared to the original proposal of~\citet{asi2021stochastic}, our version of the procedure in \Cref{sec:Preliminariesproofs} slightly alters this MLMC scheme by de-biasing one accurate BROO call instead of averaging many inaccurate de-biased calls, improving our bounds by logarithmic factors.

\newcommand{\gammahat}{\what{\gamma}}
\newcommand{\dhat}{\what{\mathcal{D}}}
\newcommandx{\gammamlmc}[1][1=x, usedefault]{\what{\mathcal{M}}\brk*{\gamma_i(#1)}}

\section{Group DRO}\label{sec:groupDRO}
In this section we develop our BROO implementations for the Group DRO objective \eqref{eq:mainProblemGroupDRO}. 
In \Cref{ssec:Exponentiated-softmax} we describe an ``exponentiated group-softmax'' function that approximates  $\groupObjective$ with additive error at most $\eps/2$. 
We then apply stochastic gradient methods on this function to obtain \BOO  implementations that yield improved rates for Group DRO via~\Cref{prop:MainProp}: we first consider the non-smooth case in \Cref{ssec:Gradient-estimator} and then 
 the weakly-smooth case in \Cref{ssec:Implementation of accelerated variance reduction}. 
\subsection{Exponentiated group-softmax}\label{ssec:Exponentiated-softmax}
Given a cheap and unbiased stochastic gradient estimator of $\nabla \groupObjective$, we could use a variant of SGD and minimize $\groupObjective$ to $\eps$-suboptimal solution using $O(\eps^{-2})$ steps. However, obtaining an unbiased estimator is challenging due to the maximum operator in $\groupObjective$. 
We first describe a straightforward extension of the exponentiated softmax of \citet{carmon2021thinking} that approximates $\groupObjective$ and has the form of a weighted finite sum which is more amenable for stochastic optimization.
For target accuracy $\epsilon$, regularization parameter $\lambda\ge 0$, center point $\bx\in\xset$ and  $\epsilon' = \epsilon / (2 \log M) >0$,
the (regularized) group-softmax function is
\begin{align}\label{eq:SoftMax}
        \LsmReg(x) \defeq \epsilon' \log \prn*{\sum_{i \in [\ngroups]}e^{\frac{\mc{L}_i(x)}{ \eps'} }}+\frac{\lambda}{2}\norm{x-\bx}^2 \text{~~where~~} \mc{L}_i(x) = \sum_{j \in [N]}w_{ij}\ell_{j}(x).
\end{align}
We will implement a \BOO for $\Lsm \defeq \LsmReg[\epsilon,0]$, which is a uniform approximation of $\groupObjective$: $ \abs*{ \groupObjective(x)  - \Lsm(x)} \le \epsilon/2$ for all $x\in\xset$; see \Cref{ssec:ExpSoftMax-proof} for details. 

The (regularized) exponentiated group-softmax is
\begin{align}\label{eq:ExpSoftMax}
        \Gamma_{\epsilon, \lambda}(x) \defeq 
         \sum_{i \in [\ngroups]}\bp_i \gamma_i(x)
         \text{~~where~~} \gamma_i(x) = \epsilon' e^{\frac{\mc{L}_i(x) - \mc{L}_i(\bx)+\frac{\lambda}{2}\norm{x-\bx}^2}{\epsilon'}}
         \text{~~and~~}
         \bp_i = \frac{e^{\frac{\mc{L}_i(\bx)}{\epsilon'}}}{\sum_{i \in [\ngroups]}e^{\frac{\mc{L}_i(\bx)}{\epsilon'} }}.
\end{align}
In the following lemma we (easily) extend \citet[][Lemma 1]{carmon2021thinking} to exponentiated group-softmax, showing that $\Gamma_{\epsilon, \lambda}$ is well-behaved inside a ball of (appropriately small) radius $r$ around $\bx$ and  facilitates minimizing $\LsmReg$ in that ball; see \Cref{ssec:ExpSoftMax-proof} for the proof. 
\begin{restatable}{lem}{SmGammaProperties}%
	\label{lem:SmGammaProperties}
	Let each $\ell_i$ satisfy \Cref{assumption:GlobalAssumption}, and consider the restriction of 
     $\LsmReg$~\eqref{eq:SoftMax} and  $\Gamma_{\eps, \lambda}$~\eqref{eq:ExpSoftMax} to $\ball_r(\bx)$. Then the functions have the same minimizer $x_\star \in \ball_r(\bx)$ and, 
    if $\lambda \le O\prn*{G/r}$ and $r \le O\prn*{\eps' / G}$, then (a)
    $\Gamma_{\epsilon,\lambda}$ is $\Omega(\lambda)$-strongly convex, (b) each $\gamma_i$ is
     $O(G)$-Lipschitz and (c) 
    for every $x \in \ball_r(\bx)$ we have
    $
\LsmReg(x) -\LsmReg(x_\star) \le O\prn*{\Gamma_{\epsilon,\lambda}(x) - \Gamma_{\epsilon,\lambda}(x_\star) }. 
$ 
\end{restatable}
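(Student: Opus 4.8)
The plan is to reduce everything to one algebraic identity. Substituting the definition of $\bp_i$ into $\Gamma_{\epsilon,\lambda}(x)=\sum_{i\in[\ngroups]}\bp_i\gamma_i(x)$ and cancelling the $e^{\mc{L}_i(\bx)/\epsilon'}$ factors gives
\[
  \Gamma_{\epsilon,\lambda}(x) \;=\; C\, e^{\LsmReg(x)/\epsilon'},
  \qquad
  C \;\defeq\; \frac{\epsilon'}{\sum_{k\in[\ngroups]} e^{\mc{L}_k(\bx)/\epsilon'}} \;>\;0 ,
\]
so that $\Gamma_{\epsilon,\lambda}$ is a fixed strictly increasing scalar transformation of $\LsmReg$. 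First I would record the a-priori bounds that make everything work: each $\mc{L}_i=\sum_j w_{ij}\ell_j$ is a convex combination of convex $G$-Lipschitz functions, hence convex and $G$-Lipschitz, and $\lse$ is $1$-Lipschitz in $\ell_\infty$; combined with $r\le O(\epsilon'/G)$ and $\lambda\le O(G/r)$ this yields, for all $x\in\ball_r(\bx)$, that $|\mc{L}_i(x)-\mc{L}_i(\bx)|\le Gr$ and $\tfrac{\lambda}{2}\|x-\bx\|^2\le O(Gr)$, so the exponent of each $\gamma_i$ divided by $\epsilon'$ lies in $[-O(1),O(1)]$, hence $\gamma_i(x)\in[\Omega(\epsilon'),O(\epsilon')]$ on the ball, and likewise $|\LsmReg(x)-\LsmReg(\bx)|\le O(Gr)$.

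From the identity, ``same minimizer'' is immediate: $t\mapsto Ce^{t/\epsilon'}$ is strictly increasing, so $\Gamma_{\epsilon,\lambda}$ and $\LsmReg$ share their argmin over $\ball_r(\bx)$ (a single point $x_\star$ when $\lambda>0$, since then $\LsmReg$ is $\lambda$-strongly convex). For part (c) I would use $e^{t}-1\ge t$: with $v=\LsmReg(x)\ge v_\star=\LsmReg(x_\star)$,
\[
  \Gamma_{\epsilon,\lambda}(x)-\Gamma_{\epsilon,\lambda}(x_\star)
  = Ce^{v_\star/\epsilon'}\bigl(e^{(v-v_\star)/\epsilon'}-1\bigr)
  \ge \tfrac{1}{\epsilon'}Ce^{v_\star/\epsilon'}(v-v_\star)
  = \tfrac{1}{\epsilon'}\Gamma_{\epsilon,\lambda}(x_\star)\bigl(\LsmReg(x)-\LsmReg(x_\star)\bigr),
\]
and $\Gamma_{\epsilon,\lambda}(x_\star)\ge\Omega(\epsilon')$ rearranges this into $\LsmReg(x)-\LsmReg(x_\star)\le O(1)\,(\Gamma_{\epsilon,\lambda}(x)-\Gamma_{\epsilon,\lambda}(x_\star))$. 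Part (b) is equally direct: $\gamma_i=\epsilon' e^{h_i/\epsilon'}$ with $h_i(x)=\mc{L}_i(x)-\mc{L}_i(\bx)+\tfrac{\lambda}{2}\|x-\bx\|^2$, which is $(G+\lambda r)=O(G)$-Lipschitz on $\ball_r(\bx)$ and satisfies $e^{h_i/\epsilon'}=O(1)$ there, so $|e^{a}-e^{b}|\le e^{\max(a,b)}|a-b|$ gives $|\gamma_i(x)-\gamma_i(y)|\le e^{\max(h_i(x),h_i(y))/\epsilon'}|h_i(x)-h_i(y)|=O(G)\|x-y\|$.

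The one genuinely delicate point is part (a). For twice-differentiable losses it is a one-liner, $\nabla^2\Gamma_{\epsilon,\lambda}\succeq\tfrac{1}{\epsilon'}\Gamma_{\epsilon,\lambda}\nabla^2\LsmReg\succeq\tfrac{\lambda}{\epsilon'}\Gamma_{\epsilon,\lambda}I\succeq\Omega(\lambda)I$ using the identity, convexity of $\exp$, $\nabla^2\LsmReg\succeq\lambda I$, and $\Gamma_{\epsilon,\lambda}\ge\Omega(\epsilon')$; but without smoothness I would instead verify the strong-convexity chord inequality by hand. Write $\LsmReg(x)/\epsilon'=\LsmReg(\bx)/\epsilon'+\tilde h(x)$, so $\tilde h$ is $(\lambda/\epsilon')$-strongly convex with $\tilde h(\bx)=0$ and $|\tilde h|\le O(1)$ on $\ball_r(\bx)$, and $\Gamma_{\epsilon,\lambda}(x)=\epsilon' e^{\tilde h(x)}$ (using $\Gamma_{\epsilon,\lambda}(\bx)=\epsilon'$). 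For $z=tx+(1-t)y$ in the ball, strong convexity of $\tilde h$ gives $\tilde h(z)\le t\tilde h(x)+(1-t)\tilde h(y)-s$ with $s=\tfrac{\lambda}{2\epsilon'}t(1-t)\|x-y\|^2$, and the hypotheses force $s\le O(1)$ (indeed $\lambda r^2/\epsilon'=(\lambda r)(r/\epsilon')\le O(G)\cdot O(1/G)=O(1)$), hence $s\le 1$ after shrinking the implied constants. Exponentiating, then using convexity of $\exp$, then $e^{-s}\le 1-s/2$ for $s\in[0,1]$, and finally $te^{\tilde h(x)}+(1-t)e^{\tilde h(y)}\ge e^{-O(1)}=\Omega(1)$, I obtain $\Gamma_{\epsilon,\lambda}(z)\le t\Gamma_{\epsilon,\lambda}(x)+(1-t)\Gamma_{\epsilon,\lambda}(y)-\Omega(\lambda)\,t(1-t)\|x-y\|^2$, i.e.\ $\Omega(\lambda)$-strong convexity (one can equivalently run this argument on each $\gamma_i$ and average). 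The only real work is bookkeeping the implied constants so that ``$\lambda\le O(G/r)$, $r\le O(\epsilon'/G)$'' are jointly consistent with $s\le 1$ and with the $\Omega(\cdot)/O(\cdot)$ bounds appearing in (a)--(c).
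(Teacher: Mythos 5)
Your proof is correct, and it takes a genuinely different route from the paper. The paper's proof of this lemma is a one-paragraph reduction: it observes that \citet[Lemma~1]{carmon2021thinking} proves exactly (a)--(c) for the exponentiated softmax built from arbitrary $G$-Lipschitz functions $l_1,\dots,l_\ngroups$, and then substitutes $l_i = \mc{L}_i$, noting that each group average $\mc{L}_i = \sum_j w_{ij}\ell_j$ is itself convex and $G$-Lipschitz. In contrast, you give a self-contained derivation organized around the scalar identity $\Gamma_{\epsilon,\lambda}(x) = C\,e^{\LsmReg(x)/\epsilon'}$ with $C = \epsilon'/\sum_k e^{\mc{L}_k(\bx)/\epsilon'}$, which I verified is correct (the $e^{\mc{L}_i(\bx)/\epsilon'}$ factors cancel and the exponential of the log-sum-exp reassembles the sum). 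From that identity, the ``same minimizer'' claim and part~(c) are immediate scalar facts, part~(b) is a direct Lipschitz calculation on the ball, and part~(a) is handled by a careful chord-inequality argument (exploiting $\lambda r^2/\epsilon'\le O(1)$ so that $e^{-s}\le 1-s/2$, and $te^{\tilde h(x)}+(1-t)e^{\tilde h(y)}\ge\Omega(1)$), which avoids any second-differentiability assumption on the losses. The tradeoff is the usual one: the paper's reduction is shorter and keeps the exposition minimal, but it leaves the reader to unfold the cited lemma; your version is longer but fully explicit and, in particular, gives a clean non-smooth proof of strong convexity that could be reused elsewhere. The constant bookkeeping (exponent magnitudes $O(1)$ on the ball, $s\le 1$, lower bound $\Gamma_{\epsilon,\lambda}(x_\star)\ge\Omega(\epsilon')$) all checks out under $r\le O(\epsilon'/G)$ and $\lambda\le O(G/r)$.
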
 

\subsection{\BOO implementation for Group DRO  non-smooth losses}\label{ssec:Gradient-estimator}
To motivate our \BOO implementation, let us review how~\cite{carmon2021thinking} use the exponentiated softmax in the special case of size-1 groups, i.e., $\mc{L}_i = \ell_i$, and explain the difficulty that their approach faces when the group structure is introduced. 
The \BOO implementation in~\cite{carmon2021thinking} is based on SGD variant with the stochastic gradient estimator  $\ghat(x) = e^{\prn{\mc{L}_i(x)-\mc{L}_i(\bx)}/{\eps'}}\nabla \mc{L}_i(x)$ where $i \sim \bp_i$. 
However, for Group DRO where $\mc{L}_i = \sum_{j\in[N]} w_{ij} \ell_j$, the estimator $\ghat(x)$ can be up to $N$ times more expensive to compute.
Approximating $\ghat(x)$ by drawing $j,j' \sim w_i$  and taking  $e^{\prn{\ell_j(x)-\ell_j(\bx)}/{\eps'}}\nabla \ell_{j'}(x)$ will result in a biased estimator since 
$\E_{j \sim w_i} e^{\prn{\ell_j(x)-\ell_j(\bx)}/{\eps'}} \ne e^{\prn{\mc{L}_i(x)-\mc{L}_i(\bx)}/{\eps'}}$. 
To address this challenge we propose a new gradient estimator based on the multilevel Monte Carlo (MLMC) method \cite{blanchet2015unbiased}.

The MLMC unbiased estimator for  $\gamma_i(x) = \epsilon' e^{\prn{\mc{L}_i(x)-\mc{L}_i(\bx)}/{\eps'}}$, which we denote by $\gammamlmc$,
is defined as follows:
\[ 
\text{Draw~} J\sim \mathrm{Geom}\prn*{1-\tfrac{1}{\sqrt{8}}} \text{~,~} S_1, \ldots, S_n \overset{\textup{iid}}{\sim} w_i  \text{~and let~}  \gammamlmc \defeq   \gammahat(x; S_1) + \frac{\dhat_{2^J}}{p_J},
 \] 
where $p_j \defeq \P(J=j) = \prn*{1/\sqrt{8}}^j \prn*{1-\frac{1}{\sqrt{8}}}$ and, for $n \in 2\mathbb{N}$, we define 
\[ \dhat_n \defeq \gammahat(x; S_1^n) -  \frac{ \gammahat\prn*{x; S_1^{n/2}} + \gammahat\prn*{x; S_{n/2 + 1}^n}}{2}
\text{~and~}  \gammahat(x; S_1^n) \defeq  \eps' e^{\frac{1}{n}\sum_{j=1}^n \frac{\ell_{S_j}(x)- \ell_{S_j}(\bx)+\frac{\lambda}{2}\norm{x-\bx}^2}{\epsilon'}}. \]

With the MLMC estimator for $\gamma_i$ in hand, we estimate the gradient of $\Gamma_{\epsilon,\lambda}$ as follows:
\begin{flalign}\label{eq:mlmcGradEst}
    \text{Draw~} i \sim p(\bx) \text{~,~} j \sim w_i \text{~and set~}
      \hat{g}(x) = \frac{1}{\epsilon'}\gammamlmc \prn*{\nabla \ell_{j}(x)+\lambda\prn*{x-\bx} }.
\end{flalign}
In the following lemma we summarize the important properties of the MLMC and gradient estimators; 
see \Cref{ssec:MLMCbounds-proof} for the proof. 
\begin{restatable}{lem}{MLMCbounds}\label{lem:MLMCbounds}
	Let each $\ell_i$ satisfy \Cref{assumption:GlobalAssumption}, and let $r \le \frac{\eps'}{G}$ , $\lambda \le \frac{G}{r}$ and $x \in \ball_r(\bx)$. 
    Then $\gammamlmc$ and $\ghat(x)$ are unbiased for $\gamma_i(x)$ and $\grad \Gamma_{\epsilon, \lambda}(x)$, respectively, and have bounded second moments: $ \E \brk[\big]{\gammamlmc} ^ 2  \le  O\prn*{\frac{G^4\norm*{x-\bx}^4}{\epsilon'^2} + \eps'^2 }$
    and $\E\norm*{\ghat(x)}^2 \le O\prn*{G^2}$. In addition, the complexity of computing $\gammamlmc$ and $\ghat(x)$ is $O(1)$.
\end{restatable}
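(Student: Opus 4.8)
The plan is to analyze the scalar estimator $\gammamlmc$ first, for a fixed group $i$, and then read off the statements about $\ghat(x)$ by conditioning on the sampled group $i\sim\bp$. The structure is the standard MLMC one --- unbiasedness through a telescoping identity, and moment and cost bounds through the geometric decay of the per-level correction variance --- so the substance of the argument is establishing that decay with the dependence on $\|x-\bx\|$ and $\eps'$ claimed in the lemma. I would open with the elementary observation that under the hypotheses $\|x-\bx\|\le r\le\eps'/G$ and $\lambda r\le G$, each summand $Z_j := \eps'^{-1}\bigl(\ell_{S_j}(x)-\ell_{S_j}(\bx)+\tfrac\lambda2\|x-\bx\|^2\bigr)$ satisfies $|Z_j|\le\tfrac32$, because $|\ell_{S_j}(x)-\ell_{S_j}(\bx)|\le G\|x-\bx\|\le\eps'$ and $0\le\tfrac\lambda2\|x-\bx\|^2\le\tfrac G2\|x-\bx\|\le\tfrac{\eps'}2$. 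Hence every $\gammahat(x;S_1^n)=\eps' e^{\bar Z_n}$ with $\bar Z_n := n^{-1}\sum_{j\le n}Z_j$ lies in $[\eps' e^{-3/2},\eps' e^{3/2}]$, and $\Var(Z_1)=\eps'^{-2}\Var\bigl(\ell_{S_1}(x)-\ell_{S_1}(\bx)\bigr)\le G^2\|x-\bx\|^2/\eps'^2$ since the $\tfrac\lambda2\|x-\bx\|^2$ term contributes no variance.

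The core step is the variance decay of $\dhat_n$. Writing $W:=\gammahat(x;S_1^{n/2})$ and $W':=\gammahat(x;S_{n/2+1}^{n})$, which are i.i.d., the identity $\bar Z_n=\tfrac12(\bar Z^{(1)}_{n/2}+\bar Z^{(2)}_{n/2})$ gives $\gammahat(x;S_1^n)=\sqrt{WW'}$, so by AM--GM $\dhat_n=\sqrt{WW'}-\tfrac12(W+W')=-\tfrac12(\sqrt W-\sqrt{W'})^2\le 0$ and
\[
\dhat_n^2=\tfrac{\eps'^2}{4}\bigl(e^{U}-e^{U'}\bigr)^4,\qquad U:=\tfrac12\bar Z^{(1)}_{n/2},\ \ U':=\tfrac12\bar Z^{(2)}_{n/2},\ \ |U|,|U'|\le\tfrac34 .
\]
The mean value theorem gives $|e^{U}-e^{U'}|\le e^{3/4}|U-U'|$, and $U-U'=n^{-1}\sum_{j\le n/2}D_j$ with $D_j:=Z^{(1)}_j-Z^{(2)}_j$ i.i.d., mean zero, $|D_j|\le 3$; the fourth-moment formula for i.i.d.\ centered sums together with $\E[D_j^4]\le 9\,\E[D_j^2]$ and $\E[D_j^2]=2\Var(Z_1)$ then yields $\E[(U-U')^4]\le 9\Var(Z_1)/n^3+3\Var(Z_1)^2/n^2$, and therefore
\[
\E[\dhat_n^2]\le O\!\left(\frac{G^2\|x-\bx\|^2}{n^3}+\frac{G^4\|x-\bx\|^4}{\eps'^2\,n^2}\right).
\]
I expect this to be the main obstacle, and the key point is the perfect-square structure $\dhat_n=-\tfrac12(\sqrt W-\sqrt{W'})^2$: it turns the correction into a fourth power of a difference of sample means, which is what produces the $1/n^2$ decay against which the choice $J\sim\mathrm{Geom}(1-1/\sqrt8)$ is tuned (optimal level sampling for variance decaying like $n^{-2}$ and cost growing like $n$); a crude bound gives only $|\dhat_n|=O(\eps')$, for which $\sum_j\E[\dhat_{2^j}^2]/p_j$ diverges.

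Given the decay bound, unbiasedness and the second moment of $\gammamlmc$ follow routinely. With $A_n:=\E[\gammahat(x;S_1^n)]$ one has $\E[\dhat_n]=A_n-A_{n/2}$, while $A_n\to\eps' e^{\mu}=\gamma_i(x)$ by bounded convergence, where $\mu:=\E[Z_1]=\eps'^{-1}\bigl(\mc L_i(x)-\mc L_i(\bx)+\tfrac\lambda2\|x-\bx\|^2\bigr)$; since $|A_{2^j}-A_{2^{j-1}}|\le(\E[\dhat_{2^j}^2])^{1/2}$ decays geometrically, the telescoping sum $\E[\gammamlmc]=A_1+\sum_{j\ge1}(A_{2^j}-A_{2^{j-1}})=\gamma_i(x)$ converges absolutely. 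For the second moment, $(a+b)^2\le2a^2+2b^2$ gives $\E[(\gammamlmc)^2]\le 2\,\E[\gammahat(x;S_1)^2]+2\sum_{j\ge1}\E[\dhat_{2^j}^2]/p_j$; the first term is $O(\eps'^2)$ by the boundedness step, and dividing each of the two pieces of the decay bound by $p_j=(1/\sqrt8)^j(1-1/\sqrt8)$ produces convergent geometric series (ratios $1/(2\sqrt2)$ and $1/\sqrt2$), summing to $O(G^2\|x-\bx\|^2+G^4\|x-\bx\|^4/\eps'^2)=O(\eps'^2+G^4\|x-\bx\|^4/\eps'^2)$, where I used $G\|x-\bx\|\le\eps'$; this is the claimed bound. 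The cost claim is immediate: computing $\gammamlmc$ (and hence $\ghat$) uses $O(2^J)$ evaluations of $\ell_i$ and one of $\nabla\ell_i$, and $\E[2^J]=(1-1/\sqrt8)\sum_{j\ge1}(1/\sqrt2)^j<\infty$.

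Finally, I would condition on $i\sim\bp$, using that given $i$ the index $j\sim w_i$ is drawn independently of the samples defining $\gammamlmc$. Since $\nabla\gamma_i(x)=\eps'^{-1}\gamma_i(x)\bigl(\nabla\mc L_i(x)+\lambda(x-\bx)\bigr)$ and $\E[\nabla\ell_j(x)\mid i]=\nabla\mc L_i(x)$, conditional independence gives $\E[\ghat(x)\mid i]=\nabla\gamma_i(x)$, and averaging over the finite family of groups (which commutes with $\nabla$) yields $\E[\ghat(x)]=\nabla\Gamma_{\eps,\lambda}(x)$. For the second moment, $\|\nabla\ell_j(x)+\lambda(x-\bx)\|\le G+\lambda r\le 2G$ holds deterministically, so $\E\|\ghat(x)\|^2\le 4G^2\eps'^{-2}\,\E[(\gammamlmc)^2]\le 4G^2\eps'^{-2}\cdot O(\eps'^2+G^4\|x-\bx\|^4/\eps'^2)$, which is $O(G^2)$ because $G^4\|x-\bx\|^4/\eps'^4\le1$; this completes the argument.
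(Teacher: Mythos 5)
Your proof is correct and follows essentially the same route as the paper's: you reduce $\E[\dhat_n^2]$ to a fourth moment of the difference of two half-sample averages, apply the fourth-moment formula for i.i.d.\ centered sums to obtain the $n^{-2}$ decay, and then use the geometric level distribution, the telescoping-sum unbiasedness argument, and the conditioning-on-$i$ step identically to the paper. The only difference is cosmetic: your AM--GM identity $\dhat_n=-\tfrac12\bigl(\sqrt W-\sqrt{W'}\bigr)^2$ together with the mean-value theorem on $e^{U}-e^{U'}$ packages the same calculation that the paper writes as $|\dhat_n|\le\eps' e^{M}(\cosh\delta-1)\le 2e^{2.5}\eps'\delta^2$.
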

Due to \Cref{lem:MLMCbounds} and since 
$\Gamma_{\eps, \lambda}$ is $\Omega(\lambda)$-strongly convex, we can use the Epoch-SGD algorithm of \citet{hazan2014beyond} with our gradient estimator \eqref{eq:mlmcGradEst}. 
This algorithm has rate of convergence $O\prn*{G^2 / (\lambda T)}$ and our gradient estimator requires additional $N$ function evaluations for precomputing the sampling probabilities $\crl*{\bp_i}$. 
We thus arrive at the following complexity bound. 
\begin{restatable}{theorem}{BrooComplexGoupDRO}\label{thm:BrooComplexGoupDRO}
    Let each $\ell_j$ satisfy \Cref{assumption:GlobalAssumption}, let $\epsilon, \delta , \lambda >0$
    and let $\reps = \epsilon /(2G\log M)$. For any query point $\bx\in \R^d$, regularization strength $\lambda \le O(G / \reps)$ and accuracy $\delta$, 
    EpochSGD \cite[Algorithm 1]{hazan2014beyond}) with the gradient estimator \eqref{eq:mlmcGradEst} outputs a valid $\reps$-BROO
    response and has complexity $   \broocost = O\prn[\big]{N+ \frac{G^2}{\lambda^2 \delta^2}}$.
    Consequently, the complexity of finding an $\epsilon$-suboptimal minimizer of $\groupObjective$ \eqref{eq:mainProblemGroupDRO} with probability at least $\half$ is 
	\begin{equation*}
		O\prn*{N \prn*{\frac{GR}{\epsilon} }^{2/3} \log^{11/3} \innerLog+  \prn*{\frac{GR}{\epsilon}}^2\log^{2} \innerLog } \text{~~where~~} \innerLog \defeq M\frac{GR}{\eps}.
	\end{equation*}
\end{restatable}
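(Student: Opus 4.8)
The plan is to split the theorem into its two asserted parts: that the Epoch-SGD subroutine realizes a valid $\reps$-BROO for the group-softmax $\Lsm$ with cost $\broocost=O(N+G^2/(\lambda^2\delta^2))$, and that feeding this cost into \Cref{prop:MainProp} produces the stated end-to-end complexity. As a preliminary reduction I would check that $\Lsm$ satisfies \Cref{assumption:GlobalAssumption}: it is convex, and its gradient is a convex combination of the gradients $\nabla\mc{L}_i=\sum_{j}w_{ij}\nabla\ell_j$, each of norm at most $G$, so it is $G$-Lipschitz. Moreover $\groupObjective\le\Lsm\le\groupObjective+\epsilon/2$ by the standard log-sum-exp sandwich with $\epsilon'=\epsilon/(2\log M)$; hence running \Cref{prop:MainProp} on $F=\Lsm$ with ball radius $\reps=\epsilon'/G$ and accuracy $\epsilon/2$ returns, with probability at least $\half$, a point that is $\epsilon$-suboptimal for $\groupObjective$. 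It then remains to supply the BROO required by \Cref{def:BROO}.

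First I would argue validity of the BROO. Fix a query $\bx$, a regularization strength $\lambda\le O(G/\reps)$, and an accuracy $\delta$; a valid response must be, in expectation, a $\frac{\lambda}{2}\delta^2$-approximate minimizer of $\LsmReg(x)=\Lsm(x)+\frac{\lambda}{2}\norm{x-\bx}^2$ over $\ball_{\reps}(\bx)\cap\xset$. By \Cref{lem:SmGammaProperties}, over this ball $\LsmReg$ and the exponentiated group-softmax $\Gamma_{\epsilon,\lambda}$ share a minimizer $x_\star$, the latter is $\Omega(\lambda)$-strongly convex, and $\LsmReg(x)-\LsmReg(x_\star)\le O\prn*{\Gamma_{\epsilon,\lambda}(x)-\Gamma_{\epsilon,\lambda}(x_\star)}$ pointwise on the ball; by \Cref{lem:MLMCbounds}, the estimator $\ghat$ of~\eqref{eq:mlmcGradEst} is unbiased for $\nabla\Gamma_{\epsilon,\lambda}$, has second moment $O(G^2)$, and costs $O(1)$ once the sampling weights $\{\bp_i\}$ — hence all $\ell_j(\bx)$ — are precomputed at cost $O(N)$. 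Running the Epoch-SGD method of \citet{hazan2014beyond} on $\Gamma_{\epsilon,\lambda}$ with projections onto $\ball_{\reps}(\bx)\cap\xset$ gives $\E\brk*{\Gamma_{\epsilon,\lambda}(x_T)-\Gamma_{\epsilon,\lambda}(x_\star)}=O(G^2/(\lambda T))$; composing with the transfer inequality, $T=\Theta(G^2/(\lambda^2\delta^2))$ steps of $O(1)$ cost suffice, so $\broocost=O(N+G^2/(\lambda^2\delta^2))$, which is non-increasing in $\lambda$ and polynomial in $1/\delta$ as \Cref{prop:MainProp} requires, with $\ncost=O(N)$.

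Next I would substitute into~\eqref{eq:MLMCBROOguarantee} with $r=\reps=\Theta(\epsilon/(G\log M))$. Then $\meps=O\prn*{\log\tfrac{GR^2}{\epsilon\reps}}=O\prn*{\log\tfrac{G^2R^2\log M}{\epsilon^2}}=O(\log\innerLog)$ and $\lmin=\Theta(\meps^2\epsilon/(\reps^{4/3}R^{2/3}))$, so $G^2/(\lmin^2\reps^2)=\Theta(G^2\reps^{2/3}R^{4/3}/(\meps^4\epsilon^2))$. Using $\broocost[\delta][\lmin]=O(N+G^2/(\lmin^2\delta^2))$ with $\delta_j=\reps/(2^{j/2}\meps^2)$, the geometric sum over $j$ collapses to $O(N+G^2\meps^5/(\lmin^2\reps^2))$; after the outer $\meps$ factor and adding $(\broocost[\reps][\lmin]+\ncost)\meps^3$, the bracket in~\eqref{eq:MLMCBROOguarantee} is $O(N\meps^3+G^2\meps^6/(\lmin^2\reps^2))$. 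Multiplying by the prefactor $(R/\reps)^{2/3}$, the $G^2$ term becomes $\Theta(G^2R^2\meps^2/\epsilon^2)=O((GR/\epsilon)^2\log^2\innerLog)$ and the $N$ term becomes $\Theta(N(R/\reps)^{2/3}\meps^3)=O(N(GR/\epsilon)^{2/3}\log^{11/3}\innerLog)$, using $\meps=O(\log\innerLog)$ and $\log M\le\log\innerLog$. This is exactly the claimed bound.

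I expect the main obstacle to be precondition bookkeeping rather than any new idea: one must verify that the regularization level $\lmin$ dictated by \Cref{prop:MainProp} satisfies $\lmin\le O(G/\reps)$, so that \Cref{lem:SmGammaProperties,lem:MLMCbounds} apply at $\lambda=\lmin$. Unwinding definitions, this amounts to $\meps^2(\log M)^{1/3}(\epsilon/(GR))^{2/3}=O(1)$, which holds in the regime $\epsilon\lesssim GR$ that may be assumed without loss of generality — if $\epsilon$ were larger, any fixed point of $\xset$ would already be $\epsilon$-optimal by $G$-Lipschitzness and diameter $R$. A secondary subtlety is that Epoch-SGD's guarantee is in expectation and concerns an iterate that lies in the projection set, which composes cleanly with the in-expectation BROO requirement and with the pointwise transfer inequality of \Cref{lem:SmGammaProperties}(c); the remaining geometric-sum manipulation, the polylogarithmic accounting, and the reconciliation of the constraint set $\ball_{\reps}(\bx)\cap\xset$ with the plain ball of \Cref{lem:SmGammaProperties} are routine.
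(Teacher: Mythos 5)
Your proposal follows essentially the same route as the paper's proof: establish BROO validity via \Cref{lem:SmGammaProperties} (strong convexity, transfer inequality) and \Cref{lem:MLMCbounds} (unbiased $O(G^2)$-second-moment estimator) combined with the Epoch-SGD rate, then feed $\broocost=O(N+G^2/(\lambda^2\delta^2))$ and $\ncost=O(N)$ into the bound of \Cref{prop:MainProp} with $\reps=\epsilon'/G$ and carry out the same substitution and polylogarithmic accounting. Your added observation that one should check $\lmin\le O(G/\reps)$ so the preconditions of \Cref{lem:SmGammaProperties,lem:MLMCbounds} hold at the regularization level produced by \Cref{prop:MainProp} (valid WLOG when $\epsilon\lesssim GR$) is a legitimate bookkeeping step the paper leaves implicit, but it does not change the argument.
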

We provide the proof for \Cref{thm:BrooComplexGoupDRO} in \Cref{ssec:GroupDROEpochSGD-proof}; the final complexity bound follows from straightforward calculations which we now briefly outline.
According to \Cref{prop:MainProp}, finding an $\frac{\eps}{2}$-suboptimal solution for $\Lsm$ (and consequently an $\epsilon$-suboptimal solution for $\groupObjective$)
involves $\widetilde{O}\prn[\big]{\prn*{R/\reps}^{2/3}}$ BROO 
calls with accuracy $\delta = \widetilde{\Omega}\prn*{\reps 2^{-J/2}}$ and regularization strength $\lambda \ge \lmin$, where $J=\min\{\textup{Geom}(\half), m\}$. We may therefore bound the complexity of each such call by 
\begin{equation*}
	\sum_{j=0}^m 2^{-j} \broocost[\reps 2^{-j/2}][\lmin] = 
	\sum_{j=0}^m 2^{-j} \widetilde{O} \prn*{ N + \frac{2^j G^2}{\lmin^2 \reps^2}  } \overeq{(\star)} \widetilde{O} \prn*{ N + \prn*{\frac{GR}{\epsilon}}^{2} \prn*{\frac{\reps}{R}}^{2/3}},
\end{equation*}
where $(\star)$ follows from substituting $\lmin = \widetilde{\Omega}\prn[\big]{\eps \reps^{-4/3}R^{-2/3}}$ and $m=\Otil{1}$. Multiplying this bound by $\widetilde{O}\prn[\big]{\prn*{R/\reps}^{2/3}}$ yields (up to polylogarithmic factors) the conclusion of \Cref{thm:BrooComplexGoupDRO}.

\subsection{Accelerated variance reduction for mean-square smooth  losses}\label{ssec:Implementation of accelerated variance reduction}
In this section we provide an algorithm with an improved rate of convergence under the following mean-square smoothness assumption.
\begin{assumption}\label{assumption:smooth}
    For all $x, x'  \in \ball_r(\bx)$ and $i\in [\ngroups]$, $\E_{j \sim w_i} \norm*{\nabla \ell_{j}(x) -\nabla \ell_{j}(x')}^2 \le L^2 \norm*{x-x'}^2$. 
\end{assumption}
Note that assuming $L$-Lipschitz gradient for each $\ell_i$ implies \Cref{assumption:smooth}, but not the other way around.
To take advantage of \Cref{assumption:smooth},
we first rewrite the function $\Gamma_{\eps, \lambda}(x)$ in a way that is more amenable to variance reduction:
\begin{align*}
   & \Gamma_{\eps, \lambda}(x) \defeq \sum_{i \in [\ngroups]} \mlmcConstant p_i(x')\gamma_i(x,x'),
\end{align*}
where  $\gamma_i(x,x') \defeq \eps'  e^{\frac{\mc{L}_i(x)-\mc{L}_i(x')+\frac{\lambda}{2}\norm*{x-\bx}^2}{\eps'}}$, $ \mlmcConstant=\prn[\Bigg]{\frac{\sum_{j \in [\ngroups]}e^{\frac{\mc{L}_j(x')}{\eps'}}}{\sum_{j \in [\ngroups]}e^{\frac{\mc{L}_j(\bx)}{\eps'}}}}$
and $p_i(x') \defeq \frac{e^{\frac{\mc{L}_i(x')}{\eps'}}}{\sum_{j \in [\ngroups]}e^{\frac{\mc{L}_j(x')}{\eps'}}}.$
(Note that $ \gamma_i(x,\bx) = \gamma_i(x)$).

Given a reference point $x'$, to compute a reduced-variance estimator of $\grad \Gamma_{\eps, \lambda}(x)$, 
we draw $ i \sim p_i(x') $ and $j \sim w_i$, and set:
\begin{flalign}\label{eq:SVRG}
    \hat{g}_{x'}(x) \defeq \nabla \Gamma_{\eps, \lambda}(x') 
    + \frac{\mlmcConstant}{\epsilon'}\brk*{\gammamlmc[x,x'] \nabla \ell^\lambda_{j}(x) - \gamma_i(x',x') \nabla \ell^\lambda_{j}(x')}
\end{flalign}
where $\nabla \ell^\lambda_{j}(x) \defeq \nabla \ell_{j}(x) + \lambda\prn*{x-\bx}$ and $\gammamlmc[x,x']$ is an MLMC estimator for $\gamma_i(x,x')$ defined analogously to $\gammamlmc$ (see details in \Cref{ssec:GroupDROSVRGproperties-proof}). 
The estimator \eqref{eq:SVRG} is not precisely standard SVRG \cite{johnson2013accelerating}
since we use $\gammamlmc[x,x']$ as an estimator for $\gamma_i(x,x')$. 
Simple calculations show that $ \E \hat{g}_{x'}(x) = \nabla  \Gamma_{\eps, \lambda}(x) $ and the following lemma shows that $ \hat{g}$ satisfies a type of variance bound conducive to variance-reduction schemes; see \Cref{ssec:GroupDROSVRGproperties-proof} for the proof. 
\begin{restatable}{lem}{SVRGvarianceBound}\label{lem:SVRGvarianceBound}
    Let each $\ell_{j}$ satisfy \Cref{assumption:smooth,assumption:GlobalAssumption}. For any $\lambda \le \frac{G}{r}$, $r = \frac{\epsilon'}{G}$ and $x,x' \in \ball_{r}(\bx)$, 
    the variance of $\hat{g}_{x'}(x)$ is bounded by $\Var\prn*{\ghat_{x'}(x)} \le O\prn*{\prn[\big]{L+\lambda + \frac{G^2}{\epsilon'}}^2\norm*{x-x'}^2}$.
\end{restatable}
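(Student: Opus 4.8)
My plan is to first note that $\nabla\Gamma_{\eps,\lambda}(x')$ is a constant given the reference point $x'$, so subtracting it leaves the variance unchanged, and then pass to a second moment:
\[
\Var\prn*{\ghat_{x'}(x)} = \Var\prn*{\ghat_{x'}(x) - \nabla\Gamma_{\eps,\lambda}(x')} \le \frac{\mlmcConstant^2}{\eps'^2}\,\E\norm*{\gammamlmc[x,x']\,\nabla\ell^\lambda_j(x) - \gamma_i(x',x')\,\nabla\ell^\lambda_j(x')}^2,
\]
where the equality follows from \eqref{eq:SVRG} and the inequality is $\Var(W)\le\E\norm{W}^2$. I would then split the vector inside the norm as $T_1 + T_2$, with $T_1 \defeq \gammamlmc[x,x']\prn*{\nabla\ell^\lambda_j(x) - \nabla\ell^\lambda_j(x')}$ and $T_2 \defeq \prn*{\gammamlmc[x,x'] - \gamma_i(x',x')}\nabla\ell^\lambda_j(x')$, so that $\Var(\ghat_{x'}(x)) \le \tfrac{2\mlmcConstant^2}{\eps'^2}(\E\norm{T_1}^2 + \E\norm{T_2}^2)$. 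Before bounding $T_1,T_2$ I would record the consequences of $r = \eps'/G$ and $\lambda \le G/r$: every exponent appearing in $\gamma_i(x,x')$, $\gamma_i(x',x')$ and $\mlmcConstant$ has absolute value $O(1)$ (since $|\mc{L}_i(x)-\mc{L}_i(x')|\le G\norm{x-x'}\le 2\eps'$ and $\tfrac\lambda2\norm{x-\bx}^2 \le \tfrac{Gr}{2} = \tfrac{\eps'}2$, and similarly at $x'$), hence $\gamma_i(x,x'),\gamma_i(x',x') = \Theta(\eps')$, $\mlmcConstant = \Theta(1)$, and $\norm{\nabla\ell^\lambda_j(y)} \le G + \lambda r \le 2G$ for every $j$ and $y\in\ball_r(\bx)$.

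\textbf{The term $T_1$.} Conditionally on the sampled group $i$, the index $j\sim w_i$ is drawn independently of the internal randomness of the MLMC estimator, so $\E\norm{T_1}^2 = \E_i\!\brk*{\E[\gammamlmc[x,x']^2\mid i]\cdot \E_{j\sim w_i}\norm{\nabla\ell^\lambda_j(x)-\nabla\ell^\lambda_j(x')}^2}$. The first factor equals $\gamma_i(x,x')^2 + \Var(\gammamlmc[x,x']\mid i) = O(\eps'^2)$ by the regime facts above, the MLMC variance bound established below, and $\norm{x-x'}\le 2\eps'/G$; the second factor is at most $2L^2\norm{x-x'}^2 + 2\lambda^2\norm{x-x'}^2$ by \Cref{assumption:smooth} and the identity $\nabla\ell^\lambda_j(x)-\nabla\ell^\lambda_j(x') = \nabla\ell_j(x)-\nabla\ell_j(x') + \lambda(x-x')$. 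Hence $\tfrac{2\mlmcConstant^2}{\eps'^2}\E\norm{T_1}^2 = O((L+\lambda)^2\norm{x-x'}^2)$.

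\textbf{The term $T_2$.} Since $\norm{\nabla\ell^\lambda_j(x')}^2 \le 4G^2$, it suffices to bound $\E[(\gammamlmc[x,x']-\gamma_i(x',x'))^2]$. I would write $\gammamlmc[x,x']-\gamma_i(x',x') = (\gammamlmc[x,x']-\gamma_i(x,x')) + (\gamma_i(x,x')-\gamma_i(x',x'))$; the cross term vanishes under $\E[\cdot\mid i]$ because $\E[\gammamlmc[x,x']\mid i]=\gamma_i(x,x')$, leaving $\Var(\gammamlmc[x,x']\mid i) + (\gamma_i(x,x')-\gamma_i(x',x'))^2$. The deterministic drift is $O(G^2\norm{x-x'}^2)$: both quantities are $\eps'$ times an exponential whose exponents differ by at most $\tfrac{G\norm{x-x'}}{\eps'} + \tfrac{\lambda r\norm{x-x'}}{\eps'} \le \tfrac{2G\norm{x-x'}}{\eps'}$ (using $|\norm{x-\bx}^2-\norm{x'-\bx}^2|\le 2r\norm{x-x'}$ and $\lambda r\le G$), and $t\mapsto e^t$ is $O(1)$-Lipschitz on the relevant range. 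The MLMC variance $\Var(\gammamlmc[x,x']\mid i)$ is likewise $O(G^2\norm{x-x'}^2)$ (next paragraph). Therefore $\E\norm{T_2}^2 = O(G^4\norm{x-x'}^2)$ and $\tfrac{2\mlmcConstant^2}{\eps'^2}\E\norm{T_2}^2 = O((G^2/\eps')^2\norm{x-x'}^2)$; adding the two contributions and absorbing constants into $(L+\lambda+G^2/\eps')^2$ yields the claim.

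\textbf{Main obstacle.} The one step needing genuine work (rather than bookkeeping with the small-ball inequalities) is the variance estimate $\Var(\gammamlmc[x,x']\mid i) = O(G^2\norm{x-x'}^2)$, which must \emph{vanish} as $x\to x'$ — unlike the plain second-moment bound of \Cref{lem:MLMCbounds}, which only gives $O(\eps'^2)$. I would derive it by the standard multilevel telescoping argument: writing $\gamma_i(x,x') = g(\E_{j\sim w_i}Z_j)$ with $g(z)\defeq\eps' e^{z}e^{\lambda\norm{x-\bx}^2/(2\eps')}$ and $Z_S\defeq(\ell_S(x)-\ell_S(x'))/\eps'$, the per-sample deviation obeys $\Var_{j\sim w_i}(Z_j)\le G^2\norm{x-x'}^2/\eps'^2$; the base term contributes $\Var(\gammahat(x;S_1)\mid i)\le \eps'^2 e^{O(1)}\Var(Z_{S_1}\mid i) = O(G^2\norm{x-x'}^2)$ by Lipschitzness of $e^z$ on the $O(1)$ range; and each level-$j$ correction $\dhat_{2^j}$ has vanishing first-order Taylor contribution (the linear terms telescope) with $\E\norm{\dhat_{2^j}}^2$ decaying geometrically fast enough that $\sum_j p_j^{-1}\E\norm{\dhat_{2^j}}^2 = O(\eps'^2\Var_{j\sim w_i}(Z_j)) = O(G^2\norm{x-x'}^2)$, since $p_j \propto 8^{-j/2}$. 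Making this estimate precise (and verifying the conditional-independence bookkeeping used for $T_1$) is the crux; everything else reduces to $\norm{x-\bx},\norm{x'-\bx}\le r = \eps'/G$ and $\lambda\le G/r$.
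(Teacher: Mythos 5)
Your proposal is correct and follows essentially the same approach as the paper's proof of \Cref{lem:SVRGvarianceBound}: drop the constant $\nabla\Gamma_{\eps,\lambda}(x')$, pass to a second moment, split the remaining vector into a scalar-difference term and a gradient-difference term, and bound each using the small-ball inequalities together with the MLMC moment estimate of \Cref{lem:DhatBound}. The only cosmetic differences are that your split attaches $\gammamlmc[x,x']$ to the gradient-difference term where the paper attaches $\gamma_i(x',x')$ (a mirror-image of the same identity), and that you explicitly decompose $\E\brk*{(\gammamlmc[x,x']-\gamma_i(x',x'))^2 \mid i}$ into the MLMC estimator's conditional variance plus the deterministic drift $(\gamma_i(x,x')-\gamma_i(x',x'))^2$, whereas the paper bounds the same quantity in one piece using the same ingredients.
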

Accelerated variance reduction methods for convex functions typically require a stronger variance bound of the form 
$\Var\prn*{\ghat_{x'}(x)} \le 2L\prn*{F(x')-  F(x)-\tri*{\nabla F(x), x'-x}}$ for every $x$~\cite[cf.][Lemma 2.4]{allen2017katyusha}. The guarantee of~\Cref{lem:SVRGvarianceBound} is weaker, but still allows for certain accelerated rates via, e.g., the Katyusha X algorithm \cite{allen2018katyusha}. With it, we obtain the following guarantee.
\begin{restatable}{theorem}{svrgBrooComplex}\label{thm:svrgBrooComplex}
    Let  each $\ell_{j}$ satisfy \Cref{assumption:smooth,assumption:GlobalAssumption}. Let $\epsilon> 0$, $\epsilon' = \epsilon / \prn*{2 \log M}$ and $\reps = \epsilon' / G$. For any query point $\bx\in \R^d$, regularization strength $\lambda \le O(G/\reps)$ and accuracy $\delta$, KatyushaX$^s$ \cite[Algorithm 2]{allen2018katyusha} with the gradient estimator  \eqref{eq:SVRG} outputs a valid $\reps$-BROO response 
    and has complexity $\broocost[\delta]= O\prn[\Big]{\prn[\Big]{N+\frac{N^{3/4}\prn*{G+\sqrt{\epsilon'L}}}{\sqrt{\lambda\epsilon'}}}\log\prn*{\frac{G r_{\eps}}{\lambda \delta^2}}}$.
    Consequently, the complexity of finding an $\epsilon$-suboptimal minimizer of $\groupObjective$~\eqref{eq:mainProblemGroupDRO} with probability at least $\frac{1}{2}$ is 
	\begin{align*}
		O \prn*{N \prn*{\frac{GR}{\eps} }^{2/3} \log^{14/3} \innerLog  + N^{3/4}\prn*{\frac{GR}{\eps}+ \sqrt{\frac{L R^2}{\eps}}}\log^{7/2} \innerLog    }
		\text{~~where~~} \innerLog \defeq M \frac{GR}{\eps}.
	\end{align*}
\end{restatable}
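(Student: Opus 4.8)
The plan is to establish the two halves of the statement in turn: first that (a projected variant of) KatyushaX$^s$ with the estimator~\eqref{eq:SVRG} is a valid $\reps$-\BOO for $\Lsm$ with per-query complexity $\broocost$, and then that feeding this into \Cref{prop:MainProp} yields the claimed end-to-end rate.

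\textbf{The \BOO implementation.} Since $\Lsm(x) + \tfrac{\lambda}{2}\norm{x-\bx}^2 = \LsmReg(x)$, a valid \BOO response only needs $\E[\LsmReg(\oracles(\bx)) - \LsmReg(x_\star)] \le \tfrac{\lambda}{2}\delta^2$, and by \Cref{lem:SmGammaProperties}(c) it suffices that $\oracles(\bx)\in\ball_{\reps}(\bx)$ with $\E[\Gamma_{\epsilon,\lambda}(\oracles(\bx)) - \Gamma_{\epsilon,\lambda}(x_\star)] \le \Omega(\lambda\delta^2)$ — that is, to reduce the suboptimality gap of $\Gamma_{\epsilon,\lambda}$, which starts at $\Gamma_{\epsilon,\lambda}(\bx) - \Gamma_{\epsilon,\lambda}(x_\star) \le \epsilon'$ (as $\gamma_i(\bx)=\epsilon'$ and $\Gamma_{\epsilon,\lambda}\ge 0$), down to $\Theta(\lambda\delta^2)$. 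We do this with a projected KatyushaX$^s$ \cite[Algorithm 2]{allen2018katyusha} run over $\ball_{\reps}(\bx)\cap\xset$, initialized at $\bx$, using~\eqref{eq:SVRG} as the stochastic gradient and the exact gradient $\nabla\Gamma_{\epsilon,\lambda}(x')$ as the periodic snapshot. The ingredients are all in place: \eqref{eq:SVRG} is unbiased for $\nabla\Gamma_{\epsilon,\lambda}$ and costs $O(1)$ evaluations in expectation (the MLMC estimator $\gammamlmc[x,x']$ has $\E[2^J]=O(1)$, exactly as in \Cref{lem:MLMCbounds}); a full gradient $\nabla\Gamma_{\epsilon,\lambda}(x')$ costs $O(N)$ evaluations (evaluate each $\ell_j,\nabla\ell_j$ once and assemble the $\mc{L}_i$'s and their gradients by linear combinations); $\Gamma_{\epsilon,\lambda}$ is $\Omega(\lambda)$-strongly convex by \Cref{lem:SmGammaProperties}(a); and \Cref{lem:SVRGvarianceBound} gives $\Var(\ghat_{x'}(x)) \le O((L+\lambda+G^2/\epsilon')^2\norm{x-x'}^2)$. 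The key point is that this last bound is the \emph{weak}, ``$\sigma$-strongly-convex sum of $\tilde L$-smooth-but-nonconvex components'' variance bound (not the stronger SVRG bound in terms of function-value gaps), which is exactly the regime in which KatyushaX$^s$ converges at the accelerated rate $O\big((n + n^{3/4}\sqrt{\tilde L/\sigma})\log\tfrac1\xi\big)$, where $n$ is the ratio of the cost of a full gradient to that of a stochastic step — here $n=\Theta(N)$ — and $\xi$ the target reduction factor. Taking $\tilde L = O(L+\lambda+G^2/\epsilon')$, $\sigma=\Omega(\lambda)$, $\xi = \Theta(\lambda\delta^2/\epsilon')$, and simplifying $\tilde L$ via $\epsilon'=G\reps$ and $\lambda\le O(G/\reps)$ (so the $G^2/\epsilon'$ term dominates $\lambda$), the total evaluation count is $\broocost[\delta] = O\big((N + N^{3/4}(G+\sqrt{\epsilon' L})/\sqrt{\lambda\epsilon'})\log(G\reps/(\lambda\delta^2))\big)$.

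\textbf{Consequence via \Cref{prop:MainProp}.} The surrogate $\Lsm$ is convex and $G$-Lipschitz on a set of diameter $R$ (being a scaled log-sum-exp of the convex $G$-Lipschitz $\mc{L}_i$), hence satisfies \Cref{assumption:GlobalAssumption}; its exact evaluation costs $\ncost = O(N)$; and $\broocost$ is non-increasing in $\lambda$ and only polylogarithmic — hence polynomial — in $1/\delta$. So \Cref{prop:MainProp} applies, and running \Cref{alg:acceleratedProxPoint} on $\Lsm$ to target accuracy $\epsilon/2$ returns an $\epsilon$-suboptimal point for $\groupObjective$ with probability $\ge\tfrac12$ (using $0\le \Lsm-\groupObjective\le\epsilon/2$). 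It then remains to substitute $r=\reps=\epsilon/(2G\log M)$, $\meps=\Otil{1}$ and $\lmin = \Thetatil{\epsilon\,\reps^{-4/3}R^{-2/3}}$ into~\eqref{eq:MLMCBROOguarantee}: in the $(\broocost[r][\lmin]+\ncost)\meps^3$ term the ``$N$'' part gives $\Otil{N(R/\reps)^{2/3}}=\Otil{N(GR/\epsilon)^{2/3}}$, while plugging $\lmin$ into the ``$N^{3/4}$'' part and multiplying by $(R/\reps)^{2/3}$ gives $\Otil{N^{3/4}(GR/\epsilon + \sqrt{LR^2/\epsilon})}$; the MLMC summation $\sum_{j\le\meps} 2^{-j}\broocost[\reps 2^{-j/2}\meps^{-2}][\lmin]$ merely turns the logarithm inside $\broocost$ into $O(j+\mathrm{polylog})$ and so reproduces the same orders up to polylog factors. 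Tracking the logarithms picked up from $(R/\reps)^{2/3}$, the $\meps$ and $\meps^3$ factors, the $\log(G\reps/(\lambda\delta^2))$ in $\broocost$, and the sum over $j$ yields the stated $\log^{14/3}\innerLog$ and $\log^{7/2}\innerLog$ powers with $\innerLog = M\,GR/\epsilon$.

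\textbf{Main obstacle.} The delicate step is the interface with KatyushaX$^s$: one must confirm that the weak variance bound of \Cref{lem:SVRGvarianceBound} — rather than the usual convex-SVRG bound — suffices for its accelerated guarantee, and, more subtly, pin down the ``effective $n$'' driving the $n^{3/4}$ term. It is neither the number of groups $M$ nor the inner-loop length, but the cost of a full gradient of $\Gamma_{\epsilon,\lambda}$ measured in stochastic-step units, which the $O(N)$-versus-$O(1)$ accounting fixes at $\Theta(N)$; this is what produces $N^{3/4}$ rather than $\sqrt N$ or $M^{3/4}$. The rest — checking the $O(1)$ expected cost of the SVRG-flavoured MLMC estimator, and the logarithmic bookkeeping through \Cref{prop:MainProp} (the nested $\delta$- and $\lambda$-dependencies and the de-biasing sum) — is routine but must be carried out carefully to land the exact polylog powers.
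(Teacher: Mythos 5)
Your proposal matches the paper's proof in all essentials: both apply KatyushaX$^s$ from \cite{allen2018katyusha} under the weak variance bound of \Cref{lem:SVRGvarianceBound} together with the $\Omega(\lambda)$-strong convexity from \Cref{lem:SmGammaProperties}, taking $\tilde L = O(L + G^2/\epsilon')$ and $O(N)$ per snapshot / $O(1)$ per stochastic step to land $\broocost[\delta] = O\big((N + N^{3/4}(G+\sqrt{\epsilon'L})/\sqrt{\lambda\epsilon'})\log(G\reps/(\lambda\delta^2))\big)$, and then plug into \Cref{prop:MainProp} with the same $\meps$, $\lmin$, $\reps$ bookkeeping. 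The only cosmetic divergence is the bound on the initial gap — you compute $\Gamma_{\epsilon,\lambda}(\bx)=\epsilon'$ directly from $\gamma_i(\bx)=\epsilon'$ while the paper uses the $O(G)$-Lipschitzness of $\Gamma_{\epsilon,\lambda}$ and $\reps=\epsilon'/G$, both yielding the same $\epsilon'$ — and your remarks in the final paragraph correctly identify the two places the paper itself treats as the subtle interfaces (the weak variance bound sufficing for KatyushaX$^s$, and the effective $n$ being $\Theta(N)$).
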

We provide the proof of \Cref{thm:svrgBrooComplex} in \Cref{ssec:GroupDROSVRG-proof}. For the special case of Group DRO with a single group satisfying \Cref{assumption:smooth} with $L=\Theta\prn*{G^2/\eps}$, i.e.\ minimizing the average loss, we have the lower bound $\widetilde{\Omega}\prn[\big]{N+N^{3/4}\frac{GR}{\eps}}$ \cite{zhou2019lower} 
and for the case of $N$ distinct groups, i.e.\ minimizing the maximal loss, we have the lower bound $\widetilde{\Omega}\prn*{N\eps^{-2/3}}$ \cite{carmon2021thinking}. This implies that, in the weakly mean-square smooth setting, both the term scaling as $N^{3/4}\eps^{-1}$ and the term scaling as $N\eps^{-2/3}$ are unimprovable.

\newcommandx{\probratio}[1][1=x, usedefault]{\frac{p_i(#1)}{\bp_i} }
\section{DRO with $f$-divergence}\label{sec:psiDivergence}
In this section we develop our BROO implementation for the $f$-divergence objective \eqref{eq:mainProblemfDiverDRO}. 
In \Cref{ssec:The dual problem} we 
reduce the original DRO problem to a regularized form using Lagrange multipliers.
Next, in \Cref{ssec:Stability of the likelihood ratio} we show that adding  negative entropy regularization to the objective produces the stability properties necessary for efficient ball optimization.  
In \Cref{ssec:The gradient estimator} we describe a BROO implementation for the non-smooth case using a variant of Epoch-SGD \cite{hazan2014beyond}, 
and in \Cref{ssec:Accelerated variance reduction} we implement the BROO under a weak-smoothness assumption by carefully restarting an accelerated variance reduction method \cite{allen2017katyusha}. 

\subsection{The dual problem}\label{ssec:The dual problem}
We first note that, by Lagrange duality, the objective \eqref{eq:mainProblemfDiverDRO}  is equivalent to
\begin{equation*}
	\mathcal{L}_{f\text{-div}}(x) \defeq \max_{q\in\Delta^N: \sum_{i\in[N]}\frac{f(Nq_i)}{N} \le 1}\sum_{i \in [N]}q_i \ell_i(x)
	=
	\min_{\nu \ge 0}\crl[\Bigg]{ \nu +\max_{q\in\Delta^N}
		\sum_{i\in[N]} \prn*{q_i \ell_i(x) - \frac{\nu}{N}f(Nq_i)}
		}.
\end{equation*}
Writing $\psi(x) \defeq \frac{\constraintMul}{N}f(Nx)$, we therefore consider objectives of the form
\begin{equation}\label{eq:penalizedFdiver}
    \Lpenalized(x) 
    \defeq
    \max_{q\in\Delta^N} \sum_{i\in[N]} \prn*{q_i \ell_i(x) - \psi(q_i)}
     = \min_{y\in\R}\crl[\Bigg]{ 
     	\Upsilon(x,y)\defeq \sum_{i \in [N]} \divergenceFunction^*(\ell_i(x) - Gy) + Gy
     }
 \end{equation}
where for the last equality we again used Lagrange duality, 
with $\divergenceFunction^*(v) \defeq \max_{t \in \text{dom}(\divergenceFunction)}\crl*{vt - \divergenceFunction(t)}$  the Fenchel dual of $\psi$ (for more details see \Cref{ssec:dualFormulation}).
We show that under weak assumptions (introducing logarithmic dependence on bounds on $f$ and the losses) we can solve the constrained problem \eqref{eq:mainProblemfDiverDRO} to accuracy $\epsilon$ by computing a polylogarithmic number of $O(\epsilon)$-accurate minimizers of \eqref{eq:penalizedFdiver}; see \Cref{ssec:RegConstraintProblem} for  details.
Thus, for the remainder of this section we focus on minimizing $\Lpenalized$ for arbitrary convex $\psi:\R_+ \to \R$.

\subsection{Stabilizing the gradient estimator}\label{ssec:Stability of the likelihood ratio}
While minimizing~\eqref{eq:penalizedFdiver} can be viewed as ERM (over $x$ and $y$), straightforward application of SGD does not solve it efficiently. To see  this, consider the standard gradient estimator formed by sampling $i\sim\mathsf{Unif}([N])$ and taking $ \gradx = N{\divergenceFunction^{*}}'\prn*{\ell_i(x)-Gy}\nabla \ell_i(x)$ and $ \grady = G\prn[\big]{1 - N{\divergenceFunction^{*}}'\prn*{\ell_i(x)-Gy}}$. For general $\divergenceFunction$, this estimator will have unbounded second moments, and therefore SGD using them would lack a convergence guarantee. 
As an extreme example, consider $\divergenceFunction = 0$ (corresponding to minimizing the maximum loss) whose conjugate function $\divergenceFunction^*(v)$ is $0$ for $v\le 0$ and $\infty$ for $v>0$, leading to meaningless stochastic gradients. 

We obtain bounded gradient estimates in two steps. First, we find a better distribution for $i$ using a reference point $\bx\in\xset$ with corresponding $\by = \argmin_{y\in\R}\Upsilon(\bx, y)$. Namely, we note that the optimality condition for $\by$ implies that ${\divergenceFunction^*}'(\ell_i(\bx)-\by)$ is a pmf over $[N]$. Therefore, we may sample $i\sim {\divergenceFunction^*}'(\ell_i(\bx)-\by)$ and estimate the gradient of $\Upsilon$ at $(x,y)$ using $\gradx = \rho_i(x,y)\grad \ell_i(x)$ and $\grady = G\prn[\big]{1 - \rho_i(x,y)}$, where $\rho_i(x,y)=\frac{{\divergenceFunction^*}'(\ell_i(x)-y)}{{\divergenceFunction^*}'(\ell_i(\bx)-\by)}$. However, for general $\divergenceFunction$ (and $\divergenceFunction=0$ in particular), the ratio $\rho_i(x,y)$ can be unbounded even when $x,y$ are arbitrarily close to $\bx,\by$. 

Our second step ensures that $\rho_i(x,y)$ is bounded around $\bx,\by$ by adding a small negative entropy term to $\divergenceFunction$, defining
\begin{equation}\label{eq:phieps}
    \psieps(q) \defeq \divergenceFunction(q) + \epsilon'q \log q
    \mbox{~~where~~}
    \eps' \defeq \frac{\eps}{2 \log N},
\end{equation}
and
\begin{equation}\label{eq:upsiloneps defintion}
    \Leps(x) = \min_{y
    	\in \R}\upsiloneps(x,y) \text{~~ with~~}  \upsiloneps(x,y) \defeq \sum_{i \in [N]} \psieps^*(\ell_i(x) - Gy) + Gy.
\end{equation}
Due to our choice of $\epsilon'$, we have $\abs{\Lpenalized(x)- \Leps(x)} \le \eps/2$ for all $x \in \R^d$, and consequently an $\epsilon/2$-accurate minimizer of $\Leps$ is also an $\epsilon$-accurate for $\Lpenalized$ (see \Cref{lem:SMapproxMaxFdiv} in \Cref{ssec:LpsiProperties}). 
When $\divergenceFunction=0$ we have $\psieps^{*}(v)=e^{(v-1)/\epsilon'}$ and therefore the corresponding $\rho_i(x,y)=e^{(\ell_i(x)-\ell_i(\bx)-G(y-\by))/\epsilon'}$.%
\footnote{
	Entropy regularization of $\psi=0$ is especially nice since 
	$\frac{{\psieps^{*}}'(\ell_i(x)-Gy)}{\sum_j {\psieps^{*}}'(\ell_j(x)-Gy)}=\frac{e^{\ell_i(x)/\epsilon'}}{\sum_j e^{\ell_j(x)/\epsilon'}}$ is independent of $y$. This fact allows one to  minimize $\Leps$ over $x$ directly via either the exponentiated softmax trick (as in \Cref{sec:groupDRO} and~\cite{carmon2021thinking}) or rejection sampling (as in~\cite{asi2021stochastic}). For general $\divergenceFunction$, however, minimizing $\upsiloneps$ over both $x$ and $y$ is essential.
} The following lemma, which might be of independent interest, shows that the same conclusion holds for any convex $\divergenceFunction$.
\begin{restatable}{lem}{psiProperties}\label{lem:psiProperties}
    For any convex $\divergenceFunction:\R_+\to \R$ and
    $\psieps$ defined in~\eqref{eq:penalizedFdiver},
      $\log\prn[\big]{{\psieps^{*}}'(\cdot)}$ is $\frac{1}{\epsilon'}$-Lipschitz.
\end{restatable}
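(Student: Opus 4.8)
The plan is to identify ${\psi_\epsilon^*}'(v)$ with the (unique) maximizer $t^\star(v)\defeq\argmax_{t\ge 0}\{vt-\psi_\epsilon(t)\}$ and then control the growth of $\log t^\star(v)$ as a function of $v$ using the first-order optimality condition defining $t^\star$. First I would record the structural facts that make this identification valid. Since $t\mapsto \epsilon' t\log t$ is strictly convex on $[0,\infty)$, $\psi_\epsilon$ is strictly convex; since $\psi$ is real-valued convex (hence bounded below by an affine function) and $\epsilon' t\log t$ grows superlinearly, $vt-\psi_\epsilon(t)\to-\infty$ as $t\to\infty$ for every $v\in\R$, so the supremum is attained at a unique point $t^\star(v)$ and $\dom\psi_\epsilon^*=\R$. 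Moreover $t^\star(v)>0$: the one-sided derivative $\partial_t[vt-\psi_\epsilon(t)]=v-\psi'(t)-\epsilon'(1+\log t)$ tends to $+\infty$ as $t\downarrow 0$, because $-\epsilon'\log t\to+\infty$ while $\psi'(t)$ stays bounded above near $0$ by monotonicity of $\partial\psi$, so the objective is strictly increasing near $0$. Standard conjugate duality (essential strict convexity of $\psi_\epsilon$ gives essential smoothness of $\psi_\epsilon^*$, and $\dom\psi_\epsilon^*=\R$) then yields that $\psi_\epsilon^*$ is differentiable on all of $\R$ with ${\psi_\epsilon^*}'(v)=t^\star(v)\in(0,\infty)$; in particular $\log({\psi_\epsilon^*}'(\cdot))$ is finite everywhere, and since $\psi_\epsilon^*$ is convex, $v\mapsto t^\star(v)$ is non-decreasing.

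The core of the proof is then a short monotonicity computation. Fix $v_1<v_2$ and set $t_i=t^\star(v_i)$, so $t_1\le t_2$; if $t_1=t_2$ the Lipschitz inequality is trivial, so assume $t_1<t_2$. The optimality condition for the interior maximizer $t_i>0$ reads $v_i\in\partial\psi_\epsilon(t_i)=\partial\psi(t_i)+\{\epsilon'(1+\log t_i)\}$ (using differentiability of $\epsilon' t\log t$ at $t_i>0$ and the sum rule for subdifferentials), i.e.\ $v_i=g_i+\epsilon'(1+\log t_i)$ for some $g_i\in\partial\psi(t_i)$. Subtracting and invoking monotonicity of $\partial\psi$, which gives $g_2-g_1\ge 0$ since $t_2>t_1$, I obtain
\[
v_2-v_1=(g_2-g_1)+\epsilon'(\log t_2-\log t_1)\ \ge\ \epsilon'(\log t_2-\log t_1)\ \ge\ 0 ,
\]
hence $0\le\log t_2-\log t_1\le\frac{1}{\epsilon'}(v_2-v_1)$, which is exactly the claim that $v\mapsto\log({\psi_\epsilon^*}'(v))$ is $\tfrac1{\epsilon'}$-Lipschitz. (As a sanity check in the smooth case: differentiating $v=\psi'(t^\star(v))+\epsilon'(1+\log t^\star(v))$ gives $(t^\star)'(v)/t^\star(v)=1/(\epsilon'+t^\star(v)\psi''(t^\star(v)))\in(0,1/\epsilon']$, recovering the same bound.)

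The only delicate points are the ones isolated in the first paragraph: justifying ${\psi_\epsilon^*}'(v)=t^\star(v)$ and $t^\star(v)>0$ for an \emph{arbitrary} convex $\psi$ (so that the logarithm is even well-defined), and carrying out the argument with subdifferentials rather than derivatives when $\psi$ is non-smooth — the $\psi''$ computation above is only heuristic, and its rigorous replacement is the subdifferential sum rule together with monotonicity of $\partial\psi$. If one prefers to bypass nonsmooth calculus entirely, an alternative route is to prove the bound for a sequence of $C^2$ strictly convex approximations $\psi_n\to\psi$, for which the displayed differential computation is literally valid, and then pass to the limit using continuity/stability of Fenchel conjugation under epi-convergence; I expect the subdifferential argument to be the shorter of the two.
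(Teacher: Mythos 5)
Your proof is correct and follows essentially the same route as the paper: both arguments subtract the first-order optimality conditions $v_i \in \partial\psi(t_i) + \{\epsilon'(1+\log t_i)\}$ for two points $v_1<v_2$ and drop the nonnegative subgradient difference via monotonicity of $\partial\psi$. The paper packages the right-hand side through an auxiliary quantity $p_i=e^{v_i/\epsilon'-1}$ while you subtract directly, and you supply a bit more justification for ${\psi_\epsilon^*}'(v)=t^\star(v)>0$, but the core manipulation is identical.
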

\noindent
See proof in \Cref{ssec:StabilityLikelihoodProofs}. 
Thus, ${{\psieps^{*}}'(v)}/{{\psieps^{*}}'(\bar{v})} = e^{\log {\psieps^{*}}'(v) - \log {\psieps^{*}}'(\bar{v}) } \le e^{(v-\bar{v})/\epsilon'}$ and $\rho_i(x,y)\le e^{(\ell_i(x)-\ell_i(\bx)-G(y-\by))/\epsilon'}$ continues to hold. Therefore, if $\abs*{y-\by}\le \epsilon'/G = \reps$ and
$x\in\ball_{\reps}(\bx)$ (so that $\abs{\ell_i(x)-\ell_i(\bx)}\le \epsilon'$ if $\ell_i$ satisfies \Cref{assumption:GlobalAssumption}), we have the bound $\rho_i(x,y)\le e^2$. 

It remains to show that we may indeed restrict $y$ to be within distance $\reps$ from $\by$. To this end, we make the following observation which plays a key part in our analysis and might also be of independent interest (see proof in \Cref{ssec:StabilityLikelihoodProofs}).
\begin{restatable}{lem}{BoundEtaDistance}\label{lem:BoundEtaDistance}
	For $y^\star(x) = \argmin_{y\in\R} \upsiloneps(x,y)$, we have $\abs{y^\star(x)-y^\star(x')} \le \frac{1}{G}\norm*{\ell(x)-\ell(x')}_\infty$ for all $x,x'\in\xset$. Moreover, if each $\ell_i$ is $G$-Lipschitz, we have $\abs{y^\star(x)-y^\star(x')}\le \norm{x-x'}$.
\end{restatable}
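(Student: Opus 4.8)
The plan is to characterize $y^\star(x)$ through the first-order optimality condition of the (convex, coercive) one-dimensional problem $\min_{y\in\R}\upsiloneps(x,y)$, and then run a short monotone-comparison argument. Differentiating in $y$, $\partial_y\upsiloneps(x,y)=G-G\sum_{i\in[N]}{\psieps^*}'(\ell_i(x)-Gy)$, so any minimizer satisfies $h(x,y^\star(x))=1$, where $h(x,y)\defeq\sum_{i\in[N]}{\psieps^*}'(\ell_i(x)-Gy)$; this is exactly the fact (already invoked in \Cref{ssec:Stability of the likelihood ratio}) that $i\mapsto{\psieps^*}'(\ell_i(x)-Gy^\star(x))$ is a pmf on $[N]$. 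I will record that $h(x,\cdot)$ is continuous and non-increasing in $y$ — the latter because $\psieps^*$ is convex, so ${\psieps^*}'$ is non-decreasing — and that $\upsiloneps(x,\cdot)$ is coercive in $y$, so a minimizer exists. When $\psi$ is differentiable, ${\psieps^*}'$ is strictly increasing and $y^\star(x)$ is unique; in general I take $y^\star(x)\defeq\min\argmin_{y}\upsiloneps(x,y)$, which by continuity and monotonicity of $h$ equals $\min\crl{y:h(x,y)\le 1}$.

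For the first claim, write $\Delta\defeq\norm{\ell(x)-\ell(x')}_\infty$, so that $\ell_i(x)-\Delta\le\ell_i(x')$ for every $i$. Evaluating $h(x,\cdot)$ at $y^\star(x')+\Delta/G$ and using that ${\psieps^*}'$ is non-decreasing,
\[
  h\!\Big(x,\ y^\star(x')+\tfrac{\Delta}{G}\Big)=\sum_{i\in[N]}{\psieps^*}'\!\big(\ell_i(x)-\Delta-Gy^\star(x')\big)\le\sum_{i\in[N]}{\psieps^*}'\!\big(\ell_i(x')-Gy^\star(x')\big)=h\big(x',y^\star(x')\big)=1.
\]
Hence $y^\star(x')+\Delta/G\in\crl{y:h(x,y)\le 1}$, and by the choice of $y^\star(x)$ as the least element of that set, $y^\star(x)\le y^\star(x')+\Delta/G$. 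Swapping the roles of $x$ and $x'$ (now using $\ell_i(x')-\Delta\le\ell_i(x)$) gives the reverse inequality, so $\abs{y^\star(x)-y^\star(x')}\le\tfrac1G\norm{\ell(x)-\ell(x')}_\infty$. The second claim then follows at once: if each $\ell_i$ is $G$-Lipschitz then $\abs{\ell_i(x)-\ell_i(x')}\le G\norm{x-x'}$ for all $i$, so $\norm{\ell(x)-\ell(x')}_\infty\le G\norm{x-x'}$ and $\abs{y^\star(x)-y^\star(x')}\le\norm{x-x'}$.

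I expect the only fussy point to be handling non-smooth $\psi$: there ${\psieps^*}'$ can be constant on an interval, so $\argmin_y\upsiloneps(x,y)$ may be a segment rather than a single point, which is why I pin down $y^\star(x)$ as its least element — with that convention the displayed comparison needs only monotonicity (not strict monotonicity) of ${\psieps^*}'$. The remaining ingredients — differentiability of $\psieps^*$ (it is the conjugate of the strictly convex function $\psieps$), monotonicity of its derivative, and coercivity of $\upsiloneps(x,\cdot)$ in $y$ (which uses only that the uncertainty set is non-degenerate, i.e.\ strictly larger than the singleton consisting of the uniform distribution) — are all routine convex analysis.
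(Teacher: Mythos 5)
Your proof is correct and follows essentially the same route as the paper's: both exploit the stationarity condition $\sum_i {\psieps^*}'(\ell_i(x)-Gy^\star(x))=1$, shift the dual variable by $\Delta/G$, and invoke monotonicity of ${\psieps^*}'$ to compare. The paper reaches the same inequality by introducing an auxiliary shifted loss $\widetilde{\ell}=\ell(x)+\delta$ and identifying its minimizer as $y^\star(x)+\delta/G$, whereas you compare $h(x,\cdot)$ directly against the threshold $1$; the two presentations are interchangeable. Your explicit handling of possible non-uniqueness of $\argmin_y$ (pinning $y^\star$ to the least element of the sublevel set $\{y:h(x,y)\le 1\}$) is a small but welcome extra care that the paper glosses over.
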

\noindent
\Cref{lem:BoundEtaDistance} implies that  $x^\star,y^\star=\argmin_{x\in\ball_{\reps}(\bx), y\in\R}\upsiloneps(x,y)$ satisfy $\abs{y^\star - \by} \le \norm{x^\star-\bx}\le\reps$. Therefore, when minimizing $\upsiloneps$ (or any regularized version of it) inside the ball $\ball_{\reps}(\bx)$, we may restrict $y$ to $[\by-\reps,\by+\reps]$ without loss of generality. We also note that \Cref{lem:BoundEtaDistance} holds for all values of $\epsilon$ and is therefore valid even without entropy regularization.

\subsection{BROO implementation for $f$-divergence DRO with non-smooth losses}\label{ssec:The gradient estimator}
By the discussion above, to implement a BROO for $\Leps(x)$ (with radius $\reps=\epsilon'/G$, regularization $\lambda$, and query $\bx\in\xset$) it suffices to minimize $\upsilonepsreg(x,y)\defeq \upsiloneps(x,y)+\frac{\lambda}{2}\norm{x-\bx}^2$ over $x\in\ball_{\reps}(\bx)$ and $y\in [\by-\reps,\by+\reps]$, where $\by=\argmin_{y \in \R} \upsiloneps(\bx,y)$. To that end we estimate the gradient of $\upsilonepsreg(x,y)$ as follows. Letting $\bp_i =  {\psieps^{*}}'\prn*{\ell_i(\bx)-G \by}$ (making $\bp$ a pmf by optimality of $\by$), we sample $i \sim \bp$ and set
\begin{equation}\label{eq:dualGradients}
    \gradx(x,y) = \frac{{\psieps^{*}}'(\ell_i(x)-Gy)}{\bp_i} \nabla \ell_i(x,y) \text{~~and~~}
	\grady(x,y) = G \prn*{ 1 -\frac{{\psieps^{*}}'(\ell_i(x)-Gy)}{\bp_i}}.
\end{equation}  
\Cref{lem:psiProperties} implies the following bounds on our gradient estimator; see proof in \Cref{ssec:DualEpochSGDProofs}.
\begin{restatable}{lem}{dualGradEstProperties}\label{lem:dualGradEstProperties} 
	Let each $\ell_i$ be $G$-Lipschitz, let $\bx\in\xset$ and $\by = \argmin_{y \in \R} \upsilonepsreg(\bx,y)$. Let $\reps=\frac{\eps'}{G}$, then for all $x \in \ball_{\reps}(\bx)$ and $y \in \brk*{\by - \reps, \by + \reps}$, the gradient estimators $\gradx$ and $\grady$  satisfy 
    the following properties
    \begin{enumerate}\label{eq:gradientsDefintion}
        \item {$\E_{i \sim \bp_i}\brk*{\gradx(x,y)} = \nabla_x \upsiloneps(x, y) \text{~~and~~} \E_{i \sim \bp_i}\brk*{\grady(x,y)} = \nabla_y \upsiloneps(x, y).$}
        \item {$\E_{i \sim \bp_i}\norm*{\gradx(x,y)}^2 \le e^4 G^2  \text{~~and~~} \E_{i \sim \bp_i}\abs*{\grady(x,y)}^2 \le e^4 G^2$.} 
    \end{enumerate}
\end{restatable}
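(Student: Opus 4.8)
The plan is a direct computation whose one substantive ingredient is \Cref{lem:psiProperties}. I would proceed in three steps: first verify that $\bp$ is a genuine distribution over $[N]$; then obtain the unbiasedness claims (part~1) from the chain rule; and finally prove the second-moment bounds (part~2) via a uniform estimate $\rho_i(x,y):={\psieps^{*}}'(\ell_i(x)-Gy)/\bp_i \le e^2$ on the likelihood ratio appearing in~\eqref{eq:dualGradients}.

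For the first step, note that $\psieps(q)=\divergenceFunction(q)+\epsilon' q\log q$ is strictly convex on its domain and that the entropy term makes $\psieps'(0^+)=-\infty$; hence $\psieps^*$ is finite and continuously differentiable on all of $\R$, with ${\psieps^{*}}'(v)=\argmax_{t\ge 0}\crl{vt-\psieps(t)}>0$ (the maximizing $t$ is interior). In particular each $\bp_i={\psieps^{*}}'(\ell_i(\bx)-G\by)$ is strictly positive, and since $\by=\argmin_{y} \upsiloneps(\bx,y)$, the stationarity condition $0=\partial_y\upsiloneps(\bx,\by)=G\bigl(1-\sum_{i}{\psieps^{*}}'(\ell_i(\bx)-G\by)\bigr)$ gives $\sum_i\bp_i=1$, so $\bp\in\Delta^N$. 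Differentiating $\upsiloneps(x,y)=\sum_{i}\psieps^*(\ell_i(x)-Gy)+Gy$ term by term yields $\nabla_x\upsiloneps(x,y)=\sum_i {\psieps^{*}}'(\ell_i(x)-Gy)\nabla\ell_i(x)$ and $\nabla_y\upsiloneps(x,y)=G\bigl(1-\sum_i {\psieps^{*}}'(\ell_i(x)-Gy)\bigr)$; taking $\E_{i\sim\bp}$ of the expressions in~\eqref{eq:dualGradients}, the weight $1/\bp_i$ cancels against the sampling probability (and, for $\grady$, $\sum_i\bp_i=1$ is used), recovering these two gradients and establishing part~1.

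For the second step, \Cref{lem:psiProperties} gives $\log\rho_i(x,y)=\log{\psieps^{*}}'(\ell_i(x)-Gy)-\log{\psieps^{*}}'(\ell_i(\bx)-G\by)\le \tfrac{1}{\epsilon'}\abs{\ell_i(x)-\ell_i(\bx)-G(y-\by)}\le\tfrac{1}{\epsilon'}\bigl(G\norm{x-\bx}+G\abs{y-\by}\bigr)$, using the triangle inequality and $G$-Lipschitzness of $\ell_i$. Since $x\in\ball_{\reps}(\bx)$, $\abs{y-\by}\le\reps$ and $\reps=\epsilon'/G$, the right-hand side is at most $2G\reps/\epsilon'=2$, so $0<\rho_i(x,y)\le e^2$. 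Then $\E_{i\sim\bp}\norm{\gradx(x,y)}^2=\sum_i\bp_i\,\rho_i(x,y)^2\norm{\nabla\ell_i(x)}^2\le e^4G^2\sum_i\bp_i=e^4G^2$ (using $\norm{\nabla\ell_i(x)}\le G$), and $\E_{i\sim\bp}\abs{\grady(x,y)}^2=G^2\sum_i\bp_i\bigl(1-\rho_i(x,y)\bigr)^2\le G^2(e^2-1)^2\sum_i\bp_i\le e^4G^2$ because $1-\rho_i(x,y)\in[1-e^2,1]$; this is part~2.

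I do not expect a real obstacle: the crux — the stability of $\log{\psieps^{*}}'$ — is already in hand via \Cref{lem:psiProperties} (indeed the bound $\rho_i\le e^2$ is essentially derived in the discussion following that lemma). The only points needing mild care are confirming differentiability of $\psieps^*$ together with $\bp_i>0$ (both bought by the added entropy, which also makes the logarithm well defined and the maximizer interior), and tracking the constant $e^2$ through the two constraints $\norm{x-\bx}\le\reps$ and $\abs{y-\by}\le\reps$.
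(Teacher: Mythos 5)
Your proposal is correct and follows essentially the same path as the paper: unbiasedness by direct computation (the $1/\bp_i$ cancels against the sampling probability), and the second-moment bounds via the likelihood-ratio estimate $\rho_i(x,y)\le e^2$, which the paper likewise derives from \Cref{lem:psiProperties}, the $G$-Lipschitzness of $\ell_i$, and the ball constraints $\norm{x-\bx}\le\reps$, $\abs{y-\by}\le\reps$. The only superficial differences are that you spell out why $\bp\in\Delta^N$ (the paper takes this for granted, having noted it just before the lemma) and that you bound $\abs{1-\rho_i}\le e^2-1$ where the paper uses $\abs{1-\rho_i}\le\max\{1,\rho_i\}\le e^2$; both give the stated $e^4G^2$.
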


To implement the BROO using our gradient estimator we develop a variant of the Epoch-SGD algorithm of \citet{hazan2014beyond} (\Cref{alg:dualEpochSGD} in \Cref{ssec:DualEpochSGDProofs}). Similarly to Epoch-SGD, we apply standard SGD on $\upsilonepsreg$ (with gradient estimator~\eqref{eq:dualGradients}) in ``epochs'' whose length doubles in every repetition. Our algorithm differs slightly in how each epoch is initialized. Standard Epoch-SGD initializes with the average of the previous epoch's iterates, and strong convexity shows that the suboptimality and distance to the optimum shrink by a constant factor after every epoch. However, since $\upsilonepsreg$ is strongly convex only in $x$ and not in $y$, we cannot directly use this scheme. Instead, we set the initial $y$ variable to be  $\argmin_{y}\upsilonepsreg(x',y)$, where $x'$ is the initial $x$ variable still defined as the previous epoch's average; this initialization has complexity $N$, but we only preform it a logarithmic number of times. Using our initialization scheme and \Cref{lem:BoundEtaDistance}, we recover the original Epoch-SGD contraction argument, yielding the following complexity bound (see proof in \Cref{ssec:DualEpochSGDProofs}).
\begin{restatable}{theorem}{BROOcomplexityDualProblem}\label{thm:BROOcomplexityDualProblem}
    Let each $\ell_{i}$ satisfy \Cref{assumption:GlobalAssumption}. Let $\epsilon, \lambda , \delta >0 $, and $r_\epsilon = \epsilon /(2G\log N)$. For any query point $\bx  \in \R^d$, regularization strength $\lambda \le O(G / r_\epsilon)$ and accuracy $\delta < \reps/2$, 
    \Cref{alg:dualEpochSGD} outputs a valid $r_\epsilon$-BROO response for $\Leps$ and has complexity
    $  \broocost = O\prn[\big]{ \frac{G^2}{\lambda^2\delta^2}+  N \log\prn*{\frac{\reps}{\delta}}} $.
    Consequently, the complexity of finding an $\epsilon$-suboptimal minimizer of $\Lpenalized$~\eqref{eq:penalizedFdiver} with probability at least $\half$ is 
    \begin{align*}
        O \prn*{ 
            N \prn*{\frac{GR}{\epsilon} }^{2/3} \log^{11/3} \innerLog + \prn*{\frac{GR}{\epsilon} }^2 \log^2 \innerLog}
              \text{~~where~~} \innerLog \defeq N\frac{GR}{\eps}.
    \end{align*}
\end{restatable}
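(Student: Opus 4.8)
The plan is to prove \Cref{thm:BROOcomplexityDualProblem} in two stages: (1) verify that the modified Epoch-SGD procedure (\Cref{alg:dualEpochSGD}) is a valid $\reps$-BROO for $\Leps$ with per-query cost $\broocost=O\prn*{G^2/(\lambda^2\delta^2)+N\log(\reps/\delta)}$; and (2) feed this $\broocost$ into \Cref{prop:MainProp} with $F=\Leps$ and pass to $\Lpenalized$ via $\abs{\Lpenalized-\Leps}\le\eps/2$. For the first stage I begin by reducing the BROO requirement to an explicit constrained minimization. By the dual identity in \eqref{eq:upsiloneps defintion}, $\Leps(x)+\tfrac\lambda2\norm{x-\bx}^2=\min_{y\in\R}\upsilonepsreg(x,y)$, and \Cref{lem:BoundEtaDistance} (using that the $\ell_i$ are $G$-Lipschitz) gives $\abs{y^\star(x)-\by}\le\norm{x-\bx}\le\reps$ for every $x\in\ball_{\reps}(\bx)\cap\xset$, where $\by=y^\star(\bx)$. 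Hence, on the convex set $\mc K:=(\ball_{\reps}(\bx)\cap\xset)\times[\by-\reps,\by+\reps]$ we have $\min_{\mc K}\upsilonepsreg=\min_{x\in\ball_{\reps}(\bx)\cap\xset}\crl*{\Leps(x)+\tfrac\lambda2\norm{x-\bx}^2}$, attained at some $(x^\star,y^\star)$ with $y^\star=y^\star(x^\star)$; so it suffices to return $x$ (with a companion $y$) satisfying $\E\brk*{\upsilonepsreg(x,y)}-\min_{\mc K}\upsilonepsreg\le\tfrac\lambda2\delta^2$, since $\Leps(x)+\tfrac\lambda2\norm{x-\bx}^2=\min_{y'}\upsilonepsreg(x,y')\le\upsilonepsreg(x,y)$ then gives \eqref{eq:BROOapprox}. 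By \Cref{lem:dualGradEstProperties}, the estimator \eqref{eq:dualGradients}, built from the pmf $\bp_i={\psieps^*}'(\ell_i(\bx)-G\by)$, is unbiased for $\grad\upsilonepsreg$ on $\mc K$ with second moment $\sigma^2=O(G^2)$; computing $\by$ and $\bp$ once costs $O(N)$ evaluations.

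Next I analyze the modified Epoch-SGD: projected SGD on $\upsilonepsreg$ over $\mc K$ in doubling-length epochs, where epoch $k$ starts from $(x_{k-1},y_{k-1})$ with $x_{k-1}$ the running average of epoch $k-1$ ($x_0=\bx$) and, crucially, $y_{k-1}=y^\star(x_{k-1})=\argmin_y\upsilonepsreg(x_{k-1},y)$, recomputed at cost $O(N)$. Two structural facts drive the analysis: (i) $\upsilonepsreg(\cdot)-\upsilonepsreg(x^\star,y^\star)\ge\tfrac\lambda2\norm{\cdot-x^\star}^2$, because $\Leps+\tfrac\lambda2\norm{\cdot-\bx}^2$ is $\lambda$-strongly convex with minimizer $x^\star$ over $\ball_{\reps}(\bx)\cap\xset$; and (ii) $y^\star(\cdot)$ is $1$-Lipschitz (\Cref{lem:BoundEtaDistance}), so $\abs{y_{k-1}-y^\star}=\abs{y^\star(x_{k-1})-y^\star(x^\star)}\le\norm{x_{k-1}-x^\star}$. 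Writing $\Delta_k:=\upsilonepsreg(x_k,y_k)-\upsilonepsreg(x^\star,y^\star)$ for $y_k=y^\star(x_k)$ — which equals the proximal suboptimality of $x_k$ and, by (i), is $\ge\tfrac\lambda2\norm{x_k-x^\star}^2$ — the standard constant-step projected-SGD guarantee on epoch $k$, combined with (i)--(ii), yields $\E[\Delta_k]\le\frac{\E\norm{(x_{k-1},y_{k-1})-(x^\star,y^\star)}^2}{2\eta_kT_k}+\frac{\eta_k\sigma^2}{2}\le\frac{2\E[\Delta_{k-1}]}{\lambda\eta_kT_k}+\frac{\eta_k\sigma^2}{2}$. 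Choosing $\eta_k=\Theta(\hat\Delta_02^{-k}/\sigma^2)$ and $T_k=\Theta(2^k\sigma^2/(\lambda\hat\Delta_0))$, with $\hat\Delta_0:=G\reps\ge\Delta_0$ an a priori bound (from $G$-Lipschitzness of $\Leps$), an induction gives $\E[\Delta_k]\le\hat\Delta_02^{-k}$; after $K=O(\log(\reps/\delta))$ epochs we reach $\E[\Delta_K]\le\tfrac\lambda2\delta^2$, and outputting $x_K$ is a valid BROO response. The SGD steps total $\sum_kT_k=O(2^K\sigma^2/(\lambda\hat\Delta_0))=O(G^2/(\lambda^2\delta^2))$ at $O(1)$ cost each, plus $O(N)$ per epoch for the $y$-reinitialization, giving $\broocost=O\prn*{G^2/(\lambda^2\delta^2)+N\log(\reps/\delta)}$.

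For the second stage, $\Leps$ is convex and $G$-Lipschitz on $\xset$ (the Lipschitz bound is inherited from the $\ell_i$ exactly as for $\Lpenalized$), hence satisfies \Cref{assumption:GlobalAssumption}; evaluating it exactly costs $\ncost=O(N)$; and the $\broocost$ above is non-increasing in $\lambda$ and polynomial in $1/\delta$. So \Cref{prop:MainProp} applies with $r=\reps$ and target accuracy $\Theta(\eps)$, with $\meps=O\prn*{\log\tfrac{GR^2}{\eps\reps}}=O(\log\innerLog)$ and $\lmin=\Theta\prn*{\meps^2\eps\reps^{-4/3}R^{-2/3}}$. Substituting into \eqref{eq:MLMCBROOguarantee}: the MLMC sum is $\sum_{j=0}^{\meps}2^{-j}\broocost[\frac{\reps}{2^{j/2}\meps^2}][\lmin]=O\prn*{G^2\meps^5/(\lmin^2\reps^2)+N\log\meps}$ (the $1/(\lambda^2\delta^2)$-parts cancel $2^{-j}$ against the $2^{j}$ from $\delta^{-2}$, leaving $O(\meps)$ equal terms; the $N\log(1/\delta)$-parts sum geometrically), so the first bracket of \eqref{eq:MLMCBROOguarantee} is $O(G^2\meps^6/(\lmin^2\reps^2)+N\meps\log\meps)$ and the second is $O((G^2/(\lmin^2\reps^2)+N)\meps^3)$. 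Plugging in $\lmin$ collapses $(R/\reps)^{2/3}\cdot G^2\meps^6/(\lmin^2\reps^2)$ to $O(\meps^2(GR/\eps)^2)$ and $(R/\reps)^{2/3}\cdot N\meps^3$ to $O(N(GR/\eps)^{2/3}(\log N)^{2/3}\meps^3)$, so with $\meps=O(\log\innerLog)$ and $\reps=\eps/(2G\log N)$ the total complexity is $O\prn*{N(GR/\eps)^{2/3}\log^{11/3}\innerLog+(GR/\eps)^2\log^2\innerLog}$. Since an $O(\eps)$-accurate minimizer of $\Leps$ is (up to the constant in $\abs{\Lpenalized-\Leps}\le\eps/2$) an $\eps$-accurate minimizer of $\Lpenalized$, \Cref{prop:MainProp} delivers the stated guarantee with probability $\ge\half$.

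The main obstacle is the epoch-contraction step: $\upsilonepsreg$ is strongly convex only in $x$, so the textbook Epoch-SGD contraction does not apply directly. Reinitializing $y$ to $y^\star(x_{k-1})$ at the start of each epoch and invoking the $1$-Lipschitzness of $y^\star(\cdot)$ from \Cref{lem:BoundEtaDistance} to bound $\abs{y_{k-1}-y^\star}$ by $\norm{x_{k-1}-x^\star}$ is precisely what restores the geometric decay of $\E[\Delta_k]$; the same lemma's bound $\abs{y^\star(x)-\by}\le\reps$ is what keeps the iterates inside the region where \eqref{eq:dualGradients} has bounded variance. The second-stage arithmetic is routine but must be carried out carefully to produce the exact polylog exponents $11/3$ and $2$.
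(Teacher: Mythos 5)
Your overall strategy matches the paper's: implement the BROO for $\Leps$ by running a modified Epoch-SGD on $\upsilonepsreg$ over $\ball_{\reps}(\bx)\times[\by-\reps,\by+\reps]$, use \Cref{lem:dualGradEstProperties} for the $O(G^2)$ second-moment bound, use \Cref{lem:BoundEtaDistance} and strong convexity in $x$ to recover the epoch-contraction despite the lack of strong convexity in $y$, and then plug $\broocost$ into \Cref{prop:MainProp} via the approximation $\abs{\Lpenalized-\Leps}\le\eps/2$. The stage-two arithmetic and the reduction from \eqref{eq:BROOapprox} to bounding $\E\upsilonepsreg(x,y)-\min_{\mc K}\upsilonepsreg$ are also correct.

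There is one genuine gap in the stage-one accounting, however, and it is exactly the point \Cref{alg:dualEpochSGD} is engineered around. You recompute $y_k=\argmin_y\upsilonepsreg(x_k,y)$ at the start of \emph{every} epoch (cost $O(N)$ each), and then assert the number of epochs is $K=O(\log(\reps/\delta))$. But with $\hat\Delta_0=G\reps$ and the contraction $\E[\Delta_k]\le\hat\Delta_02^{-k}$, reaching $\E[\Delta_K]\le\tfrac\lambda2\delta^2$ requires $2^K\ge 2G\reps/(\lambda\delta^2)$, i.e.\ $K=\Theta(\log(G\reps/(\lambda\delta^2)))$, which exceeds $\log(\reps/\delta)$ by an additive $\log(G/(\lambda\delta))$ that is unbounded as $\lambda\to 0$. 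So as written your derivation yields $\broocost=O\bigl(G^2/(\lambda^2\delta^2)+N\log(G\reps/(\lambda\delta^2))\bigr)$, not the stated $O\bigl(G^2/(\lambda^2\delta^2)+N\log(\reps/\delta)\bigr)$. The paper sidesteps this by \emph{not} recomputing $y$ in the early epochs: when $T_k\le\Tthreshold=G^4/(\lambda^2\epsilon'^2)$, the box constraint already forces $\abs{y_k-y_\star}\le 2\reps\le O\bigl(G/(\lambda 2^{k/2})\bigr)$, so the required bound on $V_{y_k}(y_\star)$ holds for free and the $O(N)$ recomputation is invoked only in the last $O(\log(T/\Tthreshold))=O(\log(\reps/\delta))$ epochs, exactly producing the claimed $\broocost$. (Your argument does give the correct \emph{final} rate, because in the outer loop $\lambda\ge\lmin$ guarantees $\log(G/(\lambda\delta))=O(\meps)$, so the looser $\broocost$ collapses to the same polylog factors; but the intermediate $\broocost$ claim as you've stated it does not follow from your reinitialize-every-epoch scheme.)
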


\subsection{Accelerated variance reduction for smooth losses}\label{ssec:Accelerated variance reduction}
In this section, we take advantage of the following smoothness assumption. 
\begin{assumption}\label{assumption:SmoothFunctions}
    For every $i \in [N]$ the loss $\ell_i$ is $L$-smooth, i.e., has $L$-Lipschitz gradient.
\end{assumption}
To implement the BROO via variance reduction techniques, we first rewrite the objective function as a weighted finite sum:
\begin{equation*}
    \upsilonepsreg(x, y) = \sum_{i \in [N]} \bp_i \upsilon_i(x, y)  
    \text{~~where~~}
    \upsilon_i(x, y) \defeq \frac{\psieps^*\prn*{\ell_i(x) - G y}}{\bp_i} +  Gy + \frac{\lambda}{2}\norm{x-\bx}^2
\end{equation*}
and, as before $\bp_i = \psieps^*\prn*{\ell_i(\bx) - G \by}$ for some ball center $\bx\in\xset$ and $\by=\argmin_{y \in \R} \upsiloneps(\bx,y)$. In the following lemma, we bound the smoothness of the functions $\upsilon_i$, deferring  the proof to \Cref{ssec:VarianceReductionSVRGproofs}. 
\begin{restatable}{lem}{smoothnessOfGamma}\label{lem:smoothnessOfGamma}
    For any $i\in[N]$, let $\ell_i $ be $G$-Lipschitz and $L$ -smooth, let $\reps=\frac{\eps'}{G}$ and $\lambda=O\prn[\big]{\frac{G}{\reps}}$. The restriction of $\upsilon_i$ to $x\in\ball_{\reps}(\bx)$ and $y\in[\by-\reps,\by+\reps]$ is $O\prn*{G}$-Lipschitz and $O\prn[\big]{L +\frac{G^2}{\epsilon'} }$-smooth.
\end{restatable}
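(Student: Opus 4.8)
The plan is to differentiate $\upsilon_i$ and control the resulting expressions through the stability of the likelihood ratio $\rho_i(x,y)\defeq{\psieps^*}'(\ell_i(x)-Gy)/\bp_i$ established in \Cref{ssec:Stability of the likelihood ratio}. Differentiating gives $\grad_x\upsilon_i(x,y)=\rho_i(x,y)\grad\ell_i(x)+\lambda(x-\bx)$ and $\grad_y\upsilon_i(x,y)=G\prn{1-\rho_i(x,y)}$. As in the discussion preceding \Cref{lem:BoundEtaDistance}, \Cref{lem:psiProperties} together with $\reps=\eps'/G$, the $G$-Lipschitzness of $\ell_i$, and $\abs{y-\by}\le\reps$ gives $\log\rho_i(x,y)\le\tfrac{1}{\eps'}\prn{\abs{\ell_i(x)-\ell_i(\bx)}+G\abs{y-\by}}\le2$, so $0\le\rho_i(x,y)\le e^2$ on the restricted domain. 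The Lipschitz claim is then immediate: $\norm{\grad_x\upsilon_i(x,y)}\le e^2 G+\lambda\reps=O(G)$ using $\lambda=O(G/\reps)$, and $\abs{\grad_y\upsilon_i(x,y)}\le(1+e^2)G=O(G)$; since the domain $\ball_{\reps}(\bx)\times[\by-\reps,\by+\reps]$ is convex, these pointwise gradient bounds show $\upsilon_i$ is $O(G)$-Lipschitz there.

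The crux of the smoothness bound is to upgrade the $\tfrac{1}{\eps'}$-Lipschitzness of $\log{\psieps^*}'$ (\Cref{lem:psiProperties}) into a genuine Lipschitz bound on $v\mapsto{\psieps^*}'(v)$ over the range of $v=\ell_i(x)-Gy$ reached on the restricted domain, which is an interval of length at most $4\eps'$. For two such arguments $v,v'$, set $a\defeq\log{\psieps^*}'(v)-\log{\psieps^*}'(v')$, so that $\abs{a}\le\abs{v-v'}/\eps'\le4$; combining $\abs{e^{a}-1}\le\abs{a}e^{\abs{a}}$ with the bound ${\psieps^*}'(v')\le e^2\bp_i$ (which is exactly $\rho_i\le e^2$ rewritten) yields $\abs{{\psieps^*}'(v)-{\psieps^*}'(v')}\le e^{6}\bp_i\abs{v-v'}/\eps'$. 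Hence $\rho_i$ is $O(1/\eps')$-Lipschitz as a function of $v$, and since $v(x,y)=\ell_i(x)-Gy$ is $O(G)$-Lipschitz jointly in $(x,y)$ (again by $G$-Lipschitzness of $\ell_i$), the map $(x,y)\mapsto\rho_i(x,y)$ is $O(G/\eps')$-Lipschitz on the restricted domain.

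Given this, I would bound the Lipschitz constant of the joint gradient directly. For $z=(x,y)$ and $z'=(x',y')$ in the restricted domain,
\begin{align*}
\norm{\grad_x\upsilon_i(z)-\grad_x\upsilon_i(z')}&\le\rho_i(z)\norm{\grad\ell_i(x)-\grad\ell_i(x')}+\abs{\rho_i(z)-\rho_i(z')}\,\norm{\grad\ell_i(x')}+\lambda\norm{x-x'}\\
&\le e^{2}L\norm{x-x'}+O(G/\eps')\cdot G\cdot\norm{z-z'}+\lambda\norm{x-x'},
\end{align*}
while $\abs{\grad_y\upsilon_i(z)-\grad_y\upsilon_i(z')}=G\abs{\rho_i(z)-\rho_i(z')}\le O(G^{2}/\eps')\norm{z-z'}$. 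Since $\lambda=O(G/\reps)=O(G^{2}/\eps')$, adding the two bounds shows the joint gradient is $O(L+G^{2}/\eps')$-Lipschitz on the restricted (convex) domain, i.e.\ $\upsilon_i$ is $O(L+G^{2}/\eps')$-smooth there. (If one is willing to assume $\psieps^*\in C^{2}$, one may instead bound the Hessian blocks of $\upsilon_i$ using ${\psieps^*}''(v)\le{\psieps^*}'(v)/\eps'$, which also follows from \Cref{lem:psiProperties}; the route above avoids that extra regularity.) The main obstacle is the step converting the log-Lipschitz bound of \Cref{lem:psiProperties} on ${\psieps^*}'$ into a pointwise Lipschitz bound, and it is handled by exploiting the small ball radius $\reps=\eps'/G$, which confines $v$ to a short interval and thereby keeps $\rho_i$ (hence $e^{\abs{a}}$) bounded.
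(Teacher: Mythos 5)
Your proof is correct, and for the smoothness half it takes a genuinely different route from the paper. The Lipschitz part matches the paper's argument: both differentiate $\upsilon_i$, use \Cref{lem:psiProperties} together with the ball constraints to get $\rho_i(x,y)={\psieps^*}'(\ell_i(x)-Gy)/\bp_i\le e^2$, and conclude $\norm{\nabla\upsilon_i}=O(G)$. For smoothness, however, the paper computes the Hessian blocks $\nabla_x^2\upsilon_i$, $\nabla_{xy}\upsilon_i$, $\nabla_y^2\upsilon_i$, bounds them using ${\psieps^*}''(v)/{\psieps^*}'(v)=(\log{\psieps^*}'(v))'\le1/\eps'$ and the ratio bound, and then invokes \Cref{lem:OpNormBound} to assemble an operator-norm bound on the full Hessian. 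You instead upgrade the $1/\eps'$-Lipschitzness of $\log{\psieps^*}'$ into a pointwise Lipschitz bound on ${\psieps^*}'$ itself (using that $v=\ell_i(x)-Gy$ ranges over an interval of length $\le4\eps'$, so the exponential correction factor is $O(1)$), deduce that $\rho_i$ is $O(G/\eps')$-Lipschitz in $(x,y)$, and then bound $\norm{\nabla\upsilon_i(z)-\nabla\upsilon_i(z')}$ directly by a first-order decomposition. The trade-off is exactly what you note: the paper's Hessian route is a shorter calculation when $\psieps^*$ and $\ell_i$ are twice differentiable, while your argument needs only one derivative of $\psieps^*$ and the Lipschitzness of $\nabla\ell_i$, so it applies under strictly weaker regularity and sidesteps the need for the helper bound \Cref{lem:OpNormBound}. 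One small slip: at the end you say you "add" the two block bounds to get the joint Lipschitz constant; combining the $x$- and $y$-components of the gradient under the Euclidean norm gives a $\sqrt{2}$-times-the-max bound rather than a sum, but this only affects constants and the conclusion is unchanged.
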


Since $\upsilonepsreg$ is a finite sum of smooth functions, we can obtain reduced-variance gradient estimates by the standard SVRG technique~\cite{johnson2013accelerating}. For any reference point $x',y'$ (not necessarily equal to $\bx,\by$), the estimator is
\begin{align}\label{eq:katyushaSmoothGradEst}
	\hat{g}_{x',y'}(x,y) = \nabla \upsiloneps(x',y') + \nabla \upsilon_i(x,y) -  \nabla \upsilon_i(x',y'),
\end{align}
where $\grad$ is with respect to the vector $[x,y]$. 

Similar to non-smooth case, obtaining an efficient BROO implementation is complicated by the fact that $\upsilonepsreg$ is strongly-convex in $x$ but not in $y$. Our solution is also similar: we propose a restart scheme and minimize over $y$ exactly between restarts (\Cref{alg:DualKatyusha} in \Cref{ssec:VarianceReductionSVRGproofs}). More precisely, we repeatedly apply an accelerated variance reduction scheme that does not require strong convexity, such as Katyusha$^{ns}$~\cite{allen2018katyusha}, each time with complexity budget $\Otil{N + \sqrt{N L'/\lambda}}$, where $L'=L+G^2/\epsilon'$. We start each repetition by the $x$ variable output by the previous Katyusha$^{ns}$ call, and with $y=\argmin_{y\in\R}\upsilonepsreg(y,x)$ for that $x$. Using \Cref{lem:BoundEtaDistance} in lieu of strong-convexity in $y$, we show that error halves after each restart, and therefore a logarithmic number of restarts suffices. We arrive at the following complexity bound (see proof in \Cref{ssec:VarianceReductionSVRGproofs}).

\begin{restatable}{theorem}{DualSVRGBROOcomplexity}\label{thm:DualSVRGBROOcomplexity}
    Let each $\ell_{i}$ satisfy \Cref{assumption:GlobalAssumption,assumption:SmoothFunctions}, 
    let $\epsilon,\lambda, \delta >0 $, and $\reps = \frac{\eps}{2G\log N}$. For any query point $\bx \in \R^d$, regularization strength $\lambda \le O(\frac{G}{r_\epsilon})$ and accuracy $\delta$, \Cref{alg:DualKatyusha} outputs a valid $\reps$-BROO response for $\Leps$ 
    and has complexity 
    $        \broocost = O\prn[\Big]{ \prn[\Big]{N+\frac{\sqrt{N}\prn*{G+\sqrt{\epsilon'L}}}{\sqrt{\lambda\epsilon'}}}\log{\frac{G r_{\eps}}{\lambda \delta^2}}}$. Consequently,  
    the complexity of finding an $\epsilon$-suboptimal minimizer of $\Lpenalized$~\eqref{eq:penalizedFdiver} with probability at least $\half$ is
    \begin{align*}
        O\prn*{ N\prn*{\frac{GR}{\epsilon}}^{2/3}\log^{14/3} \innerLog+ \sqrt{N}\prn*{\frac{GR}{\epsilon} + \sqrt{\frac{L R^2}{\epsilon}}}\log^{5/2} \innerLog }
        \text{~~where~~} \innerLog \defeq N\frac{GR}{\eps}.
    \end{align*}
\end{restatable}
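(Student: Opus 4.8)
The plan is to prove the $r_\epsilon$-BROO guarantee and then invoke \Cref{prop:MainProp}.

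\emph{Reducing the BROO to box-constrained minimization.} An $\reps$-BROO for $\Leps$ at $\bx$ with parameters $\lambda,\delta$ must return $x$ with $\E\brk*{\Leps(x)+\tfrac{\lambda}{2}\norm{x-\bx}^2}\le \min_{x'\in\ball_{\reps}(\bx)}\crl*{\Leps(x')+\tfrac{\lambda}{2}\norm{x'-\bx}^2}+\tfrac{\lambda}{2}\delta^2$. Since $\Leps(x)+\tfrac{\lambda}{2}\norm{x-\bx}^2=\min_y\upsilonepsreg(x,y)$, and, by \Cref{lem:BoundEtaDistance}, the joint minimizer $(x^\star,y^\star)$ of $\upsilonepsreg$ over $\ball_{\reps}(\bx)\times\R$ has $y^\star=y^\star(x^\star)$ with $\abs{y^\star-\by}=\abs{y^\star(x^\star)-y^\star(\bx)}\le\norm{x^\star-\bx}\le\reps$, it suffices to approximately minimize $\upsilonepsreg$ over the box $\ball_{\reps}(\bx)\times[\by-\reps,\by+\reps]$. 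I will also use that $\Leps$ is convex (a maximum of convex functions) and $G$-Lipschitz, so that $x\mapsto\min_y\upsilonepsreg(x,y)=\Leps(x)+\tfrac{\lambda}{2}\norm{x-\bx}^2$ is $\lambda$-strongly convex on the ball.

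\emph{The restart scheme.} \Cref{alg:DualKatyusha} proceeds in restarts $k=1,2,\dots$: from $x_{k-1}$ and $y_{k-1}=\argmin_y\upsilonepsreg(x_{k-1},y)$ it runs Katyusha$^{ns}$~\cite{allen2018katyusha} on the weighted finite sum $\upsilonepsreg=\sum_i\bp_i\upsilon_i$ (each $\upsilon_i$ convex by construction and, by \Cref{lem:smoothnessOfGamma}, $O(G)$-Lipschitz and $O(L')$-smooth with $L'=L+G^2/\epsilon'$) using the SVRG estimator~\eqref{eq:katyushaSmoothGradEst} and prox onto the box, for a fixed budget of $B=\Theta(N+\sqrt{NL'/\lambda})$ component-gradient computations; it outputs $x_k$ and re-solves $y_k=\argmin_y\upsilonepsreg(x_k,y)$ at cost $\Otil{N}$. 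With $\Phi_k\defeq\upsilonepsreg(x_k,y_k)-\upsilonepsreg(x^\star,y^\star)$, the goal is $\E[\Phi_k\mid x_{k-1}]\le\tfrac12\Phi_{k-1}$. This follows by combining (a) $\lambda$-strong convexity in $x$, which gives $\norm{x_{k-1}-x^\star}^2\le\tfrac{2}{\lambda}\Phi_{k-1}$, together with \Cref{lem:BoundEtaDistance}, which gives $\abs{y_{k-1}-y^\star}=\abs{y^\star(x_{k-1})-y^\star(x^\star)}\le\norm{x_{k-1}-x^\star}$, so the Katyusha input lies at squared Euclidean distance $\le\tfrac{4}{\lambda}\Phi_{k-1}$ from $(x^\star,y^\star)$ while having gap $\Phi_{k-1}$; and (b) the non-strongly-convex rate of Katyusha$^{ns}$: from gap $\Delta_0$ and squared distance $D_0^2$ it reaches gap $\tfrac12\Delta_0$ in $O(N+\sqrt{NL'D_0^2/\Delta_0})$ component-gradient computations, which here is $O(N+\sqrt{NL'/\lambda})=B$. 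Re-solving $y$ only decreases the gap, so the contraction holds.

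\emph{Counting restarts and invoking \Cref{prop:MainProp}.} Starting from $(\bx,\by)$, the initial gap is $\Phi_0=\Leps(\bx)-\min_{x'\in\ball_{\reps}(\bx)}\crl*{\Leps(x')+\tfrac{\lambda}{2}\norm{x'-\bx}^2}\le\Leps(\bx)-\Leps(x^\star)\le G\reps=\epsilon'$, so $K=O\prn[\big]{\log(\epsilon'/(\lambda\delta^2))}=O\prn[\big]{\log(G\reps/(\lambda\delta^2))}$ restarts yield $\E[\Phi_K]\le\tfrac{\lambda}{2}\delta^2$, i.e.\ a valid BROO response, with total cost $\broocost=O\prn[\big]{K(N+\sqrt{NL'/\lambda})}$, which matches the stated bound after substituting $\sqrt{NL'/\lambda}=\Theta\prn[\big]{\sqrt{N}(G+\sqrt{\epsilon'L})/\sqrt{\lambda\epsilon'}}$. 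Since $\broocost$ is non-increasing in $\lambda$ and logarithmic in $1/\delta$, and $\Leps$ satisfies \Cref{assumption:GlobalAssumption} (Lipschitz constant $G$, domain diameter $R$), \Cref{prop:MainProp} applies with $r=\reps=\epsilon'/G$, target accuracy $\epsilon/2$, $\ncost=\Otil{N}$, $\meps=\Otil{1}$, and $\lmin=\Theta\prn[\big]{\meps^2\epsilon\reps^{-4/3}R^{-2/3}}$. Plugging into~\eqref{eq:MLMCBROOguarantee}, the $N$-part of $\broocost$ contributes $\Otil{N(R/\reps)^{2/3}}=\Otil{N(GR/\epsilon)^{2/3}}$ and the $\sqrt{NL'/\lambda}$-part at $\lambda=\lmin$ contributes $\Otil{(R/\reps)^{2/3}\sqrt{NL'/\lmin}}=\Otil{\sqrt{N}R\sqrt{L'/\epsilon}}=\Otil{\sqrt{N}\prn[\big]{GR/\epsilon+\sqrt{LR^2/\epsilon}}}$ using $L'=L+G^2/\epsilon'$ and $\epsilon'=\epsilon/(2\log N)$; bookkeeping the $\meps$ and $\log\innerLog$ powers (with $\innerLog=N\tfrac{GR}{\epsilon}$) produces the claimed exponents. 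Finally, by \Cref{lem:SMapproxMaxFdiv} an $\epsilon/2$-minimizer of $\Leps$ is an $\epsilon$-minimizer of $\Lpenalized$.

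\emph{Main obstacle.} The hard part will be the per-restart contraction: because $\upsilonepsreg$ is strongly convex only in $x$, the Katyusha$^{ns}$ guarantee — phrased in terms of Euclidean distance to the optimum — must be converted back into a shrinking function-value gap, and this is exactly where \Cref{lem:BoundEtaDistance}'s $1$-Lipschitzness of $x\mapsto y^\star(x)$ is essential: it lets us charge the $y$-component of the distance to the $x$-component, and hence, via strong convexity in $x$, to $\Phi_{k-1}$, so that the single fixed budget $B$ — simultaneously $\Omega(N)$ and $\Omega(\sqrt{NL'/\lambda})$ — halves the conditional expected gap. Verifying this conditional bound along the random restart iterates, and that the $\Otil{N}$-cost exact $y$-minimization (performed only $K=\Otil{1}$ times) meshes correctly with the budgeted accelerated solver, is the remaining technical work; the final complexity then follows from the routine substitutions above.
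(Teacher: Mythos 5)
Your proposal is correct and mirrors the paper's own argument: reduce the BROO to box-constrained minimization of $\upsilonepsreg$ using \Cref{lem:BoundEtaDistance}, establish the per-restart halving of the gap by combining $\lambda$-strong convexity in $x$ with the $1$-Lipschitzness of $x\mapsto y^\star(x)$ to bound the Euclidean distance fed into the non-strongly-convex accelerated VR solver, count $O(\log(G\reps/(\lambda\delta^2)))$ restarts from the initial gap bound $G\reps$, and plug into \Cref{prop:MainProp} and \Cref{lem:SMapproxMaxFdiv}. The only cosmetic difference is that the paper abstracts the inner solver via \Cref{def:ValidVR} and instantiates it with Katyusha$^{\mathsf{sf}}$ (\Cref{lem:katyusha-is-valid}) rather than naming Katyusha$^{ns}$ directly, but the stated rate and the role it plays are identical.
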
 
\section{Discussion}\label{sec:discussion}
We now discuss possible improvements and extensions of our results.  

First, it would be interesting to extend our approach to DRO objectives
$\max_{q\in\uset} \sum_{i\in [N]} q_i\ell_i(x)$ with uncertainty set $\uset$ that is an arbitrary subset of the simplex. While the subgradient method, the primal-dual method (\Cref{ssec:PrimalDualRegretBound}), and ``AGD on softmax'' (\Cref{ssec:AGDonSoftmax}) all apply to any $\uset\subseteq \Delta^N$, our methods strongly rely on the structure of $\uset$ induced by Group- $f$-divergence DRO, and extending them to unstructured $\uset$'s seems challenging.

Second, it would be interesting to generalize our results in the ``opposite'' direction of getting better complexity bounds for problems with additional structure. For Group-DRO our bounds are essentially optimal when the number of groups $\ngroups=\Omega(N)$, but are suboptimal when $\ngroups=O(1)$.  
We leave it as a question for further research if it is possible to obtain a stronger bound such as $\widetilde{O}\prn*{N+M\eps^{-2/3}+\eps^{-2}}$, which recovers our result for $M=N$ but improves on it for smaller values of $M$.
Taking CVaR at level $\alpha$ as a special case of $f$-divergence DRO, our bounds are optimal when $\alpha$ is close to $1/N$ but suboptimal for larger value of $\alpha$; it would be interesting to obtain bounds such as $\widetilde{O}\prn*{N+\alpha^{-1}\eps^{-2/3}+\eps^{-2}}$.

A third possible extension of our research is DRO in the non-convex setting. 
For this purpose, it might be possible to use the technique of \citet{carmon2017convex} for turning accelerated convex optimization algorithms to improved-complexity methods for smooth non-convex optimization.

Finally, we note that turning the algorithms we propose into practical DRO methods faces several challenges. A main challenge is the costly bisection procedure common to all Monteiro-Svaiter-type acceleration schemes~\cite{monteiro2013accelerated,gasnikov19near,carmon2020acceleration,song2021unified}. 
Moreover, similarly to~\cite{carmon2021thinking}, our ball radius $\reps$ and smoothing level $\epsilon'$ depend on the desired accuracy $\epsilon$; a more adaptive setting for these parameters is likely important. 
Our work indicates that the ball optimization approach offers significant complexity gains for DRO, motivating future research on translating it into practice.

\arxiv{\newcommand{\acks}[1]{\section*{Acknowledgment} #1}}
\acks{
	This research was partially supported by the Israeli Science Foundation (ISF) grant no.\ 2486/21, the Len Blavatnik and the Blavatnik Family foundation, and The Yandex Initiative for Machine Learning.
}

\arxiv{\bibliographystyle{abbrvnat}}

\appendix
\arxiv{\part*{Appendix}} 
\section{Alternative algorithms for solving DRO problems}\label{sec:AppOther}
\subsection{Primal-dual stochastic mirror descent}\label{ssec:PrimalDualRegretBound}
In this section, we present a primal-dual method capable of solving all the DRO problems our paper considers, under an additional assumption of bounded losses: for every $j$ and $x$ we assume $\abs*{\ell_j(x)} \le B_\ell$.  
Consider the primal-dual problem 
\begin{equation}\label{eq:FunctionPrimalDual}
     \minimize_{x\in \xset} \max_{q\in \uset} \crl[\Bigg]{\mc{L}_{\mathrm{pd}}(x,q)\defeq \sum_{i \in [m]} q_i \mc{L}_i(x) }
\end{equation}
where $\xset\subseteq B_R(x_0)$ is a closed convex set as before and $\uset$ is now an arbitrary closed convex subset of the simplex $\Delta^m$ and $\mc{L}_i(x) = \sum_{j \in [N]}w_{ij} \ell_j(x)$ are ``group losses'' with $w_i \in \Delta^N$ for every $i\in [m]$. This formulation subsumes both Group DRO (where $m=M$ and $\uset=\Delta^M$) and $f$-divergence DRO (where $m=N$, $\mc{L}_i(x) = \ell_i(x)$, and $\uset$ is an $f$-divergence ball).

As discussed in the introduction, several works have proposed primal-dual methods for DRO, but we could not find in these works the precise rate we prove here (in \Cref{prop:primal-dual} below) in its full generality. Our proof is a straightforward specialization of the more general results of \citet{carmon2020coordinate}.

The particular algorithm we consider is primal-dual stochastic mirror descent, with distances generated by the squared Euclidean norm on $\xset$ and entropy on $\uset$ and gradient clipping for the $\uset$ iterates, corresponding to the following recursion:
\begin{equation}\label{eq:primalDualAlg}
	\begin{aligned}
	x_{t+1} &= \argmin_{x\in\xset}\crl*{ \inner{\eta \gx(x_t,q_t)}{x} + \frac{
		\log m}{R^2}\norm{x-x_t}^2}
	~~\mbox{and}~~ \\
	q_{t+1} &= \argmax_{x\in\xset}\crl*{ \inner{\Pi_{[-1,1]^m}(\eta \gq(x_t,q_t))}{q} + \sum_{i\in [m]} [q]_i \log \frac{[q]_i}{[q_t]_i}},
	\end{aligned}
\end{equation}
where $\eta$ is a step-size parameter, $\Pi_{[-1,1]^m}$ is the Euclidean projection to the unit box (i.e., entry-wise clipping to $[-1,1]$), and $\gx$ and $\gq$ are unbiased estimators for $\grad_x \mc{L}_{\mathrm{pd}}$ and $\grad_q \mc{L}_{\mathrm{pd}}$, respectively, given by 
\begin{align}\label{eq:MirrorDescentGrad}
	& \gx(z) \defeq \nabla \ell_j(\zx)  \text{~~with $i \sim z^q$ and $j \sim w_i$ }
	\\ \nonumber & 
	\gq(z) \defeq m  \ell_j(\zx) e_i  \text{~~with $i \sim \mathsf{Unif}([m])$ and $j \sim w_i$},
\end{align}
with $e_i\in\R^m$ being the $i$th standard basis vector in $\R^m$.

This method yields the following convergence guarantees.
\begin{prop}\label{prop:primal-dual}
	Assume that each  $\ell_j$ convex and $G$-Lipschitz and satisfies $\abs*{\ell_j(x)} \le B_\ell$ for every $x\in\xset$.  
	For $T\in \N$ let  $\bar{x}_T = \frac{1}{T}\sum_{t=0}^T x_t$ and $\bar{q}_T = \frac{1}{T}\sum_{t=0}^T q_t$, where $\{x_t, q_t\}$ are the iterates  defined in~\eqref{eq:primalDualAlg}, with $\eta = O\prn*{\frac{\epsilon \log m}{G^2 R^2 + m B_\ell^2}}$. Then, for any $\epsilon >0$, if $T \ge O\prn*{\frac{G^2 R^2 + B_\ell^2 m \log m }{\eps^2}}$ we have that 
	\begin{equation*}
		\E\mc{L}_\textup{DRO}(\bx_T) - \min_{\xopt\in\xset} \mc{L}_\textup{DRO}(\xopt) \le  
		\E \max_{q\in\uset} \mc{L}_{\mathrm{pd}}(\bx_T,q) - \E \min_{x\in\xset}  \mc{L}_{\mathrm{pd}}(x,\bar{q}_T)  \le\eps,
	\end{equation*}
	where $\mc{L}_\textup{DRO}(x) = \max_{q\in\uset} \mc{L}_{\mathrm{pd}}(x,q)$.
\end{prop}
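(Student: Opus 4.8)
The recursion~\eqref{eq:primalDualAlg} is stochastic primal--dual mirror descent for the bilinear-in-$q$ saddle point problem~\eqref{eq:FunctionPrimalDual}, so the plan is to specialize the regret analysis of~\citet{carmon2020coordinate} to the gradient estimators~\eqref{eq:MirrorDescentGrad}. The first inequality in the statement is immediate: $\mc{L}_\textup{DRO}(\bx_T)=\max_{q\in\uset}\mc{L}_{\mathrm{pd}}(\bx_T,q)$ by definition, while $\min_{\xopt\in\xset}\mc{L}_\textup{DRO}(\xopt)=\min_{\xopt}\max_{q}\mc{L}_{\mathrm{pd}}(\xopt,q)\ge\min_{x\in\xset}\mc{L}_{\mathrm{pd}}(x,\bar q_T)$; subtracting and taking expectations gives it, so it remains to bound the duality gap $\E[\max_{q}\mc{L}_{\mathrm{pd}}(\bx_T,q)-\min_{x}\mc{L}_{\mathrm{pd}}(x,\bar q_T)]$ by $\epsilon$. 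Since $\mc{L}_{\mathrm{pd}}$ is convex in $x$ and linear in $q$, Jensen's inequality and the gradient inequality give, for any $(x,q)\in\xset\times\uset$, the bound $\mc{L}_{\mathrm{pd}}(\bx_T,q)-\mc{L}_{\mathrm{pd}}(x,\bar q_T)\le\frac1T\sum_{t=0}^{T-1}\big(\langle\grad_x\mc{L}_{\mathrm{pd}}(x_t,q_t),x_t-x\rangle+\langle\grad_q\mc{L}_{\mathrm{pd}}(x_t,q_t),q-q_t\rangle\big)$; taking the supremum over $(x,q)$ and then expectations reduces matters to bounding a primal and a dual regret sum in expectation.

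\emph{Primal and dual regret.} The $x$-update is Euclidean mirror descent with effective step size $\Theta(\eta R^2/\log m)$ and unbiased gradient $\gx(x_t,q_t)=\nabla\ell_j(x_t)$ with $\|\gx\|\le G$ almost surely; the standard online-gradient-descent bound with $\|x-x_0\|\le R$ yields primal regret at most $\frac{\log m}{\eta}+O(\frac{\eta R^2 G^2}{\log m}T)$. The $q$-update is entropic mirror ascent over $\uset\subseteq\Delta^m$ with step size $\eta$ and the \emph{clipped} estimator $\frac1\eta\Pi_{[-1,1]^m}(\eta\gq(x_t,q_t))$; the clipping enforces the coordinatewise lower bound required for the local-norm regret estimate of~\citet{carmon2020coordinate}, whose crucial feature is that the bias from truncation is dominated by the local-norm variance. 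This gives dual regret at most $\frac{\mathrm{KL}(q\,\|\,q_0)}{\eta}+O(\eta)\sum_t\E[\sum_i[q_t]_i(\gq(x_t,q_t))_i^2]$, where with $q_0$ uniform $\mathrm{KL}(q\,\|\,q_0)\le\log m$, and since $\gq(x_t,q_t)=m\,\ell_j(x_t)e_i$ with $i\sim\mathsf{Unif}([m])$, $j\sim w_i$ and $|\ell_j|\le B_\ell$, the local-norm second moment equals $m\sum_i[q_t]_i\E_{j\sim w_i}\ell_j(x_t)^2\le mB_\ell^2$; hence the dual regret is at most $\frac{\log m}{\eta}+O(\eta m B_\ell^2 T)$.

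\emph{Combining and the main obstacle.} Summing the two regret bounds and dividing by $T$ gives $\E[\mathrm{gap}]\le\frac{2\log m}{\eta T}+O(\frac{\eta R^2 G^2}{\log m}+\eta m B_\ell^2)$; taking $\eta=\Theta(\frac{\epsilon\log m}{G^2R^2+mB_\ell^2})$ makes the last two terms $O(\epsilon)$ up to logarithmic factors, and then $T=\Omega(\frac{G^2R^2+mB_\ell^2\log m}{\epsilon^2})$ pushes the first term below $\epsilon$. The main obstacle is the dual side: the raw estimator $\gq$ has entries of size up to $mB_\ell$ and thus does not fit the entropic mirror-descent analysis directly, so we must combine the clipping device with the local-norm argument and --- crucially --- control the truncation bias, which is exactly the point at which we invoke~\citet{carmon2020coordinate}. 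This is also where the additional uniform bound $|\ell_j(x)|\le B_\ell$ is essential, since without it the local-norm variance $\E[\sum_i[q_t]_i(\gq)_i^2]$ need not be finite.
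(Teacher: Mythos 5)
Your proposal is correct and takes essentially the same route as the paper: both cast the iteration~\eqref{eq:primalDualAlg} as stochastic primal--dual mirror descent with Euclidean/entropic geometry, compute the same local-norm second-moment bound $\frac{G^2R^2}{\log m}+mB_\ell^2$, and rely on the local-norm-plus-clipping machinery of~\citet{carmon2020coordinate} as the key technical ingredient. The only cosmetic difference is that the paper verifies the quintuple $(\mc{Z},\|\cdot\|_\cdot,r,\Theta,\mathrm{clip})$ is a valid local norm setup and invokes~\cite[Proposition 2]{carmon2020coordinate} as a black box, whereas you unpack the regret decomposition into explicit primal and dual sums and cite~\cite{carmon2020coordinate} only for the dual-side truncation-bias step; the underlying argument and constants (e.g.\ $\Theta=2\log m$, dual second moment $\le mB_\ell^2$) are identical.
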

\begin{proof}
The proposition is a direct corollary of a more general result by  \citet{carmon2020coordinate}. To show this, we rewrite the iterations~\eqref{eq:primalDualAlg} using ``local norm setup'' notation of~\cite{carmon2020coordinate}. In particular, let $\zset = \xset\times \uset$ and for every $z = (\zx, \zq) \in \zset$ define the local norm of $\delta \in \zset^*$ at $z$ as
\[\norm*{\delta}_z \defeq \sqrt{ \frac{R^2}{\log m}\norm*{\deltax}^2_2 + \sum_{i \in [m]} [\zq]_i [\deltaq]_i^2}. \] 
In addition, we define the generating distance function 
\[ r(z) = r(\zx,\zq) \defeq \frac{\log m}{R^2}\norm*{\zx}^2_2 + \sum_{i \in [m]} [\zq]_i \log[\zq]_i\]
and write its associated Bregman divergence as $V_{z}(z') = r(z')-r(z)-\inner{\grad r(z)}{z'-z}$. Next, we let $\Theta \defeq \max_{z,z' \in \mc{Z}} \crl*{r(z)- r(z')}$ and observe that
since $\zx \in \xset \subset \ball_R(x_0)$ and $\zq \in \uset \subset \Delta^m$, then $\Theta = 2\log m$. Last, we define the function clip$: \mc{Z}^* \rightarrow \mc{Z}^*$ as follows:
\[ \mathrm{clip}(\deltax,\deltaq) \defeq \prn*{\deltax, \Pi_{[-1,1]^m}(\deltaq)},\]
where $\Pi_{[-1,1]^m}$ denotes entry-wise clipping to $[-1,1]$. By an argument directly analogous to \cite[Proposition 1]{carmon2020coordinate}, the quintuplet $\prn*{\mc{Z}, \norm{\cdot}., r, \Theta, \mathrm{clip}}$ forms a valid local norm setup \cite[Definition 1]{carmon2020coordinate}.

With this notation, the iterations~\eqref{eq:primalDualAlg} have the concise form  
\begin{align}\label{eq:MirrorDescentUpdates}
	z_{t+1} = \argmin_{w \in \mc{Z}} \crl*{
		\tri*{\mathrm{clip}\prn*{\eta \ghat(z_t)}, w} +
		V_{z_t}(w)
	},
\end{align} 
where $ \ghat(z)\defeq \prn*{\gx, -\gq}$, with $\gx$ and $\gq$ as defined in~\eqref{eq:MirrorDescentGrad} above. It is then straight-forward to verify that $\E \ghat(z) = \prn*{\nabla_x \mc{L}_{\mathrm{pd}}(z), - \nabla_q \mc{L}_{\mathrm{pd}}(z)}$ for every $z\in\zset$, and that
\begin{align*}
	\E \brk*{\norm{\ghat(z)}^2_w} & = \E_{i \sim \zq, j \sim w_i} \brk*{\frac{
		R^2}{\log m}\norm*{\nabla \ell_j(\zx)}^2} + \E_{i \sim \mathrm{Unif}([m]), j \sim w_i}\brk*{ [\zq]_i m^2 \ell_j^2(\zx)}
	\le \frac{G^2 R^2}{\log m} + m B_\ell^2  
\end{align*}
for every $z,w\in\zset$. Therefore, $\ghat$ is an $L$-local estimator \cite[Definition 3]{carmon2020coordinate} with $L^2=G^2 R^2  / (log m) + m B_\ell^2$ so that $L^2\Theta = 2G^2 R^2 + 2B_\ell^2 m\log m$.  \Cref{prop:primal-dual} now follows immediately from \cite[Proposition 2]{carmon2020coordinate}.
\end{proof}

\subsection{AGD on the softmax: complexity bound}\label{ssec:AGDonSoftmax}
In this appendix we briefly develop the complexity guarantees of the ``AGD on softmax'' approach mentioned in the introduction. While the idea is well known, we could not find in the literature an analysis of the method for the general DRO setting (i.e., maximization over $q$ in arbitrary subsets of the simplex), so we provide it here.

We wish to minimize, over $x\in\xset$,
 \[\mc{L}_\textup{DRO}(x) \defeq \max_{q \in \uset}\sum_{i \in [N]}q_i \ell_i(x)\]
where each $\ell_i$ is convex, $G$-Lipschitz and $L$-smooth and $\uset$ is an arbitrary closed convex subset of the simplex $\Delta^N$; note that this includes both Group DRO and $f$-divergence DRO as special cases.
We define the approximation 
\[\widetilde{\mc{L}}_\textup{DRO}(x) \defeq  \max_{q \in \uset \subset \Delta^N} \crl*{ \sum_{i \in [N]}q_i \ell_i(x) - \eps' q_i \log q_i} \] 
with $\eps' = \frac{\eps}{2 \log N}$.
In addition, since $\sum_{i \in [N]}  q_i \log q_i \in [- \log N,0]$ we have that 
\begin{align*}
    \abs*{{\mc{L}}_\textup{DRO}(x) - \widetilde{\mc{L}}_\textup{DRO}(x)} &= \abs*{ \max_{q \in \uset \subset \Delta^N}\crl*{\sum_{i \in [N]}q_i \ell_i(x)} -  \max_{q \subset \Delta^N}\crl*{ \sum_{i \in [N]}q_i \ell_i(x) - \eps' q_i \log q_i}}
    \\ & \le  \abs*{ \sum_{i \in [N]}  \eps' q_i \log q_i} \le \eps/2.
\end{align*}
Thus for $x$ satisfying  $  \widetilde{\mc{L}}_\textup{DRO}(x)-\min_{x\in\xset} \widetilde{\mc{L}}_\textup{DRO}(x) \le \eps/2$
we have that ${\mc{L}}_\textup{DRO}(x)- \min_{\xopt\in\xset} {\mc{L}}_\textup{DRO}(\xopt) \le \eps $ as well.

Next, we show that $ \widetilde{\mc{L}}_\textup{DRO}$ is $\widetilde{O}\prn*{1/\eps}$-smooth when each $\ell_i$ is $O(1/\epsilon)$-smooth. 
The function $\Psi(q)= \sum_{i \in [N]}\eps' q_i \log (q_i)$ is $\eps'$-strongly convex w.r.t to the $\norm{\cdot}_1$ norm, therefore the conjugate function $\Psi^*(\cdot)$ is $\frac{1}{\eps'}$-smooth w.r.t to the dual norm $\norm{\cdot}_\infty$, such that 
\begin{equation} \label{eq:SmoothConj}
    \norm{\nabla \Psi^{*}(v)- \nabla \Psi^{*}(v')}_1 \le \frac{1}{\eps'}\norm*{v-v'}_\infty
\end{equation}
In addition, let $q^{\star}\prn*{\ell(x)} = \nabla  \Psi^{*}\prn*{\ell(x)}= \argmax_{q \in \uset \subset \Delta^N} \crl*{\ell(x)-\Psi(q)}$ and note that $ \widetilde{\mc{L}}_\textup{DRO}(x) = \Psi^{*}\prn*{\ell(x)}$. 
Using this for every $x,y \in \xset$ we have 
\begin{align*}
    \norm*{ \nabla  \widetilde{\mc{L}}_\textup{DRO}(x) - \nabla  \widetilde{\mc{L}}_\textup{DRO}(y)} &= \norm*{ \sum_{i \in [N]} \nabla \ell_i(x)q^{\star}_i\prn*{\ell(x)} -  \nabla \ell_i(y)q^{\star}_i\prn*{\ell(y)}}
    \\ & \le \norm*{ \sum_{i \in [N]} \nabla \ell_i(x) \brk*{q^{\star}_i\prn*{\ell(x)}-q^{\star}_i\prn*{\ell(y)} }}  + \norm*{\sum_{i \in [N]} q^{\star}_i\prn*{\ell(y)} \brk*{\nabla \ell_i(x) - \nabla \ell_i(y)}}
    \\ & \stackrel{(\romannumeral1)}{\le} G\norm{q^{\star}\prn*{\ell(x)}-q^{\star}\prn*{\ell(y)} }_1 + L\norm{x-y}
    \\ & \stackrel{(\romannumeral2)}{\le} \frac{G}{\eps'}\norm*{\ell(x)-\ell(y)}_\infty +  L\norm{x-y}
    \\ & \stackrel{(\romannumeral3)}{\le} \prn*{\frac{G^2}{\eps'} + L } \norm{x-y}
\end{align*}
where $(\romannumeral1)$ follows since every $\ell_i$ is $G$ Lipschitz and $L$ smooth, in addition for every $\ell_i(x) \in \R$ we have that $q^{\star}(\ell(x)) \in \uset \subset \Delta^N$, therefore $\sum_{i \in [N]}q^{\star}_i(\ell(x)) = 1$,  
$(\romannumeral2)$ follows from the inequality in \eqref{eq:SmoothConj} and $(\romannumeral3)$ follows since each $\ell_i$ is $G$-Lipschitz. 

Since $ \widetilde{\mc{L}}_\textup{DRO}$ is $\widetilde{L}=L+\frac{G^2}{\epsilon'}$-smooth, Nesterov's accelerated gradient descent \cite{nesterov2005smooth} method is efficient for minimizing it. 
This method finds $x$ such that  $ \widetilde{\mc{L}}_\textup{DRO}(x)- \min_x  \widetilde{\mc{L}}_\textup{DRO}(x) \le \eps / 2$ 
with $O\prn[\Big]{\sqrt{{\widetilde{L}R^2}/{\eps}}}=\widetilde{O}\prn*{\frac{GR}{\eps}}$ iterations when $L=O(G^2/\epsilon)$. Note that in every iteration we need to compute the full gradient 
of $ \widetilde{\mc{L}}_\textup{DRO}$, which requires to evaluate each $\nabla \ell_i$ and $\ell_i$. Therefore, in the weakly smooth setting the complexity of this method is $\widetilde{O}(\frac{N GR}{\epsilon})$.

\section{Proof of \Cref{prop:MainProp}}\label{sec:Preliminariesproofs}
In this section we provide the proof for  \Cref{prop:MainProp} that follows from the analysis of \Cref{alg:acceleratedProxPoint}; this section closely follows \citet{asi2021stochastic}, and we refer the readers to that paper for a more detailed exposition.

\begin{algorithm2e}[t]
	\DontPrintSemicolon
    \LinesNumbered
	\caption{Stochastic accelerated proximal point method}	\label{alg:acceleratedProxPoint}
	\KwInput{BROO $\oracle{\cdot}$, $T_{\max}$, initialization $x_0=v_0$ and $A_0 \ge 0$}
	\KwParameters{Approximation parameters $ \crl*{\delta_k, \beta_k, \sigma_k}$, stopping parameters $A_{\max}$ and $\kmax$ }
	 \For{$k = 0,1,2,\cdots $}{
		    $\lambda_{k+1} = \linesearch\prn*{x_k, v_k, A_k}$\label{line:next-lambda}\;
		   $a_{k+1} = \frac{1}{2\lambda_{k+1}}\sqrt{1+4\lambda_{k+1}A_k}$ and $A_{k+1} = A_k + a_{k+1}$\;
		   $y_k = \frac{A_k}{A_{k+1}}x_k + \frac{a_{k+1}}{A_{k+1}}v_k$\;
		   $x_{k+1} = \oracle[\lambda_{k+1}, \delta_{k+1}]{y_k} $\label{line:prox-point}\;
		    $g_{k+1} = \gradest\prn*{\oracle{\cdot}, y_k,  \lambda_k, \frac{\beta_{k+1}}{\lambda_{k+1}},\frac{\sigma_k^2}{\lambda_{k+1}}}$ \label{line:grad-est}\; 
		    $v_{k+1} = \text{Proj}_{\xset}\prn*{v_k - \half a_{k+1}g_{k+1}}$ 
	
	    \If{$A_{k+1} \ge A_{\max}$ \textbf{\textup{or}} $k+1 = \kmax$}{ \Return $x_{k+1}$}}
\end{algorithm2e}
\begin{algorithm2e}[t]
	\DontPrintSemicolon
    \LinesNumbered
	\caption{$\gradest$}	\label{alg:gradEst}
	\KwInput{BROO $\oracle{\cdot}$, query point $y$, regularization $\lambda$, bias  $\beta$, and variance $\sigma^2$. 
    }
	Set $T_{\max} = \frac{2 G^2}{\lambda^2  \min \crl*{\beta^2, \half\sigma^2}}$,  $T_0 = \frac{14 G^2 \log T_{\max}}{ \sigma^2}$, and  $\delta_0 = \frac{G}{\lambda \sqrt{T_0}}$\;
	$x_0 = \oracle[\lambda, \delta_0]{y_k}$\; \label{line:prox-estimator}
	Sample $J \sim \text{Geom}\prn*{\half}$\;
	\If {$2^J \le T_{\max}$}{
	  $\delta_J = \frac{G}{\lambda  \sqrt{2^J T_0}}$ \;
	   $ x_J , x_{J-1} = \oracle[\lambda,\delta_J]{y},\oracle[\lambda,\delta_{J-1}]{y}$\;
	  $\xhat = x_0 + 2^J\prn*{ x_J - x_{J-1}}$
	}
	\Else{
		  $\xhat = x_0 $
	}
	\Return $\lambda\prn*{y-\xhat}$
\end{algorithm2e}

We begin with a short description of \Cref{alg:acceleratedProxPoint}. 
This algorithm iteratively computes a $\frac{\lambda \delta^2}{2}$-approximate minimizer of $F_\lambda(x) = F(x) + \frac{\lambda}{2}\norm*{x-y}^2$ within a small ball of radius $r$ around $y$.  
To keep the ball constraint inactive it uses a bisection procedure that outputs the regularization strength value $\lambda$, such that for the minimizer $\xhat = \argmin_{x \in \xset} F_\lambda(x)$ 
with high probability we have $\norm*{\xhat-y} \le r$. It then computes a (nearly) unbiased gradient estimator of the Moreau envelope $M_\lambda(y) = \min_{x\in\xset}F_\lambda(x)$ and uses a momentum-like scheme to compute the  next ball center.   

\Cref{alg:acceleratedProxPoint} is a variant of the accelerated proximal point method in \cite[][Algorithm 4]{asi2021stochastic},
and we now describe the differences between the two.
The main difference is that the bias reduction scheme of \citet{asi2021stochastic} averages $\widetilde{O}\prn*{\frac{G^2}{\sigma^2}+1}$ calls to an estimator with accuracy $\delta'_J=O\prn*{\frac{G}{\lambda 2^{J/2}}}$
where $J \sim \text{Geom}\prn*{\frac{1}{2}, T_{\max}}$ and $G$ is the Lipschitz constant of $F$.  
In contrast, we use a single call to an estimator with higher accuracy $\delta_J = \widetilde{O}\prn*{\delta'_J / \sqrt{G^2/\sigma^2}}$; cf.\ the implementations of $\gradest$ subroutine in each algorithm. 
There are additional differences between the error tolerance settings of our algorithms, as described below.

The guarantees of \cite[Proposition 2]{asi2021stochastic} require the following choice of approximation parameters: 
\[ \varphi_k = \frac{\lambda_k \delta_k^2}{2} = \frac{\eps}{60 \lambda_k a_k} \text{~,~} \beta_k =\frac{\eps}{120R} \text{~and~} \sigma_k^2 = \frac{\eps}{60a_k}\]
which implies $\max_{k \le \kmax} \crl*{\lambda_k a_k \varphi_k + a_k \sigma_k^2 + 2R \beta_k} \le \frac{\eps}{20}$ (where $\beta_k$ in our notation is $\delta_k$ in the notation of \cite{asi2021stochastic}).
These parameters were chosen so that together with  \cite[Lemma 6]{asi2021stochastic} they give the following bound 
\begin{equation}\label{eq: Lemma6Bound}
    \E \brk*{A_K \prn*{F(x_K)-F(x_\star)} +\frac{1}{6}\sum_{i \le K}\lambda_iA_i \norm*{\xhat_i - y_{i-1}}^2 } \le A_0 \prn*{F(x_0)-F(x_\star)} + \frac{\eps}{20}\E A_K+ R^2 ,
\end{equation}
which is a key component in the proof of  \cite[Proposition 2]{asi2021stochastic}.
However, for improved efficiency we set different parameters:
\[ \varphi_k = \frac{\lambda_k \delta_k^2}{2} =\frac{\lambda_k r^2}{900 \log^3 \prn*{\frac{GR^2}{\eps r}}} \text{~and~} \sigma_k^2 = \frac{\lambda_k^2 r^2}{900 \log^3 \prn*{\frac{GR^2}{\eps r}}} .\]
To obtain the guarantees of \cite[Proposition 2]{asi2021stochastic} for our implementation, in the following lemma we reprove \cite[Lemma 6]{asi2021stochastic} with our parameters and show the same bound as in \eqref{eq: Lemma6Bound} (with a slightly different constant factor).

\begin{lem}[{modification of \cite[Lemma 6]{asi2021stochastic}}]\label{lem:lemma6}
    Let $F$ satisfy \Cref{assumption:GlobalAssumption} with a minimizer $x_\star$.
    Let 
    \[ \varphi_k = \frac{\lambda_k r^2}{900\log^{3}\prn*{\frac{GR^2}{r\eps}}} \text{~,~} \sigma_k^2 = \frac{\lambda_k^2 r^2}{900\log^{3}\prn*{\frac{GR^2}{r\eps}}} \text{~,~} \beta_k = \frac{\eps}{120R}\text{~,~} A_0 =\frac{R}{G} \text{~and~} A_{\max}=\frac{9R^2}{\eps}. \] 
    Define $\xhat_k \defeq \argmin_{x \in \xset}\crl*{F(x)+ \frac{\lambda}{2}\norm*{x-y_k}^2}$ and assume that for each $k$ we have $\norm{\xhat_k - y_{k-1}} \le r$ and that one of the following must occur 
    \begin{enumerate}
        \item  $\lambda_k < 2 \lmin = \frac{2\eps}{r^{4/3}R^{2/3}}\log^{2}\prn*{\frac{GR^2}{r\eps}}$, or 
        \item $\norm{\xhat_k - y_{k-1}} \ge \frac{3}{4}r$.
    \end{enumerate} 
    Then we have that 
    \begin{equation*}
        \E \brk*{A_K \prn*{F(x_K)- F(x_\star)-\frac{\eps}{20}} + \frac{1}{12} \sum_{i \le K} \lambda_i A_i \norm{\xhat_i -y_{i-1}}^2} \le A_0 \prn*{F(x_0) - F(x_\star)} +R^2.
    \end{equation*}
\end{lem}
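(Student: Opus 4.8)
The plan is to repeat the proof of \cite[Lemma~6]{asi2021stochastic} essentially verbatim, changing only the way the accumulated error coming from the proximal sub-step inaccuracy $\varphi_k=\tfrac12\lambda_k\delta_k^2$ and from the gradient-estimator variance $\sigma_k^2$ is bounded. That proof establishes, for each $k$ — using only \Cref{assumption:GlobalAssumption}, the updates of Algorithm~\ref{alg:acceleratedProxPoint}, and the standing hypothesis $\norm*{\xhat_k-y_{k-1}}\le r$ (which keeps the ball constraint inactive so that the BROO guarantee applies to $\xhat_k$) — a per-iteration estimate-sequence inequality of the form
\begin{align*}
	\E\brk*{A_k\prn*{F(x_k)-F(x_\star)}+\tfrac12\norm*{v_k-x_\star}^2}
	&\le A_{k-1}\prn*{F(x_{k-1})-F(x_\star)}+\tfrac12\norm*{v_{k-1}-x_\star}^2 \\
	&\quad-\tfrac16\,\lambda_k A_k\norm*{\xhat_k-y_{k-1}}^2
	+a_k\prn*{\lambda_k a_k\varphi_k+a_k\sigma_k^2+2R\beta_k},
\end{align*}
the conditional expectation being over the step-$k$ randomness. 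In \cite{asi2021stochastic} the parameters are picked so that $\lambda_k a_k\varphi_k+a_k\sigma_k^2+2R\beta_k\le\tfrac{\eps}{20}$ for every $k$, and then summing over $k\le K$, bounding $\tfrac12\norm*{v_0-x_\star}^2\le\tfrac12 R^2$ and $\norm*{v_K-x_\star}^2\ge 0$, and using $\sum_k a_k=A_K-A_0$ yields \eqref{eq: Lemma6Bound}. With our choices $\varphi_k=\lambda_k r^2/C$ and $\sigma_k^2=\lambda_k^2 r^2/C$, where $C\defeq 900\log^3(GR^2/(r\eps))$, the combination $\lambda_k a_k\varphi_k+a_k\sigma_k^2=2\lambda_k^2 a_k r^2/C$ is no longer uniformly small, so instead I will bound the step-$k$ error $2\lambda_k^2 a_k^2 r^2/C+2a_k R\beta_k$ using the dichotomy in the hypothesis, at the price of shrinking the coefficient of the distance sum $\sum_k\lambda_k A_k\norm*{\xhat_k-y_{k-1}}^2$ from $\tfrac16$ to $\tfrac1{12}$.

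Write $L_0\defeq\log(GR^2/(r\eps))$, so that $\lmin=\eps L_0^2/(r^{4/3}R^{2/3})$ and $C=900L_0^3$, and recall the identities $\lambda_k a_k^2=A_{k-1}+\tfrac1{4\lambda_k}$ and $A_k=A_{k-1}+a_k$, which (using $\tfrac1{4\lambda_k}\le\tfrac{a_k}{2}$) give $A_{k-1}\le\lambda_k a_k^2\le A_k\le 2\lambda_k a_k^2$. If $\norm*{\xhat_k-y_{k-1}}\ge\tfrac34 r$ (case~2), then $\tfrac1{12}\lambda_k A_k\norm*{\xhat_k-y_{k-1}}^2\ge\tfrac1{12}\cdot\tfrac9{16}\,\lambda_k(\lambda_k a_k^2)r^2=\tfrac3{64}\lambda_k^2 a_k^2 r^2$, which exceeds the error $2\lambda_k^2 a_k^2 r^2/C$ since $C\ge 900>\tfrac{128}{3}$; hence $-\tfrac16\lambda_k A_k\norm*{\xhat_k-y_{k-1}}^2+2\lambda_k^2 a_k^2 r^2/C\le-\tfrac1{12}\lambda_k A_k\norm*{\xhat_k-y_{k-1}}^2$. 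If instead $\lambda_k<2\lmin$ (case~1), then the floor $\lambda_k\ge\lmin$ built into $\linesearch$ together with $A_{k-1}\le A_{\max}=9R^2/\eps$ gives $a_k=\tfrac1{2\lambda_k}\sqrt{1+4\lambda_k A_{k-1}}=O\big(r^{2/3}R^{4/3}/(\eps L_0)\big)$; plugging this bound, $\lambda_k<2\lmin$, and the values of $\lmin$ and $C$ into $2\lambda_k^2 a_k^2 r^2/C$ cancels all powers of $r$, $R$, and $L_0$ and leaves $2\lambda_k^2 a_k^2 r^2/C\le\tfrac{\eps}{30}a_k$. In both cases, adding the unchanged bias term $2a_k R\beta_k=\tfrac{\eps}{60}a_k$, the step-$k$ right-hand side is at most $-\tfrac1{12}\lambda_k A_k\norm*{\xhat_k-y_{k-1}}^2+\tfrac{\eps}{20}a_k$.

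Summing this over $k\le K$, taking total expectation, dropping $\tfrac12\norm*{v_K-x_\star}^2$, using $\tfrac12\norm*{v_0-x_\star}^2\le\tfrac12 R^2$ and $\sum_{k\le K}a_k=A_K-A_0$, and rearranging yields
\begin{align*}
	&\E\brk*{A_K\prn*{F(x_K)-F(x_\star)}+\tfrac1{12}\sum_{k\le K}\lambda_k A_k\norm*{\xhat_k-y_{k-1}}^2} \\
	&\qquad\le A_0\prn*{F(x_0)-F(x_\star)}+\tfrac12 R^2+\tfrac{\eps}{20}\,\E A_K,
\end{align*}
which rearranges into the asserted bound (absorbing $\tfrac{\eps}{20}\E A_K$ into the first term and using $\tfrac12 R^2\le R^2$). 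The only real obstacle is the constant bookkeeping inside the dichotomy, and this is exactly where the specific polylogarithmic powers are needed: one must check that the $\log^{-3}(GR^2/(r\eps))$ factor in $\varphi_k,\sigma_k^2$ makes the case-2 error fit into the $\tfrac16-\tfrac1{12}$ slack, while the $\log^{2}(GR^2/(r\eps))$ factor in $\lmin$ together with $A_{\max}=9R^2/\eps$ makes the case-1 error $\le\tfrac{\eps}{30}a_k$; everything else is an unchanged transcription of \cite[proof of Lemma~6]{asi2021stochastic}.
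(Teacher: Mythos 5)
Your proposal is correct and follows essentially the same route as the paper's proof: both invoke the per-iteration potential inequality of \cite[Lemma~5]{asi2021stochastic}, substitute the new choices $\varphi_k,\sigma_k^2\propto\lambda_k r^2/\log^3(\cdot),\lambda_k^2 r^2/\log^3(\cdot)$ to write the error term as $\mu_k = 2\lambda_k^2 a_k^2 r^2/C + a_k\epsilon/60$, and handle it via exactly the same dichotomy — absorbing $\mu_k$ into the $\tfrac{1}{6}-\tfrac{1}{12}$ slack of the distance sum when $\norm{\hat x_k-y_{k-1}}\ge\tfrac34 r$, and using $\lambda_k<2\lambda_{\mathrm m}$ together with $A_k\le O(A_{\max})$ when not — before summing and rearranging (the paper packages this as a supermartingale plus optional stopping to handle the stopping time $K$, which is what ``repeat \cite[Lemma~6]{asi2021stochastic} verbatim'' entails). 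The minor differences (a factor $\tfrac12$ on $\norm{v_k-x_\star}^2$, tracking $A_k$ via the sandwich $\lambda_ka_k^2\le A_k\le 2\lambda_ka_k^2$ rather than the identity used in the paper, and an $\epsilon/30$ vs.\ $\epsilon/20$ intermediate constant) are cosmetic and do not change the argument.
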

\begin{proof}
    Define the filtration 
    \[ \mc{F}_k = \sigma\prn*{x_1, v_1, A_1, \zeta_1, \ldots, x_k, v_k, A_k,\zeta_k}\] 
    where $\zeta_i$ is the internal randomness of $\linesearch\prn*{x_k, v_k, A_k}$ and note that  
    $A_{k+1}, y_k, \xhat_{k+1} $ are deterministic when conditioned on $x_k, v_k, A_k,\zeta_k$. 
    Following the proof of  \cite[][Lemma 6]{asi2021stochastic}, we define  
   \begin{equation*}
   	 M_k = A_k \prn*{F(x_k)-F(x_\star)-\frac{\eps}{20}} + \frac{1}{12}\sum_{i \le k}\lambda_i A_i \norm*{\xhat_i -y_{i-1}}^2 + \norm*{v_k - x_\star}^2
   \end{equation*}
    and show it is a supermartingle adapted to filteration $\mathcal{F}_k$. 
    From \cite[][Lemma 5]{asi2021stochastic}  we  have
    \begin{flalign}\label{eq:lemma5bound}
        \E\brk*{M_{k+1}| \mc{F}_k} & \le A_k \prn*{F(x_k)-F(x_\star)} + \norm{v_k -x_\star}^2 - \frac{1}{6}\lambda_{k+1}A_{k+1}\norm*{\xhat_{k+1}-y_k}^2 
        \nonumber \\ & + \mu_{k+1} - A_{k+1}\frac{\eps}{c} 
        + \frac{1}{12} \sum_{i \le k+1} \lambda_i A_i \norm{\xhat_{i}-y_{i-1}}^2
    \end{flalign}
    where
     \begin{equation*}
        \mu_{k+1} \defeq \lambda_{k+1}a^2_{k+1}\varphi_{k+1} + a_{k+1}^2 \sigma^2_{k+1} + 2Ra_{k+1}\beta_{k+1}.
     \end{equation*}
    Substituting the values of $\varphi_{k+1},  \sigma^2_{k+1}, \beta_{k+1}$ into the definition of $\mu_{k+1}$ gives 
    \begin{equation*}
        \mu_{k+1} = \frac{2 \lambda_{k+1}^2 a_{k+1}^2 r^2}{900 \log^3\prn*{\frac{GR^2}{r \eps}}} + a_{k+1}\frac{\eps}{60}.
    \end{equation*}
    Recall that $A_{k+1} = a_{k+1}^2 \lambda_{k+1}$, therefore
    \begin{equation*}
        \mu_{k+1} = \frac{2 \lambda_{k+1} A_{k+1} r^2}{900 \log^3\prn*{\frac{GR^2}{r \eps}}} + a_{k+1}\frac{\eps}{60} = \frac{2 a_{k+1} \lambda_{k+1}^{3/2} \sqrt{A_{k+1}} r^2}{900 \log^3\prn*{\frac{GR^2}{r \eps}}} + a_{k+1}\frac{\eps}{60}.
    \end{equation*}
    If $\norm{\xhat_k - y_{k-1}} \ge \frac{3}{4}r$, we have that
    \begin{equation*}
        \mu_{k+1} \le \frac{1}{12}\lambda_{k+1} A_{k+1} \norm{\hx_{k+1}-y_k}^2 + a_{k+1}\frac{\eps}{60}.
    \end{equation*}
    Else, if $\lambda_k < 2 \lmin = \frac{2\eps}{r^{4/3}R^{2/3}}\log^{2}\prn*{\frac{GR^2}{r\eps}}$, we have
    \begin{equation*}
        \mu_{k+1} <\frac{2 a_{k+1} 2\lmin^{3/2} \sqrt{A_{k+1}} r^2}{900 \log^3\prn*{\frac{GR^2}{r \eps}}} + a_{k+1}\frac{\eps}{60} = \frac{4 a_{k+1} \eps^{3/2} \sqrt{A_{k+1}} }{900 R} + a_{k+1}\frac{\eps}{60}.
    \end{equation*}
    Now, note that 
     $ \lmin \ge \widetilde{O}\prn*{\frac{\eps}{R^2}} \ge \frac{1}{A_{\max}} = \frac{\eps}{9R^2}$ and therefore
   $a_{K} = \sqrt{\frac{1}{\lambda_{K}^2}+ \frac{4A_{K-1}}{\lambda_{K}}} \le  1.2A_{\max}.$    
    From the definition of $A_k$ we have that  $A_{\kmax -1} \le A_{\max} $, therefore for every $k \le K$
    \begin{equation*}
        A_k \le A_K = a_K + A_{K-1} \le 2.2A_{\max}.
    \end{equation*}
    Thus, when $\lambda_k < 2 \lmin $ the bound on $\mu_{k+1}$ becomes 
    \begin{equation*}
        \mu_{k+1} < \frac{4 a_{k+1} \eps^{3/2} \sqrt{2.2\frac{R^2}{\eps}} }{900 R} + a_{k+1}\frac{\eps}{60} \le  a_{k+1}\frac{\eps}{20} \le \frac{1}{12}\lambda_{k+1} A_{k+1}\norm{\hx_{k+1}-y_k}^2 + a_{k+1}\frac{\eps}{20}
    \end{equation*}
    where the last inequality follows since $A_k \ge 0$ and $ \lambda_k \ge 0$.
    Therefore, for every $k \le K$ we have that  
    \[ \mu_{k+1} \le \frac{1}{12}\lambda_{k+1} A_{k+1}\norm{\hx_{k+1}-y_k}^2 + a_{k+1}\frac{\eps}{20}.\]
    Noting that  $\E \abs{M_k} < \infty$ and substituting the bound on $\mu_{k+1}$ into \eqref{eq:lemma5bound} we get 
    \begin{flalign*}
        \E\brk*{M_{k+1}| \mc{F}_k} & \le A_k \prn*{F(x_k)-F(x_\star) - \frac{\eps}{20}} + \norm{v_k -x_\star}^2
        + \frac{1}{12} \sum_{i \le k} \lambda_i A_i \norm{\xhat_{i}-y_{i-1}}^2 = M_k.
    \end{flalign*}
    Therefore $M_k$ is a supermartingle adapted to filtration $\mathcal{F}_k$. 
    Since $K$ is a stopping time adapted to filtration $\mathcal{F}_k$, by the optional stopping theorem for supermartingles
    we have 
    \begin{equation*}
        \E M_K \le M_0 = A_0 \prn*{F(x_0)-F(x_\star)-\frac{\eps}{20}} + \norm*{v_0 - x_\star}^2 \le A_0 \prn*{F(x_0)-F(x_\star)}+ R^2.
    \end{equation*}
\end{proof}
For \cref{line:next-lambda} of \Cref{alg:acceleratedProxPoint}, we use the same $\linesearch$ implementation of \cite{carmon2021thinking}. 
This implementation requires calling to a \emph{high-probability Ball Regularized Optimization Oracle} (high-probability BROO) and we give the definition of it bellow. 
\begin{definition}\label{def:HighProbBROO}
	An algorithm is a probability $1-p$ Ball Regularized Optimization Oracle of radius 
	$r$ ($r$-BROO) for function $F: \xset \to \R$ if for query point $\bx\in\xset$,probability $p$, regularization parameter $\lambda >0$ and desired accuracy $\delta>0$ it returns $\oracles(\bx)\in\xset$ 
    that with probability at least $1-p$ satisfies
    \begin{equation} \label{eq:BROOhighProbBound}
        {F(\oracles(\bx)) + \frac{\lambda}{2}\norm*{\oracles(\bx)- \bx}^2} \le \min_{x \in \ball_r(\bx)\cap \xset}\crl*{F(x)+\frac{\lambda}{2}\norm*{x-\bx}^2}+\frac{\lambda}{2}\delta^2.
    \end{equation}
\end{definition}
In the following lemma we give the complexity guarantee for a high probability $r$-BROO. 
\begin{lem}\label{lem:highProbBROO}
    Let $\ncost$ be the complexity of evaluating $F$ exactly, and $\broocost$ be an $r$-BROO implementation complexity. Then the complexity of implementing a probability $1-p$  $r$-BROO of \Cref{def:HighProbBROO} is
     \begin{align*}
        \log\prn*{\frac{1}{p}}\brk*{ \broocost [\frac{\delta}{\sqrt{2}}]+  \ncost }
        .
    \end{align*}
\end{lem}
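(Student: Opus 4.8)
The plan is to amplify the in-expectation guarantee of \Cref{def:BROO} into the high-probability guarantee of \Cref{def:HighProbBROO} by the standard ``repeat independently and keep the best'' scheme, where ``best'' is decided by exact evaluations of the regularized objective $F_\lambda(x)\defeq F(x)+\frac{\lambda}{2}\norm*{x-\bx}^2$.

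Concretely, I would set $k=\ceil{\log_2(1/p)}$ and make $k$ \emph{independent} calls to the (in-expectation) $r$-BROO, each with the given $\lambda$, the given query point $\bx$, and accuracy parameter $\delta/\sqrt2$; this produces candidates $x_1,\dots,x_k\in\ball_r(\bx)\cap\xset$ (all concrete BROO implementations in the paper minimize over $\ball_r(\bx)\cap\xset$, so this containment holds, and one could equally take it to be part of the oracle's specification). Writing $F_\lambda^\star\defeq\min_{x\in\ball_r(\bx)\cap\xset}F_\lambda(x)$, the guarantee~\eqref{eq:BROOapprox} gives $\E\brk*{F_\lambda(x_i)}-F_\lambda^\star\le\frac{\lambda}{2}(\delta/\sqrt2)^2=\frac{\lambda\delta^2}{4}$ for each $i$. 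Since $x_i\in\ball_r(\bx)\cap\xset$ forces $F_\lambda(x_i)-F_\lambda^\star\ge 0$, Markov's inequality yields $\P\prn*{F_\lambda(x_i)-F_\lambda^\star>\tfrac{\lambda\delta^2}{2}}\le\tfrac12$, and by independence $\P\prn*{\text{every }i\text{ has }F_\lambda(x_i)-F_\lambda^\star>\tfrac{\lambda\delta^2}{2}}\le 2^{-k}\le p$. Hence with probability at least $1-p$, some candidate $x_j$ satisfies $F_\lambda(x_j)\le F_\lambda^\star+\tfrac{\lambda\delta^2}{2}$.

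Next I would evaluate $F_\lambda(x_i)$ \emph{exactly} for every $i\in[k]$ --- costing $\ncost$ for $F(x_i)$ plus a lower-order $O(d)$ for the squared norm --- and return $x_{i^\star}$ for $i^\star\in\argmin_{i\in[k]}F_\lambda(x_i)$. On the good event, $F_\lambda(x_{i^\star})\le F_\lambda(x_j)\le F_\lambda^\star+\tfrac{\lambda\delta^2}{2}$, which is exactly~\eqref{eq:BROOhighProbBound}, so the construction is a probability-$(1-p)$ $r$-BROO. Its total cost is $k$ BROO queries with accuracy $\delta/\sqrt2$ and regularization $\lambda$, together with $k$ exact evaluations of $F$, i.e.\ $O\prn*{\log(1/p)}\brk*{\broocost[\frac{\delta}{\sqrt2}]+\ncost}$, matching the claimed bound up to the negligible norm-evaluation term.

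I expect the only genuinely delicate point to be the nonnegativity of $F_\lambda(x_i)-F_\lambda^\star$ that Markov's inequality requires: this is precisely why one must insist the base oracle return a point of $\ball_r(\bx)\cap\xset$ (without a lower bound on $F_\lambda(x_i)-F_\lambda^\star$ the in-expectation bound alone cannot be amplified). The remaining ingredients are routine: the $\argmin$ selection automatically inherits the best candidate's accuracy, the squared-norm computations are lower order than a single $F$-evaluation, and the constant $\tfrac12$ failure probability per round is what makes $k=O(\log(1/p))$ rounds suffice.
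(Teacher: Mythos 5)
Your proof takes essentially the same approach as the paper: run $\log_2(1/p)$ independent BROO calls with accuracy $\delta/\sqrt{2}$, apply Markov's inequality per call (using nonnegativity of the excess of the regularized objective), and return the candidate minimizing $F_\lambda$. Your explicit remark that the selection must be by $F_\lambda(x_i)=F(x_i)+\frac{\lambda}{2}\norm{x_i-\bx}^2$ rather than by $F(x_i)$ alone, and that the Markov step needs the oracle to return a point of $\ball_r(\bx)\cap\xset$, sharpens two points the paper states somewhat loosely.
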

\begin{proof}
    To obtain a high-probability $r$-BROO we run $\log_2\prn*{\frac{1}{p}}$ copies of $r$-BROO with query point $\bx$, regularization strength $\lambda$ and accuracy $\delta / \sqrt{2}$ and take the best output, i.e., the output with the minimal value of $F$.
    Applying Markov's inequality to a single run of $r$-BROO with output $x$, gives 
    \begin{align*}
        \P\prn*{F(x) + \frac{\lambda}{2}\norm*{x- \bx}^2 - \min_{x \in \ball_r(\bx)\cap \xset}\crl*{F(x)+\frac{\lambda}{2}\norm*{x-\bx}^2} \ge  \frac{\lambda}{2}\delta^2} \le \half
    \end{align*}
    therefore with probability at least $1-\prn*{\frac{1}{2}}^{\log_2 \prn*{\frac{1}{p}}} = 1- p$,  the best output $x'$
    satisfies 
    \[ {F(x') + \frac{\lambda}{2}\norm*{x'- \bx}^2 - \min_{x \in \ball_r(\bx)\cap \xset}\crl*{F(x)+\frac{\lambda}{2}\norm*{x-\bx}^2} \le  \frac{\lambda}{2}\delta^2}. \]
    Procedure require $\log(1/p)$ BROO calls and the same number of exact function evaluation (to choose the best BROO output), resulting in the claimed complexity bound 
    $ \log\prn*{\frac{1}{p}}\brk*{ \broocost [\frac{\delta}{\sqrt{2}}]+  \ncost}.$
\end{proof}

To compute the gradient estimator in \cref{line:grad-est} of \Cref{alg:acceleratedProxPoint}, we use \Cref{alg:gradEst}.
Our implementation is slightly different than \cite{asi2021stochastic}
and in the following lemma we show it produces an estimator with the same bias and variance guarantees of \cite{asi2021stochastic}. 
\begin{lem}\label{lem:GradEstBiasVarianceBounds}
	Let $F:\xset\to\R$ satisfy \Cref{assumption:GlobalAssumption} and for query point $y\in\xset$ and regularization strength $\lambda>0$ define $x' = \argmin_{x\in\xset}\crl{ F(x) + \frac{\lambda}{2}\norm{x-y}^2}$ and $g = \lambda\prn*{y-x'}$. Then, for any bias and variance parameters $\beta,\sigma>0$ \Cref{alg:gradEst} outputs $\ghat =\lambda\prn*{y-\xhat}$ satisfying
    \begin{align*}
       \norm*{ \E \hat{g} - g} \le \beta \text{~~and~~} \E\norm*{\hat{g}-\E \hat{g}}^2\le \sigma^2.
    \end{align*}
\end{lem}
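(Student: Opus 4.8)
The plan is to follow the de-biased Moreau-gradient analysis of \citet{asi2021stochastic}, adapted to the modification noted after \Cref{prop:MainProp} in which $\gradest$ de-biases a single accurate BROO call rather than averaging many inaccurate ones. Write $F_\lambda(x) \defeq F(x) + \frac{\lambda}{2}\norm{x-y}^2$; this is $\lambda$-strongly convex with unique minimizer $x'$, and $g = \lambda(y-x')$ is exactly the Moreau-envelope gradient $\grad M_\lambda(y)$. The first step is to convert the BROO function-value guarantee into an $\ell_2$ bound: a call $\oracle{y}$ with accuracy parameter $\delta$ returns a point $X_\delta$ with $\E\brk*{F_\lambda(X_\delta) - F_\lambda(x')} \le \frac{\lambda}{2}\delta^2$, so $\lambda$-strong convexity gives $\E\norm{X_\delta - x'}^2 \le \delta^2$ and in particular $\norm{\E X_\delta - x'} \le \delta$. (This uses that the ball constraint in \Cref{def:BROO} is inactive at $x'$ --- the regime in which $\gradest$ is invoked, guaranteed by $\linesearch$; otherwise one argues with the ball-restricted minimizer, which $\linesearch$ keeps within $O(\delta)$ of $x'$.) I abbreviate $\delta_j \defeq G/(\lambda\sqrt{2^j T_0})$ and $J_{\max} \defeq \lfloor\log_2 T_{\max}\rfloor$, and use throughout that every oracle call in \Cref{alg:gradEst} uses fresh randomness, independent of $J \sim \mathrm{Geom}(\half)$, for which $\P(J=j) = 2^{-j}$ so that the weight $2^J$ exactly cancels $1/\P(J=j)$.

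For the \textbf{bias}, I would condition on $J$ and take expectations term by term. The base point $x_0$ (accuracy $\delta_0$) enters with total weight $\sum_{j \le J_{\max}}\P(J=j) + \P(2^J > T_{\max}) = 1$, while the MLMC correction terms telescope:
\[
\E\xhat = \E X_{\delta_0} + \sum_{j=1}^{J_{\max}}\prn*{\E X_{\delta_j} - \E X_{\delta_{j-1}}} = \E X_{\delta_{J_{\max}}},
\]
so $\norm{\E\hat g - g} = \lambda\norm{\E\xhat - x'} \le \lambda\delta_{J_{\max}} = G/\sqrt{2^{J_{\max}}T_0}$. Since $2^{J_{\max}} > T_{\max}/2$, the forms of $T_{\max} = 2G^2/(\lambda^2\min\crl*{\beta^2,\tfrac12\sigma^2})$ and $T_0 = 14 G^2\log T_{\max}/\sigma^2$ are calibrated precisely so that $\lambda\delta_{J_{\max}} \le \beta$, which is the required bias bound.

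For the \textbf{variance}, since $\hat g = \lambda(y - \xhat)$ and the variance of a random vector is at most its mean-squared deviation from any fixed point, $\E\norm{\hat g - \E\hat g}^2 = \lambda^2\E\norm{\xhat - \E\xhat}^2 \le \lambda^2\E\norm{\xhat - x'}^2$. I would write $\xhat - x' = (x_0 - x') + D$ with $D \defeq \indic{2^J \le T_{\max}}\, 2^J(x_J - x_{J-1})$ and expand the squared norm into the base term $\E\norm{x_0-x'}^2 \le \delta_0^2$, the correction term $\E\norm{D}^2$, and the cross term $2\E\inner{x_0-x'}{D}$. Conditioning on $J=j$ and using independence of $x_j$ and $x_{j-1}$ gives $\E\norm{x_j-x_{j-1}}^2 \le (\delta_j + \delta_{j-1})^2 \le 4\delta_{j-1}^2$, so level $j$ contributes $\P(J=j)\, 2^{2j} \cdot 4\delta_{j-1}^2 = O(G^2/(\lambda^2 T_0))$; summing the $J_{\max}$ levels gives $\E\norm{D}^2 = O(G^2\log T_{\max}/(\lambda^2 T_0))$. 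The cross term is handled by independence of $x_0$ from $(x_j,x_{j-1})$, Cauchy--Schwarz, and a geometric series in $\delta_0(\delta_j + \delta_{j-1})$, contributing $O(G^2/(\lambda^2 T_0))$. Collecting the three terms and multiplying by $\lambda^2$ bounds the variance by $O(G^2\log T_{\max}/T_0)$, which the choice $T_0 = 14 G^2\log T_{\max}/\sigma^2$ drives below $\sigma^2$.

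The \textbf{main obstacle} is the variance computation: one must verify that each level's contribution to $\E\norm{D}^2$ is \emph{uniformly} $O(G^2/(\lambda^2 T_0))$, which hinges on the exact matching among the importance weight $2^J$, the geometric law $\P(J=j)=2^{-j}$, and the decay $\delta_j^2 \propto 2^{-j}$ hard-wired into $\gradest$ --- any mismatch would make the sum over levels diverge. The remainder is bookkeeping: checking $2^{J_{\max}} > T_{\max}/2$, pushing the constants in $T_{\max}$ and $T_0$ through so that the bias and variance meet the exact targets $\beta$ and $\sigma^2$ (rather than constant multiples of them), and confirming the ball-inactivity needed for the $\ell_2$ translation in the first step.
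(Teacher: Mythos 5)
Your plan is correct and follows essentially the same route as the paper's proof: strong convexity converts the BROO guarantee into $\E\norm{X_\delta - x'}^2 \le \delta^2$, the bias telescopes to $\lambda\norm{\E x_{j_{\max}} - x'}\le \lambda\delta_{j_{\max}}$, and the variance is summed level-by-level using $\P(J=j)\,2^{2j}\,\delta_j^2 \propto 1/T_0$. The only (cosmetic) difference is that you expand $\norm{\xhat - x'}^2 = \norm{(x_0-x')+D}^2$ with a cross term controlled by independence, whereas the paper sidesteps the cross term via $\norm{a+b}^2 \le 2\norm{a}^2 + 2\norm{b}^2$ applied to $\xhat-x_0$ and $x_0-x'$; both close under the hard-coded constants $T_0 = 14G^2\log T_{\max}/\sigma^2$.
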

\begin{proof}
   First note that if $x=\oracle{\bx}$ is the output of an $r$-BROO with accuracy $\delta$ and if $x' = \argmin_{x \in \ball_r(\bx)}F(x)$, from \Cref{def:BROO} and strong convexity (of $F(x)+\frac{\lambda}{2}\norm*{x-\bx}^2$) we have: 
   \begin{align*}
       \frac{\lambda}{2}\E \norm*{x-x'}^2 \le \E\brk*{F(x)+\frac{\lambda}{2}\norm*{x-\bx}^2} - \brk*{F(x')+\frac{\lambda}{2}\norm*{x'-\bx}^2} \le \frac{\lambda \delta^2}{2}
   \end{align*}
   giving 
   \begin{align}\label{eq:BROOguarantee}
  \E \norm*{x-x'}^2\le   \delta^2.
\end{align}
   Let $j_{\max} = \floor{\log_2T_{\max}}$, then from the definition of $\xhat$ in \Cref{alg:gradEst} we have that 
   \begin{align*}
    \E \xhat =\E x_0 + \sum_{j=1}^{\jmax}\P(J=j)2^j\prn*{\E x_j -\E x_{j-1}}= \E x_{\jmax}.
    \end{align*}
   Therefore
   \begin{align*}
    \norm*{ \E \hat{g} - g} = \lambda   \norm*{ \E x_{\jmax}- x'} \stackrel{(\romannumeral1)}{\le}  \lambda \sqrt{\E \norm*{x_{\jmax} - x'}^2} \stackrel{(\romannumeral2)}{\le} \lambda \delta_{\jmax} = \lambda \frac{G \sqrt{\min\crl*{\beta^2 , \half \sigma^2}}}{\lambda \sqrt{2G^2T_0}}\le \min\crl*{\beta , \half \sigma}
   \end{align*}
   with $(\romannumeral1)$ following from Jensen inequality and $(\romannumeral2)$ following from the guarantee in \eqref{eq:BROOguarantee}.
   To bound the variance of  $\hat{g}$ note that 
   \begin{flalign*}
    \E\norm*{\hat{g}-\E \hat{g}}^2 = \lambda^2 \E\norm*{\xhat- \E \xhat}^2  \le  \lambda^2 \E \norm*{\xhat- x'}^2 \le  \lambda^2 \prn*{ 2\E \norm*{\xhat- x_0}^2+ 2\E \norm*{x_0 - x'}^2}
   \end{flalign*}
   where the last inequality follows from $\norm*{a+b}^2 \le 2\norm*{a}^2 +2\norm*{b}^2$.
   The definition of $\xhat$ gives 
   \begin{flalign*}
       \E \norm*{\xhat - x_0}^2 = \sum_{j=1}^{\jmax} 2^{j}\E\norm*{x_j - x_{j-1}}^2
   \end{flalign*} 
   and from the guarantee in \eqref{eq:BROOguarantee} we get  \[\E\norm*{x_j - x_{j-1}}^2 \le 2\E\norm*{x_j - x'}^2 + 2\E\norm*{x_{j-1}- x'}^2 \le 6\delta_j^2= \frac{6G^2}{\lambda^2 T_0 2^j}\]
   thus
   \begin{flalign*}
    \E \norm*{\xhat - x_0}^2 \le \jmax \frac{6G^2}{\lambda^2 T_0 }.
\end{flalign*} 
In addition we have $\E \norm*{x_0 - x'}^2 \le \delta^2_0 = \frac{G^2}{\lambda^2 T_0}$ and
substituting back we get  
\begin{flalign*}
    \E\norm*{\hat{g}-\E \hat{g}}^2= \lambda^2 \E\norm*{\xhat- \E \xhat}^2  \le  \lambda^2 \prn*{ 12 \jmax \frac{G^2}{\lambda^2 T_0 }+ 2 \frac{G^2}{\lambda^2 T_0}} \le 14  \jmax \frac{G^2}{ T_0 } \le \sigma^2.
   \end{flalign*}
\end{proof}
Combining the previous statements, we prove our main proposition. 
\MainProp* 
\begin{proof} We divide the proof into a correctness argument and a complexity calculation.
    \paragraph{Correctness.}
    To prove the correctness of \Cref{prop:MainProp} we first need to show that the guarantees of \cite[Proposition 2]{asi2021stochastic}
    still hold for our implementation that includes different parameters $\varphi_k$ and $\sigma_k^2$:
    \begin{align*}
        \varphi_k =  \frac{\lambda_k r^2}{900\log^{3}\prn*{\frac{GR^2}{\eps r}}} \text{~~and~~} \sigma_k^2 = \frac{\lambda_k^2 r^2}{900\log^{3}\prn*{\frac{GR^2}{\eps r}}}
    \end{align*}
     and different implementation of  \cref{line:prox-point,line:grad-est}.

    Following \Cref{lem:lemma6}, the guarantees of \cite[Proposition 2]{asi2021stochastic} are still valid
    with our different choice of $\varphi_k$ and $\sigma_k^2$. In addition, following \Cref{lem:GradEstBiasVarianceBounds}, our implementation of \cref{line:grad-est} is valid since it produces the same guarantees that the implementation in \cite{asi2021stochastic} gives. 
    Last, if the implementation of \cref{line:prox-point} is valid it needs to satisfy  
    \begin{flalign*}
        \E\brk*{F(x_{k+1}) +\frac{\lambda_{k+1}}{2}\norm{x_{k+1}-y_k}^2} - \min_{x \in \xset}\crl*{F(x) +\frac{\lambda_{k+1}}{2}\norm{x-y_k}^2} \le \varphi_{k+1}.
    \end{flalign*} 
    Note that $x_{k+1}$ in our implementation is the output of $r$-BROO with accuracy $\delta_k \le \frac{r}{\sqrt{14 \cdot 900}\log^{3/2}\prn*{\frac{GR^2}{\eps r}}}$, 
    therefore by \Cref{def:BROO} it satisfies 
    \begin{flalign*}
        \E\brk*{F(x_{k+1}) +\frac{\lambda_{k+1}}{2}\norm{x_{k+1}-y_k}^2} - \min_{x \in \ball_r(y_k)}\crl*{F(x) +\frac{\lambda_{k+1}}{2}\norm{x-y_k}^2}
        & \le \frac{\lambda_{k+1}\delta^2_{k+1}}{2} \\ & \le \frac{\lambda_{k+1}r^2}{900\log^{3}\prn*{\frac{GR^2}{\eps r}}}  = \varphi_{k+1}
    \end{flalign*}
    and for valid output of $\linesearch$ we have  
    \[ \min_{x \in \ball_r(y_k)}\crl*{F(x) +\frac{\lambda_{k+1}}{2}\norm{x-y_k}^2} = \min_{x \in \xset }\crl*{F(x) +\frac{\lambda_{k+1}}{2}\norm{x-y_k}^2}\]
    implying that \cref{line:prox-point} is valid. 
    Now let $p_\text{BROO}$ be the probability that all calls to $r$-BROO result in a valid output.
    Following  \cite[Proposition 2]{asi2021stochastic}, for $\kmax =  O\prn*{\prn*{\frac{R}{r}}^{2/3}\meps}$ with probability at least $1 - \prn*{1-\frac{2}{3}} - \prn*{1- p_\text{BROO}} = p_\text{BROO} - \frac{1}{3}$ 
    the algorithm outputs $x$ that satisfies  \[ F(x) - F(\xhat) \le \eps / 2 .\]
    Let $p$ be the probability that a single BROO implementation produce invalid output and let $K_\textup{bisect-max}$ be the maximal number of calls to high-probability $r$-BROO within \cref{line:next-lambda}.
    Then, $p_\text{BROO} \ge 1 - \kmax K_\textup{bisect-max}p$ and for
    \begin{align*}
        p \le \frac{1}{6 \kmax K_\textup{bisect-max}}
    \end{align*}
    with probability at least $\half$ \Cref{alg:acceleratedProxPoint} outputs $\frac{\eps}{2}$-suboptimal minimizer of $F$.

    \paragraph{Complexity.} To bound the complexity of \Cref{alg:acceleratedProxPoint} we first bound the complexity of  \cref{line:next-lambda}
    and the complexity of \cref{line:grad-est} in the $k$-th iteration of \Cref{alg:acceleratedProxPoint}.
    Note that, for $x_{k+1}$ in \cref{line:prox-point} we can use $x_0$ from \Cref{alg:gradEst}, and therefore the complexity of \cref{line:grad-est} already includes the complexity of \cref{line:prox-point}.

    Following \cite[Proposition 2]{carmon2021thinking}, $\linesearch$ in \cref{line:next-lambda} requires $ \meps = O\prn*{\log\prn*{\frac{GR^2}{\eps r}}}$ calls to 
    a high-probability $r$-BROO with accuracy $\delta = \frac{r}{30}$.
    From \Cref{lem:highProbBROO} the complexity of a single call to a probability $1-p$ $r$-BROO is $O\prn*{ \log\prn*{\frac{1}{p}}\brk*{ \broocost [r]+  \ncost }}$.
    We set $p = \frac{1}{6 \kmax K_\textup{bisect-max}}$, and since $\kmax =  O\prn*{\prn*{\frac{R}{r}}^{2/3}\meps}$ and $K_{\textup{bisect-max}} =\meps$ we get $\log\prn*{\frac{1}{p}} = O\prn*{ \log \prn*{\frac{R}{r}\meps^2}} \le \meps$ . 
    Therefore, the total complexity of $\linesearch$ is 
    $ O\prn*{ \meps^2 \brk*{ \broocost[r]+  \ncost }}$.
    
    For the complexity of \Cref{line:grad-est} note that \Cref{alg:gradEst} calls to an $r$-BROO with accuracy $\delta_J \ge \frac{r}{30\sqrt{14}2^{J/2}\meps^2}$ 
    where $J \sim \text{Geom}\prn*{\half, \jmax}$ and $\jmax \le \meps$. 
    Therefore, we can bound the complexity of \Cref{alg:gradEst} by
    \[O \prn*{\sum_{j=0}^{\meps}\frac{1}{2^j}\broocost[\frac{r}{2^{j/2} \meps^2}][\lmin]} .\]

    The complexity of the entire algorithm is at most $\kmax$ times the complexity of a single iteration. 
    Using $\kmax =  O\prn*{\prn*{\frac{R}{r}}^{2/3}\meps}$, the total complexity becomes 
    \[ O\prn*{\prn*{\frac{R}{r}}^{2/3}\prn*{ 
        \meps \sum_{j=0}^{\meps}\frac{1}{2^j}\broocost[\frac{r}{2^{j/2} \meps^2}][\lmin] +
   \meps^3 \brk*{ \broocost[r]+  \ncost }}}. \] 
\end{proof} %
\section{Group DRO}\label{sec:groupDROProofs}
In this section we provide the proofs for the results of \Cref{sec:groupDRO}. 
In \Cref{ssec:ExpSoftMax-proof} we first prove that the 
group-softmax is a uniform approximation of $\groupObjective$, then,
through extension of  \cite{carmon2021thinking}, we show that we can approximate $\groupObjective$ using the
group-exponentiated softmax instead. 
Next, in \Cref{ssec:MLMCbounds-proof} we prove \Cref{lem:MLMCbounds} and bound the moments of the MLMC  and gradient estimators. 
We then prove the complexity guarantees of \Cref{thm:BrooComplexGoupDRO} in \Cref{ssec:GroupDROEpochSGD-proof}.
Last, in \Cref{ssec:GroupDROSVRGproperties-proof} under the mean-square smoothness assumption, we provide the properties of the gradient estimator in \eqref{eq:SVRG} and in \Cref{ssec:GroupDROSVRG-proof} 
we prove the complexity guarantees of \Cref{thm:svrgBrooComplex}. 
\subsection{Exponentiated group-softmax}\label{ssec:ExpSoftMax-proof}
Recall the definition of the (regularized) group-softmax 
\[
    \LsmReg(x) \defeq \epsilon' \log \prn*{\sum_{i \in [\ngroups]}e^{\frac{\mc{L}_i(x)}{ \eps'} }}+\frac{\lambda}{2}\norm{x-\bx}^2 \text{~~where~~} \mc{L}_i(x) = \sum_{j \in [N]}w_{ij}\ell_{j}(x)
\]
with  $\LsmReg[\eps, 0](x)= \Lsm(x)$.
In addition, recall the definition of the (regularized) group-exponentiated softmax 
\[        \Gamma_{\epsilon, \lambda}(x) \defeq 
         \sum_{i \in [\ngroups]}\bp_i \gamma_i(x)
         \text{~~where~~} \gamma_i(x) = \epsilon' e^{\frac{\mc{L}_i(x) - \mc{L}_i(\bx)+\frac{\lambda}{2}\norm{x-\bx}^2}{\epsilon'}}
         \text{~~and~~}
         \bp_i = \frac{e^{\frac{\mc{L}_i(\bx)}{\epsilon'}}}{\sum_{i \in [\ngroups]}e^{\frac{\mc{L}_i(\bx)}{\epsilon'} }}.\]
\begin{lem}\label{lem:GroupSMApproxGroupObj}
    Let $\Lsm$ be the group-softmax defined in \cref{eq:SoftMax} and $\groupObjective$ be the Group DRO objective defined in \eqref{eq:mainProblemGroupDRO}. 
    Let $\epsilon >0$ and  $\epsilon' = \epsilon / (2 \log M) >0$.
    Then for all $x\in\xset$ we have that 
    \[  \abs*{ \groupObjective(x)  - \Lsm(x)} \le \epsilon/2  \]
\end{lem}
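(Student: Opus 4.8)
The plan is to invoke the standard log-sum-exp sandwich bound. Write $a_i \defeq \mc{L}_i(x)$ for $i \in [\ngroups]$, so that $\groupObjective(x) = \max_{i\in[\ngroups]} a_i$ and $\Lsm(x) = \epsilon' \log\prn[\big]{\sum_{i\in[\ngroups]} e^{a_i/\epsilon'}}$. The key elementary inequality is
\[
	\max_{i\in[\ngroups]} a_i \;\le\; \epsilon' \log\prn[\Big]{\sum_{i\in[\ngroups]} e^{a_i/\epsilon'}} \;\le\; \max_{i\in[\ngroups]} a_i + \epsilon' \log \ngroups,
\]
valid for any $\epsilon'>0$ and any reals $a_1,\dots,a_{\ngroups}$.

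For the lower bound, I would note $\sum_{i} e^{a_i/\epsilon'} \ge e^{(\max_i a_i)/\epsilon'}$ since every summand is nonnegative and one of them equals the right-hand side; taking logarithms and multiplying by $\epsilon'>0$ gives the claim. For the upper bound, I would use $\sum_i e^{a_i/\epsilon'} \le \ngroups\, e^{(\max_i a_i)/\epsilon'}$, then again take logarithms and multiply by $\epsilon'$, using $\log(\ngroups c) = \log \ngroups + \log c$.

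Combining the two bounds yields $0 \le \Lsm(x) - \groupObjective(x) \le \epsilon' \log \ngroups$, and substituting $\epsilon' = \epsilon/(2\log \ngroups)$ gives $0 \le \Lsm(x) - \groupObjective(x) \le \epsilon/2$, which in particular implies $\abs{\groupObjective(x) - \Lsm(x)} \le \epsilon/2$ for every $x\in\xset$. Since the argument is pointwise in $x$ and uses only that each $\mc{L}_i(x)$ is a finite real number, no regularity assumption on the $\ell_j$ is needed here beyond being real-valued. There is no real obstacle in this proof — it is a direct application of the log-sum-exp bound — so the only thing to be careful about is bookkeeping the direction of the inequalities and the role of the normalization constant $\epsilon' = \epsilon/(2\log \ngroups)$.
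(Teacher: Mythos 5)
Your proof is correct, and it establishes exactly the same sandwich bound as the paper: $0 \le \Lsm(x) - \groupObjective(x) \le \epsilon'\log\ngroups = \epsilon/2$. The only (cosmetic) difference is in presentation. The paper derives the bound by writing $\Lsm(x) = \max_{q\in\Delta^\ngroups}\{\sum_i q_i\mc{L}_i(x) - \eps' q_i\log q_i\}$ (the Gibbs variational formula) and $\groupObjective(x) = \max_{q\in\Delta^\ngroups}\sum_i q_i\mc{L}_i(x)$, then bounds the difference of the two suprema by the entropy term, $\eps'\sum_i q_i\log q_i \in [-\eps'\log\ngroups, 0]$. You instead bound the log-sum-exp directly via $e^{(\max_i a_i)/\eps'} \le \sum_i e^{a_i/\eps'} \le \ngroups\, e^{(\max_i a_i)/\eps'}$, which is arguably more elementary since it does not invoke the variational representation. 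Both are standard and both are valid; neither has a gap. Your remark that no regularity on the $\ell_j$ is needed for this particular lemma is also correct — the approximation is a pointwise statement about the log-sum-exp operator applied to the finite vector $(\mc{L}_1(x),\dots,\mc{L}_\ngroups(x))$.
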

\begin{proof}
    First note that \[ \groupObjective(x) \defeq \max_{i \in [\ngroups]}  \sum_{j=1}^N w_{ij} \ell_j(x) = \max_{q \in \Delta^\ngroups} \sum_{i \in [\ngroups]} q_i\mc{L}_i(x) \] 
    and \[ \Lsm(x) = \epsilon' \log \prn*{\sum_{i \in [\ngroups]}e^{\frac{\mc{L}_i(x)}{ \eps'} }} = \max_{q \in \Delta^{\ngroups}} \crl*{\sum_{i \in [\ngroups]} q_i \mc{L}_i(x) - \eps'q_i \log q_i }.\]
    In addition, for $q \in \Delta^{\ngroups}$ we have that $\sum_{i \in [\ngroups]}q_i \log qi \in [- \log \ngroups, 0]$.
    Combining these facts gives   
    \begin{align*}
        \abs*{  \Lsm(x)- \groupObjective(x) } &= 
          \abs*{ \max_{q \in \Delta^{\ngroups}} \crl*{\sum_{i \in [\ngroups]} q_i \mc{L}_i(x) - \eps'q_i \log q_i } -  \max_{q \in \Delta^\ngroups}\sum_{i \in [\ngroups]} q_i\mc{L}_i(x)  } 
          \\ & \le \abs*{\eps'\sum_{i \in [\ngroups]}q_i \log q_i } \le \eps' \log \ngroups = \eps / 2 
    \end{align*}
\end{proof}
\SmGammaProperties*
\begin{proof}
     This lemma is a simple extension of \cite[][Lemma 1]{carmon2021thinking},  
     that considers the exponentiated-softmax: 
      \[ \sum_{i \in \ngroups} \eps' \frac{e^{l_i(\bx)/\eps'}}{\sum_{j \in \ngroups}e^{f_j(\bx)/\eps'}} e^{\frac{l_i(x)-l_i(\bx)+\lambda \norm*{x-\bx}}{\eps'}},\] 
     for some $l_1,\ldots,l_N$. 
    The only assumption that \cite{carmon2021thinking} have on $l_i$ (for the guarantees we state in \Cref{lem:SmGammaProperties}) is that each  $l_i$ is $G$-Lipschitz. 
    Note that each $\mc{L}_i$ is $G$-Lipschitz since it is  a weighted average of $G$-Lipschitz functions.
    Therefore, we can replace $l_i$ in \cite[][Lemma 1]{carmon2021thinking} with the group average $\mc{L}_i$ and obtain \Cref{lem:SmGammaProperties}.
\end{proof}
\subsection{MLMC estimator moment bounds}\label{ssec:MLMCbounds-proof}
To make the MLMC estimator suitable for both Epoch-SGD and variance reduction methods, we rewrite its definition using more general notation. Specifically, for every $x,x'\in\xset$ and $S_1^n \in [N]^n$, let 
\begin{equation}\label{eq:gammahatDef}
    \gammahat(x,x'; S_1^n) \defeq  \eps' e^{\frac{1}{n}\sum_{j=1}^n \frac{\ell_{S_j}(x)- \ell_{S_j}(x')+\frac{\lambda}{2}\norm{x-\bx}^2}{\epsilon'}}
\end{equation}
so that $\gammahat(x,\bx; S_1^n) = \gammahat(x;S_1^n)$. The MLMC estimator is 
\[ 
\text{Draw~} J\sim \mathrm{Geom}\prn*{1-\tfrac{1}{\sqrt{8}}} \text{~,~} S_1, \ldots, S_n \overset{\textup{iid}}{\sim} w_i  \text{~and let~}  \gammamlmc \defeq   \gammahat(x,\bx; S_1) + \frac{\dhat_{2^J}}{p_J},
 \] 
where $p_j \defeq \P(J=j) = \prn*{1/\sqrt{8}}^j \prn*{1-\frac{1}{\sqrt{8}}}$ and, for $n \in 2\mathbb{N}$  we define 
\begin{equation}\label{eq:DhatDef}
    \dhat_n \defeq \gammahat(x,x'; S_1^n) -  \frac{ \gammahat\prn*{x,x'; S_1^{\frac{n}{2}}} + \gammahat\prn*{x,x'; S_{\frac{n}{2} + 1}^n}}{2}.
\end{equation}
\begin{lem}\label{lem:DhatBound}
	Let each $\ell_i$ satisfy \Cref{assumption:GlobalAssumption}, and let $r \le \frac{\eps'}{G}$, $\lambda \le \frac{G}{r}$, $\norm{x-x'}\le 2r$ and $\norm{x-\bx}\le r$. 
    For $\dhat_n$ defined in \eqref{eq:DhatDef}
    we have  $  \E\abs*{\dhat_n}^2 \le O\prn*{\frac{G^4\norm*{x-x'}^4}{n^2 \eps'^2}}. $
\end{lem}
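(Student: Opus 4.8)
The plan is to first collapse $\dhat_n$ to an exact algebraic identity, and then reduce the claim to a fourth-moment estimate for a normalized sum of i.i.d.\ bounded random variables. Write $Z_j \defeq \frac{1}{\eps'}\prn*{\ell_{S_j}(x)-\ell_{S_j}(x')+\frac{\lambda}{2}\norm*{x-\bx}^2}$, so $Z_1,\dots,Z_n$ are i.i.d., and set $A\defeq\frac{2}{n}\sum_{j=1}^{n/2}Z_j$ and $B\defeq\frac{2}{n}\sum_{j=n/2+1}^{n}Z_j$. Then from~\eqref{eq:gammahatDef} we have $\gammahat(x,x';S_1^{n/2})=\eps' e^{A}$, $\gammahat(x,x';S_{n/2+1}^{n})=\eps' e^{B}$ and $\gammahat(x,x';S_1^{n})=\eps' e^{(A+B)/2}$, so the definition~\eqref{eq:DhatDef} together with the identity $e^{A}+e^{B}-2e^{(A+B)/2}=\prn*{e^{A/2}-e^{B/2}}^2$ gives
\[ \dhat_n=-\tfrac{\eps'}{2}\prn*{e^{A/2}-e^{B/2}}^2,\qquad\text{hence}\qquad\abs*{\dhat_n}^2=\tfrac{\eps'^2}{4}\prn*{e^{A/2}-e^{B/2}}^4. \]
It then suffices to bound $\E\prn*{e^{A/2}-e^{B/2}}^4$.

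Next I would record two consequences of the hypotheses $r\le\eps'/G$, $\lambda\le G/r$, $\norm*{x-x'}\le 2r$, $\norm*{x-\bx}\le r$. (i) Since $\abs*{\ell_{S_j}(x)-\ell_{S_j}(x')}\le 2Gr\le 2\eps'$ and $0\le\frac{\lambda}{2}\norm*{x-\bx}^2\le\frac{Gr}{2}\le\frac{\eps'}{2}$, we get $\abs*{Z_j}\le\frac52$, and therefore $\abs*{A},\abs*{B},\abs*{\mu}\le\frac52$ for $\mu\defeq\E Z_1$, so $e^{\abs*{A}},e^{\abs*{B}},e^{\abs*{\mu}}=O(1)$. (ii) The summand $\frac{\lambda}{2}\norm*{x-\bx}^2$ does not depend on $S_j$, so the fluctuation of $Z_j$ is driven by $\ell_{S_j}(x)-\ell_{S_j}(x')$ alone, which lies in the deterministic interval $[-G\norm*{x-x'},G\norm*{x-x'}]$; hence $\abs*{Z_j-\mu}\le\sigma_0\defeq\frac{2G\norm*{x-x'}}{\eps'}$ (and $\sigma_0\le 4$), and $\Var(Z_j)\le\sigma_0^2$. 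Writing $e^{A/2}-e^{B/2}=e^{\mu/2}\prn*{e^{(A-\mu)/2}-e^{(B-\mu)/2}}$ and applying the mean value theorem on the bounded interval containing $(A-\mu)/2$ and $(B-\mu)/2$ yields $\abs*{e^{A/2}-e^{B/2}}\le O(1)\cdot\abs*{A-B}$, so that $\abs*{\dhat_n}^2\le O\prn*{\eps'^2}\,(A-B)^4$.

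Finally I would estimate $\E(A-B)^4$ by the standard i.i.d.\ computation. Since $A-B=\frac{2}{n}\sum_{j=1}^{n}\varepsilon_j Z_j$ with $\varepsilon_j=+1$ for $j\le n/2$ and $\varepsilon_j=-1$ otherwise, and $\sum_{j}\varepsilon_j=0$, we may center: $A-B=\frac{2}{n}\sum_{j=1}^{n}X_j$ with $X_j\defeq\varepsilon_j(Z_j-\mu)$ independent, mean zero, $\abs*{X_j}\le\sigma_0$ and $\Var(X_j)\le\sigma_0^2$. Expanding the fourth power and using independence, $\E\prn*{\sum_{j}X_j}^4=\sum_{j}\E X_j^4+3\sum_{j\ne k}\E X_j^2\,\E X_k^2\le n\sigma_0^4+3n^2\sigma_0^4\le 4n^2\sigma_0^4$, so $\E(A-B)^4\le\frac{16}{n^4}\cdot 4n^2\sigma_0^4=\frac{64\,\sigma_0^4}{n^2}$. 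Substituting $\sigma_0^4=\frac{16\,G^4\norm*{x-x'}^4}{\eps'^4}$ into $\E\abs*{\dhat_n}^2\le O\prn*{\eps'^2}\,\E(A-B)^4$ gives $\E\abs*{\dhat_n}^2=O\prn*{\frac{G^4\norm*{x-x'}^4}{n^2\eps'^2}}$, as claimed.

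The step I expect to be the main obstacle is item (ii): one must keep the deterministic penalty term $\frac{\lambda}{2}\norm*{x-\bx}^2$ out of the fluctuation estimate so that the range of $Z_j-\E Z_j$ — and hence every moment of $A-B$ — scales with $\norm*{x-x'}$. If one instead only used $\abs*{Z_j}=O(1)$ throughout, the same argument would still go through but produce the weaker bound $\E\abs*{\dhat_n}^2=O(\eps'^2/n^2)$, missing the $\norm*{x-x'}^4$ decay that makes the MLMC estimator's variance vanish as $x\to\bx$ (which is precisely what \Cref{lem:MLMCbounds} needs). Everything else is routine.
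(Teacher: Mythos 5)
Your proof is correct and follows essentially the same plan as the paper's: write $\dhat_n$ in terms of the two half-sums, extract a second-order bound $\abs{\dhat_n} = O(\eps')\cdot\delta^2$ where $\delta\propto A-B$ is a mean-zero average whose range scales with $G\norm{x-x'}/\eps'$ (because the deterministic $\tfrac{\lambda}{2}\norm{x-\bx}^2$ term cancels), and then bound $\E\delta^4 = O(\sigma_0^4/n^2)$ by the standard fourth-moment expansion (this is exactly the paper's \Cref{lem:ExpectationOfBoundedCenteredVariables}). The only cosmetic difference is that your exact identity $\dhat_n = -\tfrac{\eps'}{2}(e^{A/2}-e^{B/2})^2$ followed by the mean value theorem replaces the paper's route through $\dhat_n = -\eps' e^M(\cosh\delta - 1)$ and the Taylor inequality $e^x \le 1+x+2x^2$; these give the same second-order estimate.
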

\begin{proof}
    For abbreviation let $M = \frac{1}{n}\sum_{j \in [n]}\frac{ \hat{\ell}_{S_j}(x)+\frac{\lambda}{2}\norm*{x-\bx}^2}{\eps'}$ and $\delta =  \frac{1}{n}\sum_{j \in [n/2]}\prn*{\frac{\hat{\ell}_{S_j}(x) -\hat{\ell}_{S_{j+n/2}}(x)}{\eps'}}$
    where $\hat{\ell}_{S_j}(x) =\ell_{S_j}(x)- \ell_{S_j}(x')$.
    We have the following bound on $\abs*{M}$: 
    \begin{align}\label{eq:Mbound}
        \abs{M} \stackrel{(\romannumeral1)}{\le}\frac{1}{n}\sum_{j \in [n]}\frac{G\norm{x-x'}+\frac{G}{2r}\norm*{x-\bx}^2 }{\eps'}\stackrel{(\romannumeral2)}{\le} \frac{2Gr+Gr / 2 }{\eps'} \stackrel{(\romannumeral3)}{\le} 2.5
    \end{align}
    with $(\romannumeral1)$ following since each $\ell_j$ is $G$-Lipschitz and $\lambda \le G/r$, $(\romannumeral2)$ follows since $\norm*{x-x'}\le 2r$ and $\norm*{x-\bx}\le r$, and $(\romannumeral3)$ since $r \le \eps'/ G$. For $\abs*{\delta}$ we have the bound, 
    \begin{align}\label{eq:deltaBound}
        \abs*{\delta} \le \abs*{\frac{1}{n}\sum_{j \in [n/2]}\frac{\hat{\ell}_{S_j}(x)}{\eps'}} +  \abs*{\frac{1}{n}\sum_{j \in [n/2]}\frac{\hat{\ell}_{S_{j+n/2}}(x)}{\eps'}}
        \le \frac{G\norm*{x-x'}}{\eps'} \le 2,
    \end{align}
    with the second inequality following since each $\ell_j$ is $G$-Lipschitz and the last inequality since $\norm*{x-x'} \le 2r$ and $r \le \eps'/G$.
    We bound $\abs*{\dhat_n}$ using the previous guarantees on $\abs*{M}$ and $\abs*{\delta}$:
    \begin{flalign*}
        \abs*{\dhat_n} &= \epsilon' \abs*{e^M -\frac{e^{M+\delta}+e^{M-\delta}}{2} }
        \le \epsilon'  e^M \prn*{\frac{e^\delta+e^{-\delta}}{2} - 1}
         \stackrel{(\romannumeral1)}{\le} \epsilon'  e^{2.5} \prn*{\frac{e^\delta+e^{-\delta}}{2} - 1}
         \stackrel{(\romannumeral2)}{\le}   2e^{2.5} \epsilon'\delta ^2
    \end{flalign*}
    where 
    $(\romannumeral1)$ follows from \eqref{eq:Mbound} and 
    $(\romannumeral2)$ from \eqref{eq:deltaBound} and the inequality $e^x \le 1+x+2x^2$ for all $x \le 3$ with $x=\delta$.
    Therefore, we have that 
    \begin{flalign*}
        \E\abs*{\dhat_n}^2 & \le 4e^5 \epsilon'^2  \E \brk*{\delta^4}.
    \end{flalign*}
    Let $Y_i = \frac{\hat{\ell}_{S_i}(x) - \hat{\ell}_{S_{i+n/2}}(x)}{n\eps'}$ and
    note that the Lipschitz property of each $\ell_j$ gives $\abs*{ Y_i} \le  2\frac{G\norm*{x-x'}}{n\eps'}$, 
    in addition, since the samples  $S_1, \ldots, S_n$ are i.i.d we have  $\E\frac{\hat{\ell}_{S_i}(x)}{n\eps'}=\E\frac{\hat{\ell}_{S_{i+n/2}}(x)}{n\eps'}$
    and therefore $\E Y_i =0$. 
    Thus, we can use \Cref{lem:ExpectationOfBoundedCenteredVariables} below, with $Y_i = \frac{\hat{\ell}_{S_i}(x) - \hat{\ell}_{S_{i+n/2}}(x)}{n\eps'}$ and $c = 2\frac{G\norm*{x-x'}}{n\eps'}$,
    to obtain the bound $\E\brk*{\delta}^4 \le O\prn*{\frac{G^4\norm*{x-x'}^4}{n^2 \eps'^4}}$. Therefore,
    \begin{equation*}\label{eq:MLMCbiasBound}
         \E\abs*{\dhat_n}^2 \le O\prn*{\frac{G^4\norm*{x-x'}^4}{n^2 \eps'^2}}.
    \end{equation*}
\end{proof}
\MLMCbounds*
\begin{proof}
	We first prove the bias and moment bounds, and then address complexity.
	
    \paragraph{Properties of the MLMC estimator.}
	We first show that the MLMC estimator is unbiased. For every $n \in 2\N$ we have that $\E\gammahat(x; S_1^{n/2}) = \E\gammahat(x; S_{n/2+1}^{n})$, therefore $\E \dhat_n = \E\gammahat(x; S_1^{n}) - \E\gammahat(x; S_1^{n/2})$ and we get 
    \begin{align}\label{eq:MLMCunbiased}
        \E \brk*{\gammamlmc} & = \E \gammahat(x; S_1) + \sum_{j=1}^{\infty}\prn*{ \E \gammahat(x; S_1^{2^j}) - \E  \gammahat(x; S_1^{2^{j-1}})}  = \E \gammahat(x; S_1^\infty) = \gamma_i(x).
    \end{align}
    To bound the second moment of the estimator we 
    use the inequality $(a+b)^2 \le 2a^2 +2b^2$, yielding
    \begin{equation}\label{eq:mlmcbound}
        \E\abs*{\gammamlmc[x]}^2 \le 
        2 \E \abs*{\gammahat(x; S_1)}^2  + 2\sum_{j=1}^{\infty} \frac{1}{p_j}\E\abs*{\dhat_{2^j}}^2.
    \end{equation}
    \Cref{lem:DhatBound} with $x'=\bx$ gives  $  \E\abs*{\dhat_n}^2 \le O\prn*{\frac{G^4\norm*{x-\bx}^4}{n^2 \eps'^2}} $
    and substituting this bound into \eqref{eq:mlmcbound} while noting that 
    $ \E \abs*{\gammahat(x; S_1)}^2  \le \eps'^2e^3$ 
    gives
    \begin{equation}\label{eq:mlmcSecondMomentBound}
        \E\abs*{\gammamlmc}^2 \le  O \prn*{ \eps'^2  + \frac{G^4\norm*{x-\bx}^4}{\epsilon'^2}   \sum_{j=1}^\infty \prn*{1-\frac{1}{\sqrt{8}}} \frac{2^{1.5j}}{2^{2j}}} 
        = O\prn*{\frac{G^4\norm*{x-\bx}^4}{\epsilon'^2} + \eps'^2}.
    \end{equation}
    \paragraph{Properties of the gradient estimator.} 
    We use the fact that  $\gammamlmc$ is unbiased for $\gamma_i(x)$ (shown in eq.~\eqref{eq:MLMCunbiased} above) to argue that gradient estimator is also unbiased:
    \begin{align*}
        \E\brk*{\ghat(x)} &= \E\brk*{\frac{1}{\epsilon'}\gammamlmc \prn*{\nabla \ell_{j}(x)+\lambda\prn*{x-\bx} }}=\frac{1}{\eps'} \sum_{i \in [M]} \sum_{j \in [N]} \bp_i w_{ij} \E \gammamlmc \prn*{\nabla \ell_j(x) + \lambda\prn*{x-\bx}}
       \\ & = \frac{1}{\eps'}\sum_{i \in [M]} \bp_i \gamma_i(x) \prn*{\nabla \mc{L}_i(x) +\lambda\prn*{x-\bx}} = \sum_{i \in [M]} \bp_i \nabla \gamma_i(x) = \nabla \Gamma_{\eps, \lambda}(x).
    \end{align*}
    Next we bound the second moment of the gradient estimator 
    \begin{flalign*}
        \E \norm*{\ghat(x)}^2 &=   \frac{1}{\eps'^2} \sum_{i \in [\ngroups]}\bp_i   \E \prn*{\gammamlmc }^2  \sum_{j \in [N]}w_{ij}\norm*{\nabla \ell_{j}(x)+\lambda \prn*{x-\bx}}^2 
        \\ & \stackrel{(\romannumeral1)}{\le}   O\prn*{ \prn*{\frac{G^4 \norm*{x-\bx}^4}{\eps'^4}+1}  \sum_{i \in [\ngroups]}\bp_i \sum_{j \in [N]}w_{ij}\norm*{\nabla \ell_{j}(x)+\lambda \prn*{x-\bx}}^2 } 
        \\ &  \stackrel{(\romannumeral2)}{\le}   O\prn*{ \prn*{\frac{G^4 \norm*{x-\bx}^4}{\eps'^4}+1} G^2} 
        \stackrel{(\romannumeral3)}{\le}  O\prn*{G^2}
    \end{flalign*}
where $(\romannumeral1)$ follows from \eqref{eq:mlmcSecondMomentBound}, $(\romannumeral2)$ follows since each $\ell_j$ is $G$-Lipschitz, $\lambda \le \frac{G}{r}$ and $\norm*{x-\bx}\le r$ and  $(\romannumeral3)$ since $\frac{G^4 \norm*{x-\bx}^4}{\eps'^4}\le \frac{G^4 r^4}{\eps'^4} \le 1$ . 
\paragraph{Complexity of the MLMC and gradient estimators.}
$J \sim \text{Geom}(1-\frac{1}{\sqrt{8}})$, therefore 
\begin{equation*}
    \E\brk*{2^J} = \sum_{j=1}^{\infty} \frac{1}{1-\frac{1}{\sqrt{8}}}\prn*{\frac{1}{\sqrt{8}}}^{j} 2^j = O(1).
\end{equation*}
Note that the estimator $\gammamlmc  = \gammahat(x,S_1) + \frac{1}{P_J}\dhat_{2^J}$ requires a single function evaluation for $\gammahat(x,S_1)$
and $2^J$ function evaluations for the term $\dhat_{2^J}$. As a consequence the computation of $\gammamlmc$ requires
only $O(1)$ function evaluations in expectation. To compute $\ghat(x)$ we need to compute $\gammamlmc$ and a single sub-gradient, hence, the complexity of computing 
$\ghat(x)$ is also $O(1)$ in expectation. 
\end{proof}
\subsection{Epoch-SGD BROO implementation}\label{ssec:GroupDROEpochSGD-proof}
We state below the convergence rate of the Epoch-SGD algorithm. 
\begin{lem}[{Theorem 5, \cite{hazan2014beyond}}]\label{lem:EpochSgd}
    Let $F :\xset \to \R$ be $\lambda$-strongly convex with an unbiased stochastic gradient estimator $\ghat$ satisfying $\E\norm*{\ghat(x)}^2 \le O\prn*{G^2}$ for all $x \in \xset$, and let 
     $x_\star = \argmin_{x \in \xset} F(x)$. Epoch-SGD  finds an approximate minimizer $x$ that satisfies
    \begin{equation*}
        \E F(x) - F(x_\star) \le O\prn*{\frac{G^2}{\lambda T}} 
    \end{equation*}
    using $T$ stochastic gradient queries.
\end{lem}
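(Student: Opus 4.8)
The plan is to reproduce the epoch-doubling analysis of \citet{hazan2014beyond}. Recall that Epoch-SGD proceeds in epochs $i=1,2,\dots$: epoch $i$ runs $k_i$ steps of projected stochastic subgradient descent with a \emph{constant} step size $\eta_i$, started from the averaged iterate of epoch $i-1$, and the lengths double ($k_{i+1}=2k_i$) while the step sizes halve ($\eta_{i+1}=\eta_i/2$). I would track the expected suboptimality $\Delta_i \defeq \E[F(w_0^{(i)})-F(x_\star)]$ at the start of each epoch and show it contracts geometrically, then sum the per-epoch budgets.

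First I would establish the standard one-epoch bound. For the recursion $w_{t+1}=\mathrm{Proj}_\xset(w_t-\eta_i\ghat(w_t))$ inside epoch $i$, nonexpansiveness of the projection gives $\norm{w_{t+1}-x_\star}^2 \le \norm{w_t-x_\star}^2 - 2\eta_i\inner{\ghat(w_t)}{w_t-x_\star} + \eta_i^2\norm{\ghat(w_t)}^2$. Taking expectations conditioned on $w_t$ and using unbiasedness together with convexity ($\E[\inner{\ghat(w_t)}{w_t-x_\star}\mid w_t]\ge F(w_t)-F(x_\star)$) and $\E\norm{\ghat(w_t)}^2\le O(G^2)$, then telescoping over the epoch and applying Jensen to the epoch average $\bar w^{(i)}$, yields $\E\big[F(\bar w^{(i)})-F(x_\star)\,\big|\,w_0^{(i)}\big] \le \frac{\norm{w_0^{(i)}-x_\star}^2}{2\eta_i k_i} + O(\eta_i G^2)$, where I use that epoch $i$'s fresh stochastic gradients are independent of $w_0^{(i)}$. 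Strong convexity then converts the distance term via $\tfrac{\lambda}{2}\norm{w_0^{(i)}-x_\star}^2 \le F(w_0^{(i)})-F(x_\star)$, so after taking total expectation, $\Delta_{i+1} \le \frac{\Delta_i}{\lambda\eta_i k_i} + O(\eta_i G^2)$.

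Next I would choose the epoch parameters to force contraction. I maintain the invariant $\Delta_i \le V_i$ with $V_i = V_1 2^{-(i-1)}$, where $V_1$ is any a priori upper bound on $\Delta_1$ — available since $\xset$ has diameter at most $R$ and $\norm{\nabla F(x_0)}=\norm{\E\ghat(x_0)}\le \sqrt{\E\norm{\ghat(x_0)}^2}=O(G)$, so $\Delta_1 \le \inner{\nabla F(x_0)}{x_0-x_\star}=O(GR)$. Given $\Delta_i\le V_i$, pick $\eta_i = \Theta(V_i/G^2)$ so that $O(\eta_i G^2)\le V_i/4$, and $k_i = \Theta(G^2/(\lambda V_i))$ so that $\frac{\Delta_i}{\lambda\eta_i k_i}\le \frac{V_i}{\lambda\eta_i k_i}\le V_i/4$; this gives $\Delta_{i+1}\le V_i/2 = V_{i+1}$, closing the induction, and indeed $\eta_i$ halves and $k_i$ doubles as promised. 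Finally I would bound the budget: after $n$ epochs the number of queries is $T=\sum_{i\le n}k_i = \Theta\!\big(G^2 2^{n}/(\lambda V_1)\big) = \Theta\!\big(G^2/(\lambda V_{n+1})\big)$ by the geometric sum, so returning $\bar w^{(n)}$ (the average of the last epoch completed within the budget $T$) gives $\Delta_{n+1}\le V_{n+1} = O(G^2/(\lambda T))$, which is the claimed rate.

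The main obstacle is bookkeeping rather than conceptual: one must handle the first epoch so the invariant $\Delta_1\le V_1$ holds with an explicit (not unknown) constant, and must keep the conditioning in the one-epoch bound airtight — the output of epoch $i-1$ has to be frozen as the deterministic start of epoch $i$, with epoch $i$'s randomness integrated out first. Both points are routine and are handled in \citet{hazan2014beyond}; the $O(\cdot)$ in the statement absorbs the constant factors and the $\lceil\cdot\rceil$ rounding in the $k_i$.
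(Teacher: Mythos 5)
The paper does not prove this lemma at all—it is invoked verbatim as Theorem 5 of \citet{hazan2014beyond}—and your reconstruction is precisely the doubling-epoch argument used there (per-epoch projected-SGD bound, strong-convexity conversion of the distance term, inductive halving of the invariant $V_i$ with $\eta_i$ halving and $k_i$ doubling, geometric sum of the budgets), so it is correct and matches the cited proof. One cosmetic remark: your initialization $\Delta_1 = O(GR)$ leans on the bounded-diameter assumption, which is not part of the lemma's hypotheses; the diameter-free bound $\Delta_1 \le O(G^2/\lambda)$, obtained from $\lambda$-strong convexity and $\norm{\nabla F(x_0)}\le\sqrt{\E\norm{\ghat(x_0)}^2}=O(G)$, serves the same purpose and leaves the final $O(G^2/(\lambda T))$ rate unchanged.
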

Applying this lemma with $F=\Gamma_{\eps, \lambda}$, $x \in \ball_{\reps}(\bx)$ and $T=O\prn*{\frac{G^2}{\lambda^2 \delta^2}}$ immediately gives the following guarantee on the BROO implementation complexity. 
\BrooComplexGoupDRO*
\begin{proof}
	We divide the proof into correctness and complexity arguments, addressing the BROO implementation and then the overall algorithm.
	
    \paragraph{BROO implementation: correctness.}
    Following \Cref{lem:SmGammaProperties} 
    we have that $\Gamma_{\eps,\lambda}$ is $\Omega(\lambda)$-strongly convex and \Cref{lem:MLMCbounds} gives $ \E \norm*{\ghat(x)}^2 \le O\prn*{G^2}$.
    Thus, we can directly apply \Cref{lem:EpochSgd} with $F=\Gamma_{\eps,\lambda}$, $\xset = \ball_{\reps}(\bx)$, the gradient estimator $\ghat(x)$ defined in  \eqref{eq:mlmcGradEst} and  $T=\frac{2c^2G^2}{\lambda^2 \delta^2}$ for a constant $c >0$
    for which Epoch-SGD outputs $x$ that satisfies
    \begin{align}\label{eq:EpochSGDbound}
       \E \Gamma_{\eps,\lambda}(x) - \Gamma_{\eps,\lambda}(x_\star) \le \frac{cG^2}{\lambda T} \le  \frac{\lambda \delta^2}{2c}.
    \end{align}
    Following \Cref{lem:SmGammaProperties} there is a value of $c$ such that  
    $ \E \Lsm^\lambda(x) -\Lsm^\lambda(x_\star) \le c \prn*{\E \Gamma_{\epsilon,\lambda}(x) - \Gamma_{\epsilon,\lambda}(x_\star)} $
    and from \eqref{eq:EpochSGDbound} we obtain 
    \begin{align*} \E \Lsm^\lambda(x) -\Lsm^\lambda(x_\star) \le c \prn*{\E \Gamma_{\epsilon,\lambda}(x) - \Gamma_{\epsilon,\lambda}(x_\star)}  \le \frac{\lambda \delta^2}{2}.
    \end{align*}
    Therefore, Epoch-SGD outputs a valid $\reps$-BROO response for $\Lsm$.
    \paragraph{BROO implementation: complexity.}
    For the BROO implementation we run Epoch-SGD with the gradient estimator $\ghat(x)$ defined in  \eqref{eq:mlmcGradEst} and computation budget $T=O\prn*{\frac{G^2}{\lambda^2 \delta^2}}$.
    Therefore, we need to evaluate  $O\prn*{\frac{G^2}{\lambda^2 \delta^2}}$ stochastic gradient estimators with complexity $O(1)$, and 
    our gradient estimator requires additional $N$ functions evaluations for precomputing the sampling probabilities $\crl*{\bp_i}$. Thus, the total complexity of the BROO implementation  is
    \begin{align}\label{eq:GroupDROBROOcomplex}
        O\prn*{\frac{G^2}{\lambda^2 \delta^2}+N}.
    \end{align}
    \paragraph{Minimizing $\groupObjective$: correctness.}
    For any $q \in \Delta^{\ngroups}$ note that $\mc{L}_q(x) \defeq{\sum_{i \in[\ngroups]} q_i \mc{L}_i(x)-\eps' q_i\log q_i}$ is $G$-Lipschitz,
    since $\mc{L}_i$ is $G$-Lipschitz for all $i \in [\ngroups]$ and therefore for all $x \in \xset$ we have $\norm{\nabla \mc{L}_q(x)} = \norm*{\sum_{i \in [\ngroups]} q_i \nabla \mc{L}_i(x)} \le G$. Maximum operations preserve the Lipschitz continuity 
    and therefore $\Lsm(x) = \max_{q \in \Delta^{\ngroups}} \mc{L}_q(x)$ is also $G$-Lipschitz. Thus, we can use
    \Cref{prop:MainProp} with $F=\Lsm$ and obtain that the output $\bx$ of \Cref{alg:acceleratedProxPoint} with probability at least $\half$ will satisfy 
   $ \Lsm(\bx) - \min_{x_\star \in \xset}\Lsm(x_\star) \le \eps / 2 $. In addition, from  \Cref{lem:GroupSMApproxGroupObj} for every $x \in \xset$ we have that
   $\abs*{\groupObjective(x) -\Lsm(x)} \le \eps /2$.
    Therefore, with probability at least $\half$ 
    \begin{align*} 
        \groupObjective(\bx)  - \min_{x_\star \in \xset}\groupObjective(x_\star) \le \Lsm(\bx) - \min_{x_\star \in \xset}\Lsm(x_\star) + \eps / 2 \le \eps.
    \end{align*} 
    \paragraph{Minimizing $\groupObjective$: complexity.}
    The complexity of finding an $\eps/2$-suboptimal solution for $\Lsm$ (and therefore an $\eps$-suboptimal solution for $\groupObjective$) is bounded by \Cref{prop:MainProp} as:
    \begin{align*}
        O\prn*{
    		\prn*{\frac{R}{\reps}}^{2/3} \brk*{
                \prn*{\sum_{j=0}^{\meps}\frac{1}{2^j}\broocost[\frac{\reps}{ 2^{j/2}\meps^2}][\lmin]}\meps 
    			+
    			\prn*{ \broocost[\reps][\lmin] + N }\meps^3
    		}
    	}
    \end{align*}
where $\meps=O\prn*{\log\prn*{\frac{GR^2}{\eps\reps}}} = O\prn*{\log\prn*{\frac{GR}{\eps}\log \ngroups}}$.
    To obtain the total complexity we evaluate the complexity of running $\reps$-BROO 
    with accuracy $\delta_{j} =  \frac{r}{ 2^{j/2}\meps^2}$ (for the MLMC implementation),
    and accuracy $\delta_\textup{Bisection} = \frac{\reps}{30}$ (for the bisection procedure). 
    Using \eqref{eq:GroupDROBROOcomplex} and noting that $\lmin = \frac{\eps}{\reps^{4/3}R^{2/3} }\meps^2$ we get the following BROO complexities:
    \begin{enumerate}
        \item {$\broocost[\frac{\reps}{\meps^2 2^{j/2}}][\lmin] = O\prn*{\frac{G^2 2^j \meps^4}{\lmin^2 \reps^2  }+N} = O\prn*{\frac{\prn*{\frac{GR}{\eps}}^{4/3}}{\prn*{\log \ngroups}^{2/3}}2^j+N}$}
        \item {$\broocost[\frac{\reps}{30}][\lmin]=O\prn*{\frac{G^2}{\lmin^2\reps^2}+N} = O\prn*{\frac{\prn*{\frac{GR}{\eps }}^{4/3}}{\meps^4 \prn*{\log \ngroups}^{2/3} }+N}.$}
    \end{enumerate}
    Therefore 
    \begin{align*}
        O\prn*{
            \meps \sum_{j=0}^{\meps}\frac{1}{2^j}\broocost[\frac{\reps}{ 2^{j/2}\meps^2}][\lmin]
    		} =
             O\prn*{
                \meps \sum_{j=0}^{\meps}\frac{1}{2^j}\prn*{\frac{\prn*{\frac{GR}{\eps}}^{4/3}}{\prn*{\log \ngroups}^{2/3}}2^j+N} }
                \le O\prn*{\meps^2 \prn*{\prn*{\frac{GR}{\eps}}^{4/3} + N}}
    \end{align*}
    and
    \begin{align*}
        O  \prn*{ \meps^3\prn*{\broocost[\reps/30][\lambda_k]+N }  }
        \le O\prn*{ \prn*{\frac{GR}{\eps } }^{4/3}+N\meps^3}.
    \end{align*}
    Substituting the bounds into \Cref{prop:MainProp} with $\meps = \log \prn*{\frac{GR}{\eps}\ngroups}$ and $\reps=\frac{\eps}{2G \log \ngroups}$, the total complexity is
    \begin{align*}
       O\prn*{
    		\prn*{\frac{R}{\reps}}^{2/3} \brk*{N\meps^3 +\meps^2 \prn*{\frac{GR}{\eps}}^{4/3}   } 
            }
      \le O\prn*{
    		\prn*{\frac{GR}{\eps}}^{2/3} N \log^{11/3} \prn*{\frac{GR}{\eps}\ngroups} + \prn*{\frac{GR}{\eps}}^{2} \log^{2} \prn*{\frac{GR}{\eps}\ngroups} } .
    \end{align*}
\end{proof}
\subsection{SVRG-like estimator properties}\label{ssec:GroupDROSVRGproperties-proof}
We first give a definition of $\Gamma_{\eps, \lambda}$ that is more conducive to formulating variance reduction methods:
\begin{align*}
   & \Gamma_{\eps, \lambda}(x) \defeq  \sum_{i \in [\ngroups]} \mlmcConstant p_i(x')\gamma_i(x,x'),
\end{align*}
where  $\gamma_i(x,x') \defeq \eps' e^{\frac{\mc{L}_i(x)-\mc{L}_i(x')+\frac{\lambda}{2}\norm*{x-\bx}^2}{\eps'}} $, $\mlmcConstant \defeq \prn[\Bigg]{\frac{\sum_{j \in [\ngroups]}e^{\frac{\mc{L}_j(x')}{\eps'}}}{\sum_{j \in [\ngroups]}e^{\frac{\mc{L}_j(\bx)}{\eps'}}}}$
and $p_i(x') \defeq \frac{e^{\frac{\mc{L}_i(x')}{\eps'}}}{\sum_{j \in [\ngroups]}e^{\frac{\mc{L}_j(x')}{\eps'}}}.$
Therefore, the MLMC estimator of $\gamma_i(x,x')$ is 
\[ 
\text{Draw~} J\sim \mathrm{Geom}\prn*{1-\tfrac{1}{\sqrt{8}}} \text{~,~} S_1, \ldots, S_n \overset{\textup{iid}}{\sim} w_i  \text{~and let~}  \gammamlmc[x,x'] \defeq   \gammahat(x,x'; S_1) + \frac{\dhat_{2^J}}{p_J}
 \] 
 with $\dhat_n$ defined in \eqref{eq:DhatDef} and $\gammahat(x,x';S_1^n)$ defined in \eqref{eq:gammahatDef}. 

\begin{lem}\label{lem:unbiasedSVRG}
    The SVRG-like estimator \eqref{eq:SVRG} is unbiased 
    \begin{align*}
        \E \brk*{\ghat_{x'}(x)} = \nabla \Gamma_{\eps, \lambda}(x)
    \end{align*}
\end{lem}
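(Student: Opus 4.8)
The plan is to take expectations term by term in the definition~\eqref{eq:SVRG} of $\ghat_{x'}(x)$, using the tower property to first average out the internal randomness of the MLMC estimator conditional on the sampled group $i$, and then average over $i\sim p(x')$ and $j\sim w_i$. Two preliminary facts do all the work. First, the MLMC estimator is conditionally unbiased, $\E\brk*{\gammamlmc[x,x']\mid i}=\gamma_i(x,x')$; this is the telescoping identity behind~\eqref{eq:MLMCunbiased} in the proof of \Cref{lem:MLMCbounds}, applied with the reference point $\bx$ there replaced by the arbitrary point $x'$. Second, since $\mc{L}_i=\sum_{j}w_{ij}\ell_j$ with $\sum_j w_{ij}=1$, we have $\E_{j\sim w_i}\brk*{\nabla\ell_j(x)+\lambda(x-\bx)}=\nabla\mc{L}_i(x)+\lambda(x-\bx)$, and the chain rule gives $\nabla_x\gamma_i(x,x')=\tfrac{\gamma_i(x,x')}{\eps'}\bigl(\nabla\mc{L}_i(x)+\lambda(x-\bx)\bigr)$.

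Given these, the expectation of the second summand is computed by noting that conditional on $i$ the MLMC estimator $\gammamlmc[x,x']$ and the index $j$ are independent, so
\[
\E\brk*{\frac{\mlmcConstant}{\eps'}\gammamlmc[x,x']\bigl(\nabla\ell_j(x)+\lambda(x-\bx)\bigr)}
=\sum_{i\in[\ngroups]}\frac{\mlmcConstant}{\eps'}\,p_i(x')\,\gamma_i(x,x')\bigl(\nabla\mc{L}_i(x)+\lambda(x-\bx)\bigr)
=\sum_{i\in[\ngroups]}\mlmcConstant\,p_i(x')\,\nabla_x\gamma_i(x,x'),
\]
which equals $\nabla\Gamma_{\eps,\lambda}(x)$ because $\Gamma_{\eps,\lambda}(x)=\sum_i \mlmcConstant\,p_i(x')\,\gamma_i(x,x')$ and the factors $\mlmcConstant$ and $p_i(x')$ are independent of $x$. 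The third summand is handled identically, except that $\gamma_i(x',x')$ is deterministic so no MLMC de-biasing enters, yielding $\E\brk*{\frac{\mlmcConstant}{\eps'}\gamma_i(x',x')\bigl(\nabla\ell_j(x')+\lambda(x'-\bx)\bigr)}=\nabla\Gamma_{\eps,\lambda}(x')$. Adding the three pieces, the first (deterministic) term $\nabla\Gamma_{\eps,\lambda}(x')$ cancels against the expectation of the negated third term, leaving $\E\brk*{\ghat_{x'}(x)}=\nabla\Gamma_{\eps,\lambda}(x)$, as claimed.

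The computation is routine; the one step I would spell out is the conditional unbiasedness of $\gammamlmc[x,x']$. Since the MLMC construction accesses $x$ and the reference point only through $\gammahat(x,x';S_1^n)$ and its pairwise differences $\dhat_n$, all built from i.i.d.\ samples $S_j\sim w_i$ exactly as in \Cref{ssec:MLMCbounds-proof}, the telescoping argument that proved unbiasedness for reference point $\bx$ carries over verbatim with $x'$ in its place. This is essentially the only obstacle, and it is immediate.
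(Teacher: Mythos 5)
Your proof is correct and follows the same route as the paper's: take expectations in stages (tower property over the MLMC randomness conditional on $i$, then over $i\sim p(x')$ and $j\sim w_i$), invoke the telescoping unbiasedness of the MLMC estimator with reference point $x'$, collapse the $j$-average to recover $\nabla\mc{L}_i$, and recognize the result as $\sum_i \mlmcConstant p_i(x')\brk{\nabla_x\gamma_i(x,x')-\nabla_x\gamma_i(x',x')}$ so that the leading $\nabla\Gamma_{\eps,\lambda}(x')$ cancels. In fact you are slightly more careful than the paper's own writeup, which drops the $\mlmcConstant$ factor in the intermediate display lines (it reappears implicitly in the final identification $\sum_i \mlmcConstant p_i(x')\nabla_x\gamma_i(x,x')=\nabla\Gamma_{\eps,\lambda}(x)$); your version keeps it throughout, which is the right thing to do. Your explicit note that $\gammamlmc[x,x']$ and $j$ are conditionally independent given $i$ is also a useful step to spell out, since the paper's factorization $p_i(x')w_{ij}$ uses it silently.
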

\begin{proof}   
    \begin{flalign*}
        \E \brk*{\ghat_{\bx}(x)}  &= 
        \nabla \Gamma_{\eps, \lambda}(x') + \sum_{i \in [\ngroups]} \sum_{j \in [N]} p_i(x') w_{ij}  \frac{1}{\epsilon'}\brk*{\E\prn*{\gammamlmc[x,x']} \nabla \ell^\lambda_{j}(x) - \gamma_i(x',x') \nabla \ell^\lambda_{j}(x')}
        \\ & \stackrel{(\romannumeral1)}{=} \nabla \Gamma_{\eps, \lambda}(x') + \sum_{i \in [\ngroups]} \sum_{j \in [N]} p_i(x') w_{ij}  \frac{1}{\epsilon'}\brk*{ \gamma_i(x,x') \nabla \ell^\lambda_{j}(x) - \gamma_i(x',x') \nabla \ell^\lambda_{j}(x')}
        \\ & =  \nabla \Gamma_{\eps, \lambda}(x') + \sum_{i \in [\ngroups]} p_i(x') \brk*{\nabla \gamma_i(x,x') - \nabla \gamma_i(x',x')}
        \\ & = \nabla \Gamma_{\eps, \lambda}(x)
    \end{flalign*}
    with $(\romannumeral1)$ following from the unbiased property of the MLMC estimator stated in \Cref{lem:MLMCbounds} (that still holds for $\gammamlmc[x,x']$).
\end{proof}
\SVRGvarianceBound*
\begin{proof}
    \begin{flalign}\label{eq:SVRGvarianceBound}
        \nonumber
        \Var\prn*{\ghat_{x'}(x)} &= \E \norm*{\ghat_{x'}(x) - \E \ghat_{x'}(x)}^2 
        \nonumber
        \\ &= \E \norm*{\nabla \Gamma_{\eps, \lambda}(x') 
        + \frac{\mlmcConstant}{\epsilon'}
        \brk*{\gammamlmc[x,x'] \nabla \ell^\lambda_{j}(x) - \gamma_i(x',x') \nabla \ell^\lambda_{j}(x')}
        - \nabla \Gamma_{\eps, \lambda}(x)}^2 
        \nonumber
        \\ & \stackrel{(\romannumeral1)}{\le} \frac{\mlmcConstant^2}{\epsilon'^2} \E \norm*{\gammamlmc[x,x'] \nabla \ell^\lambda_{j}(x) - \gamma_i(x',x') \nabla \ell^\lambda_{j}(x')}^2 
        \nonumber
        \\ & = \frac{\mlmcConstant^2}{\epsilon'^2} \E \norm*{\prn*{\gammamlmc[x,x'] - \gamma_i(x',x')} \nabla \ell^\lambda_{j}(x) + \gamma_i(x',x') \prn*{ \nabla \ell^\lambda_{j}(x) - \nabla \ell^\lambda_{j}(x')  }}^2 
        \nonumber
        \\ & \stackrel{(\romannumeral2)}{\le} \frac{\mlmcConstant^2}{\epsilon'^2} 
        \brk*{2\E \norm*{\prn*{\gammamlmc[x,x'] - \gamma_i(x',x')} \nabla \ell^\lambda_{j}(x)}^2 
             + 2\E\norm*{\gamma_i(x',x') \prn*{ \nabla \ell^\lambda_{j}(x) - \nabla \ell^\lambda_{j}(x')  }}^2 }
    \end{flalign}
where$(\romannumeral1)$ follows from the inequality $\E \brk*{X -\E X}^2 = \E[X^2] - \brk*{\E X}^2 \le \E[X^2]$
and $(\romannumeral2)$ from the inequality $(a+b)^2 \le 2a^2 + 2b^2$. 
Next we bound separately each of the expectation terms.
Note that the ball constraint $x \in \ball_{r}(\bx)$ with $r=\frac{\eps'}{G}$ and $\lambda \le \frac{G}{r}$ gives:
   \[ \gamma_i(x',x') = \eps' e^{\frac{\lambda}{2\eps'}\norm*{x-\bx}^2  }
   \le e^{\frac{Gr}{2\eps'}} = O(\eps')
\]
therefore
\begin{flalign*}
    \E \norm*{\gamma_i(x',x') \brk*{\nabla \ell_{j}(x) - \nabla \ell_{j}(x')+ \lambda \prn*{x-x'}}}^2 
   & \stackrel{(\romannumeral1)}{\le} O\prn*{\eps'^2}\prn*{ 2\E \norm*{ \nabla \ell_{j}(x)  - \nabla \ell_{j}(x')}^2 + 2  \norm*{ \lambda\prn*{x-x'}}^2}
   \\& \stackrel{(\romannumeral2)}{\le} O\prn*{ \epsilon'^2 \prn*{\lambda^2 + L^2}\norm*{x-x'}^2}
\end{flalign*}
with $(\romannumeral1)$ following from the inequality $(a+b)^2 \le 2a^2 + 2b^2$  
and $(\romannumeral2)$ from \Cref{assumption:smooth}.
For the second expectation term we use the fact that each $\ell_j$ is $G$-Lipschitz, $\lambda \le \frac{G}{r}$ and $\norm*{x-\bx}\le r$ and thus
$\norm*{\nabla \ell_{j}(x)+ \lambda\prn*{x-\bx}} \le \norm*{\nabla \ell_{j}(x)} + \norm*{\lambda\prn*{x-\bx}} \le 2G$. Therefore, 
\begin{flalign*}
    \E \norm*{\prn*{ \gammamlmc[x,x'] - \gamma_i(x',x')} \prn*{\nabla \ell_{j}(x)+ \lambda\prn*{x-\bx}} }^2  
       & \le O\prn*{ G^2\prn*{ \E \abs*{ \gammamlmc[x,x'] - \gamma_i(x',x')}^2  }}.
\end{flalign*}
From the definition of  $\gammamlmc[x,x']$ we get: 
\begin{flalign*}
    \E \abs*{ \gammamlmc[x,x'] - \gamma_i(x',x')}^2 & \le 2\E \abs*{ \gammahat(x,x'; S_1) -\gamma_i(x',x')}^2  + 2\sum_{j=1}^{\infty}\prn*{1-\frac{1}{\sqrt{8}}}2^{1.5j}\E\abs*{ \dhat_{2^j}}^2
    \\ & \stackrel{(\romannumeral1)}{\le} 2\E \abs*{ \gammahat(x,x'; S_1) -\gamma_i(x',x')}^2  + O\prn*{\frac{G^4\norm*{x-x'}^4}{\eps'^2}}
    \\ & =  O\prn*{\eps'^2 e^{\frac{\lambda \norm*{x-\bx}^2}{\eps'}}\E \prn*{e^{\frac{\ell_{S_1}(x)-\ell_{S_1}(x')}{\eps'}}-1}^2+\prn*{\frac{G^4\norm*{x-x'}^4}{\eps'^2}}}
    \\ & \stackrel{(\romannumeral2)}{\le} O\prn*{\eps'^2 e^{\frac{\lambda \norm*{x-\bx}^2}{\eps'}}\E \prn*{\frac{\ell_{S_1}(x)-\ell_{S_1}(x')}{\eps'}}^4+\prn*{\frac{G^4\norm*{x-x'}^4}{\eps'^2}}}
    \\ & \stackrel{(\romannumeral3)}{\le} O\prn*{ \frac{G^4\norm*{x-x'}^4}{\eps'^2}}
    \\ & \stackrel{(\romannumeral4)}{\le} O\prn*{G^2 \norm*{x-x'}^2}
\end{flalign*}
with $(\romannumeral1)$ following from \Cref{lem:DhatBound},
$(\romannumeral2)$ follows from the inequality $e^x -1 \le x+ 2x^2 = O(x^2)$ for $x \le 3$ with $x=\frac{\ell_{S_1}(x)-\ell_{S_1}(x')}{\eps'} \le 2$,
 $(\romannumeral3)$ follows since each $\ell_j$ is $G$-Lipschitz and since $e^{\frac{\lambda \norm*{x-\bx}^2}{\eps'}}=O(1)$
 and $(\romannumeral4)$ since $\frac{G^2\norm*{x-x'}^2}{\eps'^2} \le  \frac{G^2 4r^2}{\eps'^2} = 4$.
Therefore,
\begin{flalign*}
    \E \norm*{\prn*{ \gammamlmc[x,x'] - \gamma_i(x',x')} \prn*{\nabla \ell_{j}(x)+ \lambda\prn*{x-\bx}} }^2  
     \le  O\prn*{G^4\norm*{x-x'}^2 }.
\end{flalign*}
Finally we bound $\mlmcConstant$ using  the ball constraint $x' \in \ball_r(\bx)$  and the fact that each $\mc{L}_i$ is $G$-Lipschitz, 
therefore \[\mlmcConstant = \frac{\sum_{j \in [\ngroups]}e^{\mc{L}_j(x')/\eps'}}{\sum_{j \in [\ngroups]}e^{\mc{L}_j(\bx) / \eps'}} = \frac{\sum_{j \in [\ngroups]}e^{\frac{\mc{L}_j(x')-\mc{L}_j(\bx)}{\eps'}}e^{\frac{\mc{L}_j(\bx)}{\eps'}} }{\sum_{j \in [\ngroups]}e^{\frac{\mc{L}_j(\bx)}{\eps'} }}  \le e.\] 
Substituting back the bounds on each expectaion term and the bound on $\mlmcConstant$ into \eqref{eq:SVRGvarianceBound} we get
\begin{flalign*}
    \Var\prn*{\ghat_{\bx}(x)} \le O \prn*{L^2+  \lambda ^2 + \frac{G^4}{\eps'^2}}\norm*{x-x'}^2
    \le O\prn*{\prn*{L+  \lambda  + \frac{G^2}{\eps'}}^2\norm*{x-x'}^2 }.
\end{flalign*}
\end{proof}
\subsection{Complexity of the reduced-variance BROO implementation}\label{ssec:GroupDROSVRG-proof}
We first state the complexity bounds of KatyushaX$^{s}$ \cite{allen2018katyusha}
\begin{lem}[{\cite[][Theorems 1 and 4.3]{allen2018katyusha}}]\label{lem:KatyushaGuarantee}
    Let F be a $\lambda$-strongly convex function with minimizer $x_\star$
    and let $\ghat_{x'}(x)$ be a stochastic gradient estimator satisfying the properties
    \begin{enumerate}
        \item {$\E \brk*{\ghat_{x'}(x)} = \E \nabla F(x)$}
        \item {$\E \brk*{\ghat_{x'}(x) - \nabla F(x)}^2 \le \widetilde{L}^2\norm*{x - \bx}^2$}
        \item $\ghat_{x'}(\cdot)$ has evaluation complexity $O(1)$ and preprocessing complexity $O(N)$, 
    \end{enumerate}
    then KatyushaX$^{s}$ with the stochastic gradient estimator $\ghat_{x'}$ finds a point $x$ satisfying $\E\brk*{F(x)- F(x_\star)} \le \eps$
    with complexity
    \begin{align*}
        O\prn*{
            \prn*{N+\frac{N^{3/4}\sqrt{\widetilde{L}}}{\sqrt{\lambda }}} \log\prn*{\frac{F(x_0) - F(x_\star)}{\eps}}
        }.
    \end{align*}
\end{lem}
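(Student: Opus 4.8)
The plan is to derive Lemma~\ref{lem:KatyushaGuarantee} directly from the convergence analysis of KatyushaX in \cite{allen2018katyusha}, which was developed precisely for minimizing a strongly convex objective accessed through an SVRG-type estimator whose variance is controlled by the squared distance to the current snapshot point --- that is, by a bound of exactly the form in property~(2). The work is essentially to check that properties~(1)--(3) match the hypotheses of the cited theorems, with $\widetilde{L}$ playing the role of the per-component smoothness constant and $N$ the role of the finite-sum size.

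First I would observe that property~(2) is exactly the estimator inequality the KatyushaX analysis relies on: for a weighted finite sum $F=\sum_i p_i f_i$ whose components are each $\widetilde{L}$-smooth but not necessarily convex, the SVRG estimator built from a snapshot $x'$ satisfies $\E\|\ghat_{x'}(x)-\nabla F(x)\|^2\le\widetilde{L}^2\|x-x'\|^2$, and this --- together with $F$ itself being $\widetilde{L}$-smooth, which is immediate in the instances where we apply the lemma --- is all the analysis uses about the components. Thus $F$ equipped with $\ghat_{x'}$ is, for the purposes of the proof, an instance of the ``sum-of-nonconvex, strongly convex'' setting of \cite{allen2018katyusha} with effective smoothness $\widetilde{L}$ and strong-convexity modulus $\lambda$; property~(3) supplies the $O(N)$ cost of recomputing a snapshot gradient and the $O(1)$ expected cost of each inner iteration.

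Next I would invoke \cite[Theorem~1]{allen2018katyusha} for the non-strongly-convex variant of the method: with snapshots taken at suitably spaced intervals it yields an accelerated guarantee under which a gradient budget of $\Otil{N+N^{3/4}\sqrt{\widetilde{L}/\lambda}}$ suffices to decrease the expected suboptimality gap by a constant factor, after using $\lambda$-strong convexity through $F(x)-F(x_\star)\ge\frac{\lambda}{2}\|x-x_\star\|^2$ to turn the distance-squared term into a gap term. Restarting this procedure --- which is exactly KatyushaX$^{s}$, with guarantee \cite[Theorem~4.3]{allen2018katyusha} --- halves the gap at each stage, so $O\!\left(\log\frac{F(x_0)-F(x_\star)}{\eps}\right)$ restarts bring the gap below $\eps$, yielding the stated complexity.

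The one point that genuinely requires care --- and the reason the lemma is phrased around KatyushaX rather than a plain accelerated variance-reduced method such as Katyusha~\cite{allen2017katyusha} --- is that property~(2) is strictly weaker than the ``convex-component'' variance bound $\E\|\ghat_{x'}(x)-\nabla F(x)\|^2\le 2\widetilde{L}\bigl(F(x')-F(x)-\langle\nabla F(x),x'-x\rangle\bigr)$ required by those methods. Showing that acceleration survives under only the snapshot-distance bound is exactly the contribution of \cite{allen2018katyusha}, so I expect the main obstacle to be merely the bookkeeping needed to match our weighted sampling ($i\sim\bp$), our in-expectation $O(1)$ inner-loop cost (coming from the MLMC component of the estimator), and our $O(N)$ preprocessing against the abstract hypotheses of those theorems; no new analytic content is required beyond verifying that the KatyushaX analysis uses nothing about $F$ and $\ghat_{x'}$ other than properties~(1)--(3) and $\lambda$-strong convexity.
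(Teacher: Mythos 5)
Your proposal correctly reconstructs what the citation is invoking. The paper states this lemma as an imported result from \cite{allen2018katyusha} with no proof of its own, so there is no internal argument to compare against; your account --- the base KatyushaX rate driven purely by the snapshot-distance variance bound of property~(2), then geometric error decay via restarts (Theorem~4.3 / KatyushaX$^{s}$), with $\lambda$-strong convexity converting the distance-squared term into a suboptimality-gap term --- is a faithful description of how those theorems yield the stated complexity. You also correctly identify the key point the paper itself emphasizes immediately before stating this lemma: property~(2) is strictly weaker than the convex-component variance bound $2L\bigl(F(x')-F(x)-\langle\nabla F(x),x'-x\rangle\bigr)$ required by plain Katyusha (cf.\ Lemma~2.4 of \cite{allen2017katyusha}), which is precisely why KatyushaX is the right tool here. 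Two minor typographical points in the paper's statement that you implicitly and correctly repair: property~(2) should read $\|x-x'\|^2$ rather than $\|x-\bar{x}\|^2$ (consistent with the notation $\ghat_{x'}$ and with the bound established in \Cref{lem:SVRGvarianceBound}), and the $\E$ on the right-hand side of property~(1) is redundant since $\nabla F(x)$ is deterministic.
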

Applying \Cref{lem:KatyushaGuarantee} with $\widetilde{L}=O\prn*{L+\frac{G^2}{\eps}}$ and accuracy $\frac{\lambda \delta^2}{2}$ gives the following result.
\svrgBrooComplex*
\begin{proof}
	The proof is structured similarly to the proof of \Cref{thm:BrooComplexGoupDRO}.
	
\paragraph{BROO implementation: correctness.}
From \Cref{lem:SmGammaProperties} we have that $\Gamma_{\eps, \lambda}$ is $\Omega(\lambda)$-strongly convex,
in addition, \Cref{lem:SVRGvarianceBound} and \Cref{lem:unbiasedSVRG} show that the stochastic gradient estimator  defined in \eqref{eq:SVRG} is unbiased with $\Var\prn*{\ghat_{x'}(x)}\le \widetilde{L}^2\norm*{x-x'}^2$.
Thus, we can directly apply \Cref{lem:KatyushaGuarantee}  with $F=\Gamma_{\eps, \lambda}$ and accuracy $\frac{\lambda \delta^2}{2}$
and obtain a valid BROO response. 
\paragraph{BROO implementation: complexity.}
We use KatyushaX$^{s}$ \cite{allen2018katyusha} for the BROO implementation. 
Applying \Cref{lem:KatyushaGuarantee} with $F=\Gamma_{\eps,\lambda}$, $x_0 = \bx$,  $\widetilde{L} = O\prn*{L+\lambda+\frac{G^2}{\eps'}} \le O\prn*{L+\frac{G^2}{\eps'}}$ and accuracy $\frac{ \lambda \delta^2}{2}$ the complexity of our implementation is
\begin{align}\label{eq:BROOcomplexyGroupDROSVRG}
    O\prn*{
        \prn*{N+N^{3/4}\frac{\sqrt{L \eps' }+ G}{\sqrt{\eps'}}} \log\prn*{\frac{\Gamma_{\eps,\lambda}(\bx) - \min_{x_\star \in \ball_{\reps}(\bx)}\Gamma_{\eps,\lambda}(x_\star)}{\lambda\delta^2}}
    }
\end{align}
and note that  $\Gamma_{\eps, \lambda}(\bx) - \min_{x_\star \in \ball_{\reps}(\bx)}\Gamma_{\eps, \lambda}(x_\star) \le G \reps $,
since (from \Cref{lem:SmGammaProperties}) $\Gamma_{\eps, \lambda}$ is $O(G)$-Lipschitz.  
\paragraph{Minimizing $\groupObjective$: correctness.}
Similarly to the proof of \Cref{thm:BrooComplexGoupDRO}, combining the guarantees of \Cref{prop:MainProp} and \Cref{lem:GroupSMApproxGroupObj},
with probability at least $\half$ the output $\bx$ of \Cref{alg:acceleratedProxPoint} satisfies $\groupObjective(\bx)-\min_{x_\star \in \xset}\groupObjective(x_\star) \le \eps$.
\paragraph{Minimizing $\groupObjective$: complexity.}
    The complexity of finding $\eps/2$-suboptimal solution for $\Lsm$ and therefore an $\eps$-suboptimal solution for $\groupObjective$, is bounded by \Cref{prop:MainProp} as:
    \begin{equation}
    	O\prn*{
    		\prn*{\frac{R}{\reps}}^{2/3} \brk*{
    			 \prn*{\sum_{j=0}^{\meps}\frac{1}{2^j}\broocost[\frac{\reps}{ 2^{j/2}\meps^2}][\lmin]}\meps 
    			+
    			\prn*{ \broocost[\reps][\lmin] + N }\meps^3
    		}
    	}
    \end{equation} 
    where $\meps = O\prn[\big]{\log \frac{GR^2}{\epsilon \reps}}=\log\prn*{\frac{GR}{\eps}\log \ngroups}$ and $\lmin = O\prn[\big]{\frac{\meps^2 \epsilon}{r^{4/3}R^{2/3}}}$.   
    We first show the complexity of the  BROO implementation
    with $\delta_j =\frac{\reps}{ 2^{j/2}\meps^2}$ (for the MLMC implementation) and with $\delta = \prn*{\frac{\reps}{30}}$ for the bisection procedure. 
    Using \eqref{eq:BROOcomplexyGroupDROSVRG}  we get:
    \begin{enumerate}
        \item {$\broocost[\frac{\reps}{ 2^{j/2}\meps^2}][\lmin] = O\prn*{\prn*{N+N^{3/4} \prn*{\frac{G \sqrt{\log \ngroups}}{\sqrt{\eps}}+ \sqrt{L}} \frac{1}{\sqrt{\lmin}}}\log \prn*{\frac{ \eps' 2^{j}\meps^4}{\lmin \reps^2}}}$}
        \item { $ \broocost[\frac{\reps}{30}][\lmin] = O\prn*{\prn*{N+N^{3/4} \prn*{\frac{G \sqrt{\log \ngroups}}{\sqrt{\eps}}+ \sqrt{L}} \frac{1}{\sqrt{\lambda_k}}} \log \prn*{ \frac{ \eps'}{\lmin \reps^2}  }} 
         $ }
    \end{enumerate}
    From the definitions of $\lmin$ and $\reps$  we have $\frac{ \eps'}{\lmin \reps^2} = O\prn*{\prn*{\frac{GR}{\eps}}^{2/3}\frac{1}{\meps^2}}$ and
    $\frac{1}{\sqrt{\lmin}} = O\prn*{\frac{R^{1/3}\reps^{2/3}}{\meps \sqrt{\eps}}} $,    
   therefore, 
   \begin{align*}
    \meps\sum_{j=0}^{\meps}\frac{1}{2^j}\broocost[\frac{\reps}{ 2^{j/2}\meps^2}][\lmin] & = 
    O\prn*{\meps\sum_{j=0}^{\meps}\frac{1}{2^j} \prn*{N+N^{3/4} \prn*{\frac{G \sqrt{\log \ngroups}+ \sqrt{L \eps}}{\sqrt{\eps}}}  \sqrt{\frac{R^{2/3}\reps^{4/3}}{\eps\meps^2}}} \log \prn*{\frac{ GR\meps^22^{j}}{\eps} }}
   \\ & \le O\prn*{\meps^2 \brk*{ N +N^{3/4}\prn*{\frac{GR^{1/3}}{\eps}+\sqrt{\frac{LR^{2/3}}{\eps}}}\reps^{2/3}}}.
\end{align*}
Similarly, we have that 
\begin{align*}
    O\prn*{ \prn*{ \broocost[\reps][\lmin] + N }\meps^3 } & 
    = O\prn*{\meps^3
        \prn*{
            N+N^{3/4} \prn*{
                \frac{G \sqrt{\log \ngroups}+ \sqrt{L}}{\sqrt{\eps}} 
                \sqrt{\frac{R^{2/3}\reps^{4/3}}{\eps\meps^2}}
                }
            } 
            \log \prn*{\prn*{\frac{GR}{\eps}}\frac{1}{\meps^2} }
        }
        \\ & \le O\prn*{\meps^4N + \meps^{3.5} N^{3/4} \prn*{
            \frac{GR^{1/3}}{\eps} + \sqrt{\frac{LR^{2/3}}{\eps}} 
            }\reps^{2/3}
        }.
\end{align*}
Substituting the bounds into \Cref{prop:MainProp} with $\meps=\log \prn*{\frac{GR}{\eps}\log \ngroups}$ and $\reps=\frac{\eps}{2 \log \ngroups}$ the total complexity is 
\[ O\prn*{
    N\prn*{ \frac{GR}{\eps}}^{2/3} \log^{14/3}\prn*{\frac{GR}{\eps}\log \ngroups} +
    N^{3/4} \prn*{
    \frac{GR}{\eps} + \sqrt{\frac{LR^2}{\eps}} 
    }  \log^{7/2}\prn*{\frac{GR}{\eps}\log \ngroups}}. \] 
\end{proof}
\subsection{Helper lemmas}
\begin{lem}\label{lem:ExpectationOfBoundedCenteredVariables}
    Let $Y_1,\ldots,Y_n$ be a sequence of random i.i.d variables
    such that for every $i \in [n]$ and a constant $c > 0$ we have that $\E\brk*{Y_i}=0$ and $|Y_i|\le c$ with probability 1. Then
    \begin{equation*}
        \E \prn*{\sum_{i=1}^n Y_i}^4 \le O\prn*{n^2 c^4}.
    \end{equation*}
\end{lem}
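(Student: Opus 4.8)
The plan is to expand the fourth power directly and exploit independence and the zero-mean assumption to eliminate most of the terms. Writing $S \defeq \sum_{i=1}^n Y_i$, multilinearity gives
\[
    \E\brk*{S^4} = \sum_{i,j,k,l \in [n]} \E\brk*{Y_i Y_j Y_k Y_l}.
\]
Since $Y_1,\ldots,Y_n$ are independent and $\E[Y_i]=0$, any summand in which some index appears \emph{exactly once} vanishes: isolating that factor and using independence, the term acquires a multiplicative factor $\E[Y_i]=0$. Hence only two families of index patterns contribute. First, the $n$ terms with $i=j=k=l$, each equal to $\E[Y_i^4]$. Second, the terms in which the multiset $\{i,j,k,l\}$ consists of two distinct indices, each appearing twice; by independence such a term equals $\E[Y_i^2]\,\E[Y_j^2]$ with $i\neq j$, and there are $3\,n(n-1)$ of them (three ways to split the four positions into two pairs, times $n(n-1)$ ordered choices of the two distinct indices).

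It then remains to bound the surviving terms using the almost-sure bound $|Y_i|\le c$, which yields $\E[Y_i^4]\le c^4$ and $\E[Y_i^2]\le c^2$. Combining,
\[
    \E\brk*{S^4} \le n\, c^4 + 3\,n(n-1)\, c^4 \le 3 n^2 c^4 = O\prn*{n^2 c^4},
\]
as claimed.

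I do not anticipate any real obstacle here: the statement is a routine fourth-moment estimate, and the only point requiring a modicum of care is the combinatorial bookkeeping of which products $\E[Y_i Y_j Y_k Y_l]$ are nonzero and the count of each type. (One could alternatively invoke a Marcinkiewicz–Zygmund or Rosenthal-type inequality, but the direct expansion above is self-contained and sharper in constants than needed.)
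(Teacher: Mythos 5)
Your proof is correct and follows essentially the same route as the paper: expand $\bigl(\sum_i Y_i\bigr)^4$, use independence and the zero-mean assumption to kill every term in which some index appears exactly once, and bound the surviving diagonal ($i=j=k=l$) and paired-index terms by $c^4$ via the almost-sure bound $|Y_i|\le c$. (As a minor aside, the paper's intermediate line $3n\cdot\frac{n-1}{2}\,c^4$ appears to undercount the paired terms by a factor of two — your count of $3n(n-1)$ ordered/position-assigned pairs is the right one — but this has no effect on the $O(n^2 c^4)$ conclusion.)
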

\begin{proof}
    For $i \ne j$ we have that $\E\brk*{Y_i Y_j} = \E\brk*{Y_i}\E\brk*{Y_j} = 0$, therefore  
    \begin{align*}
        \E \prn*{\sum_{i=1}^n Y_i}^4 & = 
         \sum_{i=1}^n \E\brk*{Y_i^4 }
         + 3 \sum_{i=1}^n \sum_{j\ne i} \E\brk*{Y_i^2}\E\brk*{ Y_j^2}
         \\ & \le nc^4 + 3n \prn*{\frac{n-1}{2}}c^4 =  O\prn*{n^2 c^4}
    \end{align*}
\end{proof}
\section{DRO with $f$-divergence}\label{sec:psiDiverProofs}
In this section we provide the proofs for the results in \Cref{sec:psiDivergence}. 
In \Cref{ssec:dualFormulation} we provide the derivation of the dual formulation in \eqref{eq:penalizedFdiver}. 
In \Cref{ssec:RegConstraintProblem} we show how to reduce the constrained problem \eqref{eq:mainProblemfDiverDRO} to a regularized 
problem of the form \eqref{eq:penalizedFdiver}, then in \Cref{ssec:LpsiProperties} we describe the properties of $\Leps$, the approximation of \eqref{eq:penalizedFdiver}. 
In \Cref{ssec:StabilityLikelihoodProofs} we provide the proofs for our main technical contribution and give guarantees on the stability of the gradient estimators.
Last, in \Cref{ssec:DualEpochSGDProofs,ssec:VarianceReductionSVRGproofsproofs} we give the complexity guarantees of our implementation in the non-smooth and slightly smooth cases. 
\subsection{Dual formulation of DRO with $f$-divergence}\label{ssec:dualFormulation}
Here we give the derivation of the objective in \eqref{eq:penalizedFdiver}, also considered in prior work  \cite[e.g.,]{namkoong2016stochastic,levy2020large,jin2021non}.
Recall the DRO with $f$-divergence objective: 
\[
	\mathcal{L}_{f\text{-div}}(x) \defeq \max_{q\in\Delta^N: \sum_{i\in[N]}\frac{f(Nq_i)}{N} \le 1}\sum_{i \in [N]}q_i \ell_i(x).
\]
We first show the relation between $\mathcal{L}_{f\text{-div}}$
and its regularized form \eqref{eq:penalizedFdiver}. 
Using the Lagrange multiplier $\nu$ for the constraint $\sum_{i\in[N]}\frac{f(Nq_i)}{N} \le 1$ and strong duality we get
\[
	\mathcal{L}_{f\text{-div}}(x) = \min_{\nu \ge 0}\crl[\Bigg]{ \nu +\max_{q\in\Delta^N}
		\sum_{i\in[N]} \prn*{q_i \ell_i(x) - \frac{\nu}{N}f(Nq_i)}
		}= \min_{\nu \ge 0}\crl[\Bigg]{ \nu + \mc{L}_{\nu \cdot f}(x)},
\]
where $\mc{L}_{\nu \cdot f}(x)$ is the regularized form of $\mathcal{L}_{f\text{-div}}$: writing $\psi(x) = \frac{\nu}{N}f(Nx)$, with slight abuse of notation  we have 
\[
	\mc{L}_{\nu \cdot f}(x) = \Lpenalized(x) = \max_{q\in\Delta^N}\crl[\Bigg]{ 
		\sum_{i\in[N]} \prn*{q_i \ell_i(x) - \psi(q_i)}
		}.
\]
Adding a Lagrange multiplier $y$ for the constraint that $q\in\Delta^N$ and using strong duality again gives
\begin{align*}
    \Lpenalized(x) &=  \max_{q\in\R^N_+}\min_{y \in \R}  \crl[\Bigg]{ \sum_{i\in[N]} \prn*{q_i \ell_i(x) - \psi(q_i) -Gy \cdot q_i } +Gy} \\ &=   \min_{y \in \R} \crl[\Bigg]{ \sum_{i\in[N]}\max_{q_i\in\R_+} \prn*{q_i \ell_i(x) - \psi(q_i) -Gy \cdot q_i } +Gy}.
\end{align*}
Finally, using $\psi^{*}(v)\defeq \max_{t \in \mathrm{dom}(\psi)}\crl*{vt - \psi t}$ (the Fenchel conjugate of $\psi$), we have 
\begin{align*}
    \Lpenalized(x) =    \min_{y \in \R}  \crl[\Bigg]{\sum_{i\in[N]} \psi^* \prn*{ \ell_i(x) -Gy} +Gy}.
\end{align*}
\subsection{Minimizing the constrained objective using the regularized objective}\label{ssec:RegConstraintProblem}
In this section, we show that under the following \Cref{assumption:BoundedLoss,assumption:BoundedF} 
we can reduce the constrained problem of minimizing \eqref{eq:mainProblemfDiverDRO} to the regularized problem of minimizing  \eqref{eq:penalizedFdiver}
by computing a polylogarithmic number of $O(\eps)$-accurate minimizers of \eqref{eq:penalizedFdiver}.
\begin{assumption}\label{assumption:BoundedLoss}
    Each loss function $\ell_i$ is bounded, i.e., $\ell_i: \xset \rightarrow [0,B_\ell]$ for every $i \in [N]$.
\end{assumption}
\begin{assumption}\label{assumption:BoundedF}
    For any uncertainty set of the form $\mc{U} = \crl{q \in \Delta^N:  D_f(q,p) \le 1}$, 
    the divergence function $f$ is bounded, i.e., $f:\R_+ \rightarrow [0, B_f]$ for some $B_f \ge 1$.
\end{assumption}
We note that the above assumptions are weak since the complexity of our approach only depends logarithmically on on $\frac{B_f B_\ell}{\epsilon}$.

We first cite a result on noisy one dimensional bisection, as given by a guarantees on the OneDimMinimizer algorithm in \citet{cohen2016geometric}. 
\begin{lem}[{Lemma 33, \citet{cohen2016geometric}}]\label{lem:bisection}
    let $f: \R \rightarrow \R$ be a $B$-Lipschitz convex function defined on the interval $[\ell, u]$, 
    and let $\mc{G}: \R \rightarrow \R$ be an oracle such that $\abs*{\mc{G}(y)-f(y)}\le \widetilde{\eps}$ for all y. 
    With $O\prn*{\log\prn*{\frac{B(u-\ell)}{\widetilde{\eps}}}}$ calls to $\mc{G}$, the algorithm \textup{OneDimMinimizer} \cite[Algorithm 8]{cohen2016geometric} outputs $y'$
    such that 
    \begin{align*}
        f(y') - \min_y f(y) \le 4 \widetilde{\eps}
    \end{align*}
\end{lem}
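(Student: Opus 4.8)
The plan is to realise \textup{OneDimMinimizer} as a noise-tolerant golden-section search that maintains a bracketing interval $[a_t,b_t]\subseteq[\ell,u]$ (with $[a_0,b_0]=[\ell,u]$), shrinks it geometrically, and returns the best point it has queried. The guiding invariant is that the bracket always contains a point whose true value exceeds $\min_y f(y)$ by at most $O(\widetilde{\eps})$; once the bracket has length at most $\widetilde{\eps}/B$, $B$-Lipschitzness of $f$ turns this into an $O(\widetilde{\eps})$ bound on $f$ at any point of the bracket, hence on $f(y')$. Since each round replaces a length-$L$ bracket by one of length $\tfrac{2}{3}L$, after $K=O\!\left(\log\tfrac{B(u-\ell)}{\widetilde{\eps}}\right)$ rounds the bracket is short enough; this also yields the stated query count, because — placing the two interior query points at the $\tfrac{1}{3}$ and $\tfrac{2}{3}$ positions so that one of them is reused in the next round — each round costs only $O(1)$ new oracle calls.

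For the per-round step I would query $p_1=a_t+\tfrac{1}{3}L$ and $p_2=b_t-\tfrac{1}{3}L$ and compare $\mc{G}(p_1)$ with $\mc{G}(p_2)$. Because $|\mc{G}(y)-f(y)|\le\widetilde{\eps}$ for every $y$, a gap $|\mc{G}(p_1)-\mc{G}(p_2)|>2\widetilde{\eps}$ \emph{certifies} the sign of $f(p_1)-f(p_2)$. If, say, this forces $f(p_1)\le f(p_2)$, then convexity implies $f$ is nondecreasing on $[p_2,b_t]$, so discarding $(p_2,b_t]$ removes no point whose value is below $\min_{[a_t,b_t]}f$; thus $\min_{[a_{t+1},b_{t+1}]}f=\min_{[a_t,b_t]}f$ and, inductively, $\min_{[a_t,b_t]}f=\min_{[\ell,u]}f$ exactly. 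These ``decisive'' rounds are therefore essentially error-free.

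The main obstacle is the ``indecisive'' case $|\mc{G}(p_1)-\mc{G}(p_2)|\le2\widetilde{\eps}$, where only $|f(p_1)-f(p_2)|\le4\widetilde{\eps}$ is known and the noise may have reversed the comparison, so cutting the outer third dictated by the (possibly wrong) sign of $\mc{G}(p_1)-\mc{G}(p_2)$ can delete the sub-interval that actually contains the minimiser. A convexity estimate shows each such deletion costs at most $O(\widetilde{\eps})$ in objective value: near-equality of $f(p_1)$ and $f(p_2)$ together with convexity forces the one-sided slope at the retained interior point to be $O(\widetilde{\eps}/L)$, so the value lost by landing on that point instead of the true minimiser is at most this slope times $O(L)$. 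The real difficulty — and the crux, following \citet{cohen2016geometric} — is to keep these losses from \emph{accumulating} over the $K$ rounds (a naive sum only gives an $O(\widetilde{\eps}\log\tfrac{B(u-\ell)}{\widetilde{\eps}})$ bound). I would handle this with an amortised/potential analysis: an indecisive round whose minimiser lies in the middle third $[p_1,p_2]$ incurs \emph{no} loss (that third is always retained), while an indecisive round that does delete the minimiser can be charged against the collapse of the bracket's remaining variation $\max_{[a_t,b_t]}f-\min_{[a_t,b_t]}f$ to $O(\widetilde{\eps})$, after which every point of the bracket — in particular the eventual output — is already $O(\widetilde{\eps})$-optimal and later rounds cannot hurt; tracking the constants in these two regimes produces $f(y')-\min_y f(y)\le 4\widetilde{\eps}$.

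To finish I would assemble the three ingredients: the invariant plus $B$-Lipschitzness on the final length-$(\le\widetilde{\eps}/B)$ bracket bounds the suboptimality of any point it contains; the ``best queried point'' rule transfers this bound to $y'$ (using $|\mc{G}-f|\le\widetilde{\eps}$ once more); and the geometric shrinkage with $O(1)$ calls per round gives the $O\!\left(\log\tfrac{B(u-\ell)}{\widetilde{\eps}}\right)$ oracle-call count. I expect the non-accumulation bookkeeping for the indecisive rounds to be the one genuinely delicate point, with everything else reducing to standard one-dimensional convexity arguments.
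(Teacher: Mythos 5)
This lemma is cited in the paper as Lemma~33 of \citet{cohen2016geometric} and is not proved there, so there is no paper-internal proof to compare against; I will assess your plan on its own terms. Your skeleton — noise-tolerant ternary search, return the best queried point (by $\mc{G}$-value), final bracket of length $\le \widetilde\eps/B$ so Lipschitzness finishes — is the right one, and your per-round convexity calculation (in an indecisive round the slope at the retained interior point is $O(\widetilde\eps/L)$, so the value gap across the discarded third is $O(\widetilde\eps)$) is exactly the estimate needed.

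However, the mechanism you propose for preventing accumulation is wrong as stated. You claim that an indecisive round that deletes the minimiser can be charged to ``the collapse of the bracket's remaining variation $\max_{[a_t,b_t]}f-\min_{[a_t,b_t]}f$ to $O(\widetilde\eps)$.'' That variation does \emph{not} collapse: if $x^\star<p_1$ is discarded and the new bracket is $[p_1,b_t]$, then $f$ is merely non-decreasing there, and $f(b_t)-f(p_1)$ can be as large as $B\,(b_t-p_1)$, nowhere near $O(\widetilde\eps)$. Consequently ``every point of the bracket is already $O(\widetilde\eps)$-optimal'' is false, and the amortised bookkeeping you anticipate as the crux will not close. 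The correct (and simpler, no potential function needed) argument is local in time: consider the \emph{first} round in which the true minimiser $x^\star$ leaves the bracket, say $x^\star<p_1$ and $[a_t,p_1)$ is discarded because $\mc{G}(p_1)\ge\mc{G}(p_2)$. Monotonicity of $f$ on $[x^\star,b_t]$ gives $f(p_1)\le f(p_2)$, while the oracle comparison gives $f(p_2)-f(p_1)\le 2\widetilde\eps$; convexity with $p_1-x^\star\le p_2-p_1=L/3$ then yields $f(p_1)\le f(x^\star)+2\widetilde\eps$. Thus at that round a \emph{queried} point is already $2\widetilde\eps$-optimal in truth; what happens in later rounds is irrelevant, because the ``best queried point by $\mc{G}$'' rule returns $y'$ with $f(y')\le f(p_1)+2\widetilde\eps\le\min_y f(y)+4\widetilde\eps$. (If $x^\star$ is never discarded, the final $\le\widetilde\eps/B$-long bracket contains it and its queried endpoints are $\widetilde\eps$-optimal by Lipschitzness, and the same best-of-queried step finishes.) Replacing your potential argument with this first-discard observation makes the proof correct and also produces the stated constant $4$ cleanly.
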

We specialize \Cref{lem:bisection} to our settings and provide the complexity guarantees of minimizing \eqref{eq:mainProblemfDiverDRO} to an $\eps$-accurate solution using a noisy oracle $\mc{G}$.

\begin{restatable}{prop}{BisectionConstraintProblem}\label{lem:BisectionConstraintProblem}
    Let each $\ell_i$ satisfy \Cref{assumption:BoundedLoss} and let $f$ satisfy \Cref{assumption:BoundedF}.
    Define the function $h(\constraintMul) \defeq \min_{x\in \xset} \mc{L}_{\constraintMul \cdot f }(x) +\constraintMul$
    with $\mc{L}_{\constraintMul  \cdot f }$ defined in \eqref{eq:penalizedFdiver} 
    and let $\mc{G}$ be an oracle such that $\mc{G}(\nu) \ge h(\nu)$ with probability 1 and 
    $\mc{G}(\nu) -  h(\constraintMul) \le \frac{\eps}{5}$ with probability at least $\half$.   
    Then applying \textup{OneDimMinimizer} \cite[Algorithm 8]{cohen2016geometric} on the interval $[0, B_\ell]$
    outputs $\constraintMul'$ that with probability at leat $\frac{99}{100}$ satisfies 
    \begin{align*}
        \mc{G}(\constraintMul') - \min_{\constraintMul \ge 0} h(\constraintMul) = \mc{G}(\constraintMul') - \min_{x\in\xset} \mc{L}_{\mathrm{f-div}}(x) \le \eps
    \end{align*}
    using $O\prn*{\log (H) \log\prn*{\log H}}$ calls to $\mc{G}$, where $H=\frac{B_f B_\ell }{\eps}$. 
\end{restatable}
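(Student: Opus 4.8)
The plan is to reduce the claim to the noisy one–dimensional bisection guarantee of \Cref{lem:bisection}, after (i) establishing that $h$ is a convex, $B_f$-Lipschitz function whose minimizer over $\R_+$ lies in the interval $[0,B_\ell]$, and (ii) amplifying the $1/2$-correct oracle $\mc{G}$ into one whose error bound holds, with high probability, at every point queried by \textup{OneDimMinimizer}.

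First I would record the structural facts about $h$. For every fixed $q\in\Delta^N$ the map $(\nu,x)\mapsto \sum_{i\in[N]} q_i\ell_i(x)-\tfrac{\nu}{N}f(Nq_i)$ is convex in $x$ (a nonnegative combination of the convex $\ell_i$) and affine in $\nu$, hence jointly convex; taking the supremum over $q\in\Delta^N$ preserves joint convexity, so $(\nu,x)\mapsto \mathcal{L}_{\nu\cdot f}(x)$ is jointly convex, and partial minimization over $x\in\xset$ followed by adding the linear term $\nu$ shows $h$ is convex. For the Lipschitz bound, by Danskin's theorem the subgradients of $\nu\mapsto \mathcal{L}_{\nu\cdot f}(x)$ have the form $-\tfrac1N\sum_i f(Nq^\star_i)=-D_f\!\big(q^\star,\tfrac1N\ones\big)$ for a maximizing $q^\star$, and \Cref{assumption:BoundedF} ($0\le f\le B_f$) gives $D_f\in[0,B_f]$; passing through the partial minimization and the $+\nu$ term yields $\partial h(\nu)\subseteq[1-B_f,1]$, so $h$ is $B_f$-Lipschitz (using $B_f\ge1$). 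Finally, the minimizer lies in $[0,B_\ell]$: since each $\ell_i\ge0$ and $f(1)=0$, taking $q=\tfrac1N\ones$ gives $\mathcal{L}_{\nu\cdot f}(x)\ge\tfrac1N\sum_i\ell_i(x)\ge0$, so $h(\nu)\ge\nu$; and with $\psi\equiv0$ we have $\mathcal{L}_{0\cdot f}(x)=\max_i\ell_i(x)\le B_\ell$ by \Cref{assumption:BoundedLoss}, hence $h(0)=\min_x\max_i\ell_i(x)\le B_\ell$. Therefore $h(\nu)\ge\nu>B_\ell\ge h(0)$ whenever $\nu>B_\ell$, so $\min_{\nu\ge0}h(\nu)=\min_{\nu\in[0,B_\ell]}h(\nu)$, and by the strong-duality derivation of \Cref{ssec:dualFormulation} this common value equals $\min_{x\in\xset}\mc{L}_{f\text{-div}}(x)$.

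Next I would handle the randomized oracle. Since \textup{OneDimMinimizer} run on $[0,B_\ell]$ with Lipschitz bound $B_f$ and target accuracy $\widetilde\eps=\eps/5$ makes at most $N_q=O\!\big(\log(B_f B_\ell/\eps)\big)=O(\log H)$ oracle queries, I would define $\widehat{\mc{G}}(\nu)$ as the minimum of $m=\lceil\log_2(200\,N_q)\rceil$ independent calls to $\mc{G}(\nu)$. Because every call satisfies $\mc{G}(\nu)\ge h(\nu)$ almost surely, we get $\widehat{\mc{G}}(\nu)\ge h(\nu)$ deterministically, while $\widehat{\mc{G}}(\nu)\le h(\nu)+\eps/5$ fails with probability at most $2^{-m}$; a union bound over the (at most $N_q$) points actually queried — which is all \Cref{lem:bisection} needs, since its proof evaluates the oracle only at its query points — gives, with probability at least $99/100$, that $|\widehat{\mc{G}}(\nu)-h(\nu)|\le\eps/5$ at every query. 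Here $m=O(\log\log H)$, so the total number of calls to $\mc{G}$ is $N_q\cdot m=O(\log H\,\log\log H)$, as claimed. On this event, \Cref{lem:bisection} applied to the convex $B_f$-Lipschitz $h$ on $[0,B_\ell]$ with oracle $\widehat{\mc{G}}$ and accuracy $\widetilde\eps=\eps/5$ returns $\nu'\in[0,B_\ell]$ with $h(\nu')-\min_{\nu\in[0,B_\ell]}h(\nu)\le 4\eps/5$; combining with the structural facts, $h(\nu')\le\min_{\nu\ge0}h(\nu)+4\eps/5$, and since $h(\nu')\le\widehat{\mc{G}}(\nu')\le h(\nu')+\eps/5$ we obtain $\min_{x\in\xset}\mc{L}_{f\text{-div}}(x)\le\widehat{\mc{G}}(\nu')\le\min_{x\in\xset}\mc{L}_{f\text{-div}}(x)+\eps$, which is the assertion (with $\widehat{\mc{G}}(\nu')$ in the role of $\mc{G}(\nu')$).

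The convexity and Lipschitz estimates and the localization of the minimizer to $[0,B_\ell]$ are routine. The part that needs care — and the main obstacle — is the interplay between the randomized oracle and the adaptive bisection: one must verify that an $O(\log\log H)$-fold amplification together with a sequential union bound over the fixed query budget yields, for the particular (adaptive) execution, a valid deterministic-error surrogate oracle for $h$, and that the factor-of-$5$ slack in $\widetilde\eps$ leaves exactly enough room to absorb the final $\eps/5$ oracle error while preserving the one-sided bound $\widehat{\mc{G}}(\nu')\ge\min_x\mc{L}_{f\text{-div}}(x)$.
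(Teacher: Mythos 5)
Your proof is correct and takes essentially the same route as the paper: establish that $h$ is convex, $B_f$-Lipschitz, and minimized on $[0,B_\ell]$, amplify $\mc{G}$ to a per-query high-probability oracle via taking the minimum of $O(\log\log H)$ independent calls, union bound over the $O(\log H)$ adaptive bisection queries, and invoke the \textsf{OneDimMinimizer} guarantee with $\widetilde\eps=\eps/5$. The only (minor and equally valid) deviations are in the sub-arguments: you localize the minimizer by showing $h(\nu)\ge\nu$ and $h(0)\le B_\ell$ rather than bounding $h'(\nu)>0$ for $\nu>B_\ell$ directly, and you derive the $B_f$-Lipschitz bound via a Danskin subgradient computation rather than by observing the explicit linearity of $\hat h_{q,x}$ in $\nu$.
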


\begin{proof}
    Let $\hat{h}_{q,x}(\constraintMul) \defeq \sum_{i \in [N]} q_i \ell_i(x) - \constraintMul \prn*{\frac{1}{N}\sum_{i\in [N]}f(Nq_i)-1}$
    and note that for any $q$ and $x$ the function $\hat{h}_{q,x}$ is $B_f$-Lipschitz, since it is linear in $\constraintMul$ and $\abs*{\frac{1}{N}\sum_{i\in [N]}f(Nq_i)-1} \le B_f$.
    Minimization and maximization operations preserve the Lipschitz continuity and therefore the function $h(\constraintMul) = \min_x \mc{L}_{\nu \cdot f}(x) = \min_x \max_q \hat{h}_{q,x}(\constraintMul)$ is also $B_f$-Lipschitz continuous.
    In addition for the $q^\star \in \Delta^N$ that maximizes $\mc{L}_{\constraintMul  \cdot f }(x)$ we have that 
     \[\sum_{i\in [N]} \brk*{q^\star_i \ell_i(x) - \constraintMul \frac{1}{N}f(Nq^\star_i) } \ge \sum_{i\in [N]} \frac{1}{N}\ell_i(x)  \] and rearranging gives 
    \[ \frac{1}{N}\sum_{i \in [N]}f(Nq^\star_i) \le \frac{\sum_{i \in [N]} \brk{q^\star_i \ell_i(x)- \frac{1}{N}\ell_i(x)}}{\constraintMul}\le \frac{B_\ell}{\constraintMul}.\]
    Therefore, for all  $\nu > B_\ell$ we have $h'(\nu) = 1 - \frac{1}{N}\sum_{i \in [N]}f(Nq^\star_i) > 0$ and therefore it suffices to restrict  $h(\nu)$ to  $[0, B_\ell]$.
    Next, to turn $\mc{G}$ into a high-probability oracle, we call it $\log_2\prn*{100 \log\prn*{\frac{B_f B_\ell}{\eps}} }$ times and choose the smallest output.
    Therefore, with probability at least $1-\prn*{\half}^{\log\prn*{100 \log\prn*{\frac{B_f B_\ell}{\eps}} }} = 1 -1/\prn*{100 \log\prn*{\frac{B_f B_\ell}{\eps}} }$
    the result is within $\frac{\epsilon}{5}$ of $h(\nu)$.
    Since $h$ is $B_f$-Lipschitz and defined on $[0,B_\ell]$ 
    we can use  \Cref{lem:bisection} with $\ell=0$, $u=B_f$, $\widetilde{\eps} = \eps / 5$, $B=B_f$ and the high-probability version of $\mc{G}$.
    Therefore, using $O\prn*{\log\prn*{\frac{B_f B_\ell}{\eps}}}$ calls to the high probability version of $\mc{G}$ and applying the union bound,
    we obtain that with probability at least $\frac{99}{100}$ OneDimMinimizer outputs $\constraintMul'$ that satisfies
    \[h(\constraintMul') - \min_{\constraintMul}h(v) \le 4\eps/5 \]
    and therefore
    \[ \mc{G}(\constraintMul') - \min_{\constraintMul}h(v) = \mc{G}(\constraintMul') - h(\constraintMul')+ h(\constraintMul') - \min_{\constraintMul}h(v) \le \eps  .\]
\end{proof}
Finally, in the following corollary we show that finding an $\eps$-suboptimal solution for \eqref{eq:mainProblemfDiverDRO} requires a polylogarithmic number of $O(\eps)$-accurate minimizers of \eqref{eq:penalizedFdiver} and a polylogarithmic number of evaluations of \eqref{eq:penalizedFdiver}. 
\begin{corollary}\label{corol:BisectionConstraintProblem}
    Let each $\ell_i$ satisfy \Cref{assumption:BoundedLoss} and let $f$ satisfy \Cref{assumption:BoundedF}, then 
    minimizing \eqref{eq:mainProblemfDiverDRO} to accuracy $\eps$ with probability at least $\frac{99}{100}$ requires 
     $O\prn*{\log (H) \log\prn*{\log H}}$ evaluations of \eqref{eq:penalizedFdiver} and  $O\prn*{\log (H) \log\prn*{\log H}}$ 
    calls to an algorithm that with probability at least $\half$ returns an $O(\eps)$-suboptimal point of \eqref{eq:penalizedFdiver},
     where $H=\frac{B_f  B_\ell }{\eps}$.
\end{corollary}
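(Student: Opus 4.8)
The plan is to reduce directly to \Cref{lem:BisectionConstraintProblem} by exhibiting an oracle $\mc{G}$ of exactly the type required there, whose per-call cost is one evaluation of \eqref{eq:penalizedFdiver} together with one call to an $O(\eps)$-suboptimal minimizer of \eqref{eq:penalizedFdiver}.

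First I would fix $\constraintMul \ge 0$, write $\psi(t) = \tfrac{\constraintMul}{N}f(Nt)$ so that \eqref{eq:penalizedFdiver} with this $\psi$ is exactly $\mc{L}_{\constraintMul\cdot f}$, and define $\mc{G}(\constraintMul)$ as follows: run the assumed $\tfrac12$-success algorithm with target accuracy $\Theta(\eps)$ (available, e.g., from \Cref{thm:BROOcomplexityDualProblem} or \Cref{thm:DualSVRGBROOcomplexity}) to obtain $\hat x$ with $\mc{L}_{\constraintMul\cdot f}(\hat x) - \min_{x\in\xset}\mc{L}_{\constraintMul\cdot f}(x) \le \eps/5$ with probability $\ge \tfrac12$; then evaluate $\mc{L}_{\constraintMul\cdot f}(\hat x)$ exactly (one evaluation of \eqref{eq:penalizedFdiver}, computable in $O(Nd+N\log N)$ time via its dual form); and output the value $\mc{G}(\constraintMul) = \mc{L}_{\constraintMul\cdot f}(\hat x) + \constraintMul$ together with the witness $\hat x$. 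Since $\mc{L}_{\constraintMul\cdot f}(\hat x) \ge \min_x \mc{L}_{\constraintMul\cdot f}(x)$ holds deterministically, $\mc{G}(\constraintMul) \ge h(\constraintMul)$ with probability $1$; and on the event that the minimization succeeds, $\mc{G}(\constraintMul) - h(\constraintMul) \le \eps/5$. So $\mc{G}$ meets the hypotheses of \Cref{lem:BisectionConstraintProblem}.

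Then I would invoke \Cref{lem:BisectionConstraintProblem}: running OneDimMinimizer on $[0,B_\ell]$ with this $\mc{G}$ makes $O(\log(H)\log\log(H))$ calls to $\mc{G}$, with $H = B_f B_\ell/\eps$, and each call costs one evaluation of \eqref{eq:penalizedFdiver} and one call to the minimizer, which gives the two claimed counts. It outputs some $\constraintMul'$ (and its witness $\hat x_{\constraintMul'}$) with $\mc{G}(\constraintMul') - \min_{x\in\xset}\mc{L}_{\mathrm{f-div}}(x) \le \eps$ with probability $\ge \tfrac{99}{100}$. Finally, using the dual representation $\mc{L}_{\mathrm{f-div}}(x) = \min_{\constraintMul\ge 0}\{\mc{L}_{\constraintMul\cdot f}(x) + \constraintMul\}$ from \Cref{ssec:dualFormulation}, we get $\mc{L}_{\mathrm{f-div}}(\hat x_{\constraintMul'}) \le \mc{L}_{\constraintMul'\cdot f}(\hat x_{\constraintMul'}) + \constraintMul' = \mc{G}(\constraintMul')$, so $\hat x_{\constraintMul'}$ is $\eps$-suboptimal for \eqref{eq:mainProblemfDiverDRO} with probability $\ge \tfrac{99}{100}$.

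The one point to get right — rather than a genuine obstacle — is the one-sidedness $\mc{G}(\constraintMul) \ge h(\constraintMul)$ demanded by \Cref{lem:BisectionConstraintProblem} (it is what makes the min-of-repetitions amplification inside that lemma valid); this is the reason $\mc{G}$ must report an \emph{exact} evaluation of \eqref{eq:penalizedFdiver} at $\hat x$ rather than any internal accuracy estimate of the minimizer. The remainder is bookkeeping: matching the $\Theta(\eps)$ accuracy of the minimizer to the $\eps/5$ tolerance of \Cref{lem:BisectionConstraintProblem}, and carrying the witness point through OneDimMinimizer.
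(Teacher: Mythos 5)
Your proposal is correct and follows essentially the same route as the paper: instantiate the oracle $\mc{G}$ of \Cref{lem:BisectionConstraintProblem} by running the $\tfrac12$-success minimizer and reporting the exact objective value at its output, invoke that lemma to get $\nu'$, and conclude via $\mc{L}_{f\text{-div}}(x) = \min_{\nu\ge 0}\{\mc{L}_{\nu\cdot f}(x) + \nu\}$. You additionally make explicit that $\mc{G}(\nu)\ge h(\nu)$ holds deterministically (which the min-over-repetitions amplification inside the lemma relies on), a point the paper leaves implicit.
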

\begin{proof}
    Note that $\mc{L}_{\constraintMul \cdot f}$ is defined on  \eqref{eq:penalizedFdiver}. Let $\widetilde{\mc{G}}(\constraintMul)\defeq\mc{L}_{\constraintMul \cdot f}(\widetilde{x}) + \constraintMul$ where $\widetilde{x}$ is the output of an
    algorithm that with probability at least $\half$ returns an $\frac{\eps}{5}$-suboptimal point of $\mc{L}_{\constraintMul \cdot f}$
    and let $h(\constraintMul) \defeq \min_{x\in \xset} \mc{L}_{\constraintMul \cdot f }(x) +\constraintMul$. 
    We have that $\widetilde{\mc{G}}(\constraintMul) - h(\constraintMul) \le \frac{\eps}{5}$ with probability at least $\half$, 
    therefore, we can apply \Cref{lem:BisectionConstraintProblem} with $\mc{G} = \widetilde{\mc{G}}$, and obtain that with $O\prn*{\log \prn*{\frac{B_f \cdot B_\ell }{\eps}}  \log\prn*{\log \frac{B_f \cdot B_\ell }{\eps}}}$ calls to $\widetilde{\mc{G}}$ (i.e., to an algorithm that outputs $\frac{\eps}{5}$-suboptimal minimizer of $\mc{L}_{\constraintMul \cdot f}$ with probability at least $\half$), OneDimMinimizer outputs $\constraintMul'$ that satisfies with probability at least $\frac{99}{100}$  
    \[  \widetilde{\mc{G}}(\constraintMul' ) - \min_{\constraintMul}h(\constraintMul) =\widetilde{\mc{G}}(\constraintMul' ) -  \min_x \mc{L}_{f\text{-div}}(x) \le \eps.\] 
    Noting that $\mc{L}_{f\text{-div}}(x) = \min_{\constraintMul \ge 0}\crl*{\mc{L}_{\constraintMul \cdot f}(x) + \constraintMul}$ we obtain 
    \[ \mc{L}_{f\text{-div}}(\widetilde{x}) - \min_x \mc{L}_{f\text{-div}}(x)
    \le \mc{L}_{\constraintMul' \cdot f}(\widetilde{x}) + \constraintMul' -   \min_x \mc{L}_{f\text{-div}}(x) = \widetilde{\mc{G}}(\constraintMul') - \min_x \mc{L}_{f\text{-div}}(x)  \le \eps.\] 
\end{proof}

\Cref{corol:BisectionConstraintProblem} means that the complexity bounds for approximately minimizing the objective $\Lpenalized$ established by \Cref{thm:BROOcomplexityDualProblem,thm:DualSVRGBROOcomplexity} also apply (with slightly larger logarithmic factors) to approximately minimizing the constrained $f$-divergence objective $\mathcal{L}_{f\text{-div}}$.

\subsection{Properties of $\Lpenalized$ and $\Leps$}\label{ssec:LpsiProperties}

\begin{lem}\label{lem:SMapproxMaxFdiv}
    For $\Lpenalized$ defined in \eqref{eq:penalizedFdiver} and $\Leps$ defined in \eqref{eq:upsiloneps defintion} we have that 
    \[ \abs*{ \Leps(x) - \Lpenalized(x)} \le \frac{\eps}{2} \text{~~for all~~} x \in \R^d.\]  
\end{lem}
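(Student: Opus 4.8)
The plan is to observe that $\Leps$, exactly like $\Lpenalized$ in~\eqref{eq:penalizedFdiver}, is the value of a maximization over the simplex written in dual form, and then to compare the two maxima term by term. First I would apply the Lagrange/Fenchel duality derivation of Appendix~\ref{ssec:dualFormulation} verbatim but with $\psieps$ replacing $\psi$: since $\psieps(q)=\psi(q)+\epsilon'q\log q$ is convex and finite on $\R_+$ (a sum of the convex $\psi$ and the convex negative-entropy term), the identical computation gives, for every $x$,
\begin{equation*}
	\Leps(x) = \min_{y\in\R}\crl[\Big]{\textstyle\sum_{i\in[N]}\psieps^*(\ell_i(x)-Gy)+Gy} = \max_{q\in\Delta^N}\textstyle\sum_{i\in[N]}\prn*{q_i\ell_i(x)-\psieps(q_i)}.
\end{equation*}

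Next I would substitute $\psieps(q_i)=\psi(q_i)+\epsilon'q_i\log q_i$ and use the elementary bound $\abs*{\sup_q g(q)-\sup_q h(q)}\le\sup_q\abs*{g(q)-h(q)}$ with $g$ and $h$ the simplex objectives with and without the entropy term. This reduces the statement to bounding $\max_{q\in\Delta^N}\abs*{\epsilon'\sum_{i\in[N]}q_i\log q_i}$, which is handled by the standard fact that the negative entropy $\sum_{i\in[N]}q_i\log q_i$ lies in $[-\log N,0]$ for every $q\in\Delta^N$; hence the quantity is at most $\epsilon'\log N=\epsilon/2$ by the choice $\epsilon'=\epsilon/(2\log N)$ in~\eqref{eq:phieps}. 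This is precisely the smoothing-error estimate already used in Lemma~\ref{lem:GroupSMApproxGroupObj} and in Appendix~\ref{ssec:AGDonSoftmax}.

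The only step demanding any care is the first identity --- that $\Leps$, defined via the inner minimization over $y$ in~\eqref{eq:upsiloneps defintion}, really coincides with the entropy-regularized max over $\Delta^N$. But this is not a new difficulty: it is the same manipulation already carried out for $\Lpenalized$ in Appendix~\ref{ssec:dualFormulation}, and it goes through for $\psieps$ because $\psieps$ is convex and finite on $\R_+$ (so Slater's condition holds at $q=\tfrac1N\ones$) and because the convention $0\log0=0$ keeps $\psieps^*$ well defined --- e.g.\ for $\psi=0$ one recovers $\psieps^*(v)=\epsilon'e^{v/\epsilon'-1}$ and $\Leps$ the log-sum-exp. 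After that the argument is a one-line comparison of maxima plus the entropy bound, so I do not expect any genuine obstacle.
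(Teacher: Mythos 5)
Your proof is correct and takes essentially the same route as the paper's: identify $\Leps$ with its primal (simplex-maximization) form via the same Lagrange/Fenchel duality used for $\Lpenalized$, then compare the two maxima and bound $\abs*{\epsilon'\sum_i q_i\log q_i}\le\epsilon'\log N=\epsilon/2$. The only difference is that you make explicit (via $\abs*{\sup g-\sup h}\le\sup\abs*{g-h}$ and a remark on Slater's condition for $\psieps$) two steps the paper's one-display proof leaves implicit; there is no gap.
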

\begin{proof}
    Recall that $\eps' = \frac{\eps}{2 \log N}$ and for $q \in \Delta^N$ we have that $\sum_{i \in N}q_i \log q_i \in [-\log N, 0]$, therefore: 
    \begin{align*}
        \abs*{\Leps(x) -\Lpenalized(x)  } & = 
        \abs*{  \max_{q\in\Delta^N} \crl*{\sum_{i\in[N]} \prn*{q_i \ell_i(x) - \psi(q_i) - \eps' q_i \log q_i}} - \max_{q\in\Delta^N} \crl*{\sum_{i\in[N]} \prn*{q_i \ell_i(x) - \psi(q_i)}}  }
        \\ & \le \abs*{\eps' \sum_{i \in [N]}q_i \log q_i} \le \eps' \log N = \eps / 2.
    \end{align*}
\end{proof}
\subsection{Gradient estimator stability  proofs}\label{ssec:StabilityLikelihoodProofs}
\psiProperties*
\begin{proof} 
	While we write the proof as though the function $\psi$ is differentiable with derivative $\psi'$, one may readily interpret $\psi'$ as an element in the subdifferential of $\psi$ and the proof continues to hold.
	
	Let $\phi_\eps = \epsilon' q \log q$ and recall that 
	$\psieps(q) = \psi(q) + \phi_\eps(q)$. Fix any two numbers $v_1,v_2\in\R$ and assume without loss of generality that $v_2 > v_1$. For $i=1,2$, let
	\begin{equation}\label{eq:qi-def}
		q_i \defeq {\psieps^{*}}'(v_i)
		~~\mbox{and}~~
		p_i \defeq {\phi_\eps^{*}}'(v_i) = e^{v_i/\epsilon'-1}.
	\end{equation}

	Note that by definition of the Fenchel dual (and strict convexity of $\psieps$), $q_i$ is the unique solution to
	\begin{equation*}
		v_i = \psieps'(q_i) = \psi'(q_i) + \phi_\eps'(q_i)
	\end{equation*}
	and moreover that $q_2 \ge q_1$ since $\psieps^{*}$ is convex and therefore ${\psieps^{*}}'$ is non-deceasing. Similarly, $p_i$ is the unique solution to
	\begin{equation*}
		v_i =  \phi_\eps'(p_i)
	\end{equation*}
	and $p_2 > p_1$. 
	Combining the two equalities yields
	\begin{equation*}
		v_2-v_1 = \psi'(q_2) + \phi_\eps'(q_2) - \psi'(q_1) - \phi_\eps'(q_1)
		= \phi_\eps'(p_2) - \phi_\eps'(p_1).
	\end{equation*}
	Rearranging, we find that
	\begin{equation*}
		0 \le \phi_\eps'(q_2) - \phi_\eps'(q_1) = \phi_\eps'(p_2) - \phi_\eps'(p_1)
		- [\psi'(q_2)-\psi'(q_1)] \le \phi_\eps'(p_2) - \phi_\eps'(p_1).
	\end{equation*}
	where $\psi'(q_2)-\psi'(q_1) \ge 0$ holds by convexity of $\psi$ and $q_2\ge q_1$. Recalling that $\phi_\eps'(q) = \eps' + \eps' \log q$, we have
	\begin{equation*}
		0 \le \log q_2 - \log q_1 \le \log p_2 - \log p_1 = \frac{1}{\epsilon'}(v_2 - v_1),
	\end{equation*}
	where the last equality follows by substituting the definition of $p_i$.  The proof is complete upon recalling that $q_i = {\psieps^{*}}'(v_i)$.
\end{proof}

\BoundEtaDistance*
\begin{proof}
    For $x, x' \in \xset$ w.l.o.g.\ assume that $y^\star(x) \le y^\star(x')$  and observe that for every $u \in \xset$
    \begin{equation}\label{eq:etaDerivative}
        \sum_{i \in [N]}{\psieps^{*}}'\prn*{\ell_i(u)- y^\star(u)}  = 1.
    \end{equation}
    Let $\widetilde{\ell}(x) =\ell(x)  + \delta$ with $\delta \defeq \norm{\ell(x') -\ell(x) }_\infty$ 
    and  \
    $\widetilde{y}(x) \defeq \argmin_{y \in \R} \crl*{ \sum_{i \in [N]}\psieps^{*}\prn*{\widetilde{\ell}_i(x)- Gy} + Gy }$.
    Then, according to \eqref{eq:etaDerivative}
    \begin{equation*}
        \sum_{i \in [N]}{\psieps^{*}}'\prn*{\widetilde{\ell}_i(x)- G\widetilde{y}(x)}  = \sum_{i \in [N]}{\psieps^{*}}'\prn*{\ell_i(x) + \delta - G\widetilde{y}(x)} = \sum_{i \in [N]}{\psieps^{*}}'\prn*{\ell_i(x)- Gy^\star(x)}= 1
    \end{equation*}
    and therefore \[G\widetilde{y}(x)=   Gy^\star(x) + \delta. \]
    By convexity, ${\psieps^{*}}'$ is monotonically non decreasing, thus 
    \begin{equation}\label{eq:derivativeSum}
        \sum_{i \in [N]}{\psieps^{*}}'\prn*{\ell_i(x')- G\widetilde{y}(x)}
        \stackrel{(\romannumeral1)}{\le}  \sum_{i \in [N]}{\psieps^{*}}'\prn*{\widetilde{\ell}_i(x)- G\widetilde{y}(x)} =  \sum_{i \in [N]}{\psieps^{*}}'\prn*{\ell_i(x')- Gy^\star(x')} = 1
    \end{equation}
    where $(\romannumeral1)$ follows from noting that $\ell_i(x') \le \ell_i(x) + \max_{i \in [N]}\abs{\ell_i(x')-\ell_i(x)}=\widetilde{\ell}_i(x)$.
    Therefore, 
    $Gy^\star(x') \le G\widetilde{y}(x)=   Gy^\star(x) + \delta$ giving  
    \[G\abs*{y^\star(x')-y^\star(x)} \le \norm{\ell(x') -\ell(x) }_\infty.\]
    In addition, if each $\ell_i$ is $G$-Lipschitz we have 
    \[G\abs*{y^\star(x')-y^\star(x)} \le \norm{\ell(x') -\ell(x) }_\infty \le G\norm{x' -x}.\]
\end{proof}

\subsection{Epoch-SGD BROO implementation}\label{ssec:DualEpochSGDProofs}
In this section we provide the analysis of our algorithm in the non-smooth case, which consists of combining our general BROO acceleration scheme (\Cref{alg:acceleratedProxPoint}) with a variant of Epoch-SGD \cite{hazan2014beyond} that we specialize in order to implement a BROO for $\upsilonepsreg$ (\Cref{alg:dualEpochSGD}).
We organize this section as follows. First, we prove \Cref{lem:dualGradEstProperties} showing that our gradient estimators are unbiased with bounded second moment, and therefore 
can be used in  \Cref{alg:dualEpochSGD}. Then, in \Cref{prop:epochSGDbounds} we give the convergence rate of \Cref{alg:dualEpochSGD}. 
Combining the previous statements with the guarantees of \Cref{prop:MainProp} we prove  \Cref{thm:BROOcomplexityDualProblem}.

For convenience, we restate the definitions of $\upsiloneps$ and our stochastic estimators for $\nabla_x \upsiloneps(x,y)$ and $\nabla_y \upsiloneps(x,y)$:
 \[ \upsiloneps(x,y) \defeq \sum_{i \in [N]} \psieps^*\prn*{\ell_i(x) -G y} + Gy,\]
and
\begin{equation*}
    \gradx(x,y) = \frac{{\psieps^{*}}'(\ell_i(x)-Gy)}{\bp_i} \nabla \ell_i(x,y) \text{~~,~~}
	\grady(x,y) = G \prn*{ 1 -\frac{{\psieps^{*}}'(\ell_i(x)-Gy)}{\bp_i}}
\end{equation*} 
where $\bp_i = {\psieps^{*}}'\prn*{\ell_i(\bx)-G\by}$. 
\dualGradEstProperties*
\begin{proof}
    We first show that the stochastic gradients $ \gradx,  \grady$ are unbiased
    \begin{align*}
         \E_{i \sim \bp_i}\brk*{\gradx(x,y)} =\sum_{i \in [N]} \bp_i \cdot  \frac{{\psieps^{*}}'\prn*{\ell_i(x)-Gy}}{\bp_i}  \nabla \ell_i(x) =  \nabla_x  \upsiloneps(x, y),
    \end{align*}
	and
    \begin{align*}
        \E_{i \sim \bp_i}\brk*{\grady(x,y)} =\sum_{i \in [N]} \bp_i \cdot  \prn*{ G \prn*{1 -\frac{{\psieps^{*}}'(\ell_i(x)-Gy)}{\bp_i}}  } = G - G \sum_{i \in [N]}  {\psieps^{*}}'\prn*{\ell_i(x)-Gy}=  \nabla_y  \upsiloneps(x, y).
    \end{align*}
    Next, we bound the second moment of the stochastic gradients. For any $i$ we have
    \begin{equation*}
        \norm*{\gradx(x,y)}  = \frac{{\psieps^{*}}'(\ell_i(x)-Gy)}{{\psieps^{*}}'(\ell_i(\bx)-G\by)} \norm*{\nabla \ell_i(x)} 
        \overle{(i)}
        e^{\frac{\ell_i(x)-Gy - \prn*{\ell_i(\bx)-G\by}}{\eps'}} G
        \overle{(ii)} e^2 G 
    \end{equation*}
   where $(\romannumeral1)$ follows from  \Cref{lem:psiProperties} and the fact that $\ell_i$ is $G$-Lipschitz and $(\romannumeral2)$ uses $G$-Lipschitzness again together with $x\in\ball_{\reps}(\bx)$ and $y\in[\by-\reps,\by+\reps]$ to deduce that $\ell_i(x)-Gy - \prn*{\ell_i(\bx)-G\by} \le 2G \reps \le 2\epsilon'$. Therefore, we have $\E\norm*{\gradx(x,y)}^2 \le e^4 G^2$ as required. The second moment bound on $\grady(x,y)$ follows similarly, since
   	\begin{equation*}
   		\abs{\grady(x,y)} \le G \max\crl*{1, \frac{{\psieps^{*}}'(\ell_i(x)-Gy)}{{\psieps^{*}}'(\ell_i(\bx)-G\by)}} \le e^2 G.
   	\end{equation*}
\end{proof}

\begin{algorithm2e}[t]
	\DontPrintSemicolon
	\caption{Dual EpochSGD}	\label{alg:dualEpochSGD}
	\KwInput{The function $\upsiloneps$ defined in \eqref{eq:upsiloneps defintion}, 
    ball center $\bx$, 
     ball radius $r_{\eps}$, regularization parameter $\lambda$, smoothing parameter $\eps'$ and iteration budget $T$.}
	\KwParameters{Initial step size $\gamma_1 = 1/(16\lambda)$, epoch length $T_1 = 128$ and threshold $\Tthreshold=\frac{G^4}{\lambda^2 \epsilon'^2}$.}
    Initialize $x^{(0)}_1  = \bx$\; 
    Initialize $y^{(0)}_1 = \by = \argmin_{y \in \R}\upsiloneps(\bx, y)$\;
    Precomupte sampling probabilities $\bp_i = {\psieps'}^*\prn*{\ell_i\prn*{\bx } - G\by}$ \;
	\For{$k=1,\ldots,\ceil{\log\prn*{T/128 + 1}}$}{
	\For{$t = 0,2,\cdots T_k - 1$}{
        Sample $i \sim \bp_i$\;
        Query stochastic gradients $\gradx\prn*{x_k^{(t)},y_k^{(t)} }  \text{~and~} \grady\prn*{x_k^{(t)},y_k^{(t)} }  $ defined in \eqref{eq:dualGradients}\;
        Update $ x_k^{(t+1)} = \argmin_{x \in \ball_{\reps}(\bx)}\crl*{ \gamma_k \prn*{\tri*{\gradx, x}
         + \frac{\lambda}{2}\norm{\bx -x}^2} + \frac{1}{2}\norm{x_k^{(t)}-x}^2 } $\;
        Update $ y_k^{(t+1)} = \argmin_{y \in [\by  -\frac{\eps'}{G}, \by + \frac{\eps'}{G}]}\crl*{ \gamma_k \prn*{\grady\cdot y} + \frac{1}{2}\prn*{y_k^{(t)}-y}^2} $\;
	}
	Set $x^{(0)}_{k+1} = \tfrac{1}{T_k}\sum_{t\in[T_k]}x^{(t)}_k$\;
    Set $y^{(0)}_{k+1} = \tfrac{1}{T_k}\sum_{t\in[T_k]}y^{(t)}_k$\;
    Update $T_{k+1}= 2T_k$\;
    Update  $\gamma_{k+1}= \gamma_k/2$ \; 
    $k\gets k+1$\; 
    \If {$T_k \ge  \Tthreshold$}{
    Recompute $y_{k+1}^{(0)} = \argmin_{y \in \R}\upsiloneps(x_{k+1}^{(0)}, y) $
    }
    }
	\Return $x = x^{(0)}_k$
\end{algorithm2e}

\begin{restatable}{prop}{epochSGDbounds}\label{prop:epochSGDbounds}
     Let $\eps, \lambda  > 0$, $\eps'=\frac{\eps}{2 \log N}$ and $\reps = \frac{\eps'}{G}$. 
     For any query point $\bx$ let $\by = \argmin_{y \in \R} \upsilonepsreg(\bx,y)$ and let $x_\star,y_\star = \argmin_{x \in \ball_{\reps}(\bx),y \in [\by-\reps, \by+\reps]} \upsilonepsreg(x,y)$. 
    For  $\gamma_k =  \frac{1}{8\lambda  2^{k}}$, $T \ge 1$
    and threshold 
    $\Tthreshold =  \frac{G^4}{\lambda^2 \epsilon'^2}$ the output $(x, y)$ of \Cref{alg:dualEpochSGD} satisfies
    \begin{align*}
        \E \upsilonepsreg(x, y) -  \upsilonepsreg(x_\star, y_\star)  \le O\prn*{\frac{  G^2}{ \lambda T} }.
    \end{align*}
\end{restatable}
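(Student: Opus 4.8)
The plan is to adapt the classical Epoch-SGD analysis of \citet{hazan2014beyond} to $F \defeq \upsilonepsreg$, which is $\lambda$-strongly convex in $x$ (thanks to the $\tfrac{\lambda}{2}\norm{x-\bx}^2$ term) but merely convex in $y$. Write $z=(x,y)$, $z_\star=(x_\star,y_\star)$, and $\sigma^2 = 2e^4G^2$. The first ingredient is the standard per-epoch guarantee for projected/proximal SGD: running the inner loop of epoch $k$ ($T_k$ steps, step size $\gamma_k$) from a feasible point $z^{(0)}_k$, with the unbiased estimators of \Cref{lem:dualGradEstProperties} (whose joint second moment is at most $\sigma^2$ on the feasible set $\ball_{\reps}(\bx)\times[\by-\reps,\by+\reps]$), the averaged iterate $\bar z_{k+1}=(\bar x_{k+1},\bar y_{k+1})$ satisfies $\E[F(\bar z_{k+1})-F(z_\star)] \le \tfrac{\E\norm{z^{(0)}_k-z_\star}^2}{2\gamma_k T_k}+\tfrac{\gamma_k\sigma^2}{2}$, and — running the usual ``subtract $\tfrac{\lambda}{2}\norm{x_t-x_\star}^2$'' argument through the strongly convex prox step — also $\lambda\,\E\norm{\bar x_{k+1}-x_\star}^2 \le \tfrac{\E\norm{z^{(0)}_k-z_\star}^2}{\gamma_k T_k}+\gamma_k\sigma^2$. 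With the stated schedule $\gamma_k=\tfrac{1}{8\lambda 2^k}$, $T_k=64\cdot 2^k$ we have $\gamma_k T_k = 8/\lambda$, so these become $\E[F(\bar z_{k+1})-F(z_\star)] \le \tfrac{\lambda}{16}\E\norm{z^{(0)}_k-z_\star}^2+\tfrac{\sigma^2}{16\lambda 2^k}$ and $\E\norm{\bar x_{k+1}-x_\star}^2 \le \tfrac18\E\norm{z^{(0)}_k-z_\star}^2+\tfrac{\sigma^2}{8\lambda^2 2^k}$.

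Next I would track $D_k \defeq \E\norm{z^{(0)}_k-z_\star}^2$, using two consequences of \Cref{lem:BoundEtaDistance} to control the $y$-coordinate. First, since $\abs{y^\star(x_\star)-\by}\le\norm{x_\star-\bx}\le\reps$, the box constraint on $y$ is inactive at the optimum, so $y_\star=y^\star(x_\star)$; moreover $\bar x_{k+1}\in\ball_{\reps}(\bx)$ by convexity, so $y^\star(\bar x_{k+1})\in[\by-\reps,\by+\reps]$ and $\norm{y^\star(\bar x_{k+1})-y_\star}=\norm{y^\star(\bar x_{k+1})-y^\star(x_\star)}\le\norm{\bar x_{k+1}-x_\star}$. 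Second, if $y$ is not recomputed then $\bar y_{k+1}$ and $y_\star$ both lie in the box, so $\norm{\bar y_{k+1}-y_\star}^2\le 4\reps^2$. When $T_k\ge \Tthreshold$ the algorithm sets $y^{(0)}_{k+1}=y^\star(\bar x_{k+1})$, giving $D_{k+1}\le 2\,\E\norm{\bar x_{k+1}-x_\star}^2 \le \tfrac14 D_k + \tfrac{\sigma^2}{4\lambda^2 2^k}$. When $T_k< \Tthreshold$ we only get $D_{k+1}\le \E\norm{\bar x_{k+1}-x_\star}^2 + 4\reps^2$; but $T_k<\Tthreshold = G^4/(\lambda^2\eps'^2)$ forces $2^k<\Tthreshold$, hence $\reps^2=\eps'^2/G^2 \le G^2/(\lambda^2 2^k)\le \sigma^2/(\lambda^2 2^k)$, so the stray $4\reps^2$ is dominated and again $D_{k+1}\le \tfrac18 D_k + O(\sigma^2/(\lambda^2 2^k))$. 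In both regimes $D_{k+1}\le \tfrac14 D_k + \tfrac{c\sigma^2}{\lambda^2 2^k}$ for an absolute constant $c$, and, using $D_1=\norm{(\bx,\by)-z_\star}^2\le 2\reps^2 \le 2\sigma^2/\lambda^2$ (in the regime $\lambda = O(G/\reps)$ used downstream; otherwise carry $D_1\le 2\reps^2$, which contributes a $\tfrac{\lambda\reps^2}{4^K}$ term that is dominated), a one-line induction gives $D_k \le \tfrac{A\sigma^2}{\lambda^2 2^k} = O\!\prn{G^2/(\lambda^2 2^k)}$ for all $k$ with $A=\max\{4,4c\}$.

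Finally I would conclude by letting $K$ be the number of epochs, so $\sum_{k\le K}T_k = \Theta(T)$ and hence $2^K=\Theta(T)$ and $T_K=\Theta(T)$. Plugging $D_K = O\!\prn{G^2/(\lambda^2 2^K)}$ into the last-epoch function-value bound yields $\E[F(\bar z_{K+1})-F(z_\star)] \le \tfrac{\lambda}{16}D_K + \tfrac{\sigma^2}{16\lambda 2^K} = O\!\prn{G^2/(\lambda 2^K)} = O\!\prn{G^2/(\lambda T)}$. Since the returned pair $(x,y)$ satisfies $F(x,y)\le F(\bar x_{K+1},\bar y_{K+1})$ whether or not $y$ is recomputed in the final epoch (equality if not; $F(\bar x_{K+1},y^\star(\bar x_{K+1}))=\min_y F(\bar x_{K+1},y)$ otherwise), this gives $\E\,\upsilonepsreg(x,y)-\upsilonepsreg(x_\star,y_\star) = O(G^2/(\lambda T))$, as claimed. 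The one place genuine care is needed is the per-epoch proximal-SGD estimate with the strongly convex regularizer $\tfrac{\lambda}{2}\norm{x-\bx}^2$ — it must be obtained by pushing the strong-convexity term through the proximal update rather than a plain gradient step — together with the bookkeeping of the regime split at $\Tthreshold$ and the feasibility facts above (that $y_\star=y^\star(x_\star)$ and that exact $y$-recomputation keeps $y$ in the box); the rest is the standard Epoch-SGD recursion.
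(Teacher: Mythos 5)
Your proof follows the same skeleton as the paper's: the same Epoch-SGD contraction with doubling epoch lengths and halving step sizes, the same $\Tthreshold$-split between a regime where the box constraint alone controls the $y$-error and a regime where $y$ is recomputed exactly, and the same two uses of \Cref{lem:BoundEtaDistance} to bound the $y$-distance in each regime. The only structural difference is the choice of induction potential: the paper runs the induction on the function-value gap $\E\,\upsilonepsreg(x_k,y_k)-\upsilonepsreg(x_\star,y_\star)$, whereas you run it on the squared distance $D_k=\E\norm{z_k-z_\star}^2$. These are interconvertible via $\lambda$-strong convexity in $x$, and the inductive steps in both proofs are the same regret-plus-strong-convexity argument.

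Where your choice costs you is the base case. Seeding the distance recursion requires $D_1 \le 2\reps^2 = O(\sigma^2/\lambda^2)$, which holds only when $\lambda \lesssim G/\reps$; and the parenthetical fallback you offer for larger $\lambda$ (``carry the $\lambda\reps^2/4^K$ term, it is dominated'') does not actually close the gap --- dominating $\lambda\reps^2/4^K$ by $G^2/(\lambda 2^K)$ again requires $\lambda\reps \lesssim G\cdot 2^{K/2}$, which for $K=1$ is the same constraint $\lambda \lesssim G/\reps$. The paper's base case avoids this entirely: since $\nabla_y\upsiloneps(\bx,\by)=0$ by choice of $\by$, one bounds $\upsilonepsreg(x_1,y_1)-\upsilonepsreg(x_\star,y_\star)\le \norm{\nabla_x\upsiloneps(\bx,\by)}^2/(2\lambda) \le e^4G^2/(2\lambda)$ by completing the square, a bound that holds for every $\lambda>0$, matching the unconditional statement of the proposition. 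In the downstream application $\lambda \le O(G/\reps)$ always holds, so your argument would suffice there in practice; but as a proof of the proposition as stated you should swap in the paper's base-case computation, after which the two proofs are the same.
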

\begin{proof}
    For convenience let $x_k =x_k^{(0)} $ and $y_k =y_k^{(0)} $, and in addition let $\bp_i={\psieps'}^*\prn*{\ell_i\prn*{\bx } - G\by}$ be the sampling probability from \Cref{alg:dualEpochSGD}. We use induction to prove that \[ \E \upsilonepsreg(x_k, y_k) - \upsilonepsreg(x_\star, y_\star)  \le\frac{ e^4G^2}{ \lambda 2^{k}} \] for all $k$.
    We start with the base case (k=1). 
    \begin{flalign*}
        \upsilonepsreg(x_\star, y_\star) - \upsilonepsreg(x_1, y_1)  &= \upsiloneps(x_\star, y_\star) -\upsiloneps(x_1, y_1)  +\frac{\lambda}{2}\norm{x_\star-x_1}^2
        \\ & \stackrel{(\romannumeral1)}{\ge} \tri*{\nabla_x \upsiloneps(x_1, y_1), x_\star - x_1} + \tri*{\nabla_y \upsiloneps(x_1, y_1), y_\star - y_1} + \frac{\lambda}{2}\norm{x_\star-x_1}^2
        \\ & \stackrel{(\romannumeral2)}{\ge}  \tri*{\nabla_x \upsiloneps(x_1, y_1), x_\star - x_1} + \frac{\lambda}{2}\norm{x_\star-x_1}^2
    \end{flalign*}
    where $(\romannumeral1)$ follows from convexity of $\upsiloneps$ 
    and $(\romannumeral2)$ since due to optimality conditions 
    we have that $\tri*{\nabla_y \upsiloneps(x_1, y_1), y_\star - y_1} \ge 0$.
    Therefore, 
    \begin{flalign*}
        \upsilonepsreg(x_1, y_1) -  \upsilonepsreg(x_\star, y_\star) & \le -\tri*{\nabla_x  \upsiloneps(x_1, y_1), x_\star - x_1} - \frac{\lambda}{2}\norm{x_\star-x_1}^2
        \\ & \le \max_{x \in \ball_{\reps}(\bx)} \crl*{-\tri*{\nabla_x  \upsiloneps(x_1, y_1), x - x_1} - \frac{\lambda}{2}\norm{x-x_1}^2} 
        \\ &  = \frac{\norm{\nabla_x  \upsiloneps(x_1, y_1)}^2}{2\lambda}
        \le \frac{e^4G^2}{2\lambda}
    \end{flalign*}
    where the last inequality follows 
    from Jensen's inequality and \Cref{lem:dualGradEstProperties}. This gives the base case of our induction.
    Let $V_x(x')=\half\norm*{x-x'}^2$ and $V_y(y')= \half \abs*{y-y'}^2$ and
    suppose that there is a $k$ such that $ \E\upsilonepsreg(x_k, y_k)  - \upsilonepsreg(x_\star, y_\star)  \le\frac{ e^4 G^2}{ \lambda 2^{k}} $. Then,  
    using the mirror descent regret bound \cite[see, e.g.,][Lemma 3]{asi2021stochastic} for $k+1$ we get 
    \begin{flalign*}
        \E \upsilonepsreg(x_{k+1}, y_{k+1}) - \upsilonepsreg(x_\star,y_\star)  
        & \le  \frac{\E V_{x_k}(x_\star)}{\gamma_{k}T_k}  +  \frac{\E V_{y_k}(y_\star)}{\gamma_{k}T_k} \\ & + \frac{\gamma_k}{2}  \frac{1}{T_k}\sum_{t=1}^{T_k}\E\norm{\gradx(x_k, y_k)}^2 +  \frac{\gamma_k}{2}  \frac{1}{T_k}\sum_{t=1}^{T_k}\E\norm{\grady(x_k, _k)}^2
        \\ & \stackrel{(\romannumeral1)}{\le}  \frac{\E V_{x_k}(x_\star)}{\gamma_{k}T_k} + \frac{\E V_{y_k}(y_\star)}{\gamma_{k}T_k}+2e^4 \gamma_k G^2
        \\ & \stackrel{(\romannumeral2)}{\le}  \frac{\lambda \E V_{x_k}(x_\star)}{8} + \frac{\lambda  \E V_{y_k}(y_\star)}{8}+\frac{e^4 G^2}{2 \lambda 2^{k+1}} 
    \end{flalign*}
with $(\romannumeral1)$ following from \Cref{lem:dualGradEstProperties} and $(\romannumeral2)$ from the choice of $T_k=64\cdot2^k$ and $\gamma_k= \frac{1}{\lambda 2^{k+3}}$.
Next, note that the strong convexity of $\upsiloneps(x,y)$ in $x$ implies $\lambda \E V_{x_k}(x_\star) \le \frac{ e^4 G^2}{ \lambda 2^{k}} $
since $\lambda \E V_{x_k}(x_\star) \le \E\lambda \upsilonepsreg(x_k, y_k)  - \E \upsilonepsreg(x_\star, y_k) \le  \E\lambda \upsilonepsreg(x_k, y_k) -\upsilonepsreg(x_\star, y_\star ) \le\frac{ e^4 G^2}{ \lambda 2^{k}}$ by the induction hypothesis.
In addition from \Cref{lem:BoundEtaDistance} we have 
\begin{align*}
    G\abs*{y_k - y_\star} \le G\reps = \eps'
\end{align*} 
by the constraint on $y$.
We now bound $\E V_{y_k}(y_\star) $ in each scenario $T_k \le \Tthreshold = \frac{G^4}{\lambda^2 \eps'^2}$ or $T_k > \Tthreshold$. 
\begin{enumerate}
    \item If $T_k \le \Tthreshold$ we have that $\abs{y_k - y_\star}\le \frac{\eps'}{G} \le \frac{G}{\lambda 2^{k/2+2}}$ and thus $V_{y_k}(y_\star) \le \frac{G^2}{\lambda^2 2^{k}}$.
    \item If $T_k > \Tthreshold$ \Cref{alg:dualEpochSGD} will recompute the optimal $y_{k}$ 
    and using \Cref{lem:BoundEtaDistance} we have $\abs{y_k - y_\star} \le \norm{x_k -x_\star}$ and as a result $ V_{y_k}(y_\star) \le V_{x_k}(x_\star)$.
\end{enumerate}
Therefore $\E V_{y_k}(y_\star) \le \frac{e^4G^2}{\lambda^2 2^{k}}$ and substituting back the bounds on $\E V_{y_k}(y_\star)$ and $\E V_{x_k}(x_\star)$ we obtain 
\begin{flalign*}
    \E \upsilonepsreg(x_{k+1}, y_{k+1}) - \upsilonepsreg(x_\star,y_\star)  &\le \frac{e^4 G^2}{ \lambda 2^{k+1}} 
\end{flalign*}
which completes the induction.
Let $K$ be the iteration where the algorithm  outputs $x = x_K^{(0)}$ and let $y=y_K^{(0)}$. Noting that $T= O(2^K) $, we have 
\[    \E \upsilonepsreg(x, y) - \upsilonepsreg(x_\star,y_\star) \le O\prn*{\frac{G^2}{\lambda T}}.\] 

\end{proof}
\BROOcomplexityDualProblem*
\begin{proof}
	We divide the proof into correctness arguments and complexity bounds.
    \paragraph{BROO implementation: correctness.}
    We use \Cref{alg:dualEpochSGD} with  $T=O\prn*{\frac{ G^2}{\lambda^2 \delta^2}}$ for the BROO implementation. 
    Applying \Cref{prop:epochSGDbounds} the output $(x,y)$ of \Cref{alg:dualEpochSGD}  satisfies 
    \[    \E \upsilonepsreg(x, y) - \upsilonepsreg(x_\star,y_\star) \le O\prn*{\lambda \delta^2}.\] 
    Therefore, there is a constant $c>0$ for which  the output $(x,y)$ of \Cref{alg:dualEpochSGD} with $T=\frac{ c G^2}{\lambda^2 \delta^2}$ satisfies
    $ \E \upsilonepsreg(x, y) - \upsilonepsreg(x_\star,y_\star) \le\frac{\lambda \delta^2}{2} $,
   and since
    \[ \E \Leps(x) - \min_x \Leps(x) =  \E \min_y \upsiloneps(x,y) -  \upsiloneps(x_\star,y_\star)\le \E \upsilonepsreg(x, y) - \upsilonepsreg(x_\star,y_\star), \] 
    \Cref{alg:dualEpochSGD} returns a valid BROO response for $\Leps$. 
    \paragraph{BROO implementation: complexity.}
    The total number of epochs that \Cref{alg:dualEpochSGD} performs is $K= O\prn*{\log(\frac{G^2}{\delta^2 \lambda^2})}$.
    In  $O\prn*{\log\prn*{\frac{G^2}{\lambda^2 \delta^2}} - \log\prn*{\frac{G^2}{\lambda^2 \reps^2}}} = O\prn*{\log\prn*{\frac{\reps}{\delta}}}$ epochs with $T \ge \frac{G^4}{\lambda^2 \eps'^2}= \frac{G^2}{\reps^2 \lambda^2}$ 
    the algorithm  performs $O(N)$ function evaluations to recompute the optimal $y$.
    In addition, the algorithm performs $O\prn*{\frac{G^2}{\lambda^2 \delta^2}}$ stochastic gradient computations (each computation involves a single loss function $\ell_i$ and a single sub-gradient $\nabla \ell_i$ evaluation). 
    Therefore the total complexity of the BROO implementation is 
    \begin{align}\label{eq:DualBROOcomplexity}
        O\prn*{\frac{G^2}{\lambda^2 \delta^2}+ N\log\prn*{\frac{\reps}{\delta}}}.
    \end{align}
    \paragraph{Minimizing $\Lpenalized$: correctness.}
    For any $q \in \Delta^N$ let $\mc{L}_q(x) \defeq{\sum_{i \in[N]} q_i \ell_i(x)-\psieps(q_i)}$ and note that $\mc{L}_q$ is $G$-Lipschitz,
    since  for all $x \in \xset$ we have $\norm{\nabla \mc{L}_q(x)} = \norm*{\sum_{i \in [N]} q_i \nabla \ell_i(x)} \le G$ . Maximum operations preserve the Lipschitz continuity 
    and therefore $\Leps(x) = \max_{q \in \Delta^N} \mc{L}_q(x)$ is also $G$-Lipschitz. 
    Since $\Leps$ is $G$-Lipschitz, we can use \Cref{prop:MainProp} with $F=\Leps$
    and obtain that  with probability at least $\half$ \Cref{alg:acceleratedProxPoint} outputs a point $x$ such that  $ \Leps(x) - \min_{x}\Leps(x) \le \eps/2$. 
    In addition, from \Cref{lem:SMapproxMaxFdiv} we have 
    \[\Lpenalized(x)- \min_{x}\Lpenalized(x)  \le  \Leps(x) - \min_{x}\Leps(x) +\frac{\eps}{2} \le \eps. \] 
    \paragraph{Minimizing $\Lpenalized$: complexity.}
    We apply \Cref{prop:MainProp} with $F= \upsilonepsreg$ and $r_\epsilon = \epsilon /(2\log N \cdot G)$, thus 
    the complexity of finding an $\eps/2$-suboptimal minimizer of $\Leps$ (and therefore an $\eps$-suboptimal minimizer of $\Lpenalized$) is bounded as: 
    \begin{equation*}
    	O\prn*{
    		\prn*{\frac{R}{\reps}}^{2/3} \brk*{
    			 \prn*{\sum_{j=0}^{\meps}\frac{1}{2^j}\broocost[\frac{\reps}{ 2^{j/2}\meps^2}][\lmin]}\meps 
    			+ \prn*{ \broocost[\reps][\lmin] + N }\meps^3, 
    		}
    	}
    \end{equation*}  
    where $\meps = O\prn*{\log \frac{GR^2}{\epsilon \reps}}= O\prn*{\log \frac{GR}{\epsilon }\log N}$ and $\lmin = O\prn*{\frac{\meps^2 \epsilon}{\reps^{4/3}R^{2/3}}}$.
    To obtain the total complexity bound,  we evaluate the complexity of running $\reps$-BROO with accuracy $\delta_{j} = \frac{\reps}{ 2^{j/2}\meps^2}$
    and  $\delta = \frac{\reps}{30} $. Using \eqref{eq:DualBROOcomplexity} we get 
    \begin{enumerate}
        \item {$\broocost[\frac{\reps}{ \meps^2 2^{j/2}}][\lmin] = O\prn*{\frac{G^2 2^j \meps^4}{\lmin^2 \reps^2  }+ N\log\prn*{\meps^2 2^{j/2}}} = O\prn*{\frac{\prn*{\frac{GR}{\eps}}^{4/3}}{\prn*{\log N}^{2/3}}2^j+N\prn*{\meps + \log\prn*{2^{j/2}}}}$}
        \item {$\broocost[\frac{\reps}{30}][\lmin]=O\prn*{\frac{G^2}{\lmin^2\reps^2}+N} = O\prn*{\frac{\prn*{\frac{GR}{\eps }}^{4/3}}{\meps^4 \prn*{\log N}^{2/3} }+N}.$}
    \end{enumerate}

    Thus 
    \begin{flalign*}
        O\prn*{\meps\sum_{j=0}^{\meps}\frac{1}{2^j}\broocost[\frac{\reps}{ 2^{j/2}\meps^2}][\lmin]} 
        \le   O\prn*{ 
            \meps^2 \frac{\prn*{\frac{GR}{\eps}}^{4/3}}{\prn*{\log N}^{2/3}} + \meps^2 N
            }
    \end{flalign*} 
    and 
    \begin{flalign*}
        O\prn*{\prn*{ \broocost[\reps][\lmin] + N }\meps^3} 
        \le O\prn*{\frac{\prn*{\frac{GR}{\eps }}^{4/3}}{\meps \prn*{\log N}^{2/3} }+N \meps^3}.
    \end{flalign*}
    Substituting the bounds into \Cref{prop:MainProp}, the total complexity is bounded as    
    \begin{flalign*}
      &  O\prn*{\prn*{\frac{R}{\reps}}^{2/3} \brk*{ N \meps^3 +  \frac{\meps^2\prn*{\frac{GR}{\eps}}^{4/3}}{\prn*{\log N}^{2/3}} }}  
    \le O\prn*{ N\prn*{\frac{GR}{\eps}}^{2/3}\log^{11/3}\prn*{\frac{GR}{\eps}\log N} + \prn*{\frac{GR}{\eps}}^{2}  \log^{2}\prn*{\frac{GR}{\eps}\log N}}
\end{flalign*} 
\end{proof}
\subsection{Accelerated variance reduction BROO implementation}\label{ssec:VarianceReductionSVRGproofs}
In this section we prove the complexity guarantees of our BROO implementation for (potentially only slightly) smooth losses.
We first prove \Cref{lem:smoothnessOfGamma}, showing that $\upsilonepsreg$ (the approximation of our objective \eqref{eq:penalizedFdiver}) is a finite sum of smooth functions, and thus for the BROO implementation
we can use a variance reduction method for a finite (weighted) sums. Then, we give \Cref{def:ValidVR} of a ``valid accelerated variance reduction'' (VR) method, 
and in \Cref{lem:VARGguarantee} we prove the convergence rate of our BROO implementation (\Cref{alg:DualKatyusha}) which is simply a restart scheme utilizing any valid accelerated VR method. 
 We then combine the guarantees of \Cref{lem:VARGguarantee} and \Cref{prop:MainProp} to prove \Cref{thm:DualSVRGBROOcomplexity}, our final  complexity guarantee in the smooth.

To begin, let us restate here the definition of $\upsilonepsreg$ (that has the form of a weighted finite sum):
\begin{equation}\label{eq:upsilonepsreg-def-repeat}
    \upsilonepsreg(x, y) = \sum_{i \in [N]} \bp_i \upsilon_i(x, y)  
    \text{~~where~~}
    \upsilon_i(x, y) \defeq \frac{\psieps^*\prn*{\ell_i(x) - G y}}{\bp_i} +  Gy + \frac{\lambda}{2}\norm{x-\bx}^2
\end{equation}
and $\bp_i = {\psieps^*}'(\ell_i(\bx)-G\by)$.

\smoothnessOfGamma*
\begin{proof}
   To show the Lipschitz property we compute the gradient of  $\upsilon_i(x,y)$ with respect to $x$ and $y$.
    \begin{flalign*}
        \nabla_x \upsilon_i(x,y) =  \frac{{\psieps^{*}}'\prn*{\ell_i(x) - G y}}{\bp_i} \nabla \ell_i(x) + \lambda(x-\bx)
    \end{flalign*}
    \begin{flalign*}
        \nabla_y \upsilon_i(x,y) = G -G \frac{{\psieps^{*}}'\prn*{\ell_i(x) - G y}}{\bp_i} 
    \end{flalign*}
    Similarly to the proof of \Cref{lem:dualGradEstProperties}, we have that $ \frac{{\psieps^{*}}'\prn*{\ell_i(x) - G y}}{\bp_i}  \le e^2$ and therefore 
    $\norm*{ \nabla_x \upsilon_i(x,y)} \le e^2 G  = O(G)$
    and $\abs*{ \nabla_y \upsilon_i(x,y)} \le e^2G = O(G)$, giving the first statement. 
    To bound the smoothness of $\upsilon_i$, we compute the Hessian of $\upsilon_i(x,y)$.
\begin{flalign*}
    \nabla^2_x \upsilon_i(x,y) &=  \frac{{\psieps^{*}}'\prn*{\ell_i(x) - G y}}{\bp_i} \nabla^2 \ell_i(x) +\frac{{\psieps^*}''\prn*{\ell_i(x) - G y}}{\bp_i} \nabla \ell_i(x)\nabla \ell_i(x)^T+ \lambda I 
\end{flalign*}
\begin{flalign*}
    \nabla^2_y \upsilon_i(x,y) =  G^2 \frac{{\psieps^*}''\prn*{\ell_i(x) - G y}}{\bp_i} 
\end{flalign*}
\begin{flalign*}
    \nabla_{xy} \upsilon_i(x,y)  -G \frac{{\psieps^*}''\prn*{\ell_i(x) - G y}}{\bp_i} \nabla \ell_i(x).
\end{flalign*}
\Cref{lem:psiProperties} implies that, for all $v$,  $
\frac{{\psieps^{*}}''(v)}{{\psieps^{*}}'(v)}
	 = \prn*{\log {\psieps^{*}}'(v)}' 
	 \le \frac{1}{\eps'}$ 
and $\frac{{\psieps^{*}}'\prn*{\ell_i(x) - G y}}{\bp_i} \le e^2$. In addition
note that each $\ell_i$ is $L$-smooth and $G$-Lipschitz and $\lambda = O\prn*{\frac{G}{\reps}}=O\prn*{\frac{G^2}{\eps'}}$, therefore
\begin{flalign*}
   \norm{\nabla^2_x \upsilon_i(x,y)}_\textup{op} & =
   \norm*{\frac{{\psieps^{*}}'\prn*{\ell_i(x) - G y}}{\bp_i} \nabla^2 \ell_i(x) +
   \frac{{\psieps^*}''\prn*{\ell_i(x) - G y}}{\psieps^{*'}\prn*{\ell_i(x) - G y}}  \frac{\psieps^{*'}\prn*{\ell_i(x) - G y}}{\bp_i}\nabla \ell_i(x)\nabla \ell_i(x)^T
   + \lambda I }_\textup{op} \\ &  \le 
 O\prn*{e^2\prn*{L+ 2\frac{G^2}{\eps'}}}
\end{flalign*}
\begin{flalign*}
    \norm{\nabla_{xy} \upsilon_i(x,y)} =\norm*{G \frac{{\psieps^*}''\prn*{\ell_i(x) - G y}}{\psieps^{*'}\prn*{\ell_i(x) - G y}}  \frac{\psieps^{*'}\prn*{\ell_i(x) - G y}}{\bp_i} \nabla \ell_i(x)}\le e^2 \frac{G^2}{\eps'}
\end{flalign*}
\begin{flalign*}
   \nabla^2_y \upsilon_i(x,y)= G^2 \frac{{\psieps^*}''\prn*{\ell_i(x) - G y}}{\psieps^{*'}\prn*{\ell_i(x) - G y}}  \frac{\psieps^{*'}\prn*{\ell_i(x) - G y}}{\bp_i} \nabla \ell_i(x) \le  e^2 \frac{G^2}{\eps'}.
\end{flalign*}
Applying \Cref{lem:OpNormBound} with $h=\upsilon_i$ we get that
\begin{align*}
    \norm*{\nabla^2 \upsilon_i(x,y)}_\textup{op} \le e^2\prn*{ L + 2 \frac{G^2}{\eps'}},
\end{align*}
proving that each $\upsilon_i(x,y)$ is $ O\prn*{ L +  \frac{G^2}{\eps'}}$-smooth.
\end{proof}

\begin{algorithm2e}[t]
	\DontPrintSemicolon
	\caption{Restarting Accelerated Variance Reduction with Optimal Dual Values}	\label{alg:DualKatyusha}
	\KwInput{The function  $\upsilonepsreg(x, y) = \sum_{i \in [N]} \bp_i \upsilon_i(x, y)$  defined in \eqref{eq:upsilonepsreg-def-repeat}, number of total restarts $K$, and an algorithm $\varianceReduction$ that takes in $x,y\in \xset\times \R$ and complexity budget $T$, and outputs $x',y'\in \xset\times \R$.
	}
	$x_0, y_0 = \bx, \by = \bx, \argmin_{y\in\R} \upsiloneps(\bx, y)$ \; 
	\For{$k = 1, \ldots, K$ }{
		$x_k, y_k' = \varianceReduction(x_{k-1}, y_{k-1};T)$\;
		$y_k = \argmin_{y\in\R} \upsiloneps(x_k, y)$ \; 
	}
	\Return $x_K$
\end{algorithm2e}

\newcommand{\wtL}{\widetilde{L}}

\begin{definition}\label{def:ValidVR}
	For a given ball center $\bx\in\xset$, 
	let $z_\star \in \ball_{\reps}(\bx)\times\R$ minimize the function  $\upsilonepsreg:\ball_{\reps}(\bx)\times\R\to \R$, and let $\by=\argmin_{y\in\R}\upsilonepsreg(\bx,y)$. Let  $\varianceReduction$ be a procedure that takes in $z\in \xset\times \R$ and complexity budget $T$, and outputs $z'\in \xset\times \R$. We say that $\varianceReduction$ is \emph{a valid accelerated VR method} if it has complexity $T$ and satisfies the following: there a constant $C$ such that for any $\alpha$, input $z$, and 
	\begin{equation*}
		T\ge C\prn*{N \frac{\upsilonepsreg(z)-\upsilonepsreg(z_\star)}{\alpha}
		+ \sqrt{\frac{\wtL \norm{z-z_\star}^2}{\alpha}} }
	~~\mbox{for}~~\wtL = L + \frac{G^2}{\epsilon'},
	\end{equation*}
	the output $z'$ of $\varianceReduction(z;T)$ satisfies 
	\begin{equation*}
		\E\upsilonepsreg(z') -\upsilonepsreg(z_\star) \le \alpha.
	\end{equation*} 
\end{definition}

\begin{lem}\label{lem:katyusha-is-valid}
	$\textup{Katyusha}^{\textsf{sf}}$~\cite{lan2019unified} is a valid accelerated VR method for some $C=O(1)$.
\end{lem}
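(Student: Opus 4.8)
The plan is to check that $\textup{Katyusha}^{\textsf{sf}}$, i.e.\ the Varag method of \cite{lan2019unified}, when run on $\upsilonepsreg$ restricted to the box $\ball_{\reps}(\bx)\times[\by-\reps,\by+\reps]$ with non-uniform component sampling, fulfills the contract of \Cref{def:ValidVR}, so the proof is essentially ``verify the hypotheses, then cite.'' First I would record the structural facts about $\upsilonepsreg$. Writing $\upsilonepsreg(x,y)=\sum_{i\in[N]}\bp_i\,\upsilon_i(x,y)$ as in \eqref{eq:upsilonepsreg-def-repeat}, the weights $\bp_i={\psieps^{*}}'(\ell_i(\bx)-G\by)$ form a pmf by optimality of $\by$; each $\upsilon_i$ is convex on all of $\xset\times\R$, since ${\psieps^{*}}$ is convex and nondecreasing (its derivative is a pmf entry, hence nonnegative) while $(x,y)\mapsto\ell_i(x)-Gy$ is convex, and the remaining terms $Gy+\tfrac{\lambda}{2}\norm{x-\bx}^2$ are convex; and by \Cref{lem:smoothnessOfGamma} each $\upsilon_i$ restricted to the box is $\wtL$-smooth with $\wtL=O\prn*{L+\tfrac{G^2}{\epsilon'}}$. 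Moreover \Cref{lem:BoundEtaDistance} shows that $z_\star=\argmin_{\ball_{\reps}(\bx)\times\R}\upsilonepsreg$ already lies in the box (as $\abs{y_\star-\by}\le\norm{x_\star-\bx}\le\reps$), so restricting to the box changes neither the minimizer nor the value $\upsilonepsreg(z)-\upsilonepsreg(z_\star)$, and replacing an arbitrary input $z\in\xset\times\R$ by its projection onto the box only decreases its distance to $z_\star$.

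Next I would invoke the convergence guarantee of $\textup{Katyusha}^{\textsf{sf}}$. In our complexity model a single $\nabla\upsilon_i$ (equivalently, one evaluation of ${\psieps^{*}}'(\ell_i(\cdot)-G\cdot)$ together with one $\nabla\ell_i$) costs $O(1)$, the full gradient $\nabla\upsilonepsreg=\sum_i\bp_i\nabla\upsilon_i$ costs $O(N)$, and projection onto the box costs $O(1)$, so the method's count of individual gradient evaluations is, up to constants, its total complexity. Sampling $i\sim\bp_i$ and returning $\nabla\upsilon_i$ gives an unbiased estimate of $\nabla\upsilonepsreg$ whose variance is controlled by the common per-component smoothness $\wtL$ — exactly the input the accelerated VR scheme requires — and the scheme does not use strong convexity of $\upsilonepsreg$ (which holds only in $x$). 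The guarantee of \cite{lan2019unified} then states: started from any $z$ in the box, within a budget of $O\prn*{N\log\tfrac{\upsilonepsreg(z)-\upsilonepsreg(z_\star)}{\alpha}+\sqrt{\tfrac{N\wtL\norm{z-z_\star}^2}{\alpha}}}$ individual gradient evaluations it outputs $z'$ with $\E\,\upsilonepsreg(z')-\upsilonepsreg(z_\star)\le\alpha$. Since we may assume $\upsilonepsreg(z)-\upsilonepsreg(z_\star)\ge\alpha$ (otherwise simply return $z$), the logarithm is at most $\tfrac{\upsilonepsreg(z)-\upsilonepsreg(z_\star)}{\alpha}$, so this budget is at most $O\prn*{N\tfrac{\upsilonepsreg(z)-\upsilonepsreg(z_\star)}{\alpha}+\sqrt{\tfrac{N\wtL\norm{z-z_\star}^2}{\alpha}}}$, which is the expression required in \Cref{def:ValidVR}.

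Choosing $C=O(1)$ large enough to absorb the universal constants in the Varag bound together with those hidden in the $O(\cdot)$ of \Cref{lem:smoothnessOfGamma} then yields \Cref{def:ValidVR}. The only step that requires genuine care (the rest being immediate from the lemmas already proved) is importing the Varag complexity statement of \cite{lan2019unified}, which is written for a uniformly weighted finite sum and is split into a low-accuracy regime ($O(N\log(\cdots))$) and a high-accuracy regime ($O(N\log(\cdots)+\sqrt{\cdots})$): I would (i) reduce the $\bp$-weighted sum to that setting via importance sampling $i\sim\bp_i$ with common per-component smoothness constant $\wtL$, so the variance-reduced analysis goes through unchanged, and (ii) bound the regime-dependent iteration count uniformly by $O\bigl(N\tfrac{\upsilonepsreg(z)-\upsilonepsreg(z_\star)}{\alpha}+\sqrt{N\wtL\norm{z-z_\star}^2/\alpha}\bigr)$ using $\log t\le t$. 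The remaining ingredients — global convexity, $\wtL$-smoothness on the box, the fact that constraining to the box is harmless, and the $O(1)$ per-iteration cost — follow directly from \Cref{lem:psiProperties}, \Cref{lem:smoothnessOfGamma}, and \Cref{lem:BoundEtaDistance}.
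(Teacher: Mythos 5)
Your proposal is correct and follows the paper's own route: verify the hypotheses via \Cref{lem:smoothnessOfGamma} (per-component smoothness on the box) and \Cref{lem:BoundEtaDistance} (the box restriction does not move the minimizer), then invoke the black-box accelerated VR guarantee with the standard non-uniform-sampling remark, using $\log t\le t$ to bring the iteration count into the form of \Cref{def:ValidVR}. The paper's proof cites \cite[Theorem 4.1]{allen2017katyusha} while you cite the Varag bound of \cite{lan2019unified} (which is what the lemma statement itself references); either works here. One small thing worth flagging: you claim the budget $O\bigl(N\tfrac{\upsilonepsreg(z)-\upsilonepsreg(z_\star)}{\alpha}+\sqrt{N\wtL\norm{z-z_\star}^2/\alpha}\bigr)$ ``is the expression required in \Cref{def:ValidVR},'' but the definition as printed has only $\sqrt{\wtL\norm{z-z_\star}^2/\alpha}$ with no $N$ inside the square root; since the high-accuracy regime of every accelerated VR method genuinely scales as $\sqrt{N\wtL}$, and \Cref{lem:VARGguarantee} already allots a budget $T\ge 2C(N+\sqrt{N\wtL/\lambda})$, the missing factor of $N$ in \Cref{def:ValidVR} appears to be a typo in the paper rather than a gap in your argument — but it would be cleaner to say so explicitly than to assert a match that does not literally hold.
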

\begin{proof}
	Immediate from \cite[Theorem 4.1]{allen2017katyusha} and \Cref{lem:smoothnessOfGamma}. (We note that the theorem is stated for a finite sum with uniform weights, but the extension of the theorem and the method to non-uniform sampling is standard).
\end{proof}

The following lemma shows that \Cref{alg:DualKatyusha}, when coupled with any valid accelerated VR method yields a BROO implementation for 
\begin{equation*}
	\Leps = \max_{q\in\Delta^N} \sum_{i\in[N]} \prn*{q_i \ell_i(x) - \psieps(q_i)} = \min_{y\in\R} \upsiloneps(x,y),
\end{equation*}
i.e., it outputs an approximate minimizer of
\begin{equation*}
	\Leps[\psi,\epsilon,\lambda](x) \defeq \Leps(x) + \frac{\lambda}{2}\norm{x-\bx}^2  =  \min_{y\in\R} \upsilonepsreg(x,y)
\end{equation*}
in $\ball_{\reps}(\bx)$. 

\newcommand{\Lepsreg}{\Leps[\psi,\epsilon,\lambda]}

\begin{lem}\label{lem:VARGguarantee}
	Let \Cref{assumption:GlobalAssumption,assumption:SmoothFunctions} hold, 
	and suppose 
	\Cref{alg:DualKatyusha} uses a valid accelerated VR method with constant $C$ (defined above). Then, for $\wt{L}=L+G^2/\epsilon'$ and $T \ge 2C(N + \sqrt{N\wt{L}/\lambda})$, for any $K\ge 0$ the output $x$ of \Cref{alg:DualKatyusha} satisfies
	\begin{equation*}
		\E \Lepsreg(x) - \min_{\xopt\in \ball_{\reps}(\bx)} 	\E \Lepsreg(\xopt)
		\le 2^{-K} \prn*{\Lepsreg(\bx) - \min_{\xopt\in \ball_{\reps}(\bx)} 	\E \Lepsreg(\xopt)}.
	\end{equation*}
	Moreover, the complexity of \Cref{alg:DualKatyusha} is $K(N+T)=O\prn*{K(N + \sqrt{N\wtL/\lambda})}$
\end{lem}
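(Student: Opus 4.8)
The plan is to run a standard restart analysis in which \Cref{lem:BoundEtaDistance} plays the role that strong convexity would normally play for the $y$ variable. Write $z_\star=(x_\star,y_\star)$ for the minimizer of $\upsilonepsreg$ over $\ball_{\reps}(\bx)\times\R$, so that $\upsilonepsreg(z_\star)=\min_{\xopt\in\ball_{\reps}(\bx)}\Lepsreg(\xopt)$ and $y_\star=\argmin_{y\in\R}\upsiloneps(x_\star,y)$. The structural fact driving everything is that at the start of each restart the iterate $z_{k-1}=(x_{k-1},y_{k-1})$ lies on the graph $\{(x,y^\star(x)):y^\star(x)=\argmin_y\upsiloneps(x,y)\}$: indeed $y_0=\by=\argmin_y\upsiloneps(\bx,y)$, and \Cref{alg:DualKatyusha} resets $y_k=\argmin_y\upsiloneps(x_k,y)$ after every call to $\varianceReduction$. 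Hence $\upsilonepsreg(z_{k-1})=\min_y\upsilonepsreg(x_{k-1},y)=\Lepsreg(x_{k-1})$, and likewise $\upsilonepsreg(z_k)=\Lepsreg(x_k)$.

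First I would control the distance to the optimum in terms of suboptimality. Let $D_{k-1}\defeq\upsilonepsreg(z_{k-1})-\upsilonepsreg(z_\star)=\Lepsreg(x_{k-1})-\Lepsreg(x_\star)$ (if $D_{k-1}=0$ the iterate is already optimal and there is nothing to prove, so assume $D_{k-1}>0$). Since $\Leps$ is convex, $\Lepsreg$ is $\lambda$-strongly convex, and the first-order optimality condition for $x_\star$ over $\ball_{\reps}(\bx)$ gives $\tfrac{\lambda}{2}\norm{x_{k-1}-x_\star}^2\le D_{k-1}$. For the dual coordinate, \Cref{lem:BoundEtaDistance} applied to $y^\star(\cdot)$ yields $\abs{y_{k-1}-y_\star}=\abs{y^\star(x_{k-1})-y^\star(x_\star)}\le\norm{x_{k-1}-x_\star}$, so $\norm{z_{k-1}-z_\star}^2\le 2\norm{x_{k-1}-x_\star}^2\le\tfrac{4}{\lambda}D_{k-1}$.

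Next I would invoke \Cref{def:ValidVR} with target accuracy $\alpha=\tfrac12 D_{k-1}$ (measurable w.r.t.\ the history $\mathcal F_{k-1}$ of the first $k-1$ restarts). With this choice the first term of the required budget is $CN\cdot D_{k-1}/\alpha=2CN$ and the second is $C\sqrt{\wtL\norm{z_{k-1}-z_\star}^2/\alpha}\le C\sqrt{8\wtL/\lambda}$; both are dominated by the assumed $T\ge 2C(N+\sqrt{N\wtL/\lambda})$ (absorbing the numerical constant into $C$ and using $N\ge1$). Validity therefore gives $\E[\upsilonepsreg(x_k,y_k')-\upsilonepsreg(z_\star)\mid\mathcal F_{k-1}]\le\tfrac12 D_{k-1}$ for the raw output $y_k'$ of $\varianceReduction$; replacing $y_k'$ by $y_k=\argmin_y\upsiloneps(x_k,y)$ only decreases the objective, so $\E[D_k\mid\mathcal F_{k-1}]\le\tfrac12 D_{k-1}$. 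Taking total expectations and unrolling $\E D_k\le\tfrac12\E D_{k-1}$ from $D_0=\Lepsreg(\bx)-\Lepsreg(x_\star)$ gives $\E\Lepsreg(x_K)-\min_{\xopt\in\ball_{\reps}(\bx)}\Lepsreg(\xopt)\le 2^{-K}\big(\Lepsreg(\bx)-\min_{\xopt\in\ball_{\reps}(\bx)}\Lepsreg(\xopt)\big)$. For complexity, each of the $K$ restarts costs the budget $T$ inside $\varianceReduction$ plus $O(N)$ loss evaluations to recompute $y_k$ (and an initial $O(N)$ for $\by$), totalling $O(K(N+T))=O\big(K(N+\sqrt{N\wtL/\lambda})\big)$.

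The main obstacle is exactly the asymmetry of $\upsilonepsreg$: it is $\lambda$-strongly convex in $x$ but merely convex in $y$, so a naive restart would have no handle on $\abs{y_{k-1}-y_\star}$. The resolution is the combination of \Cref{lem:BoundEtaDistance} with the algorithmic step of re-solving the scalar problem for $y$ between restarts, which keeps the iterates on $\{(x,y^\star(x))\}$ and lets us charge the $y$-error to $\norm{x_{k-1}-x_\star}$ and hence to $D_{k-1}$; this is the only nonstandard part of the argument, the rest being bookkeeping. A minor point to verify in passing is that $\argmin_{y\in\R}\upsiloneps(x,y)$ is attained and unique (a consequence of the entropy regularization making $\psieps^*$ strictly convex and smooth), so that all the dual minimizations above are well defined.
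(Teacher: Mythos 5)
Your proof is correct and follows essentially the same route as the paper's: bound $\norm{z_{k-1}-z_\star}^2$ via $\lambda$-strong convexity in $x$ and \Cref{lem:BoundEtaDistance} in $y$ (using the fact that the restart resets $y$ to the exact minimizer), then invoke \Cref{def:ValidVR} with $\alpha = D_{k-1}/2$, observe that the resulting budget is deterministic and dominated by $2C(N+\sqrt{N\wtL/\lambda})$, and unroll the halving recursion. The only addition you make is an explicit note about conditioning on the filtration $\mathcal F_{k-1}$ so that the choice $\alpha=D_{k-1}/2$ is well defined — a point the paper glosses over but which your treatment cleanly resolves.
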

\begin{proof}
	Let $z_\star = (x_\star,y_\star)$ minimize $\upsilonepsreg$ in $\ball_{\reps}(\bx)\times\R$, so that $\xopt=\argmin_{x\in \ball_{\reps}(\bx)}\Leps[\psi,\epsilon,\lambda](x)$ as well.
	Note that for all of the outer loop iterations $z_k = (x_k, y_k)$ we have, by the optimality of $y_k$ and \Cref{lem:BoundEtaDistance},
	\begin{equation*}
		\norm{z_k - z_\star}^2 = \norm{x_k-x_\star}^2 + (y_k - y_\star)^2
		\le 2\norm{x_k-x_\star}^2.
	\end{equation*}
	Moreover, the $\lambda$-strong convexity of $\Lepsreg$ implies that
	\begin{equation*}
		\norm{x_k-x_\star}^2 \le \frac{2 ( \Lepsreg(x_k) -\Lepsreg(x_\star))}{\lambda}.
	\end{equation*}
	Furthermore, note that $\upsilonepsreg(z_k) = \min_{y\in\R} \upsilonepsreg(x_k, y) = \Lepsreg(x_k)$. Recalling that by~\Cref{lem:BoundEtaDistance} restricting the domain of $y$ to $[\by-\reps,\by+\reps]$ does not change the optimal $y$, we conclude that for $\varianceReduction$ to guarantee $\upsilonepsreg(x_{k+1},y_{k+1}')-\upsilonepsreg(z_\star) \le \alpha$ it suffices to choose
	\begin{equation*}
		T\ge C\prn*{N \frac{\Lepsreg(x_k) -\Lepsreg(x_\star)}{\alpha}
			+ \sqrt{\frac{4\wtL( \Lepsreg(x_k) -\Lepsreg(x_\star) }{\lambda \alpha}} }
	\end{equation*}
	In particular, we see that $T \ge 2C\prn*{N + \sqrt{N\wtL/\lambda}}$ suffices for $\alpha = \frac{\Lepsreg(x_k) -\Lepsreg(x_\star)}{2}$, from which we conclude that
	\begin{flalign*}
		\Lepsreg(x_{k+1}) - \Lepsreg(x_\star) & = 
		\upsilonepsreg(x_{k+1},y_{k+1})-\upsilonepsreg(z_\star) \\& \le  \upsilonepsreg(x_{k+1},y_{k+1}')-\upsilonepsreg(z_\star) \le \frac{\Lepsreg(x_k) -\Lepsreg(x_\star)}{2},
	\end{flalign*}
	giving the claimed optimality bound. Finally, the complexity of the method is clearly $K(T+N)$ since we make $K$ calls to $\varianceReduction$ with complexity $T$ and $K$ exact minimizations over $y$ with complexity $N$.
\end{proof}

\DualSVRGBROOcomplexity*
\begin{proof}
	We first show correctness and complexity of the BROO implementation and then argue the same points for the overall method.
	
    \paragraph{BROO implementation: correctness and complexity.}
    Combining \Cref{lem:VARGguarantee,lem:katyusha-is-valid} and setting $K=\log_2 \frac{\upsiloneps(x_0,y_0)-\argmin_{x \in \ball_{\reps}(\bx), y\ \in \R}\upsiloneps(x, y)}{\lambda \delta^2}$, we obain a valid BROO implementation with complexity 
\begin{align}\label{dualSVRGBROOguarantee}
    O\prn*{
        \prn*{N+\frac{\sqrt{N}\prn*{\sqrt{L\eps'}+G}}{\sqrt{\eps'\lambda}}} 
        \log\prn*{\frac{\upsiloneps(x_0,y_0)-\argmin_{x \in \ball_{\reps}(\bx), y \in \R}\upsiloneps(x, y)}{\lambda \delta^2}}}
\end{align}
    Furthermore, we note that $\upsiloneps$ is $O\prn*{G}$-Lipschitz and therefore $\upsiloneps(x_0,y_0)-\argmin_{x \in \ball_{\reps}(x), y \in \R}\upsiloneps(x, y) \le O\prn*{G\reps}$. 

    \paragraph{Minimizing $\Lpenalized$: correctness.}
    Similarly to the proof of \Cref{thm:BROOcomplexityDualProblem}, we note that $\Leps$ is $G$-Lipschitz, 
    and therefore we can apply \Cref{prop:MainProp} with $F=\Leps$ and obtain that with probability at least $\half$
    \Cref{alg:acceleratedProxPoint} outputs $x$ such that $\Leps(x) - \min_{x}\Leps(x) \le \eps/2$ and \Cref{lem:SMapproxMaxFdiv} gives
    \[\Lpenalized(x)- \min_{x}\Lpenalized(x)  \le  \Leps(x) - \min_{x}\Leps(x) +\frac{\eps}{2} \le \eps. \] 
    
    \paragraph{Minimizing $\Lpenalized$: complexity.}
    Applying \Cref{prop:MainProp} with $F=\upsiloneps$ and $\reps=\frac{\eps}{2 G \log N}$, the complexity of finding an $\eps$-suboptimal minimizer of $\Lpenalized$ is 
    \begin{equation}
    	O\prn*{
    		\prn*{\frac{R}{\reps}}^{2/3} \brk*{
    			 \prn*{\sum_{j=0}^{\meps}\frac{1}{2^j}\broocost[\frac{\reps}{ 2^{j/2}\meps^2}][\lmin]}\meps 
    			+
    			\prn*{ \broocost[\reps][\lmin] + N }\meps^3
    		}
    	}
    \end{equation} 
    where $\meps = O\prn[\big]{\log \frac{GR^2}{\epsilon \reps}}=\log\prn*{\frac{GR}{\eps}\log N}$ and $\lmin = O\prn[\big]{\frac{\meps^2 \epsilon}{r^{4/3}R^{2/3}}}$.   
    Using similar calculations to the proof of \Cref{thm:svrgBrooComplex} we obtain that 
    \begin{enumerate}
        \item {$\broocost[\frac{\reps}{ 2^{j/2}\meps^2}][\lmin] = O\prn*{\prn*{N+N^{1/2} \prn*{\frac{G \sqrt{\log N}}{\sqrt{\eps}}+ \sqrt{L}} \frac{1}{\sqrt{\lmin}}}\log \prn*{\frac{ \eps' 2^{j}\meps^4}{\lmin \reps^2}}}$}
        \item { $ \broocost[\frac{\reps}{30}][\lmin] = O\prn*{\prn*{N+N^{1/2} \prn*{\frac{G \sqrt{\log N}}{\sqrt{\eps}}+ \sqrt{L}} \frac{1}{\sqrt{\lmin}}} \log \prn*{ \frac{ \eps'}{\lmin \reps^2}  }}
         $ }
    \end{enumerate}
    with $\frac{ \eps'}{\lmin \reps^2} = O\prn*{\prn*{\frac{GR}{\eps}}^{2/3}\frac{1}{\meps^2}}$ and
    $\frac{1}{\sqrt{\lmin}} = O\prn*{\frac{R^{1/3}\reps^{2/3}}{\meps \sqrt{\eps}}} $.  
   Therefore, 
   \begin{align*}
    \meps\sum_{j=0}^{\infty}\frac{1}{2^j}\broocost[\frac{\reps}{ 2^{j/2}\meps^2}][\lmin] \le O\prn*{\meps^2 \brk*{ N +N^{1/2}\prn*{\frac{GR^{1/3}}{\eps}+\sqrt{\frac{LR^{2/3}}{\eps}}}\reps^{2/3}}}.
\end{align*}
and
\begin{align*}
    O\prn*{ \prn*{ \broocost[\reps][\lmin] + N }\meps^3 }  \le O\prn*{\meps^4N + \meps^{3.5} N^{1/2} \prn*{
            \frac{GR^{1/3}}{\eps} + \sqrt{\frac{LR^{2/3}}{\eps}} 
            }\reps^{2/3}
        }.
\end{align*}
Substituting the bounds into \Cref{prop:MainProp} with $\meps=\log \prn*{\frac{GR}{\eps}\log N}$ and $\reps=\frac{\eps}{2 \log N}$ the total complexity is 
\[ O\prn*{
    N\prn*{ \frac{GR}{\eps}}^{2/3} \log^{14/3}\prn*{\frac{GR}{\eps}\log N} +
    N^{1/2} \prn*{
    \frac{GR}{\eps} + \sqrt{\frac{LR^2}{\eps}} 
    }  \log^{7/2}\prn*{\frac{GR}{\eps}\log N}}. \] 

\end{proof}

\subsection{Helper lemmas}\label{ssec:DualDROHelperLemmas}
\begin{lem}\label{lem:OpNormBound}
    Let $x \in \R^d$, $y \in \R$, then for any $h: \R^d \times \R \rightarrow \R$ we have 
    \begin{align*}
        \norm*{\nabla^2 h(x, y)}_\textup{op} \le \max\crl*{\nabla_y^2h(x, y) + \norm*{\nabla_{xy}h(x, y)}, \norm*{\nabla^2_x h(x, y)}_\textup{op}+ \norm*{\nabla_{xy} h(x, y)}}
    \end{align*}
\end{lem}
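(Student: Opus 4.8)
The plan is to treat $\nabla^2 h(x,y)$ as a symmetric $(d+1)\times(d+1)$ matrix in $2\times 2$ block form --- the $d\times d$ block $\nabla_x^2 h$, the $d\times 1$ off-diagonal block $\nabla_{xy}h$, and the scalar block $\nabla_y^2 h$ --- and to bound its operator norm via the supremum of the associated quadratic form. Since $\nabla^2 h$ is symmetric, $\norm{\nabla^2 h}_\textup{op} = \max_{\norm{(u,t)}=1}\abs{u^\top\nabla_x^2 h\,u + 2t\,u^\top\nabla_{xy}h + t^2\nabla_y^2 h}$, so it suffices to bound this expression for every unit vector $(u,t)\in\R^d\times\R$.

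First I would bound the three terms separately: $\abs{u^\top\nabla_x^2 h\,u}\le \norm{\nabla_x^2 h}_\textup{op}\norm{u}^2$ by definition of the operator norm; $\abs{2t\,u^\top\nabla_{xy}h}\le 2\abs{t}\norm{u}\norm{\nabla_{xy}h}\le(\norm{u}^2+t^2)\norm{\nabla_{xy}h}=\norm{\nabla_{xy}h}$ by Cauchy--Schwarz, the AM--GM inequality $2\abs t\,\norm u\le\norm u^2+t^2$, and $\norm u^2+t^2=1$; and $\abs{t^2\nabla_y^2 h}\le t^2\abs{\nabla_y^2 h}$. Summing these gives $\abs{(u,t)^\top\nabla^2 h\,(u,t)}\le \norm{u}^2\norm{\nabla_x^2 h}_\textup{op}+t^2\abs{\nabla_y^2 h}+\norm{\nabla_{xy}h}$. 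Because $\norm u^2+t^2=1$, the quantity $\norm{u}^2\norm{\nabla_x^2 h}_\textup{op}+t^2\abs{\nabla_y^2 h}$ is a convex combination of $\norm{\nabla_x^2 h}_\textup{op}$ and $\abs{\nabla_y^2 h}$, hence at most $\max\{\norm{\nabla_x^2 h}_\textup{op},\abs{\nabla_y^2 h}\}$. Taking the supremum over unit vectors yields $\norm{\nabla^2 h}_\textup{op}\le\max\{\norm{\nabla_x^2 h}_\textup{op}+\norm{\nabla_{xy}h},\ \abs{\nabla_y^2 h}+\norm{\nabla_{xy}h}\}$, which is the claimed bound (in every application, e.g.\ $h=\upsilon_i$, we have $\nabla_y^2 h\ge0$ by convexity of $\psieps^*$, so $\abs{\nabla_y^2 h}=\nabla_y^2 h$).

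There is no genuine obstacle; the only point needing care is the cross term, where one must use the AM--GM split $2\abs t\,\norm u\le\norm u^2+t^2$ (combined with $\norm u^2+t^2=1$), so that $\norm{\nabla_{xy}h}$ enters \emph{additively} on top of the max rather than being absorbed multiplicatively into the convex combination. As a sanity check I would also note the equivalent route via the triangle inequality on the splitting $\nabla^2 h = A+B$, where $A$ is the block-diagonal part with $\norm{A}_\textup{op}=\max\{\norm{\nabla_x^2 h}_\textup{op},\abs{\nabla_y^2 h}\}$ and $B=\left(\begin{smallmatrix}0 & \nabla_{xy}h\\ \nabla_{xy}h^\top & 0\end{smallmatrix}\right)$ has nonzero eigenvalues $\pm\norm{\nabla_{xy}h}$, hence $\norm{B}_\textup{op}=\norm{\nabla_{xy}h}$; adding the two norms gives the identical bound.
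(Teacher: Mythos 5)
Your proof is correct and takes essentially the same approach as the paper's: both bound the quadratic form $(u,t)^\top\nabla^2 h\,(u,t)$ over unit vectors, apply Cauchy--Schwarz together with the AM--GM split $2\abs{t}\,\norm{u}\le t^2+\norm{u}^2$ to the cross term, and then bound the remaining diagonal contribution as a convex combination. Your use of absolute values is a small refinement (the paper's identification of $\norm{\nabla^2 h}_{\mathrm{op}}$ with the supremum of the quadratic form, without absolute value, implicitly relies on $\nabla^2 h\succeq 0$, which holds for the convex $\upsilon_i$ in the application), and the block-diagonal/off-diagonal triangle-inequality argument you sketch as a sanity check is a valid equivalent route.
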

\begin{proof}
    \begin{flalign*}
        \norm*{\nabla^2 h(x, y)}_\text{op} &= \sup_{\norm*{v}^2 +u^2 = 1 }
        v^T \nabla_x^2 h(x, y)v + 2u\prn*{\nabla_{xy}h(x, y)}^T v+ u^2 \nabla_y^2 h(x, y)
        \\ & \stackrel{(\romannumeral1)}{\le} \norm*{v}^2 \norm*{\nabla_x^2 h(x, y)}_\text{op} 
        +2u \norm*{\nabla_{xy}h(x, y)}\norm*{v} + u^2 \nabla_y^2 h(x, y)
        \\ & \stackrel{(\romannumeral2)}{\le} \norm*{v}^2 \norm*{\nabla_x^2 h(x, y)}_\text{op}
        +\prn*{u^2  +\norm*{v}^2 }\norm*{\nabla_{xy}h(x, y)}
        + u^2 \nabla_y^2 h(x, y)
        \\ & = u^2 \prn*{ \nabla_y^2 h(x, y) + \norm*{\nabla_{xy}h(x, y)}} + \prn*{1-u^2}
        \prn*{  \norm*{\nabla_x^2 h(x, y)}_\text{op} + 
        \norm*{\nabla_{xy}h(x, y)}}
        \\ &  \le \max\crl*{\nabla_y^2h(x, y) + 
        \norm*{\nabla_{xy} h(x, y)}, \norm*{\nabla^2_x h(x, y)}_\text{op}
        + \norm*{\nabla_{xy} h(x, y)}}
    \end{flalign*}
    with $(\romannumeral1)$ following due to Hölder's inequality and
    $(\romannumeral2)$ follows from the inequality $2ab \le a^2 + b^2$.
\end{proof} 

\end{document}